\numberwithin{equation}{section} \numberwithin{figure}{section}
\numberwithin{table}{section} \setlength{\oddsidemargin}{0in}
\theoremstyle{plain}
\newtheorem{theorem}{Theorem}[section]
\newtheorem{proposition}[theorem]{Proposition}
\newtheorem{lemma}[theorem]{Lemma}
\newtheorem{corollary}[theorem]{Corollary}
\newtheorem{conjecture}[theorem]{Conjecture}
\newtheorem{remark}[theorem]{Remark}
\newtheorem{definition}[theorem]{Definition}
\definecolor{rosso}{rgb}{0.7,0,0}
\definecolor{blu}{rgb}{0,0,1}
\def\R{\mathbb{R}}
\def\N{\mathbb{N}}
\def\a{\alpha}
\def\ep{\varepsilon}
\def\e{\varepsilon}
\def\b{\beta}
\def\cone{\mathcal{C}}
\def\z{\zeta}
\begin{document}

\title[Multi-layer radial solutions]{Multi-layer radial solutions for a supercritical Neumann problem}
\thanks{D.B. is supported by INRIA - Team MEPHYSTO, MIS F.4508.14 (FNRS), PDR T.1110.14F (FNRS) 
\& ARC AUWB-2012-12/17-ULB1- IAPAS. M. Grossi and S. Terracini are partially supported by PRIN-2012-grant ``Variational and perturbative aspects of nonlinear differential problems''. S. Terracini is partially supported by INDAM. B. Noris and S. Terracini are partially supported by the project ERC Advanced Grant  2013 n. 339958: ``Complex Patterns for Strongly Interacting Dynamical Systems - COMPAT''}
\author[Bonheure]{Denis Bonheure}
\address{D\'epartement de mathmatique, Universit\'e libre de Bruxelles, Bruxelles, Belgium}
\email{denis.bonheure@ulb.ac.be }
\author[Grossi]{Massimo Grossi}
\address{Dipartimento di Matematica, Universit\`a di Roma
La Sapienza, P.le A. Moro 2 - 00185 Roma- Italy.}
\email{massimo.grossi@uniroma1.it}
\author[Noris]{Benedetta Noris}
\address{D\'epartement de mathmatique, Universit\'e libre de Bruxelles, Bruxelles, Belgium}
\email{benedettanoris@gmail.com}
\author[Terracini]{Susanna Terracini}
\address{Dipartimento di Matematica``Giuseppe Peano", Universit\`a di Torino, Via Carlo Alberto 10, 10123. Torino, Italy.}
\email{susanna.terracini@unito.it.}
\date{\today}

\begin{abstract}
In this paper we study the Neumann problem
\begin{equation*}
\begin{cases}
-\Delta u+u=u^p & \text{ in }B_1 \\
u > 0, & \text{ in }B_1 \\
\partial_\nu u=0 & \text{ on } \partial B_1,
\end{cases}
\end{equation*}
and we show the existence of multiple-layer radial solutions as $p\rightarrow+\infty$.
\end{abstract}

\maketitle

\section{Introduction}
\subsection{Motivations and main results}
This paper is concerned with multiple solutions of the following problem, 
\begin{equation}\label{main}
\left\{\begin{array}{ll}
-\Delta u+u=u^p & \text{ in } B_1\\
u>0\\
\partial_\nu u=0 & \text{ on } \partial B_1,
\end{array}
\right.
\end{equation}
where $B_1$ is the unitary ball in $\R^N$, $N\geq3$, and $p>1$.\par
Such simple models, coming from a variety of applications, have started and inspired the analysis of singular behavior in nonlinear elliptic partial differential equations in the last two decades (see, e.g. \cite{DF,Ni1998}).  The typical situation is when, for  limiting values of a certain parameter,  there are special  solutions exhibiting a varied limiting behavior.  Here we are concerned with the asymptotic  $p\to+\infty$. In this, as well as in many other problems, one of the main points of the the analysis is the identification of its singular limits. Here we shall follow this strategy, in our search for solutions showing multiple oscillations for problem \eqref{main}.

\par

This particular problem has attracted much interest in recent years because, in spite of its simple and apparently harmless form, it already shows a variety of interesting phenomena. Just to start with, the very same existence of solution is  extremely sensitive to the boundary conditions: indeed, as well known, by the Poho\v{z}aev identity \cite{P}, the Dirichlet problem has no solution for $p\ge\frac{N+2}{N-2}$. On the other hand, the situation changes drastically when dealing with Neumann boundary conditions, when, even in the {\em supercritical regime} $p\ge\frac{N+2}{N-2}$  there hold existence results (\cite{ST,BS,BonheureSerra2013, BNW} )

\par
 Let us start recalling that in \cite{ST} it has been showed that the problem
\begin{equation}\label{I1}
\left\{\begin{array}{ll}
-\Delta u+u=a(|x|)u^p & \text{ in } B_1\\
u>0\\
\partial_\nu u=0 & \text{ on } \partial B_1,
\end{array}
\right.
\end{equation}
where $a\in L^1(B_1)$ is increasing, not constant and $a(r) >0$ a.e. in $[0,1]$ {\it admits at least one radially increasing solution}. It is a very remarkable fact that this holds irrespective of the sub or supercritical character of the power $p$. This result was extended in \cite{BNW} to the case of $a\equiv1$.\par
Other progresses have been made when the power $p$ tends to $+\infty$. In \cite{GN} it was shown the existence of a radial solution $u_p$ to \eqref{main} which satisfies
\begin{equation}\label{I2}
u_p(|x|)\rightarrow\frac{G(|x|,1)}{G(1,1)}
\end{equation}
where $G(r,s)$ is the Green function associated to the one dimensional operator
\begin{equation}\label{eq:L_mathcal}
{\mathcal L}: u\mapsto-u''-\frac{N-1}ru'+u,
\end{equation}
for the boundary conditions $u'(0)=u'(1)=0$ (see also \cite{GladialiGrossi2007}). Note that \eqref{I2} can be read as a {\em concentration on }$S^{N-1}$. Indeed, it can be shown  that in this case we have that the terms $u^p$ weakly converge to a multiple of the ($N-1$)-dimensional Hausdorff measure supported on the $1$-sphere.\par
In the case of the {\em annulus}, in \cite{BS} it was shown the existence of at least three different nonradial solutions to \eqref{main} as $p$ goes to $+\infty$.
These are {\it single or double layer solutions}, as their laplacian blows up in one --or at most two-- annuli about certain spheres, while in the rest of the domain there holds full $\mathcal C^2$ convergence.
\par

The aim of this paper is to prove the existence {\it multiple layer solutions}, that is radial solutions to  \eqref{main} whose laplacians weakly converge to measures concentrating at interior spheres, with a simple reflection rule. The existence of multiple layer solutions was found, for different singularly perturbed problems and various boundary conditions, in recent papers (see for example \cite{AMN1, AMN2,MalchiodiNiWei2005,MalchiodiMontenegro2002, BandleWei2008, WeiYan2006,RS}).\par

We shall exploit a gluing technique, using a variant of Nehari method, adapted to deal with Neumann problems instead of the standard Dirichlet ones: we choose a partition of $(0,1)$ given by $0<t_1<..<t_k<1$ and consider in $(0,t_1)$ the {\em increasing} solution obtained in \cite{BNW} and in $(t_{i-1},t_i)$ the solutions found in \cite{BS}. \par
Of course, this gluing procedure provides a solution in $(0,1)$ if and only if the value of the solutions at the endpoints $t_i$ coincide. This will be true for a careful choice of the partition, related with an auxiliary variational problem. \par
Note that our approach is very different from others dealing with existence of multiple layers radial solutions. In our opinion it is simpler and it could be applied to various perturbative problems. 
As a counterpart, it needs some careful expansions of the solutions in \cite{BNW} and in \cite{BS} as $p$ goes to $+\infty$. We finally recall that solutions featuring highly oscillatory behaviour have been studied, among others, in \cite{TV1,TV2,OrtegaVerzini2004, FelmerMartinezTanak2008} 

\par
Since we are interested in radial solutions, the corresponding equation becomes
\begin{equation}\label{main1}
\left\{\begin{array}{ll}
-u''-\frac{N-1}ru'+u=u^p & \text{ in } (0,1)\\
u>0\text{ in } (0,1)\\
u'(0)=u'(1)=0.
\end{array}
\right.
\end{equation}
A crucial tool in our arguments is given by the nondegeneracy of the increasing (decreasing) solution in the annulus. We think that this result is interesting itself.\par
Our main result is the following,
\begin{theorem}\label{T}
Let $k>0$ be an integer. There exists $p(k)$ such that for any $p>p(k)$ problem \eqref{main} admits a radial solution $u_{p,k\text{layer}}(r)$ having exactly $k$ maximum points $\a_{1,p},\ldots,\a_{k,p}$.

Furthermore we have that
\begin{itemize}
\item[(i)] $(\a_{1,p},\ldots,\a_{k,p}) \to (\a_1,\ldots,\a_k)$ as $p\to\infty$ and $(\a_1,\ldots,\a_k)$ is a critical point of the function
\begin{equation}\label{eq:varphi_def}
\varphi(s_1,\ldots,s_k)=\inf \{ \|u\|_{H^1(B_1)}^2: \, u\in H^1_{rad}(B_1), \, u(s_1)=\ldots=u(s_k)=1 \},
\end{equation}
in the set $0<s_1<\ldots<s_k<1$;
\item[(ii)] $u_{p,k\text{layer}}(r)$ converges pointwise to $\sum_{j=1}^k A_j G(r,\a_j)$, where $(A_1,\ldots,A_k)$ is a solution of the system 
\end{itemize}
\begin{equation}\label{eq:system_A_j_def}
\sum_{j=1}^k A_j G(\a_i,\a_j)=1, \quad i=1,..,k.
\end{equation}
\end{theorem}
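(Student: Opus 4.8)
The plan is to follow the gluing strategy announced in the introduction: I would construct $u_{p,k\text{layer}}$ by concatenating, on a suitable partition $0<t_1<\cdots<t_k<1$, one copy of the increasing solution of \cite{BNW} on $(0,t_1)$ and $k-1$ copies of the single-layer annular solutions of \cite{BS} on the intervals $(t_{i-1},t_i)$, then one more on $(t_{k-1}, 1)$ (with the reflection rule ensuring the pieces are decreasing/increasing alternately so that the matching is compatible with the Neumann condition at the endpoints). First I would fix a candidate partition and recall from \cite{BNW,BS} the sharp asymptotic expansions, as $p\to\infty$, of the \emph{value} $c_p(a,b)$ of each building-block solution on a generic interval $(a,b)$ (rescaled so that its endpoint values are normalised), together with the associated energy. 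These expansions should show that, after normalisation, the height of each piece on $(t_{i-1},t_i)$ is governed to leading order by the Green function $G$ of the operator $\mathcal L$ in \eqref{eq:L_mathcal}. The second step is to set up the matching system: the glued function is a genuine solution of \eqref{main1} precisely when the heights at the interfaces $t_i$ agree, which after the normalisation becomes a finite-dimensional system of $k$ equations in the $k$ unknowns $t_1,\ldots,t_k$ (equivalently in the heights $A_1,\ldots,A_k$).

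The third step is the variational identification. I would introduce the auxiliary functional $\varphi(s_1,\ldots,s_k)$ of \eqref{eq:varphi_def} — the least $H^1_{rad}$-energy among radial functions pinned to value $1$ at the prescribed radii — and observe that its minimiser, for fixed $(s_1,\ldots,s_k)$, is exactly the piecewise combination of Green functions $\sum_j A_j G(r,s_j)$ with $(A_j)$ solving the linear system \eqref{eq:system_A_j_def}; hence $\varphi$ is smooth on the open simplex $0<s_1<\cdots<s_k<1$ and its critical point equations coincide, in the limit $p\to\infty$, with the leading-order matching conditions derived in step two. The convergence $(\a_{1,p},\ldots,\a_{k,p})\to(\a_1,\ldots,\a_k)$ with $(\a_j)$ critical for $\varphi$ then follows by showing that the reduced energy of the glued family is, up to a $p$-dependent normalising factor and lower-order terms, a perturbation of $\varphi$, so that its critical points converge to those of $\varphi$; a degree-theoretic or implicit-function argument (using the nondegeneracy of the annular building blocks emphasised just before the theorem) produces, for every $p>p(k)$, a true critical point $(t_1^p,\ldots,t_k^p)$ of the finite-dimensional reduced problem, which yields the solution $u_{p,k\text{layer}}$ with exactly $k$ maxima, located at $\a_{i,p}$ inside $(t_{i-1}^p,t_i^p)$. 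Part (ii), the pointwise convergence to $\sum_j A_j G(r,\a_j)$, is then read off from the block expansions: away from the (shrinking) layers the rescaled pieces converge in $\mathcal C^2$ to multiples of $G(\cdot,\a_j)$, and the heights converge to the solution of \eqref{eq:system_A_j_def} by continuity of that linear system in $(\a_j)$.

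The main obstacle I expect is step two–three glue, namely controlling the error terms in the asymptotic expansions of the building-block solutions \emph{uniformly} on the relevant range of partitions and with enough precision (including the next-order term, not just the leading one) that the reduced functional is a genuine $\mathcal C^1$-small perturbation of $\varphi$ near its critical set. Concretely, one must show the matching map is a small perturbation of $\nabla\varphi$ in $\mathcal C^1$, which requires differentiating the expansions of \cite{BNW,BS} in the endpoint parameters — this is where the nondegeneracy of the increasing/decreasing annular solutions (invertibility of the linearised operator with Neumann conditions) is indispensable, both to make the building blocks depend smoothly on $(a,b)$ and to guarantee that the glued critical point is nondegenerate enough to persist. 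A secondary technical point is verifying that the concatenated function is $\mathcal C^1$ across each $t_i$ (value \emph{and} derivative): the value matching is the system we solve, while the derivative matching must come for free from the Neumann-type normalisation of each block at its own endpoints, so one has to be careful that the reflection rule is set up so that $u'(t_i^-)=u'(t_i^+)=0$ automatically, turning the interface into a regular point of the glued $\mathcal C^2$ solution.
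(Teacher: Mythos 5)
Your outline reproduces the architecture of the paper's proof: glue monotone blocks with Neumann conditions at their own endpoints so that only value-matching at the interfaces is required, reduce to a finite-dimensional system in the partition points, identify the limit system with the critical-point equations of $\varphi$ in \eqref{eq:varphi_def}, and pass from the limit to large $p$ by stability of the reduced problem. The genuine gap is that you never establish that the limit reduced problem \emph{has} a solution, let alone in a form robust enough to survive the perturbation to finite $p$. In the paper this is exactly Theorem \ref{thm:k-layer-limit}: the matching map $M_\infty$ of \eqref{eq:M_infty_def} is shown to have degree $1$ on the simplex $T$ by a homotopy to $I-P$, and the admissibility of that homotopy rests on the quantitative boundary estimates of Lemmas \ref{lemma:boundary_T_1}, \ref{lemma:boundary_T_k} and \ref{lemma:collapse_points}, which in turn rely on Lemmas \ref{lemma:alpha_asimpt_beta} and \ref{lemma:alpha_not_going_beta} controlling where the interior maximum of each block sits as the interval degenerates. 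Your proposed shortcut via the implicit function theorem cannot replace this: the nondegeneracy you invoke is that of the monotone building blocks (Theorem \ref{thm:non_degeneracy}), which yields smooth dependence of the blocks on the endpoints, but it says nothing about nondegeneracy of the limit configuration as a critical point of $\varphi$ (or as a zero of $M_\infty$) --- precisely the point that is \emph{not} known, since even uniqueness of the configuration is left as a conjecture in the paper. Hence only a degree argument with explicit control of $M_\infty$ on $\partial T$ is available, and that ingredient is absent from your plan.

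Two smaller points. The expansions you propose to ``recall from \cite{BNW,BS}'' are not available there in the form you need: the paper must prove them (Lemma \ref{lemma:u_p^p}, its uniform version in Remark \ref{rem:uniform}, and the estimates on $\partial u_{p,+}/\partial\b$ in Section \ref{S8}), and it is the uniqueness and nondegeneracy of Section \ref{S4} that make the blocks differentiable in the endpoints at all. Moreover, the $C^1$-closeness to $\nabla\varphi$ that you single out as the main obstacle is needed only at the inner level, to make the 1-layer matching point $\a_{j,p}(\b_{j-1},\b_j)$ unique and of class $C^1$ (Theorem \ref{thm:C1convergence}, Corollary \ref{coro:alpha_jp_C1}) so that $M_p$ is well defined and continuous; for the outer $k$-layer system, uniform convergence $M_p\to M_\infty$ together with $\deg(M_\infty,T,0)=1$ suffices, no $C^1$ perturbation of $\nabla\varphi$ being required there.
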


%\begin{remark}
%Existence of critical points for $F$ will be carried on by induction on $k$, using a topological argument. Note that, if $k=2$ then 
%\begin{equation}
%F(\a_1,\a_2)=\frac{\left[G(\a_2,\a_2)-G(\a_2,\a_1)\right]\a_1^{N-1}+\left[G(\a_1,\a_1)-G(\a_1,\a_2)\right]\a_2^{N-1}}{G(\a_1,\a_1)G(\a_2,\a_2)-G(\a_1,\a_2)G(\a_2,\a_1)}.
%\end{equation}
%It will be shown that $F(\a_2,\a_2)$ has a local minimum.
%\end{remark}

\subsection{Organization of the paper and main ideas}
In Section \ref{sec:limit_problem} we analyze in detail the limit problem $p=+\infty$. The radial increasing solution of the equation \eqref{main} in the annulus $B_b\setminus B_a$ (or in the ball when $a=0$) converges to the increasing function
\[
\frac{G(r,b)}{G(b,b)}, \quad r\in [a,b], \ a\geq0.
\]
Recall that $G(r,s)$ was defined above \eqref{eq:L_mathcal}. A decreasing solution of \eqref{main} exists only in the case of the annulus $a>0$, and converges to
\[
\frac{G(r,a)}{G(a,a)}, \quad r\in [a,b],\ a>0.
\]
By gluing an increasing solution and a decreasing solution, we construct a 1-layer solution in $B_b\setminus B_a$. This converges to 
\begin{equation}\label{eq:intro1}
\frac{G(r,\bar{s})}{G(\bar{s},\bar{s})}, \quad r\in [a,b],\ a\geq0,
\end{equation}
$\bar{s}$ being the unique point where the left derivative of the function is opposite to the right derivative.

Similarly, we study the limit problem of the $k$-layer solution. This is a combination of $k$ Green functions, with singular points being a critical point of the function $\varphi$ in \ref{eq:varphi_def}, and normalized with value 1 at the maximum points (see Theorem \ref{main1}). Again, the left and right derivatives are opposite at the maximum points. In order to prove the existence of a critical point of $\varphi$, we consider the juxtaposition of $k$ functions of the type \eqref{eq:intro1} and we prove, by a degree theorem, that there exists at least one configuration such that this juxtaposition is continuous.

In Section \ref{S2} we start the study of the problem $p<\infty$. We recall the variational characterization which ensures the existence of an increasing solution in the ball and in the annulus and of a decreasing solution in the annulus.

In Section \ref{S3} we prove that the increasing and decreasing solutions converge respectively to the two limit functions introduced above. The convergence is $C^1$ in the interior of the domain, but not on the boundary at the maximum point. In particular, we prove in Lemma \ref{lemma:u_p^p} that the value of the solution at the maximum point is asymptotically related to the value of the derivative of the limit profile.

In Section \ref{S4} we prove that the monotone solutions are nondegenerate. This is the most technical part of the paper and it is based on a blow-up argument inspired from \cite{G}. We present here in detail the proof of the uniqueness of the solution, which is very close to the proof of the nondegeneracy but presents some additional technical difficulties. The uniqueness and nondegeneracy ensure that the monotone solutions depend in a regular way on the boundary points $a$ and $b$. This is the basic tool to show the existence of a $k$-layer solution of \eqref{main} which bifurcates from $p=\infty$.

In section \ref{S7} we prove the existence of a 1-layer radial solution of \eqref{main}. We glue and increasing solution and a decreasing one. Thanks to the continuous dependence of the monotone solutions on the boundary points $a$ and $b$, we can show that there exists a continuous configuration. This function converges to \eqref{eq:intro1}. It is remarkable that the limit point $\bar{s}$ is a maximum point of the function $\varphi$ in \eqref{eq:phi_def} (whereas the monotone solutions are associated to minimum points of $\varphi$). 
%This suggests that the 1-layer solution is not of mountain pass type, but should have a Morse index higer than 1 \todo[?].

In section \ref{S8} we construct the $k$-layer solution of \eqref{main}. This requires the additional property that the 1-layer solution is unique, both at the limit (see Lemma \ref{lemma:uniqueness_1_layer}) and for $p$ finite. To this aim we prove a stronger convergence result in Theorem \ref{thm:C1convergence}.

\subsection{Notation} We list below some notation used throughout the paper.
\begin{itemize}
\item[-] For $r>0$ we have $B_r=\{x\in\R^N: \, |x|<r\}$, $N\geq 3$; $|B_r|$ denotes the $N$-dimensional measure of $B_r$. For $0< r<R$, $B_R\setminus B_r =\{x\in\R^N: \, r<|x|<R\}$. In order to treat at the same time the case of the annulus and that of the ball, we will sometimes allow $r=0$ in the previous definition and use the convention that $B_R\setminus B_0=B_R$.
\item[-] $H^1_{rad}(B_r)$ denotes the Sobolev space of radial functions $H^1_{rad}(B_r)=\{u\in H^1(B_r):\, u=u(|x|)\}$. If $u\in H^1(B_r)$, $\|u\|^2_{H^1}=\int_{B_r}(|\nabla u|^2+u^2)\,dx$; if $u\in L^p(B_r)$, $1\leq p<\infty$, $\|u\|_p=\int_{B_r} |u|^p\,dx$; if $u\in L^\infty(B_r)$, $\|u\|_\infty=\text{ess\,sup}_{B_r}|u|$. Note that in the notation of the norms the domain is not specified and is taken as the domain of definition of the function.
\item[-] We denote by $u_p(r;\alpha,\beta)$ a solution of the problem \eqref{main} in the annulus $B_\beta\setminus B_\alpha$ (with Neumann b.c. on $\partial(B_\beta\setminus B_\alpha)$). The derivatives $u'_p(r;\alpha,\beta)$, $u''_p(r;\alpha,\beta)$, and so on, are taken with respect to the variable $r$.
\item[-] We adopt the standard notation $f(x)=o( g(x))$ as $x\to x_0$ if $\lim_{x\to x_0} f(x)/g(x)$ is zero, $f(x)=O(g(x))$ as $x\to x_0$ if $\limsup_{x\to x_0} |f(x)/g(x)|$ is finite, $f(x)\sim g(x)$ as $x\to x_0$ if $\lim_{x\to x_0} f(x)/g(x)$ is finite and different from zero.
\end{itemize}

\tableofcontents

%%%%%%%%%%

\section{The limit problem}\label{sec:limit_problem}

\subsection{The 1-layer solution of the limit problem}

Finding a radial solution of 
\begin{equation}\label{main-annulus}
\left\{\begin{array}{ll}
-\Delta u+u=u^p & \text{ in } B_b\setminus B_a\\
u>0\\
\partial_\nu u=0 & \text{ on } \partial (B_b\setminus B_a),
\end{array}
\right.
\end{equation}
is easily done if $a>0$, whatever $p>1$, by minimizing the quotient 
\begin{equation}\label{eq:Q_p_def}
Q_{p,[a,b]}(u)=\frac{\|u\|^2_{H^1}}{\|u\|_{p+1}^2}, \quad u\in H^{1}_{rad}(B_b\setminus B_a)
\end{equation}
The limit problem as $p\to\infty$, namely minimizing 
\begin{equation}\label{eq:Q_infty_def}
Q_{\infty,[a,b]}(u) = \frac{\|u\|_{H^1}^2}{\|u\|_{\infty}^2}, \quad u\in H^{1}_{rad}(B_b\setminus B_a)
\end{equation}
was considered in \cite{BS} and \cite{GN}. In the study of this limit problem, it was shown that an important role is played by the function $\varphi : [a,b] \to \R$ defined by
\begin{equation}\label{eq:phi_def}
\varphi_{[a,b]} (s) = \inf_{\substack{u\in H^1_{rad} \\ u(s)\ne 0}}\frac{\|u\|_{H^1}^2}{u(s)^2}.
\end{equation}
This function $\varphi_{[a,b]}$ makes sense even if $a=0$, in which case we clearly have that the infimum is zero and not achieved for $s=a=0$, while it is achieved and not zero if $s>0$. 
For every $s\in\, ]a,b]$ when $a=0$ or $s\in\, [a,b]$ otherwise, there exists, up to normalization, a unique minimizer of \eqref{eq:phi_def}. Moreover, when $s\in\, ]a,b[$, this minimizer is given by the Green function, that we denote by $G_{[a,b]}(\cdot,s)$, associated to the operator
\[
\mathcal{L}: u\mapsto -u''-\frac{N-1}{r}u'+u
\]
for the boundary conditions $u'(a)=u'(b)=0$, i.e.
\begin{equation}\label{eq:Green_def}
\mathcal{L} G_{[a,b]}(r,s) =\delta_s \text{ for } r\in [a,b], \quad 
\frac{\partial G_{[a,b]}}{\partial r} (a,s)=\frac{\partial G_{[a,b]}}{\partial r} (b,s)=0.
\end{equation}
If $a>0$, the punctual limit of $G_{[a,b]}(r,s)$ as $s\to a$ is well defined and we denote it by $G_{[a,b]}(r,a)$. Analogously we will denote by $G_{[a,b]}(r,b)$ the limit of $G_{[a,b]}(r,s)$ as $s\to b$. Notice that  $G_{[a,b]}(r,a)$ and $G_{[a,b]}(r,b)$ satisfy \eqref{eq:Green_def} with only one boundary condition.
%It is clear that $u$ is positive, smooth outside $|x|=s$, strictly radially increasing from $a$ to $s$ and strictly radially decreasing from $s$ to $b$. 

To simplify the notation we set
\[
G(r,s):=G_{[0,1]}(r,s).
\]
We recall in the next proposition some useful properties of $G(r,s)$.

\begin{proposition}[{\cite{Catrina2009,GN}}]\label{prop:xi_zeta}
There exist two positive linearly independent solutions $\zeta\in C^2((0,1])$ and $\xi\in C^2([0,1])$ of the equation $\mathcal{L}u=0$ in $(0,1)$ satisfying $\xi'(0)=\zeta'(1)=0$, 
\begin{equation}\label{eq:xi_zeta}
r^{N-1}(\xi'(r)\zeta(r)-\xi(r)\zeta'(r))= 1\quad\hbox{ for every }r\in (0,1], 
\end{equation}
and such that
\begin{equation}\label{eq:G}
G(r,s)=\left\{\begin{array}{ll}
s^{N-1}\xi(r)\zeta(s)\quad\text{for }r\leq s \\
s^{N-1}\xi(s)\zeta(r)\quad\text{for }r> s.
\end{array}\right.
\end{equation}
Moreover, $\xi$ is bounded and increasing in $[0,1]$, $\zeta$ is decreasing in $[0,1]$, and
\begin{equation}\label{eq:xi_zeta_limits1}
\displaystyle\lim_{r\rightarrow 0^+} \xi(r)=\frac{1}{N-2}
\end{equation}
\begin{equation}\label{eq:xi_zeta_limits2}
\displaystyle\lim_{r\rightarrow 0^+}r^{N-2}\z(r)=1,
\end{equation}
\begin{equation}\label{eq:xi_zeta_limits3}
\displaystyle\lim_{r\rightarrow 0^+}r^{N-1}\z'(r)=-(N-2)
\end{equation}
\end{proposition}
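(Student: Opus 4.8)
The plan is to construct $\xi$ and $\zeta$ as specific solutions of the homogeneous ODE $\mathcal{L}u = 0$, i.e.\ $-u'' - \frac{N-1}{r}u' + u = 0$ on $(0,1)$, and then read off all the stated properties from standard ODE theory together with the explicit structure of the equation near $r=0$. First I would observe that this equation is the radial form of $-\Delta u + u = 0$ in $\R^N$, so its solutions are exactly the radial parts of modified Bessel-type functions: there is a solution regular at the origin, behaving like a constant as $r\to 0^+$, and a singular one behaving like $r^{2-N}$ as $r\to 0^+$. I would define $\xi$ to be the solution regular at $0$ normalized by $\xi'(0)=0$ (this is automatic for the regular solution, since it is even in $r$ once extended to the ball), and I would fix its multiplicative constant later. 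Then $\zeta$ is defined as the solution satisfying $\zeta'(1)=0$, normalized (constant chosen later) so that it is the \emph{singular} branch near $0$; one must check $\zeta$ is not a multiple of $\xi$, which holds precisely because $\xi$ stays bounded while $\zeta$ blows up. Linear independence then gives a two-dimensional solution space spanned by $\xi,\zeta$.

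Next I would handle the Wronskian identity \eqref{eq:xi_zeta}. Writing the equation in Sturm--Liouville form, $-(r^{N-1}u')' + r^{N-1} u = 0$, the generalized Wronskian $W(r) := r^{N-1}\big(\xi'(r)\zeta(r) - \xi(r)\zeta'(r)\big)$ satisfies $W'(r) = r^{N-1}\big(\xi''\zeta - \xi\zeta''\big) + (N-1)r^{N-2}(\xi'\zeta - \xi\zeta')$; substituting $u'' = -\frac{N-1}{r}u' + u$ for both $\xi$ and $\zeta$, the $u$-terms cancel and one finds $W' \equiv 0$, so $W$ is a constant. The precise value $1$ in \eqref{eq:xi_zeta} is then a \emph{normalization choice}: having fixed $\xi$ (say by $\lim_{r\to 0^+}\xi(r) = \frac{1}{N-2}$, which is the content of \eqref{eq:xi_zeta_limits1}), I choose the constant multiple of $\zeta$ so that $W \equiv 1$. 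For the Green function formula \eqref{eq:G}: the function defined piecewise there solves $\mathcal{L}_r G = 0$ away from $s$, satisfies the two Neumann conditions because $\xi'(a)=0$ at $a=0$ — wait, $\xi'(0)=0$ handles the boundary at $r=0$ and $\zeta'(1)=0$ handles $r=1$ — and has the correct jump in the derivative at $r=s$: the jump of $\partial_r G$ across $r=s$ is $s^{N-1}\xi(s)\zeta'(s) - s^{N-1}\xi'(s)\zeta(s) = -s^{N-1}(\xi'(s)\zeta(s) - \xi(s)\zeta'(s)) = -1/s^{N-1}\cdot s^{N-1}\cdot(\ldots)$; more carefully, since $\mathcal{L}$ in divergence form has leading coefficient $r^{N-1}$, the jump relation $[\,r^{N-1}\partial_r G\,]_{r=s} = -1$ must hold, and \eqref{eq:xi_zeta} is exactly what makes this work. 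Symmetry $G(r,s)=G(s,r)$ up to the $s^{N-1}$ weight is visible from the formula. Continuity at $r=s$ is immediate.

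Finally, the monotonicity and limit statements. For \eqref{eq:xi_zeta_limits1}: the regular solution of $-u'' - \frac{N-1}{r}u' + u = 0$ with $u(0)=c$, $u'(0)=0$ has, by integrating the equation twice or by the Bessel asymptotics, the expansion $u(r) = c(1 + \frac{r^2}{2N} + \cdots)$; but the normalization $\frac{1}{N-2}$ is forced by the requirement $W\equiv 1$ together with the singular behavior of $\zeta$, so I would instead first establish \eqref{eq:xi_zeta_limits2}--\eqref{eq:xi_zeta_limits3} for the singular branch: near $r=0$ the dominant balance in $-u'' - \frac{N-1}{r}u' = 0$ gives $u \sim r^{2-N}$, and differentiating, $u' \sim (2-N)r^{1-N}$, whence $r^{N-2}\zeta(r)\to 1$ and $r^{N-1}\zeta'(r)\to -(N-2)$ once $\zeta$ is normalized by \eqref{eq:xi_zeta_limits2}. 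Then plugging the leading behaviors of $\xi$ (constant $L:=\lim\xi$) and $\zeta$ into $W\equiv 1$: $W(r) = r^{N-1}\xi'(r)\zeta(r) - r^{N-1}\xi(r)\zeta'(r) \to 0\cdot 1 - L\cdot(-(N-2)) = L(N-2)$, forcing $L = \frac{1}{N-2}$, which is \eqref{eq:xi_zeta_limits1}. (Here I use that $\xi'(r) = O(r)$ near $0$, so $r^{N-1}\xi'(r)\zeta(r) = O(r^{N-1}\cdot r \cdot r^{2-N}) = O(r^2)\to 0$.) Monotonicity: positivity of $\xi,\zeta$ follows from a maximum-principle argument (the operator $\mathcal{L}$ satisfies a maximum principle since the zeroth-order coefficient is positive, so a nonnegative solution cannot have an interior zero); then from $-(r^{N-1}\xi')' = -r^{N-1}\xi < 0$ we get $r^{N-1}\xi'$ is increasing, and since it starts at $0$ at $r=0$ it stays nonnegative, so $\xi' \geq 0$, i.e.\ $\xi$ is increasing; symmetrically $r^{N-1}\zeta'$ is increasing and ends at $0$ at $r=1$, so it is $\leq 0$ on $[0,1]$, giving $\zeta$ decreasing. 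Boundedness of $\xi$ on $[0,1]$ is clear since it is continuous there (regular at $0$). The main obstacle, such as it is, is bookkeeping the normalizations consistently — making sure the single free multiplicative constant on $\zeta$ is used exactly once to force $W\equiv 1$, and then checking that this is compatible with both \eqref{eq:xi_zeta_limits2} and \eqref{eq:xi_zeta_limits1} rather than overdetermined; the resolution is that \eqref{eq:xi_zeta_limits1} is a \emph{consequence} (a computation of $L$), not an independent normalization. Since this proposition is quoted from \cite{Catrina2009,GN}, I would keep the write-up brief, citing those references for the routine Bessel asymptotics and recording only the Wronskian computation and the derivation of \eqref{eq:xi_zeta_limits1} in detail.
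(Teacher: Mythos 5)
Your construction is sound and, in substance, it follows the same route as the paper, whose own proof is essentially a citation: existence of $\xi,\zeta$ satisfying \eqref{eq:xi_zeta} and \eqref{eq:G} is quoted from \cite[Lemma 6.1]{GN}, the boundedness of $\xi$ and the limits \eqref{eq:xi_zeta_limits1}--\eqref{eq:xi_zeta_limits3} are said to follow by retracing \cite{Catrina2009}, and the only argument the paper actually writes out is the monotonicity, obtained by integrating the equation in the form $(r^{N-1}u')'=r^{N-1}u$ and using $\xi'(0)=\zeta'(1)=0$, which is exactly your computation. What you add is the part the paper outsources: the regular/singular branches at the regular singular point $r=0$, constancy of the weighted Wronskian, the bookkeeping showing that normalizing $\zeta$ by \eqref{eq:xi_zeta_limits2} and imposing $W\equiv1$ automatically yields \eqref{eq:xi_zeta_limits1} (so the normalizations are compatible, not overdetermined), and the direct verification of the Green kernel. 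This buys self-containedness at the cost of redoing the asymptotic analysis that the references already contain.

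Two points need repair before this stands as a proof. First, linear independence is argued circularly: you say $\zeta$ is not a multiple of $\xi$ ``because $\zeta$ blows up,'' but the blow-up is only known once you know $\zeta$ has a nonzero component along the singular branch, i.e.\ once independence is known. The correct order uses your own monotonicity computation: $r^{N-1}\xi'$ is strictly increasing from $0$, hence $\xi'(1)>0$, so no nonzero multiple of $\xi$ satisfies $u'(1)=0$; therefore the solution with $\zeta'(1)=0$ contains the singular branch and $\zeta\sim c\,r^{2-N}$ (the possible $\log r$ correction from integer-separated indicial roots is of lower order for $N\ge3$), after which you set $c=1$ to get \eqref{eq:xi_zeta_limits2}--\eqref{eq:xi_zeta_limits3}. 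Positivity of $\zeta$ then follows from the same ODE argument run towards $r=1$ rather than from a maximum principle for a sign not yet established: if $\zeta$ had a first zero $r_1<1$, then $\zeta'(r_1)<0$, and on the region where $\zeta<0$ one has $(r^{N-1}\zeta')'<0$, so $r^{N-1}\zeta'$ stays negative and can never reach $0$ at $r=1$. Second, the jump normalization: with the paper's convention $\mathcal{L}G(\cdot,s)=\delta_s$ for the non-divergence operator $\mathcal{L}$, the requirement is $[\partial_r G]_{r=s}=-1$, equivalently $[r^{N-1}\partial_r G]_{r=s}=-s^{N-1}$; your first computation gives exactly this, but your ``more careful'' restatement $[r^{N-1}\partial_r G]_{r=s}=-1$ is the wrong convention. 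Since \eqref{eq:xi_zeta} produces precisely the jump $-1$ for $\partial_r G$, nothing else in your argument changes.
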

\begin{proof}
The existence of $\xi,\zeta$ satisfying \eqref{eq:xi_zeta} and \eqref{eq:G} is proved in \cite[Lemma 6.1]{GN}, based on the case of Dirichlet boundary conditions which can be found in \cite[Appendix]{Catrina2009}. Following step by step the last mentioned paper, one can also check that $\xi$ is bounded and that \eqref{eq:xi_zeta_limits1}- \eqref{eq:xi_zeta_limits3} hold. Finally, the monotonicity properties of $\xi$ and $\zeta$ follow by integrating the equation and using the boundary conditions $\xi'(0)=\zeta'(1)=0$ respectively.
\end{proof}

With $\xi$ and $\zeta$ as in Proposition \ref{prop:xi_zeta}, we define
\begin{equation}\label{eq:xi_zeta_annulus_def}
\begin{split}
& \xi_{[a,b]}(r)=\frac{\xi'(a)\zeta(r)-\xi(r)\zeta'(a)}{\sqrt{\xi'(a)\zeta'(b)-\xi'(b)\zeta'(a)}},
\  \zeta_{[a,b]}(r)=\frac{\xi'(b)\zeta(r)-\xi(r)\zeta'(b)}{\sqrt{\xi'(a)\zeta'(b)-\xi'(b)\zeta'(a)}} \quad \text{if } 0<a<b<1; \\
& \xi_{[0,b]}(r)=\frac{\xi(r)}{\xi'(b)},
\ \zeta_{[0,b]}(r)=\xi'(b)\zeta(r)-\xi(r)\zeta'(b) \quad \text{if } 0<b<1;\\
& \xi_{[a,1]}(r)=\xi'(a)\zeta(r)-\xi(r)\zeta'(a),
\ \zeta_{[a,1]}(r)=-\frac{\zeta(r)}{\zeta'(a)} \quad \text{if } 0<a<1.
\end{split}
\end{equation}

It follows from Proposition \ref{prop:xi_zeta} that these are the expressions of the Green function in the interval $[a,b]$. 
%as function of $\xi$ and $\zeta$. 

\begin{proposition}\label{prop:xi_zeta_annulus}
The functions $\xi_{[a,b]}$ and $\zeta_{[a,b]}$ are linearly independent positive solutions of $\mathcal{L}u=0$ in $(a,b)$, such that $\xi_{[a,b]}'(a)=\zeta_{[a,b]}'(b)=0$.
Furthermore, $\xi_{[a,b]}$ is increasing, $\zeta_{[a,b]}$ is decreasing,
\begin{equation}\label{eq:xi_zeta_annulus}
r^{N-1}(\xi_{[a,b]}'(r)\zeta_{[a,b]}(r)-\xi_{[a,b]}(r)\zeta_{[a,b]}'(r))= 1\quad\hbox{ for every }r\in (a,b], 
\end{equation}
and we have
\begin{equation}\label{eq:G_annulus}
G_{[a,b]}(r,s)=\left\{\begin{array}{ll}
s^{N-1}\xi_{[a,b]}(r)\zeta_{[a,b]}(s)\quad\text{for }r\leq s \\
s^{N-1}\xi_{[a,b]}(s)\zeta_{[a,b]}(r)\quad\text{for }r> s.
\end{array}\right.
\end{equation}

\end{proposition}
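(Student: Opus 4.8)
\emph{Proof plan.} The idea is to read off every assertion from the properties of $\xi$ and $\zeta$ in Proposition~\ref{prop:xi_zeta}, and above all from the Wronskian normalization \eqref{eq:xi_zeta}. In each of the three cases of \eqref{eq:xi_zeta_annulus_def}, $\xi_{[a,b]}$ and $\zeta_{[a,b]}$ are linear combinations \emph{with constant coefficients} of the solutions $\xi,\zeta$ of $\mathcal{L}u=0$, so they themselves solve $\mathcal{L}u=0$ on $(a,b)$; I would record this first. Differentiating the formulas in \eqref{eq:xi_zeta_annulus_def} and evaluating at the endpoints gives the Neumann conditions: for $0<a<b<1$ one has $\xi_{[a,b]}'(r)$ proportional to $\xi'(a)\zeta'(r)-\xi'(r)\zeta'(a)$, which vanishes at $r=a$, and symmetrically $\zeta_{[a,b]}'(b)=0$; in the cases $a=0$ and $b=1$ the boundary condition is immediate from $\xi'(0)=\zeta'(1)=0$. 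It is convenient to notice that, up to positive constants, the numerators in \eqref{eq:xi_zeta_annulus_def} are the auxiliary solutions
\[
g(r)=\xi'(a)\zeta(r)-\xi(r)\zeta'(a),\qquad h(r)=\xi'(b)\zeta(r)-\xi(r)\zeta'(b),
\]
(in the case $a=0$ one uses $\xi$ and $h$ directly, and in the case $b=1$ one uses $g$ and $\zeta$), so that the remaining statements reduce to facts about $g$ and $h$.

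The positivity and monotonicity is the only step requiring a genuine argument. Evaluating \eqref{eq:xi_zeta} at $r=a$ gives $g(a)=a^{-(N-1)}>0$, while $g'(a)=0$ by the Neumann condition just verified. Since $g$ solves $\mathcal{L}g=0$ we have $(r^{N-1}g')'=r^{N-1}g$, so on the largest subinterval of $(a,b]$ on which $g>0$ the quantity $r^{N-1}g'$ is strictly increasing and hence strictly positive (it vanishes at $r=a$); therefore $g$ is strictly increasing there, and a standard continuation argument excludes a first interior zero, giving $g>0$ and $g'>0$ on all of $(a,b]$. The symmetric argument at $r=b$, with $h(b)=b^{-(N-1)}>0$ and $h'(b)=0$, yields $h>0$ and $h'<0$ on $[a,b)$. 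In particular the discriminant $\xi'(a)\zeta'(b)-\xi'(b)\zeta'(a)$ under the square root in \eqref{eq:xi_zeta_annulus_def} equals $g'(b)>0$, so the normalizing constant is well defined and positive, and consequently $\xi_{[a,b]}$ is positive and increasing while $\zeta_{[a,b]}$ is positive and decreasing (when $a=0$, $\xi_{[0,b]}$ is a positive multiple of $\xi$, which is positive, bounded and increasing, and $\zeta_{[0,b]}=h$ up to a constant; when $b=1$, $\zeta_{[a,1]}$ is a positive multiple of $\zeta$).

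The Wronskian identity \eqref{eq:xi_zeta_annulus} is then a direct expansion: substituting \eqref{eq:xi_zeta_annulus_def} into $r^{N-1}\big(\xi_{[a,b]}'\zeta_{[a,b]}-\xi_{[a,b]}\zeta_{[a,b]}'\big)$ and collecting terms, the expression factors as $\big(\text{normalizing constant}\big)^{2}\cdot\big(\xi'(a)\zeta'(b)-\xi'(b)\zeta'(a)\big)\cdot r^{N-1}\big(\xi'(r)\zeta(r)-\xi(r)\zeta'(r)\big)$; the last factor is $1$ by \eqref{eq:xi_zeta} and the first two cancel, so the product equals $1$ (the same cancellation occurs in the two degenerate cases, where the normalizations in \eqref{eq:xi_zeta_annulus_def} are chosen for exactly this purpose). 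Since the Wronskian is nowhere zero, $\xi_{[a,b]}$ and $\zeta_{[a,b]}$ are linearly independent (one may also argue that $\xi_{[a,b]}'(a)=0$ whereas $\zeta_{[a,b]}'(a)<0$). Finally, for \eqref{eq:G_annulus} I would let $\widetilde G(r,s)$ denote its right-hand side and verify the defining properties \eqref{eq:Green_def} of $G_{[a,b]}$: it is continuous across $r=s$, solves $\mathcal{L}_r\widetilde G=0$ for $r\neq s$ (each branch being a constant multiple of $\xi_{[a,b]}$ or of $\zeta_{[a,b]}$), satisfies $\partial_r\widetilde G(a,s)=\partial_r\widetilde G(b,s)=0$ by the boundary conditions, and has derivative jump $\partial_r\widetilde G(s^+,s)-\partial_r\widetilde G(s^-,s)=-s^{N-1}\big(\xi_{[a,b]}'(s)\zeta_{[a,b]}(s)-\xi_{[a,b]}(s)\zeta_{[a,b]}'(s)\big)=-1$ by \eqref{eq:xi_zeta_annulus}, matching the unit jump carried by $G$ in \eqref{eq:G}; uniqueness for the Neumann problem associated with the coercive operator $\mathcal{L}$ then forces $\widetilde G=G_{[a,b]}$. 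The one mildly delicate point in the whole argument is the positivity/monotonicity step, and with it the strict positivity of the quantity under the square root; the rest is bookkeeping.
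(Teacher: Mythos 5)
Your proof is correct and follows essentially the same route as the paper: the one substantive point, the strict positivity of the discriminant $\xi'(a)\zeta'(b)-\xi'(b)\zeta'(a)$, is obtained in both cases through the auxiliary solution $\chi_{[a,b]}(r)=\xi'(a)\zeta(r)-\xi(r)\zeta'(a)$ (your $g$), shown positive with $\chi_{[a,b]}'(a)=0$ so that $\chi_{[a,b]}'(b)>0$, while the remaining claims are exactly the ``explicit computations'' the paper omits and which you carry out correctly (Wronskian of constant-coefficient combinations, boundary conditions, jump condition and uniqueness for the Green function). Your positivity step, via $g(a)=a^{-(N-1)}>0$ from \eqref{eq:xi_zeta} plus an ODE continuation, is only a mild variant of the paper's direct sign argument using the monotonicity and positivity of $\xi$ and $\zeta$, and is if anything slightly cleaner since it does not rely on strictness of that monotonicity.
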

\begin{proof}
Let us show that $\xi'(a)\zeta'(b)-\xi'(b)\zeta'(a)\ne0$ for any $a,b\in(0,1)$. Indeed, for $r\in[a,b]$, we have
\begin{equation}
\chi_{[a,b]}(r):=\xi'(a)\zeta(r)-\xi(r)\zeta'(a)>0,
\end{equation}
since $\xi$ is increasing, $\zeta$ is decreasing and both functions are positive.
Moreover $\chi_{[a,b]}$ satisfies $\mathcal{L}(\chi_{[a,b]})=0$ in $[a,b]$ and $\chi_{[a,b]}'(a)=0$, which implies
\begin{equation}\label{eq:G2}
0<\chi_{[a,b]}'(b)=\xi'(a)\zeta'(b)-\xi'(b)\zeta'(a).
\end{equation}
The remaining properties can be proved by explicit computations.
\end{proof}

\begin{remark}
If $N=3$ the functions $\xi$ and $\zeta$ can be explicitly computed. In this case we have that
$\xi(r)=\frac{e^{r}-e^{-r}}{2r}$ and $\zeta(r)=\frac{e^r}r$. 
%***I modified the formula in 3D, since there was a bad sign and a missing normalization. Please check I didn't mess it up*** To our knowledge there is no formula of this type for $N\ge4$ ***is it totally clear ? I would expect we can write the explicit functions with Bessel functions even if I agree that this is a tedious job to write it !!!***.
%\textcolor{red}{The formula is OK. You are right that the corresponding formula in higher dimensions can be written using the Bessel function.  We should agree on the meaning of  ``explicit''. By ``explicit'' I mean that you can compute the function $\xi$ and $zeta$..}
\end{remark}

The function $\varphi_{[a,b]}$ was shown in \cite{BS,GN} to have a global minimum at $a$ and a local minimum at $b$ (which is also consequence of the lemma below). We will recall in the next section that the local minimum in $b$ gives the limiting profile of the increasing radial solution for the original problem as $p\to\infty$ while if $a>0$, the global minimum at $a$ gives the limiting profile of the decreasing radial solution for the original problem as $p\to\infty$. We will build a third solution by gluing an increasing solution in a ball with a decreasing solution in an annulus. This is a 1-layer solution, having exactly one maximum point. The construction will use crucially the following fact.

\begin{lemma}\label{lemma:uniqueness_1_layer}
Let $0\leq a<b\leq 1$ and let $\varphi_{[a,b]}$ be as in \eqref{eq:phi_def}.
The function 
\[
s \mapsto \frac{\varphi_{[a,b]}'(s)}{s^{N-1}} \quad \text{is strictly decreasing}.
\]
Furthermore, its unique zero $\bar{s}$ satisfies $\bar{s}\in (a,b)$ and
\begin{equation}\label{eq:reflection_law}
\frac{\xi_{[a,b]}'(\bar{s})}{\xi_{[a,b]}(\bar{s})} + \frac{\zeta_{[a,b]}'(\bar{s})}{\zeta_{[a,b]}(\bar{s})} =0.
\end{equation}
\end{lemma}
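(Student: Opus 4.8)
The plan is to obtain an explicit formula for $\varphi_{[a,b]}(s)$ in terms of the functions $\xi_{[a,b]}$ and $\zeta_{[a,b]}$, differentiate it, and read off monotonicity directly. First I would recall that for $s\in (a,b)$ the (normalized) minimizer of \eqref{eq:phi_def} is the Green function $G_{[a,b]}(\cdot,s)$, and that by the variational characterization together with the equation $\mathcal L G_{[a,b]}(\cdot,s)=\delta_s$ and the Neumann boundary conditions one gets, after integrating by parts,
\[
\varphi_{[a,b]}(s)=\frac{\|G_{[a,b]}(\cdot,s)\|_{H^1}^2}{G_{[a,b]}(s,s)^2}=\frac{1}{G_{[a,b]}(s,s)}.
\]
Using the representation \eqref{eq:G_annulus}, $G_{[a,b]}(s,s)=s^{N-1}\xi_{[a,b]}(s)\zeta_{[a,b]}(s)$, so that
\[
\varphi_{[a,b]}(s)=\frac{1}{s^{N-1}\xi_{[a,b]}(s)\zeta_{[a,b]}(s)}.
\]
(When $a=0$ the same computation gives $\varphi_{[0,b]}(s)=1/(s^{N-1}\xi_{[0,b]}(s)\zeta_{[0,b]}(s))$ for $s>0$, with $\varphi_{[0,b]}(0)=0$; the argument below runs on $(0,b)$ and extends by continuity.)

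Next I would differentiate. Writing $w(s):=s^{N-1}\xi_{[a,b]}(s)\zeta_{[a,b]}(s)=G_{[a,b]}(s,s)$, we have $\varphi_{[a,b]}=1/w$, hence
\[
\frac{\varphi_{[a,b]}'(s)}{s^{N-1}}=-\frac{w'(s)}{s^{N-1}\,w(s)^2}.
\]
A direct computation of $w'$, using the Wronskian identity \eqref{eq:xi_zeta_annulus}, namely $s^{N-1}(\xi_{[a,b]}'\zeta_{[a,b]}-\xi_{[a,b]}\zeta_{[a,b]}')=1$, gives
\[
w'(s)=s^{N-1}\Big(\xi_{[a,b]}'(s)\zeta_{[a,b]}(s)+\xi_{[a,b]}(s)\zeta_{[a,b]}'(s)\Big),
\]
so that
\[
\frac{\varphi_{[a,b]}'(s)}{s^{N-1}}
=-\frac{\xi_{[a,b]}'(s)\zeta_{[a,b]}(s)+\xi_{[a,b]}(s)\zeta_{[a,b]}'(s)}{s^{N-1}\,w(s)^2}
=-\frac{1}{s^{N-1}w(s)}\left(\frac{\xi_{[a,b]}'(s)}{\xi_{[a,b]}(s)}+\frac{\zeta_{[a,b]}'(s)}{\zeta_{[a,b]}(s)}\right),
\]
using $w(s)=s^{N-1}\xi_{[a,b]}\zeta_{[a,b]}$. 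Since $\xi_{[a,b]},\zeta_{[a,b]}>0$ on $(a,b)$, the prefactor $-1/(s^{N-1}w(s))$ is negative, so the zero of $\varphi_{[a,b]}'/s^{N-1}$ is exactly the zero of $\xi_{[a,b]}'/\xi_{[a,b]}+\zeta_{[a,b]}'/\zeta_{[a,b]}$, which is \eqref{eq:reflection_law}.

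The remaining and main point is to prove that $s\mapsto \xi_{[a,b]}'(s)/\xi_{[a,b]}(s)+\zeta_{[a,b]}'(s)/\zeta_{[a,b]}(s)$ is strictly decreasing and changes sign exactly once in $(a,b)$; this will simultaneously give strict monotonicity of $\varphi_{[a,b]}'/s^{N-1}$, because once I know the bracket is strictly decreasing, combining with the above display (and checking the prefactor does not upset monotonicity — one can instead argue on the function $\psi(s):=\xi_{[a,b]}'/\xi_{[a,b]}+\zeta_{[a,b]}'/\zeta_{[a,b]}$ directly, noting $\varphi_{[a,b]}'/s^{N-1}$ and $\psi$ have the same sign and the same unique zero, and that strict monotonicity of $\varphi'/s^{N-1}$ follows from a separate convexity-type estimate) yields the claim. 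To handle $\psi$, let $f:=\xi_{[a,b]}'/\xi_{[a,b]}$ and $g:=\zeta_{[a,b]}'/\zeta_{[a,b]}$ be the logarithmic derivatives. Each satisfies a Riccati equation obtained from $\mathcal L u=0$: writing $u''+\frac{N-1}{r}u'=u$ for a positive solution $u$ and $h=u'/u$, one finds $h'+h^2+\frac{N-1}{r}h=1$, i.e. $h'=1-\frac{N-1}{r}h-h^2$. Hence $\psi'=f'+g'=2-\frac{N-1}{r}(f+g)-(f^2+g^2)=2-\frac{N-1}{r}\psi-(f^2+g^2)$. The boundary conditions $\xi_{[a,b]}'(a)=0$, $\zeta_{[a,b]}'(b)=0$ give $f(a)=0$ (so near $a$, $f\ge 0$ small) and $g(b)=0$; since $\xi_{[a,b]}$ is increasing, $f\ge 0$ on $(a,b)$, and since $\zeta_{[a,b]}$ is decreasing, $g\le 0$ on $(a,b)$, with $f(b)>0>g(a)$. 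I expect the cleanest route is a monotonicity/comparison argument: show $f$ is increasing and $g$ is increasing on $(a,b)$, from which $\psi=f+g$ is increasing — wait, that would give the wrong sign, so more care is needed. The genuinely delicate step, and the one I flag as the main obstacle, is precisely pinning down the sign of $\psi'$ and the uniqueness of the zero; I would do this by analyzing the Wronskian-type quantity and using that $\chi_{[a,b]}'(b)=\xi'(a)\zeta'(b)-\xi'(b)\zeta'(a)>0$ from Proposition \ref{prop:xi_zeta_annulus}, together with the explicit endpoint behaviour of $f,g$, to conclude that $\psi$ passes from the value $\psi(a)=g(a)<0$ to $\psi(b)=f(b)>0$ monotonically. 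An alternative, perhaps safer, is to work directly with $\Phi(s):=\varphi_{[a,b]}'(s)/s^{N-1}=-\big(\xi_{[a,b]}'\zeta_{[a,b]}+\xi_{[a,b]}\zeta_{[a,b]}'\big)/(s^{N-1}w^2)$ and differentiate once more, using the ODE and the Wronskian repeatedly to show $\Phi'<0$ on $(a,b)$; the endpoint analysis ($\Phi(a)>0$ since $\zeta_{[a,b]}'(a)<0$ dominates, $\Phi(b)<0$ since $\xi_{[a,b]}'(b)>0$ dominates) then gives both the unique zero $\bar s\in(a,b)$ and the reflection law \eqref{eq:reflection_law} via the computation above.
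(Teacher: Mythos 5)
There is a genuine gap, on two levels. First, your starting formula for $\varphi_{[a,b]}$ is wrong: since the $H^1$-norm in \eqref{eq:phi_def} is taken over the $N$-dimensional annulus, integrating by parts against $\mathcal L G_{[a,b]}(\cdot,s)=\delta_s$ gives $\|G_{[a,b]}(\cdot,s)\|_{H^1}^2=|\partial B_1|\,s^{N-1}G_{[a,b]}(s,s)$, hence $\varphi_{[a,b]}(s)=|\partial B_1|\,s^{N-1}/G_{[a,b]}(s,s)=|\partial B_1|/\bigl(\xi_{[a,b]}(s)\zeta_{[a,b]}(s)\bigr)$ (this is \eqref{eq:phi_rewritten}), not $1/G_{[a,b]}(s,s)$. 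You then compound this with a second slip, $w'(s)=s^{N-1}(\xi_{[a,b]}'\zeta_{[a,b]}+\xi_{[a,b]}\zeta_{[a,b]}')$, which omits the term $(N-1)s^{N-2}\xi_{[a,b]}\zeta_{[a,b]}$ coming from differentiating $s^{N-1}$. These two errors partially offset, so your final expression for $\varphi_{[a,b]}'/s^{N-1}$ is the correct one multiplied by a positive but $s$-dependent factor; the identification of the unique zero with the reflection law \eqref{eq:reflection_law} therefore survives, but the function whose strict monotonicity you must establish is not the one you wrote down, and a positive non-constant factor does not preserve strict monotonicity. With the correct formula, the clean route is the Wronskian manipulation of the paper, which turns $\varphi_{[a,b]}'(s)/s^{N-1}$ into $-|\partial B_1|\bigl[(\xi_{[a,b]}'/\xi_{[a,b]})^2-(\zeta_{[a,b]}'/\zeta_{[a,b]})^2\bigr]$ as in \eqref{eq:phi_derivative}, with no leftover prefactor.

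Second, and more seriously, the core of the lemma --- strict monotonicity --- is not proved in your proposal: you explicitly flag it as ``the main obstacle'' and offer only a Riccati heuristic (with a sign confusion: since $\varphi_{[a,b]}'/s^{N-1}$ is a negative multiple of $\xi_{[a,b]}'/\xi_{[a,b]}+\zeta_{[a,b]}'/\zeta_{[a,b]}$, you need this sum \emph{increasing}, not decreasing, and indeed both logarithmic derivatives turn out to be increasing). The missing idea is the integral identity obtained by cross-multiplying the equations for $\xi_{[a,b]}$ and $\xi_{[a,b]}'$, namely $s^{N-1}\bigl(\xi_{[a,b]}''\xi_{[a,b]}-(\xi_{[a,b]}')^2\bigr)=(N-1)\int_a^s r^{N-3}\xi_{[a,b]}'\xi_{[a,b]}\,dr>0$ (and its analogue for $\zeta_{[a,b]}$ integrated on $(s,b)$, where the boundary term $\zeta_{[a,b]}''(b)\zeta_{[a,b]}(b)>0$ is used). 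This gives $(\xi_{[a,b]}'/\xi_{[a,b]})'>0$ and $(\zeta_{[a,b]}'/\zeta_{[a,b]})'>0$, hence $(\xi_{[a,b]}'/\xi_{[a,b]})^2$ strictly increasing and, because $\zeta_{[a,b]}'/\zeta_{[a,b]}\le0$, $(\zeta_{[a,b]}'/\zeta_{[a,b]})^2$ strictly decreasing, which is exactly \eqref{eq:zeta_zeta'_derivative} and yields the strict monotonicity of $\varphi_{[a,b]}'/s^{N-1}$; the Riccati equation alone does not give the sign of $f'$ without an input equivalent to this identity. Your endpoint discussion is essentially right for $a>0$ (signs of $\varphi_{[a,b]}'$ at $a$ and $b$ from the vanishing of $\xi_{[a,b]}'(a)$ and $\zeta_{[a,b]}'(b)$), but the case $a=0$ needs the asymptotics \eqref{eq:asymptotics_xi_zeta} to show $\varphi_{[0,b]}'(s)/s^{N-1}\to+\infty$ as $s\to0$, which your sketch does not address.
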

\begin{proof}
Since the infimum in \eqref{eq:phi_def} is achieved by $G_{[a,b]}(\cdot,s)$, we have
\[
\varphi_{[a,b]}(s)=Q_\infty\left( \frac{G_{[a,b]}(\cdot,s)}{G_{[a,b]}(s,s)} \right).
\]
It is proved in \cite[Lemma 2.1]{GN} that
\[
Q_\infty\left( \frac{G_{[a,b]}(\cdot,s)}{G_{[a,b]}(s,s)} \right)=|\partial B_1|\frac{s^{N-1}}{G_{[a,b]}(s,s)}.
\]
Hence Proposition \ref{prop:xi_zeta_annulus} provides
\begin{equation}\label{eq:phi_rewritten}
\varphi_{[a,b]}(s)=\frac{|\partial B_1|}{\xi_{[a,b]}(s)\zeta_{[a,b]}(s)}
\end{equation}
We take the derivative of the last expression and we manipulate it by making use of \eqref{eq:xi_zeta_annulus} as follows
\begin{align}\label{eq:phi_derivative}
\frac{\varphi_{[a,b]}'(s)}{|\partial B_1|} & = - \frac{\xi_{[a,b]}'(s)\zeta_{[a,b]}(s)+\xi_{[a,b]}(s)\zeta_{[a,b]}'(s)}{\xi_{[a,b]}(s)^2\zeta_{[a,b]}(s)^2}\nonumber\\
& = - s^{N-1} \left\{ \left(\frac{\xi_{[a,b]}'(s)}{\xi_{[a,b]}(s)}\right)^2 -\left(\frac{\zeta_{[a,b]}'(s)}{\zeta_{[a,b]}(s)}\right)^2\right\}.
\end{align}
Let us study the monotonicity of the map $s\mapsto \xi_{[a,b]}'(s)/\xi_{[a,b]}(s)$. To this aim we multiply the equation $\mathcal{L} \xi_{[a,b]}=0$ by $r^{N-1}\xi_{[a,b]}'$ to get
\begin{equation}\label{eq:xi_xi'}
r^{N-1} \xi_{[a,b]}\xi_{[a,b]}'=r^{N-1} \xi_{[a,b]}''\xi_{[a,b]}'+(N-1) r^{N-2} (\xi_{[a,b]}')^2.
\end{equation}
Next we multiply the equation satisfied by $\xi_{[a,b]}'$ by $r^{N-1}\xi_{[a,b]}$ to obtain
\begin{equation}\label{eq:xi_xi'_bis}
-(r^{N-1}\xi_{[a,b]}''\xi_{[a,b]})' + r^{N-1} \xi_{[a,b]}''\xi_{[a,b]}'+(N-1) r^{N-3} \xi_{[a,b]}'\xi_{[a,b]}+r^{N-1} \xi_{[a,b]}'\xi_{[a,b]}=0.
\end{equation}
Replacing \eqref{eq:xi_xi'} into the last expression and integrating on $(a,s)$, we deduce that
\begin{equation}\label{eq:xi_xi'_tris}
s^{N-1} (\xi_{[a,b]}''(s)\xi_{[a,b]}(s)-\xi_{[a,b]}'(s)^2) = (N-1) \int_a^s r^{N-3} \xi_{[a,b]}'\xi_{[a,b]}\,dr.
\end{equation}
This implies
\begin{equation}\label{eq:xi_xi'_derivative}
\left(\frac{\xi_{[a,b]}'(s)}{\xi_{[a,b]}(s)}\right)'=\frac{\xi_{[a,b]}''(s)\xi_{[a,b]}(s)-\xi_{[a,b]}'(s)^2}{\xi_{[a,b]}(s)^2} >0.
\end{equation}
We preform the same computations with the function $\zeta_{[a,b]}$, but this time we integrate on $(s,b)$, leading to
\[
s^{N-1} (\zeta_{[a,b]}''(s)\zeta_{[a,b]}(s)-\zeta_{[a,b]}'(s)^2) = 
\zeta_{[a,b]}''(b)\zeta_{[a,b]}(b) - (N-1) \int_s^b r^{N-3} \zeta_{[a,b]}'\zeta_{[a,b]}\,dr.
\]
Since $\zeta_{[a,b]}$ is decreasing and solves $\mathcal{L}\zeta_{[a,b]}=0$, $\zeta_{[a,b]}'(b)=0$, we deduce that the previous expression is also positive. Taking again into account the monotonicity of the maps $\xi_{[a,b]}$ and $\zeta_{[a,b]}$, we conclude that
\begin{equation}\label{eq:zeta_zeta'_derivative}
s \mapsto \left(\frac{\xi_{[a,b]}'(s)}{\xi_{[a,b]}(s)}\right)^2 \text{ is increasing}, \quad
s \mapsto \left(\frac{\zeta_{[a,b]}'(s)}{\zeta_{[a,b]}(s)}\right)^2 \text{ is decreasing}.
\end{equation}
By combining this with \eqref{eq:phi_derivative}, we deduce that $\varphi_{[a,b]}'(s)/s^{N-1}$ is decreasing and that relation \eqref{eq:reflection_law} holds at the unique zero $\bar{s}$.

It only remains to show that $\bar{s}$ is an interior point. If $a\neq0$, we see from \eqref{eq:phi_derivative} that $\varphi_{[a,b]}'(a)>0$ and $\varphi_{[a,b]}'(b)<0$ so that the claim obviously follows. When $a=0$, we still have $\varphi_{[a,b]}'(b)<0$ and moreover
\[
\lim_{s\to0} \frac{\varphi_{[0,b]}'(s)}{s^{N-1}} = \lim_{s\to0} \frac{(N-2)^2}{s^2} =+\infty,
\]
as follows from \eqref{eq:xi_zeta_annulus_def} and Proposition \ref{prop:xi_zeta}.
\end{proof}

\begin{remark}
The formula \eqref{eq:reflection_law} that defines $\bar s$ is equivalent to
$$
\left. \frac{d}{dr}\left(\frac{G_{[a,b]}(r,r)}{r^{N-1}}\right)\right|_{r=\bar s}=0,
$$
which means that $\bar s$ is a critical point of the weighted Robin function associated to $G_{[a,b]}$. So one deduces the following statement from Lemma \ref{lemma:uniqueness_1_layer}: for any $0\leq a<b\leq 1$, the weighted Robin function associated to $G_{[a,b]}$ has a unique  interior critical point. 
\end{remark}

Since Lemma \ref{lemma:uniqueness_1_layer} provides the uniqueness of $\bar s$, we can define the map 
$$\bar s : (a,b)\mapsto \bar s(a,b),$$
which is defined in the set $\{0< a < 1,\ a<b\le 1\}$. Similarly, when we are working in the annulus, this is a function of one variable $\bar{s}(0,b)$ defined in $\{0<b\leq1\}$. The monotonicity proved in Lemma \ref{lemma:uniqueness_1_layer} implies that this map is smooth. 

\begin{lemma}\label{lemma:alpha_regular}
The map $(a,b)\mapsto \bar s(a,b)$ is of class $C^{1}$ in the set $\{0< a < 1,\ a<b\le 1\}$. Analogously, $\bar{s}(0,b)$ is of class $C^1$ in $\{0<b\leq1\}$
\end{lemma}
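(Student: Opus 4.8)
The plan is to apply the implicit function theorem to the equation that characterizes $\bar s$. From Lemma \ref{lemma:uniqueness_1_layer}, for each admissible pair $(a,b)$ the point $\bar s(a,b)$ is the unique zero in $(a,b)$ of the function
\[
F(s;a,b) := \frac{\varphi_{[a,b]}'(s)}{s^{N-1}}
= -\left\{ \left(\frac{\xi_{[a,b]}'(s)}{\xi_{[a,b]}(s)}\right)^2 -\left(\frac{\zeta_{[a,b]}'(s)}{\zeta_{[a,b]}(s)}\right)^2\right\},
\]
using the expression \eqref{eq:phi_derivative}. First I would check that $F$ is jointly $C^1$ in $(s,a,b)$ on the open set $\{0<a<b<1,\ a<s<b\}$ (and, when $a=0$, on $\{0<s<b<1\}$): this follows because $\xi,\zeta\in C^2$, the combinations defining $\xi_{[a,b]},\zeta_{[a,b]}$ in \eqref{eq:xi_zeta_annulus_def} depend smoothly (indeed real-analytically) on $a,b$ through the values $\xi(a),\xi'(a),\zeta(a),\zeta'(a)$ etc., and the denominator $\xi'(a)\zeta'(b)-\xi'(b)\zeta'(a)$ is strictly positive by \eqref{eq:G2}, while $\xi_{[a,b]}(s)$ and $\zeta_{[a,b]}(s)$ are strictly positive on $(a,b)$ by Proposition \ref{prop:xi_zeta_annulus}, so no denominators vanish. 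Actually $F$ is $C^1$ (even smooth) in $s$ up to the closed endpoints as well, since $\xi_{[a,b]},\zeta_{[a,b]}$ extend as $C^2$ solutions of $\mathcal Lu=0$ past $a$ and $b$.

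The crucial nondegeneracy hypothesis of the implicit function theorem is that $\partial_s F(\bar s;a,b)\neq 0$. But this is exactly the strict monotonicity statement in Lemma \ref{lemma:uniqueness_1_layer}: the map $s\mapsto \varphi_{[a,b]}'(s)/s^{N-1}=F(s;a,b)$ is strictly decreasing, so in particular $\partial_s F<0$ everywhere it is differentiable — and we have just argued $F$ is $C^1$ in $s$. Hence $\partial_s F(\bar s(a,b);a,b)<0\neq 0$. The implicit function theorem then gives, locally around any $(a_0,b_0)$ in the admissible set, a unique $C^1$ function $(a,b)\mapsto s(a,b)$ solving $F(s;a,b)=0$ with $s(a_0,b_0)=\bar s(a_0,b_0)$; by the global uniqueness of the zero (again Lemma \ref{lemma:uniqueness_1_layer}) this local solution must coincide with $\bar s(a,b)$ near $(a_0,b_0)$. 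Since $(a_0,b_0)$ was arbitrary, $\bar s$ is $C^1$ on the whole set $\{0<a<1,\ a<b\le 1\}$. The case $b=1$ is handled the same way, noting $F$ extends $C^1$ up to $b=1$ (the relevant Wronskian-type quantities are smooth there), and the annulus-free case $a=0$ is the one-variable statement, obtained identically from $F(s;0,b)$, which is $C^1$ in $(s,b)$ on $\{0<s<b\le1\}$ with the limit computation $\lim_{s\to0}F(s;0,b)=+\infty$ from the proof of Lemma \ref{lemma:uniqueness_1_layer} ensuring $\bar s(0,b)\in(0,b)$.

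The main obstacle — really the only nontrivial point — is verifying the smooth dependence of $\xi_{[a,b]},\zeta_{[a,b]}$ (hence of $F$) on the parameters $(a,b)$ and checking that no denominator degenerates as $(a,b)$ ranges over the closure directions that are actually included (namely $b\to 1$). This is where one must invoke the explicit formulas \eqref{eq:xi_zeta_annulus_def}, the positivity \eqref{eq:G2}, and Proposition \ref{prop:xi_zeta}; everything else is a direct application of the implicit function theorem powered by the strict monotonicity already established. I would therefore keep the write-up short: state that $F\in C^1$ by the explicit formulas and Proposition \ref{prop:xi_zeta}, that $\partial_s F(\bar s;a,b)<0$ by Lemma \ref{lemma:uniqueness_1_layer}, and conclude by the implicit function theorem together with uniqueness of the zero.
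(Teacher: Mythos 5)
Your route is the same as the paper's: it also sets $F(a,b,s)=\bigl(\xi_{[a,b]}'(s)/\xi_{[a,b]}(s)\bigr)^2-\bigl(\zeta_{[a,b]}'(s)/\zeta_{[a,b]}(s)\bigr)^2$ (your $F$ up to a sign and the positive factor $s^{N-1}$, cf. \eqref{eq:phi_derivative}), observes that $F$ is smooth from the explicit formulas \eqref{eq:xi_zeta_annulus_def} together with \eqref{eq:G2} and Proposition \ref{prop:xi_zeta_annulus}, checks $\partial F/\partial s\neq0$ at $\bar s$, and applies the Implicit Function Theorem, handling $b=1$ by a one-sided neighborhood and $a=0$ as the one-variable case. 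However, one inference in your write-up is not valid as stated: from ``$s\mapsto\varphi_{[a,b]}'(s)/s^{N-1}$ is strictly decreasing'' you conclude ``$\partial_s F<0$ everywhere it is differentiable.'' Strict monotonicity of a $C^1$ function does not force its derivative to be nonzero (consider $-s^3$ at $s=0$), so this alone does not deliver the nondegeneracy hypothesis of the Implicit Function Theorem, which is the crux of the proof. The gap is easily closed using what the proof of Lemma \ref{lemma:uniqueness_1_layer} actually establishes: by \eqref{eq:xi_xi'_derivative} and its analogue for $\zeta_{[a,b]}$ one has the pointwise strict inequalities $(\xi_{[a,b]}'/\xi_{[a,b]})'>0$ and $(\zeta_{[a,b]}'/\zeta_{[a,b]})'>0$, and since $\bar s$ is an interior point, $\xi_{[a,b]}'(\bar s)>0$ and $\zeta_{[a,b]}'(\bar s)<0$, whence $\frac{d}{ds}\bigl(\xi_{[a,b]}'/\xi_{[a,b]}\bigr)^2(\bar s)>0$ and $\frac{d}{ds}\bigl(\zeta_{[a,b]}'/\zeta_{[a,b]}\bigr)^2(\bar s)<0$; this is precisely how the paper (quoting \eqref{eq:zeta_zeta'_derivative}) obtains $\partial F/\partial s(a_0,b_0,s_0)>0$. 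With that substitution your argument coincides with the paper's proof; the remaining points you raise (smooth dependence on $(a,b)$, nonvanishing denominators, and identifying the local implicit solution with $\bar s$ via uniqueness of the zero) are handled correctly.
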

\begin{proof} It follows from Lemma \ref{lemma:uniqueness_1_layer} that 
$\bar s$ is implicitely defined by the equation
\[
0=F(a,b,s)=
\left(\frac{\xi_{[a,b]}'(s)}{\xi_{[a,b]}(s)}\right)^2 -\left(\frac{\zeta_{[a,b]}'(s)}{\zeta_{[a,b]}(s)}\right)^2.
\]
The definitions of $\xi_{[a,b]}$ and $\zeta_{[a,b]}$ imply that $F$ is smooth. Let $0<a_{0}<b_{0}<1$ and $s_0=\bar{s}(a_0,b_0))$. Since by \eqref{eq:zeta_zeta'_derivative} we have $\partial F/\partial s(a_0,b_0,s_0)>0$, the Implicit Function Theorem applies and $\bar s$ is a $C^{1}$ function of $(a,b)$ in a neighborhood of $(a_0,b_0)$. This holds for every $0<a_{0}<b_{0}<1$.
When $b_{0}=1$ we argue in the same way in a left neighborhood of $b_{0}$. In the case of the ball $a_0=0$ we  can proceed similarly.
\end{proof}

Next we study the behaviour of $\bar s$ when $b\to 0$. 

\begin{lemma}\label{lemma:alpha_asimpt_beta}
Let $0\leq a<b\leq 1$. We have 
\begin{equation}\label{c20} 
\bar s \sim b  \quad\text{as } b\to 0^+. 
\end{equation}
\end{lemma}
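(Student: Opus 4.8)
The plan is to combine the characterization of $\bar s$ from Lemma~\ref{lemma:uniqueness_1_layer} with the behaviour of $\xi$ and $\zeta$ near the origin. By \eqref{eq:reflection_law} (and the uniqueness proved in Lemma~\ref{lemma:uniqueness_1_layer}), $\bar s$ is the unique point of $(a,b)$ at which $(\xi_{[a,b]}\zeta_{[a,b]})'$ vanishes. Since $a<\bar s<b$, as $b\to 0^+$ all three of $a$, $\bar s$, $b$ tend to $0$. Using the explicit formulas \eqref{eq:xi_zeta_annulus_def}, whose denominators are constant in $r$, the equation $(\xi_{[a,b]}\zeta_{[a,b]})'(\bar s)=0$ is equivalent to $P'(\bar s)=0$, where for $0<a<b$
\[
P(r)=\bigl(\xi'(a)\zeta(r)-\xi(r)\zeta'(a)\bigr)\bigl(\xi'(b)\zeta(r)-\xi(r)\zeta'(b)\bigr),
\]
with the analogue $P_0(r)=\xi(r)\bigl(\xi'(b)\zeta(r)-\xi(r)\zeta'(b)\bigr)$ when $a=0$. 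From Proposition~\ref{prop:xi_zeta}, as $r\to 0^+$ one has $\xi(r)=\frac{1}{N-2}(1+o(1))$, $\zeta(r)=r^{2-N}(1+o(1))$, $\zeta'(r)=-(N-2)r^{1-N}(1+o(1))$, and, integrating $(r^{N-1}\xi')'=r^{N-1}\xi$ from $0$ with $\xi'(0)=0$, also $\xi'(r)=\frac{r}{N(N-2)}(1+o(1))$.

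Granting for the moment the a priori bound $\bar s\asymp b$, I would substitute these expansions into $P'(\bar s)=0$. The four factors $\xi'(a)\zeta'(\bar s)-\xi'(\bar s)\zeta'(a)$, $\xi'(b)\zeta(\bar s)-\xi(\bar s)\zeta'(b)$, $\xi'(a)\zeta(\bar s)-\xi(\bar s)\zeta'(a)$ and $\xi'(b)\zeta'(\bar s)-\xi'(\bar s)\zeta'(b)$ are then, to leading order, $\frac{1}{N}(\bar s\,a^{1-N}-a\,\bar s^{1-N})$, $b^{1-N}$, $a^{1-N}$ and $\frac{1}{N}(\bar s\,b^{1-N}-b\,\bar s^{1-N})$ respectively --- the companion terms $\frac{b\,\bar s^{2-N}}{N(N-2)}$ and $\frac{a\,\bar s^{2-N}}{N(N-2)}$ of the middle two factors being negligible, since $b^N\bar s^{2-N}=\bar s^2(b/\bar s)^N\to 0$ and $a^N\bar s^{2-N}\le \bar s^2\to 0$. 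Plugging these into $P'(\bar s)=0$ and using $\bar s\,a^{1-N}b^{1-N}=a^{1-N}\bar s\,b^{1-N}$, the equation collapses to
\[
2\,\bar s^{\,N}=a^N+b^N+o(b^N)\qquad\text{as }b\to 0^+,
\]
that is, $\bar s^{\,N}=\frac{a^N+b^N}{2}(1+o(1))$ (and $\bar s^{\,N}=\frac{b^N}{2}(1+o(1))$ in the ball case, working with $P_0$). Since $0\le a<b$ gives $\frac{b^N}{2}\le\frac{a^N+b^N}{2}<b^N$, this yields $2^{-1/N}b\,(1+o(1))\le\bar s<b$, hence $\bar s\sim b$.

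The delicate point is the a priori bound, and in particular the lower bound $\liminf_{b\to 0^+}\bar s/b>0$ (the upper bound $\bar s<b$ being free); I note in passing that the naive rescaling $r=b\rho$ is useless here, because the limiting operator $-v''-\frac{N-1}{\rho}v'$ with Neumann conditions is not invertible and the rescaled product degenerates to a constant. I would establish the lower bound by contradiction. If $\bar s/b\to 0$ along some sequence, then $(b/\bar s)^N\to\infty$, so the fourth factor above has no cancellation and equals $-\frac{b\,\bar s^{1-N}}{N}(1+o(1))$, while the first factor is positive and at most $\frac{\bar s\,a^{1-N}}{N}(1+o(1))$; inserting these --- together with the (genuine) asymptotics of the other two factors, now keeping the term $b^N\bar s^{2-N}$ --- into $P'(\bar s)=0$ forces
\[
\Bigl(\frac{\bar s}{b}\Bigr)^N+\frac{\bar s^{\,2}}{N(N-2)}\ge 1+o(1),
\]
which is absurd as $b\to 0^+$. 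Once $\bar s\asymp b$ is known, the only cancellations that could affect the leading-order reduction of the first and fourth factors would occur in the limits $\bar s/a\to 1$ or $\bar s/b\to 1$, and these are harmless: combined with $a<\bar s<b$ they already give $\bar s\sim b$. The rest is a routine tracking of the $o(1)$ remainders in the expansions of $\xi$ and $\zeta$ near $0$.
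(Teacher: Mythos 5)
Your proof is correct and follows essentially the same route as the paper: you plug the near-origin expansions of $\xi,\xi',\zeta,\zeta'$ (the paper's \eqref{eq:asymptotics_xi_zeta}) into the reflection identity \eqref{eq:reflection_law} (equivalently $P'(\bar s)=0$, which is just \eqref{eq:reflection_law3} cleared of denominators) and balance powers, obtaining $2\bar s^N=a^N+b^N+o(b^N)$, which is the same relation the paper extracts in \eqref{eq:a_j_sim_b_j}. The only difference is organizational: the paper rules out $\bar s=o(b)$ by contradiction inside the two subcases $a\sim b$ and $a=o(b)$, whereas you first exclude $\bar s/b\to0$ by a direct inequality and then read off the unified asymptotic relation (your closing remark that the case $\bar s/a\to1$ is "harmless" is slightly glib, but it does not affect the conclusion since the lower bound $\bar s\gtrsim b$ is already secured by your contradiction step and $\bar s<b$ is automatic).
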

\begin{proof}
Let us first consider the case $a=0$. We use \eqref{eq:xi_zeta_annulus_def} to rewrite
the equation \eqref{eq:reflection_law} as
\begin{equation}\label{c21}
\frac{ \xi'(\bar s)}{\xi(\bar s)} 
+\frac{\xi'(b)\zeta'(\bar s)-\xi'(\bar s)\zeta'(b)}{\xi'(b)\zeta(\bar s)-\xi(\bar s)\zeta'(b)}=0.
\end{equation}
Both $b$ and $\bar s$ converge to zero, hence we can replace in the previous expression the following asymptotic developments, which are deduced from Proposition \ref{prop:xi_zeta}:
\begin{equation}\label{eq:asymptotics_xi_zeta}
\begin{split}
\xi(r)=\frac{1}{N-2}+o(r), \quad \xi'(r)=\xi''(0)r+o(r), \quad\text{as } r\to0\\
\zeta(r)=\frac{1}{r^{N-2}}+o\left(\frac{1}{r^{N-2}}\right), \quad 
\zeta'(r)=-\frac{N-2}{r^{N-1}}+o\left(\frac{1}{r^{N-1}}\right) \quad\text{as } r\to0.
\end{split}
\end{equation}
Using these asymptotics in \eqref{c21}, we infer that
\[
2\bar s^N-b^N+o(\bar s^N)+o(b^N) =0,
\]
which implies that $\bar s \sim b$ as $b\to0$.

\medbreak

Assume now that $a>0$. We rewrite \eqref{eq:reflection_law} more explicitely as
\begin{equation}\label{eq:reflection_law3}
\frac{\xi'(a)\zeta'(\bar s)-\xi'(\bar s)\zeta'(a)}{\xi'(a)\zeta(\bar s)-\xi(\bar s)\zeta'(a)}
+ \frac{\xi'(b)\zeta'(\bar s)-\xi'(\bar s)\zeta'(b)}{\xi'(b)\zeta(\bar s)-\xi(\bar s)\zeta'(b)} =0.
\end{equation}
Since now $a,\bar s,b\to0$, we can use again \eqref{eq:asymptotics_xi_zeta} to obtain
\begin{equation}\label{eq:a_j_sim_b_j}
2\bar s^{2N-2}-(a^N+b^N)\bar s^{N-2}-2\xi''(0)a^N b^N
+ o(\bar s^{2N-2})+ o((a^N+b^N)\bar s^{N-2}) + o(a^N b^N) =0.
\end{equation}
Here we have to distinguish two cases. If $a\sim b$ as $b\to0$, then \eqref{eq:a_j_sim_b_j} writes
\[
\bar s^{2N-2}-C_1\bar s^{N-2}b^N-C_2b^{2N}+o(\bar s^{2N-2})+o(\bar s^{N-2}b^N)+o(b^{2N})=0.
\]
for some $C_1,C_2>0$. This implies $\bar s \sim b$. Indeed, if by contradiction $\bar s=o(b)$, then also $\bar s^{2N-2} =o(\bar s^{N-2} b^N)$, and we would obtain
\[
-C_1\bar s^{N-2}-C_2b^N+o(\bar s^{N-2})+o(b^N)=0, \quad C_1>0,C_2>0,
\]
which is not possible. When $a=o(b)$, \eqref{eq:a_j_sim_b_j} yields
\[
2\bar s^{2N-2}-b^N\bar s^{N-2}+o(\bar s^{2N-2})+o(b^N\bar s^{N-2})+o(b^{2N})=0.
\]
Again, if $\bar s=o(b)$, then we obtain $-\bar s^{N-2}+o(\bar s^{N-2})+o(b^N)=0$ and since this is not possible, we conclude that $\bar s \sim b$ also in this case.
\end{proof}

In the next lemma, we show that the distance from $\bar s$ to the extrema of the interval only depends on the length of the interval.

\begin{lemma}\label{lemma:alpha_not_going_beta}
Let $0\leq a<b\leq 1$. For every $\ep>0$, there exists $\delta>0$ such that
if $
b-a>\ep$, then  %\quad\text{ implies }\quad 
$b-\bar s>\delta$  and $\bar s -a>\delta$.
\end{lemma}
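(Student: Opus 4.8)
The plan is to argue by contradiction, combining a compactness argument in the parameters $(a,b)$ with the sign information carried by the reflection law \eqref{eq:reflection_law} near the endpoints of the interval. By Lemma~\ref{lemma:uniqueness_1_layer} and \eqref{eq:zeta_zeta'_derivative}, $\bar s(a,b)$ is the unique zero in $(a,b)$ of the strictly increasing function
\[
g_{[a,b]}(s):=P_a(s)+Q_b(s),\qquad P_a(s):=\frac{\xi_{[a,b]}'(s)}{\xi_{[a,b]}(s)},\quad Q_b(s):=\frac{\zeta_{[a,b]}'(s)}{\zeta_{[a,b]}(s)}.
\]
As one checks directly from the formulas \eqref{eq:xi_zeta_annulus_def}, $P_a$ depends only on $(a,s)$ (with $P_0(s)=\xi'(s)/\xi(s)$) and $Q_b$ only on $(b,s)$ (with $Q_1(s)=\zeta'(s)/\zeta(s)$); moreover $P_a\geq 0$ and $P_a(a)=0$, $Q_b\leq 0$ and $Q_b(b)=0$, and $(a,s)\mapsto P_a(s)$, $(b,s)\mapsto Q_b(s)$ are continuous as long as $s>0$ (the only possible failure of continuity being at $s=0$).

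Suppose the statement is false: there are $\ep>0$ and sequences $0\leq a_n<b_n\leq 1$ with $b_n-a_n>\ep$ such that, along a subsequence, $\min\{\bar s_n-a_n,\,b_n-\bar s_n\}\to 0$, where $\bar s_n:=\bar s(a_n,b_n)$. Passing to further subsequences we may assume $a_n\to a_*$, $b_n\to b_*$ and $\bar s_n\to\sigma$, with $b_*-a_*\geq\ep$ and $\sigma\in\{a_*,b_*\}$ (not both, since $a_*<b_*$). If $\sigma=b_*$ then $\sigma\geq\ep>0$, the $s$-variable stays in a compact subset of $(0,1]$, and passing to the limit in $0=P_{a_n}(\bar s_n)+Q_{b_n}(\bar s_n)$ (using $Q_{b_*}(b_*)=0$) gives $P_{a_*}(b_*)=0$, which is impossible because $P_{a_*}(b_*)=\xi_{[a_*,b_*]}'(b_*)/\xi_{[a_*,b_*]}(b_*)>0$ by Proposition~\ref{prop:xi_zeta_annulus}. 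The case $\sigma=a_*>0$ is symmetric, leading to $Q_{b_*}(a_*)=0$, which contradicts $Q_{b_*}(a_*)=\zeta_{[a_*,b_*]}'(a_*)/\zeta_{[a_*,b_*]}(a_*)<0$ (the sign of $\zeta_{[a_*,b_*]}'(a_*)$ following from the Wronskian identity \eqref{eq:xi_zeta_annulus} at $r=a_*$ and $\xi_{[a_*,b_*]}'(a_*)=0$).

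There remains the delicate case $\sigma=a_*=0$, in which $b_*\geq\ep>0$ while $a_n,\bar s_n\to 0$. On one hand, since $\chi_{[a,b]}(r):=\xi'(a)\zeta(r)-\xi(r)\zeta'(a)$ is positive and increasing on $[a,b]$ with $\chi_{[a,b]}(a)=a^{1-N}$ (from \eqref{eq:xi_zeta}) and $\xi'(a)\zeta'(s)\leq 0$, we get for $0<a\leq s$
\[
0\leq P_a(s)=\frac{\chi_{[a,b]}'(s)}{\chi_{[a,b]}(s)}\leq -\xi'(s)\,\zeta'(a)\,a^{N-1};
\]
as $-\zeta'(a)\,a^{N-1}\to N-2$ by \eqref{eq:xi_zeta_limits3} and $\xi'(\bar s_n)\to\xi'(0)=0$, this yields $P_{a_n}(\bar s_n)\to 0$ (if $a_n=0$ use instead $P_0(\bar s_n)=\xi'(\bar s_n)/\xi(\bar s_n)\to 0$). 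On the other hand, because $b_n\to b_*>0$ the numbers $\xi'(b_n),\xi(b_n),\zeta'(b_n)$ stay bounded with $\xi'(b_n)$ bounded away from $0$, while $\zeta(\bar s_n)\to+\infty$, $\zeta'(\bar s_n)\to-\infty$ and $\xi(\bar s_n)\to(N-2)^{-1}$; dividing numerator and denominator of $Q_{b_n}(\bar s_n)$ by $\zeta(\bar s_n)$ and using $\zeta'(r)/\zeta(r)\sim-(N-2)/r$ as $r\to 0^+$ (from \eqref{eq:xi_zeta_limits2}--\eqref{eq:xi_zeta_limits3}) gives $Q_{b_n}(\bar s_n)\to-\infty$. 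Hence $g_{[a_n,b_n]}(\bar s_n)\to-\infty$, contradicting $g_{[a_n,b_n]}(\bar s_n)=0$. (Alternatively, when $a_n=0$ for infinitely many $n$ one may simply observe that $b\mapsto\bar s(0,b)$ is continuous and positive on $[\ep,1]$ by Lemmas~\ref{lemma:alpha_regular} and \ref{lemma:uniqueness_1_layer}, hence bounded below there, already contradicting $\bar s_n\to 0$.)

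I expect the main obstacle to be precisely this last case $\sigma=a_*=0$: it cannot be closed by a plain continuity argument, since $Q_b(s)$ diverges as $s\to 0$, and one must instead quantify the competition between the vanishing of $P_{a_n}(\bar s_n)$ and the divergence of $Q_{b_n}(\bar s_n)$ --- this is where the sharp asymptotics \eqref{eq:xi_zeta_limits1}--\eqref{eq:xi_zeta_limits3} of $\xi$ and $\zeta$ at the origin come into play. All the other steps are routine compactness and sign considerations.
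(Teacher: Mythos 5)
Your proof is correct and follows essentially the same strategy as the paper's: argue by contradiction along sequences with $b_n-a_n>\ep$ and pass to the limit in the reflection law \eqref{eq:reflection_law}--\eqref{eq:reflection_law3}, contradicting the strict sign of Wronskian-type quantities such as $\xi'(a)\zeta'(b)-\xi'(b)\zeta'(a)>0$ from \eqref{eq:G2}. If anything, your write-up is more complete than the paper's sketch, since you explicitly close the degenerate case $a_n,\bar s_n\to 0$ (where $\zeta,\zeta'$ blow up and plain continuity fails) by quantifying $P_{a_n}(\bar s_n)\to 0$ against $Q_{b_n}(\bar s_n)\to-\infty$ via \eqref{eq:xi_zeta_limits2}--\eqref{eq:xi_zeta_limits3}, a case the paper only claims can be treated ``in a similar way''.
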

\begin{proof}
We argue by contradiction. Let $(a_{n})_{n}$ and $(b_{n})_{n}$ be such that $b_{n}-a_{n}>\varepsilon$ and $(\bar s_{n})_{n}$  We have to distinguish three cases

Assume first $a>0$. Suppose by contradiction that there exist sequences $\a_{j}^{(n)},\b_{j-1}^{(n)}$ such that $|\a_{j}^{(n)}-\b_{j-1}^{(n)}|\to0$ as $n\to\infty$. Replacing in \eqref{eq:reflection_law3}, we obtain
\[
\xi'(\b_{j}^{(n)})\zeta'(\b_{j-1}^{(n)}) -\xi'(\b_{j-1}^{(n)})\zeta'(\b_{j}^{(n)}) =0,
\]
which contradicts \eqref{eq:G2} since $|\b_{j-1}^{(n)}-\b_j^{(n)}|>\ep$ for every $n$. The case $|\a_{j}^{(n)}-\b_{j}^{(n)}|\to0$ can be ruled out in the same way. The cases $j=1$ and $j=k$ can be proved in a similar way, by exploiting the suitable definitions in \eqref{eq:xi_zeta_annulus_def}.
\end{proof}

\bigbreak

We will see that the point $\bar{s}$ defined in the previous lemma is the limit (as $p\to\infty$) of the maximum points of the 1-layer solutions of \eqref{main}. Therefore we give the following definition of 1-layer solution of the limit problem in an interval $[\b_{j-1},\b_j]$.
For $k\in\mathbb{N}_0$, let
\[
0=\b_0<\b_1<\ldots<\b_{k-1}<\b_k=1.
\] 
We denote by $\a_j:=\a_j(\b_{j-1},b_j)$ the unique point satisfying \eqref{eq:reflection_law} in the interval $[\b_{j-1},\b_j]$, namely
\begin{equation}\label{eq:reflection_law2}
\frac{\xi_{[\b_{j-1},\b_j]}'(\a_j)}{\xi_{[\b_{j-1},\b_j]}(\a_j)} 
+ \frac{\zeta_{[\b_{j-1},\b_j]}'(\a_j)}{\zeta_{[\b_{j-1},\b_j]}(\a_j)} =0, 
\quad j=1,\ldots k.
\end{equation}

%in order to introduce the notation that we will use to construct the k-layer solution, where we will glue $k$ intervals of this form.

\begin{definition}\label{def:1_layer_limit}
We refer to the function
\begin{equation}\label{eq:1layer_def}
u_{\infty,1\text{-layer}}(r;\b_{j-1},\b_{j}):=
\frac{G_{[\b_{j-1},\b_{j}]}(r,\a_j)}{G_{[\b_{j-1},\b_{j}]}(\a_j,\a_j)}
\end{equation}
as the \emph{1-layer solution of the limit problem} in the interval $[\b_{j-1},\b_{j}]$.
\end{definition}
When we do not need to emphasize the interval of definition, we write $u_{\infty,1\text{-layer}}(r)$ to shorten the notations. 
Observe that  \eqref{eq:reflection_law2} shows that $u_{\infty,1\text{-layer}}$ satisfies a \emph{reflection law} at $\a_j$: the right and left derivatives are opposite, namely
\[
\lim_{\ep\to0^-} \frac{u_{\infty,1\text{-layer}}(\a_j+\ep)-u_{\infty,1\text{-layer}}(\a_j)}{\ep}
=- \lim_{\ep\to0^+} \frac{u_{\infty,1\text{-layer}}(\a_j+\ep)-u_{\infty,1\text{-layer}}(\a_j)}{\ep}.
\]

\subsection{The k-layer solution of the limit problem}

In order to produce a \emph{$k$-layer solution of the limit problem}, we glue together $k$ 1-layer solutions. For $k\in\N_0$, let 
\begin{equation}\label{eq:T_def}
T=\{ (\b_1,\ldots,\b_{k-1})\in\R^{k-1}: \, 0=\b_0<\b_1<\b_2<\ldots<\b_{k-1}<\b_k=1 \}.
\end{equation}
The existence of a continuous configuration will follow from a degree argument, applied to the map $M_\infty=(M_\infty^{(1)},\ldots,M_\infty^{(k-1)}):T\to\R^{k-1}$, defined as
\begin{equation}\label{eq:M_infty_def}
\begin{split}
M_\infty^{(j)}(\b_1,\ldots,\b_{k-1})
&= u_{\infty,1\text{-layer}}(\b_j;\b_j,\b_{j+1})-u_{\infty,1\text{-layer}}(\b_j;\b_{j-1},\b_{j}) \\
&= \frac{\xi_{[\b_j,\b_{j+1}]}(\b_j)}{\xi_{[\b_j,\b_{j+1}]}(\a_{j+1})} - 
\frac{\zeta_{[\b_{j-1},\b_{j}]}(\b_j)}{\zeta_{[\b_{j-1},\b_{j}]}(\a_{j})} \\
&=\frac{1}{\b_j^{N-1}} \left\{ \frac{1}{\xi'(\b_j)\zeta(\a_{j+1})-\xi(\a_{j+1})\zeta'(\b_j)} -
\frac{1}{\xi'(\b_j)\zeta(\a_{j})-\xi(\a_{j})\zeta'(\b_j)} \right\},
\end{split}
\end{equation}
for $j=1,\ldots,k-1$, where $\a_j=\a_j(\b_{j-1},\b_j)$ are defined in \eqref{eq:reflection_law2}. We notice that $M_\infty^{(j)}$ depends only on $\b_{j-1},\b_j,\b_{j+1}$.

In order to study the degree of $M_\infty$ in $T$, we need to evaluate it on $\partial T$, given by the union of $k$ sets:
\begin{equation}\label{eq:partial_T}
\partial T= \cup_{j=1}^k (\partial T)_j, \quad\text{with}\quad
(\partial T)_j=\{\b_0\leq\b_1\leq\ldots\leq\b_{j-1}=\b_j\leq\ldots\leq\b_{k-1}\leq\b_k\}.
\end{equation}
Using a standard notation, we denote by $\bar{T}$ the closure of $T$, that is to say $\overline{T}=T\cup\partial T$.

\begin{lemma}\label{lemma:boundary_T_1}
There exists $\bar{\ep}$ such that for every $0<\ep<\bar{\ep}$ there exists a constant $C(\ep)>0$ (depending only on $\ep$) such that, for every $j=1,\ldots,k-1$, we have
\begin{equation*}
M_\infty^{(j)}(\b_1,\ldots,\b_{k-1})<-C(\ep) \quad 
\text{for } (\b_1,\ldots,\b_{k-1})\in \overline{T} \text{ with } \b_j\leq\ep\leq\b_{j+1}.
\end{equation*}
\end{lemma}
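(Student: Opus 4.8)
The idea is to show that when $\b_j \le \ep \le \b_{j+1}$, the quantity $M_\infty^{(j)}$ is dominated by its second (negative) term, which blows up as $\b_j \to 0$, while the first term stays bounded above by a quantity that grows more slowly. Recall from \eqref{eq:M_infty_def} that
\[
M_\infty^{(j)}=\frac{1}{\b_j^{N-1}} \left\{ \frac{1}{\xi'(\b_j)\zeta(\a_{j+1})-\xi(\a_{j+1})\zeta'(\b_j)} -
\frac{1}{\xi'(\b_j)\zeta(\a_{j})-\xi(\a_{j})\zeta'(\b_j)} \right\},
\]
where $\a_j=\a_j(\b_{j-1},\b_j)\in(\b_{j-1},\b_j)$ and $\a_{j+1}=\a_{j+1}(\b_j,\b_{j+1})\in(\b_j,\b_{j+1})$. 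By Lemma \ref{lemma:alpha_asimpt_beta} (applied on the interval $[\b_{j-1},\b_j]$) we have $\a_j\sim\b_j$ as $\b_j\to0$; on the other hand, since $\b_{j+1}-\b_j\ge\ep-\b_j$, once $\ep$ is small enough this length is bounded below, and Lemma \ref{lemma:alpha_not_going_beta} forces $\a_{j+1}$ to stay at distance $\ge\delta(\ep)$ from $\b_j$, so $\a_{j+1}$ is bounded away from $0$.

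First I would estimate the denominators. For the first term, since $\a_{j+1}\ge\delta(\ep)>0$ we have $\zeta(\a_{j+1})\ge\zeta(\a_{j+1})>0$, $\xi(\a_{j+1})$ bounded, $\xi'(\b_j)\ge0$, and $-\zeta'(\b_j)\sim(N-2)\b_j^{-(N-1)}$ by \eqref{eq:asymptotics_xi_zeta}; hence $\xi'(\b_j)\zeta(\a_{j+1})-\xi(\a_{j+1})\zeta'(\b_j)\ge c(\ep)\,\b_j^{-(N-1)}$ for $\b_j$ small, so the first term of the bracket is $O(\b_j^{N-1})$ and, after multiplication by $\b_j^{-(N-1)}$, contributes something bounded above by a constant $C_1(\ep)$. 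For the second term, using $\a_j\sim\b_j$ together with the developments \eqref{eq:asymptotics_xi_zeta} — namely $\xi(\a_j)\to\frac1{N-2}$, $\xi'(\b_j)=\xi''(0)\b_j+o(\b_j)$, $\zeta(\a_j)\sim\a_j^{-(N-2)}\sim\b_j^{-(N-2)}$, $\zeta'(\b_j)\sim-(N-2)\b_j^{-(N-1)}$ — one finds that $\xi'(\b_j)\zeta(\a_j)-\xi(\a_j)\zeta'(\b_j)$ behaves like $\frac{N-2}{N-2}\b_j^{-(N-1)}=\b_j^{-(N-1)}$ up to a positive constant (the $\xi'(\b_j)\zeta(\a_j)$ piece is of lower order, $O(\b_j^{-(N-3)})$). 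Therefore the second term of the bracket is $\sim -c_2\,\b_j^{N-1}$ with $c_2>0$, and after multiplying by $\b_j^{-(N-1)}$ it tends to $-c_2<0$. Combining, $M_\infty^{(j)}\le C_1(\ep)\cdot(\text{something }\to0)-c_2+o(1)$; a cleaner way is to show directly that the bracket, divided by $\b_j^{N-1}$, has negative limit $-c_2$, so that $M_\infty^{(j)}<-c_2/2=:-C(\ep)$ once $\b_j$ (hence $\ep$) is below a threshold $\bar\ep$.

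The main obstacle is bookkeeping the competing orders of magnitude in the two denominators: both are $\sim \b_j^{-(N-1)}$ to leading order, so the naive difference could a priori be smaller order, and one must check that the leading $\b_j^{-(N-1)}$ coefficients do not cancel — here they do not, because the first denominator's leading coefficient involves $-\xi(\a_{j+1})\zeta'(\b_j)\sim(N-2)\xi(\a_{j+1})\b_j^{-(N-1)}$ with $\xi(\a_{j+1})$ bounded away from $\frac1{N-2}$ from below only if $\a_{j+1}$ is bounded away from $0$, whereas the second denominator's leading coefficient is $-\xi(\a_j)\zeta'(\b_j)\sim\b_j^{-(N-1)}$; so the reciprocals differ at leading order and the sign of the bracket is governed by which denominator is larger. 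I would also need to make the estimates uniform over $\overline T$ with $\b_j\le\ep\le\b_{j+1}$ and over $j=1,\dots,k-1$, which follows since all the asymptotic relations used (from Proposition \ref{prop:xi_zeta}, Lemmas \ref{lemma:alpha_asimpt_beta} and \ref{lemma:alpha_not_going_beta}) are uniform once $\ep$ is fixed; a compactness argument on the closed region $\{\b_j\le\ep\le\b_{j+1}\}\cap\overline T$ turns the pointwise negativity into a uniform bound $-C(\ep)$.
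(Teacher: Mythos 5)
Your proposal follows essentially the same route as the paper's proof: insert the expansions \eqref{eq:asymptotics_xi_zeta} into \eqref{eq:M_infty_def}, use Lemma \ref{lemma:alpha_asimpt_beta} to get $\a_j\sim\b_j$ and Lemma \ref{lemma:alpha_not_going_beta} to keep $\a_{j+1}$ bounded away from $0$, and conclude that the bracket, after absorbing the factor $\b_j^{-(N-1)}$, tends to $\frac{1}{(N-2)\xi(\bar\a_{j+1})}-1<0$, which is exactly the computation \eqref{eq:boundary_T_1a}--\eqref{eq:boundary_T_1b}. The only slip is the intermediate claim that the first bracket term contributes something tending to zero (it actually tends to $\frac{1}{(N-2)\xi(\bar\a_{j+1})}\in(0,1)$), but your ``main obstacle'' paragraph already corrects this by comparing the leading coefficients $(N-2)\xi(\a_{j+1})$ versus $(N-2)\xi(\a_j)\to 1$, which is precisely the paper's mechanism for the strict negativity.
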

\begin{proof}
Fix any $j=1,\ldots,k-1$ and any $\ep\in(0,1)$. We compute the limit of $M_\infty^{(j)}$ as $\beta_j\to0$ and $\b_{j+1}>\ep$.
Consider the definition of $M_\infty^{(j)}$ in \eqref{eq:M_infty_def}. Since $\a_j,\b_j\to0$, we can replace the developments \eqref{eq:asymptotics_xi_zeta}. Moreover, thanks to Lemma \ref{lemma:alpha_not_going_beta}, for every fix $\b_{j+1}>\ep>0$, we have that $\a_{j+1}\to\bar{\alpha}_{j+1}$, with $\bar{\alpha}_{j+1}>\delta>0$ and $\delta=\delta(\ep)$ independent of $\b_{j+1}$. Therefore
\begin{multline}\label{eq:boundary_T_1a}
\lim_{\substack{\b_j\to0\\\b_{j+1}>\ep}} M_\infty^{(j)}(\b_1,\ldots,\b_{k-1})=\\
\lim_{\b_j\to0} \left\{\frac{1}{\xi''(0)\zeta(\bar{\a}_{j+1})\b_j^N+(N-2)\xi(\bar{\a}_{j+1})+o(1)}
-\frac{1}{\xi''(0)\frac{\b_j^N}{\a_j^{N-2}}+1+o(\frac{\b_j^N}{\a_j^{N-2}})+o(1)} \right\}.
\end{multline}
By Lemma \ref{lemma:alpha_asimpt_beta} we have that $\a_j\sim\b_j$, so that
\begin{equation}\label{eq:boundary_T_1b}
\lim_{\substack{\b_j\to0\\\b_{j+1}>\ep}} M_\infty^{(j)}(\b_1,\ldots,\b_{k-1})=
\frac1{(N-2)\xi(\bar{\a}_{j+1})}-1 <-C(\ep),
\end{equation}
where in the last step we used the fact that $\xi(0)=1/(N-2)$ and $\xi(\bar{\a}_{j+1})>1/(N-2)+C(\delta)$ for $\bar{\alpha}_{j+1}>\delta>0$.\par
% \textcolor{red}{Alternative (and equivalent) proof: since $\bar{\alpha}_{j+1}>\delta(\ep)$ we have $\xi\left(\bar{\alpha}_{j+1}\right)>\xi(\delta(\ep))$.
 %So, $\lim_{\substack{\b_j\to0\\\b_{j+1}>\ep}} M_\infty^{(j)}(\b_1,\ldots,\b_{k-1})=
%\frac1{(N-2)\xi(\bar{\a}_{j+1})}-1 <\frac1{(N-2)\xi(\delta(\ep))}-1=-C(\ep)$
 %and the claim follows.  }
  %\textcolor{blu}{Qui il problema e' passare da una proprieta' che vale per $\lim_{\substack{\b_j\to0\\\b_{j+1}>\ep}}$ ad una proprieta' che vale per $\b_j<\ep<\b_{j+1}$. Per fare questo, mi sembra che la riga seguente sia comunque necessaria.}
%Since $M_\infty$ is continuous, by taking $\ep$ sufficiently small the statement follows.
\end{proof}

\begin{lemma}\label{lemma:M_infty_continuous}
$M_\infty$ is continuous in $T$ and can be extended continuously on $\overline{T}$.
\end{lemma}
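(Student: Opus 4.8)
The plan is to establish continuity of $M_\infty$ on the open set $T$ first, and then to show that the formula extends continuously to the boundary $\partial T$. On $T$, the key point is that for $(\b_1,\ldots,\b_{k-1})\in T$ the configuration satisfies $0=\b_0<\b_1<\ldots<\b_{k-1}<\b_k=1$, so each sub-interval $[\b_{j-1},\b_j]$ is nondegenerate. In each such interval the functions $\xi_{[\b_{j-1},\b_j]}$ and $\zeta_{[\b_{j-1},\b_j]}$ are built from $\xi,\zeta$ (which are fixed $C^2$ functions on $(0,1]$, respectively $[0,1]$) through the explicit formulas \eqref{eq:xi_zeta_annulus_def}; the denominators appearing there are exactly $\sqrt{\xi'(a)\zeta'(b)-\xi'(b)\zeta'(a)}$, which is strictly positive by \eqref{eq:G2} whenever $0<a<b<1$, and the corresponding expressions for the ball cases $a=0$ or $b=1$ are likewise smooth and nonvanishing. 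Hence $(a,b)\mapsto \xi_{[a,b]},\zeta_{[a,b]}$ and their derivatives depend continuously (indeed smoothly) on $(a,b)$ in the relevant parameter range. Combined with Lemma \ref{lemma:alpha_regular}, which gives that $\a_j=\a_j(\b_{j-1},\b_j)$ is a $C^1$ function of $(\b_{j-1},\b_j)$, the third line of \eqref{eq:M_infty_def} exhibits $M_\infty^{(j)}$ as a composition of continuous maps, provided the two denominators $\xi'(\b_j)\zeta(\a_{j+1})-\xi(\a_{j+1})\zeta'(\b_j)$ and $\xi'(\b_j)\zeta(\a_j)-\xi(\a_j)\zeta'(\b_j)$ do not vanish. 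But each of these denominators is, up to the factor $\b_j^{N-1}$, the reciprocal of a value $G_{[\,\cdot\,,\,\cdot\,]}(\a_i,\a_i)$ (via \eqref{eq:G_annulus} and \eqref{eq:xi_zeta_annulus_def}), which is strictly positive since the Green function is positive; equivalently one checks directly, using $\xi$ increasing, $\zeta$ decreasing, both positive, that these quantities are positive exactly as in \eqref{eq:G2}. This proves continuity on $T$.

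For the extension to $\overline{T}$, recall from \eqref{eq:partial_T} that $\partial T$ is the union of the sets $(\partial T)_j$ on which some consecutive pair collapses, $\b_{j-1}=\b_j$ or $\b_j=\b_{j+1}$. The point is that $M_\infty^{(j)}$ depends only on $\b_{j-1},\b_j,\b_{j+1}$, so it suffices to understand the three possible degenerations of this triple: (a) $\b_j\to\b_{j+1}$ with $\b_{j-1}$ staying bounded away, (b) $\b_{j-1}\to\b_j$ with $\b_{j+1}$ staying bounded away, and (c) $\b_{j-1}\to\b_j$ and $\b_j\to\b_{j+1}$ simultaneously (including the case $\b_j\to 0$, which is the boundary behaviour already analyzed). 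In cases (a) and (b), Lemma \ref{lemma:alpha_not_going_beta} guarantees that the $\a_i$ associated to the non-degenerate interval stays in the interior at a definite distance from the endpoints, so that the corresponding term of $M_\infty^{(j)}$ in the second line of \eqref{eq:M_infty_def} converges to a finite limit; in the degenerating interval $[\b_j,\b_{j+1}]$ (resp. $[\b_{j-1},\b_j]$) one has $\a_{j+1}\to\b_j$ (resp. $\a_j\to\b_j$), and from the formulas \eqref{eq:xi_zeta_annulus_def} one reads off that the ratio $\xi_{[\b_j,\b_{j+1}]}(\b_j)/\xi_{[\b_j,\b_{j+1}]}(\a_{j+1})$ tends to $1$ (the interval shrinks to a point and both $\b_j$ and $\a_{j+1}$ tend to the same point), so that term also has a finite limit. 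In case (c), the relevant asymptotics of $\xi,\zeta$ near the common limit point are \eqref{eq:asymptotics_xi_zeta} when this point is $0$, and ordinary Taylor expansions at an interior point otherwise; in both regimes the computation in Lemma \ref{lemma:boundary_T_1} (and its analogue at interior points) shows that $M_\infty^{(j)}$ has a finite limit — in fact the explicit value $1/((N-2)\xi(\bar\a_{j+1}))-1$ when the point is $0$.

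The main obstacle I anticipate is bookkeeping in case (c): one must check that the various $o(\cdot)$ terms in \eqref{eq:asymptotics_xi_zeta} (and the corresponding Taylor remainders at interior points) are genuinely negligible against the leading terms along every admissible sequence $(\b_{j-1},\b_j,\b_{j+1})\to$ (degenerate configuration), using $\a_j\sim\b_j$ from Lemma \ref{lemma:alpha_asimpt_beta} when the limit point is $0$, and the uniform interior bounds from Lemma \ref{lemma:alpha_not_going_beta} otherwise. Once one has observed that the denominators in \eqref{eq:M_infty_def} are bounded below along these sequences — which again follows from positivity of the Green function together with the control on the $\a_i$ — the limits exist and are finite, and since they depend only on the limiting values of $(\b_{j-1},\b_j,\b_{j+1})$ the resulting extension of $M_\infty$ to $\overline{T}$ is well-defined and continuous. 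This completes the proof.
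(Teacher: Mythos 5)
Your proof is correct and follows essentially the same route as the paper's (which is far terser): continuity on $T$ from the smooth dependence of $\xi_{[a,b]},\zeta_{[a,b]}$ and of $\a_j$ on the endpoints via Lemma \ref{lemma:alpha_regular}, and the extension to $\overline{T}$ by the same asymptotic calculations as in \eqref{eq:boundary_T_1a}--\eqref{eq:boundary_T_1b}, using Lemmas \ref{lemma:alpha_asimpt_beta} and \ref{lemma:alpha_not_going_beta} to control $\a_j$ in the degenerating and nondegenerate intervals respectively. Your case analysis (a)--(c) simply spells out what the paper leaves implicit, so no further changes are needed (only note that the sub-case $\b_j\to0$ with $\b_{j+1}$ bounded away belongs to your case (b), which is exactly the situation of Lemma \ref{lemma:boundary_T_1}).
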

\begin{proof}
Thanks to Lemma \ref{lemma:alpha_regular}, $M_\infty$ is continuous in $T$. 
Calculations similar to the ones in \eqref{eq:boundary_T_1a}, \eqref{eq:boundary_T_1b} show that $M_\infty$ can be extended continuously on $\overline{T}$
\end{proof}

\begin{lemma}\label{lemma:boundary_T_k}
There exists $\bar{\ep}$ such that for every $0<\ep<\bar{\ep}$ there exists a constant $C(\ep)>0$ such that, for every $j=1,\ldots,k-1$, we have
\begin{equation*}
M_\infty^{(j)}(\b_1,\ldots,\b_{k-1})>C(\ep) \quad 
\text{for } (\b_1,\ldots,\b_{k-1})\in \overline{T} \text{ with } \b_{j-1}\leq 1-\ep\leq\b_j. 
\end{equation*}
\end{lemma}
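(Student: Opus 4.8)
The statement is the "mirror image" of Lemma \ref{lemma:boundary_T_1}: the latter controls $M_\infty^{(j)}$ near the left endpoint $0$, the former near the right endpoint $1$. So I would prove it by exactly the same strategy, just examining the limit $\b_j\to 1$ with $\b_{j-1}$ staying at distance $\ge\ep$ from $1$. Recall that
\[
M_\infty^{(j)}(\b_1,\ldots,\b_{k-1})
=\frac{1}{\b_j^{N-1}}\left\{\frac{1}{\xi'(\b_j)\zeta(\a_{j+1})-\xi(\a_{j+1})\zeta'(\b_j)}
-\frac{1}{\xi'(\b_j)\zeta(\a_j)-\xi(\a_j)\zeta'(\b_j)}\right\},
\]
where $\a_{j+1}=\a_{j+1}(\b_j,\b_{j+1})$ lies in $(\b_j,\b_{j+1})$ and $\a_j=\a_j(\b_{j-1},\b_j)$ lies in $(\b_{j-1},\b_j)$. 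The first step is to observe that when $\b_j\to 1$ and $\b_{j+1}\ge\b_j$, both $\b_{j+1}$ and $\a_{j+1}$ are squeezed to $1$, so in the first fraction the arguments $\b_j,\a_{j+1}$ all tend to $1$; by continuity of $\xi,\xi',\zeta,\zeta'$ at $r=1$ together with $\zeta'(1)=0$ (Proposition \ref{prop:xi_zeta}), that fraction converges to $1/(\xi'(1)\zeta(1))$. For the second fraction, $\a_j\in(\b_{j-1},\b_j)$ with $\b_{j-1}\le 1-\ep$, so by Lemma \ref{lemma:alpha_not_going_beta} we have $\b_j-\a_j>\delta(\ep)$, hence $\a_j\to\bar\a_j$ with $\bar\a_j\le 1-\delta$ bounded away from $1$; using $\b_j\to 1$, $\xi'(1)$, $\zeta'(1)=0$, the second fraction converges to $1/(\xi'(1)\zeta(\bar\a_j))$.

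**Main computation.** Passing to the limit, and using $\b_j^{N-1}\to 1$,
\[
\lim_{\substack{\b_j\to1\\ \b_{j-1}\le1-\ep}} M_\infty^{(j)}
=\frac{1}{\xi'(1)\zeta(1)}-\frac{1}{\xi'(1)\zeta(\bar\a_j)}
=\frac{1}{\xi'(1)}\left(\frac{1}{\zeta(1)}-\frac{1}{\zeta(\bar\a_j)}\right).
\]
Now $\xi'(1)>0$ since $\xi$ is increasing, and $\zeta$ is strictly decreasing, so $\zeta(\bar\a_j)>\zeta(1)$ because $\bar\a_j<1$; hence $1/\zeta(1)>1/\zeta(\bar\a_j)$ and the whole expression is strictly positive. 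Quantitatively, $\bar\a_j\le 1-\delta(\ep)$ forces $\zeta(\bar\a_j)\ge \zeta(1-\delta)>\zeta(1)$, so the limit is bounded below by a constant $2C(\ep):=\xi'(1)^{-1}\big(\zeta(1)^{-1}-\zeta(1-\delta(\ep))^{-1}\big)>0$ depending only on $\ep$. Since this holds along every sequence with $\b_{j-1}\le1-\ep\le\b_j$ (the limit being uniform in $\b_{j+1}\ge\b_j$ and in $\b_{j-1}\le 1-\ep$, thanks to the uniformity of $\delta$ in Lemma \ref{lemma:alpha_not_going_beta}), a compactness argument on $\overline T$ gives $M_\infty^{(j)}>C(\ep)$ on the indicated subset of $\overline T$, after possibly shrinking; the existence of the common threshold $\bar\ep$ follows by taking the minimum over the finitely many indices $j=1,\ldots,k-1$.

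**The delicate point.** The one step that needs a little care — and is the real analogue of the subtlety already handled in Lemma \ref{lemma:boundary_T_1} — is making the convergence $\a_j\to\bar\a_j$, $\bar\a_j$ bounded away from $1$, and the resulting lower bound, \emph{uniform} with respect to the free parameters $\b_{j-1}\in[0,1-\ep]$ and $\b_{j+1}\in[\b_j,1]$. This is exactly where Lemma \ref{lemma:alpha_not_going_beta} is used: it supplies a $\delta=\delta(\ep)>0$, independent of the particular positions of $\b_{j-1}$ and $\b_{j+1}$, guaranteeing $\b_j-\a_j>\delta$ whenever $\b_j-\b_{j-1}>\ep$. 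Granting that, the rest is the continuity of $\xi,\xi',\zeta$ on $[0,1]$ and the monotonicity/positivity statements of Proposition \ref{prop:xi_zeta}, so no new idea beyond Lemma \ref{lemma:boundary_T_1} is required; one simply runs that argument at the endpoint $1$ instead of $0$, where the situation is in fact slightly cleaner because $\zeta$ and $\zeta'$ are regular at $r=1$ (no blow-up, unlike $\zeta$ near $0$).
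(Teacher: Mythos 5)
Your argument is correct and essentially reproduces the paper's own proof: the same limit $\b_j\to1$ with $\b_{j-1}\le 1-\ep$, the same use of $\zeta'(1)=0$ and of Lemma \ref{lemma:alpha_not_going_beta} to keep $\a_j$ (hence $\bar\a_j$) away from $1$, and the same lower bound, your expression $\tfrac{1}{\xi'(1)\zeta(1)}-\tfrac{1}{\xi'(1)\zeta(\bar\a_j)}$ coinciding with the paper's $1-\tfrac{1}{\xi'(1)\zeta(\bar\a_j)}$ via the Wronskian identity $\xi'(1)\zeta(1)=1$, which you replace by the equivalent monotonicity of $\zeta$. Your final uniformity/compactness remark is at the same level of detail as the paper's ``the statement follows'' (note that, exactly as in the paper, the appeal to Lemma \ref{lemma:alpha_not_going_beta} implicitly uses that $\b_j-\b_{j-1}\ge\b_j-(1-\ep)$ stays bounded below in the regime $\b_j\to1$), so nothing essential is added or missing relative to the printed proof.
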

\begin{proof}
We proceed similarly to the previous lemma. 
% \textcolor{red}{(The same remark of the previous lemma applies)}.
 Now we use the fact that $\zeta'(1)=0$ and that $\a_{j}\to\bar{\a}_{j}<1-\delta$, with $\delta>0$, by Lemma \ref{lemma:alpha_not_going_beta}. We have
\[
\lim_{\substack{\b_{k-1}\to1\\\b_{j-1}<1-\ep}} M_\infty^{(j)}(\b_1,\ldots,\b_{k-1})=
1-\frac{1}{\xi'(1)\zeta(\bar{\a}_{j})} > C(\ep),
\]
since $\xi'(1)\zeta(\bar{\a}_{j})>\xi'(1)\zeta(1)+C(\delta)$ for $\bar{\a}_{j}<1-\delta$, and $\xi'(1)\zeta(1)=1$. Again, by taking $\ep$ sufficiently small, the statement follows.
\end{proof}

\begin{lemma}\label{lemma:collapse_points}
There exists $\bar{\ep}$ such that for every $0<\ep<\bar{\ep}$ there exists a constant $C(\ep)>0$ such that
\begin{itemize}
\item[(i)] for every $j=2,\ldots,k$ and $l=1,\ldots,j-1$ we have
\begin{equation}
M_\infty^{(l)}(\b_1,\ldots,\b_{k-1})>C(\ep) 
\quad \text{for } (\b_1,\ldots,\b_{k-1})\in \overline{T} \text{ with } \b_{l-1}\leq\b_{j}-\ep\leq\b_l;
\end{equation}
\item[(ii)] for every $j=0,\ldots,k-2$ and $l=j+1,\ldots,k-1$ we have
\begin{equation}
M_\infty^{(l)}(\b_1,\ldots,\b_{k-1})<-C(\ep) 
\quad \text{for } (\b_1,\ldots,\b_{k-1})\in \overline{T} \text{ with } \b_{l}\leq\b_{j}+\ep\leq\b_{l+1}.
\end{equation}
\end{itemize}
\end{lemma}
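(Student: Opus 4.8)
\textbf{Proof proposal for Lemma \ref{lemma:collapse_points}.}

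The plan is to reduce both items to the limiting computations already carried out in Lemmas \ref{lemma:boundary_T_1} and \ref{lemma:boundary_T_k}, observing that the map $M_\infty^{(l)}$ depends only on the triple $\b_{l-1},\b_l,\b_{l+1}$ and on the auxiliary points $\a_l=\a_l(\b_{l-1},\b_l)$, $\a_{l+1}=\a_{l+1}(\b_l,\b_{l+1})$. For item (i), the hypothesis $\b_{l-1}\le\b_j-\ep\le\b_l$ together with the ordering on $\overline T$ forces $\b_j-\b_{l-1}\ge\ep$ wait — rather, it forces $\b_l-\b_{l-1}$ could be small, but the key point is that $\b_l\ge\b_j-\ep$ while $\b_l<\b_{l+1}<\cdots<\b_j$, which squeezes $\b_{l+1},\ldots,\b_j$ into an interval of length $\le\ep$; in particular $\b_{l+1}-\b_l\le\ep$ and $\b_l$ itself stays bounded away from $0$ (since $\b_l\ge\b_j-\ep\ge\ep_0-\ep$ for $\ep$ small, using that $j\ge l+1$ forces $\b_j$ bounded below once... actually $\b_j$ need not be bounded below). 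Let me instead argue directly: the hypothesis squeezes $\b_l$ and $\b_{l+1}$ together, so $\b_{l+1}\to\b_l$. Hence $\a_{l+1}=\a_{l+1}(\b_l,\b_{l+1})$ lies in $(\b_l,\b_{l+1})$ and therefore also converges to $\b_l$.

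\smallskip

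First I would make this precise by a compactness/contradiction argument in the spirit of Lemma \ref{lemma:alpha_not_going_beta}: suppose along a sequence in $\overline T$ with $\b_{l+1}-\b_l\to 0$ one had $M_\infty^{(l)}\le C(\ep)$ for all $\ep$; extract convergent subsequences of all the $\b_i$'s. Then $\b_l,\b_{l+1},\a_{l+1}$ all converge to a common value $\bar\b\in[0,1]$, while $\a_l$ converges to some $\bar\a_l\in[\bar\b_{l-1},\bar\b]$. I would then plug these limits into the explicit third line of \eqref{eq:M_infty_def}:
\[
M_\infty^{(l)}=\frac{1}{\b_l^{N-1}}\left\{\frac{1}{\xi'(\b_l)\zeta(\a_{l+1})-\xi(\a_{l+1})\zeta'(\b_l)}-\frac{1}{\xi'(\b_l)\zeta(\a_l)-\xi(\a_l)\zeta'(\b_l)}\right\}.
\]
In the first fraction, $\a_{l+1}\to\b_l$, so the denominator tends to $\xi'(\b_l)\zeta(\b_l)-\xi(\b_l)\zeta'(\b_l)=\b_l^{-(N-1)}$ by the Wronskian identity \eqref{eq:xi_zeta}; hence the first term tends to $1$. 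In the second fraction, since $\a_l\le\bar\b=\b_l$ and $\a_l$ is in general $<\b_l$ strictly away from it (it satisfies the reflection law \eqref{eq:reflection_law2} in the larger interval $[\b_{l-1},\b_l]$, and by Lemma \ref{lemma:alpha_not_going_beta} $\b_l-\a_l$ is bounded below as long as $\b_l-\b_{l-1}$ is bounded below — which must be handled, possibly it is not), one checks that the denominator is strictly larger than $\b_l^{-(N-1)}$ using that $r\mapsto\xi'(\b_l)\zeta(r)-\xi(r)\zeta'(\b_l)=\chi$-type function is monotone, as in the proof of Proposition \ref{prop:xi_zeta_annulus}. Thus the bracket converges to $1-(\text{something}<1)$, a strictly positive quantity bounded below in terms of $\ep$, and dividing by $\b_l^{N-1}\le 1$ keeps it bounded below; this contradicts $M_\infty^{(l)}\le C(\ep)\to 0$. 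Item (ii) is the mirror image: the hypothesis $\b_l\le\b_j+\ep\le\b_{l+1}$ with $j\le l$ squeezes $\b_{l-1},\ldots,\b_l$ (hence $\b_l-\b_{l-1}$, and therefore $\a_l$) together, so now $\a_l\to\b_l$, the \emph{second} term tends to $1$, the first term stays strictly below $1$, and the bracket is strictly negative and bounded away from $0$.

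\smallskip

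The main obstacle I anticipate is the degenerate sub-case in which \emph{two consecutive} gaps collapse simultaneously — e.g. in (i) where not only $\b_{l+1},\ldots,\b_j$ collapse but also $\b_l-\b_{l-1}\to 0$, so that $\a_l\to\b_l$ as well and \emph{both} terms in the bracket tend to $1$, making the leading order cancel. This is exactly the situation analyzed in \eqref{eq:boundary_T_1a}–\eqref{eq:boundary_T_1b} and in Lemma \ref{lemma:alpha_asimpt_beta} when $a=o(b)$ or $a\sim b$: one must insert the second-order asymptotics \eqref{eq:asymptotics_xi_zeta}, use $\a_j\sim\b_j$ and $\a_l\sim\b_l$ from Lemma \ref{lemma:alpha_asimpt_beta}, and track the next-order terms. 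The resulting expansion — structurally identical to the right-hand side of \eqref{eq:boundary_T_1b}, giving a limit of the form $\frac{1}{(N-2)\xi(\bar\a)}-1$ or its reciprocal — still has the correct sign and is still bounded away from $0$ by a constant depending only on $\ep$, because the surviving coefficients $\xi(\cdot)$ are evaluated at points bounded away from the collapsing cluster (here Lemma \ref{lemma:alpha_not_going_beta} is used in the form: the \emph{non-collapsing} $\a$'s stay a definite distance from the cluster). Assembling the finitely many cases (full collapse of $\b_{l+1},\ldots,\b_j$ with or without $\b_l\to\b_{l-1}$, and likewise for (ii)) and invoking continuity on $\overline T$ from Lemma \ref{lemma:M_infty_continuous} to pass from the boundary strata to a neighborhood of them then yields the uniform constant $C(\ep)$.
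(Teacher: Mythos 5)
Your main-case computation is essentially the paper's: in the collapse regime the first term of \eqref{eq:M_infty_def} tends to $1$ via the Wronskian identity \eqref{eq:xi_zeta}, the second term is kept strictly below $1$ by Lemma \ref{lemma:alpha_not_going_beta} combined with the monotonicity of $\zeta_{[\b_{l-1},\b_l]}$ (equivalently of $r\mapsto \xi'(\b_l)\zeta(r)-\xi(r)\zeta'(\b_l)$), and (ii) is the mirror image. The paper states this as the limit of $M_\infty^{(l)}$ as $\b_l\to\b_j$ with $\b_{l-1}<\b_j-\ep$, while you package it as a compactness argument; but note that your contradiction sequence is \emph{assumed} to satisfy $\b_{l+1}-\b_l\to0$, which the negation of the statement does not give you (the hypothesis only yields $\b_{l+1}-\b_l\le\ep$), and "for all $\ep$" is not the correct negation either, so the reduction is not complete as written.

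The genuine gap is your treatment of the sub-case you yourself flag, where the left gap also collapses, $\b_l-\b_{l-1}\to0$. Your claim that inserting the second-order asymptotics as in \eqref{eq:boundary_T_1a}--\eqref{eq:boundary_T_1b} produces a limit "with the correct sign, bounded away from $0$ by $C(\ep)$" is false, and no refinement of the expansion can make it true. Indeed, take for instance $\b_l=\b_j-\ep$, $\b_{l+1}=\b_j$ (with $\b_j$ away from $0$ and $1$) and let $\b_{l-1}\uparrow\b_j-\ep$: the second term of \eqref{eq:M_infty_def} tends to $1$, since $\b_{l-1}$ and hence $\a_l$ merge with $\b_l$, while the first term equals $\chi(\b_l)/\chi(\a_{l+1})$ with $\chi(r)=\xi'(\b_l)\zeta(r)-\xi(r)\zeta'(\b_l)$, which stays below $1$ by a fixed amount because $\chi$ is strictly increasing for $r>\b_l$ (it solves $\mathcal Lu=0$ with $\chi'(\b_l)=0$) and Lemma \ref{lemma:alpha_not_going_beta} keeps $\a_{l+1}-\b_l\ge\delta(\ep)$. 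Hence $M_\infty^{(l)}$ becomes \emph{negative} along this family, i.e.\ the sign is the opposite of what item (i) would require there, so your proposed fix cannot work. The paper's proof never meets this configuration: it computes the limit as $\b_l\to\b_j$, and in that regime the constraint $\b_{l-1}\le\b_j-\ep$ automatically forces $\b_l-\b_{l-1}\ge\ep+o(1)$, so Lemma \ref{lemma:alpha_not_going_beta} applies with the fixed length $\ep$, giving $\bar{\a}_l\le\b_j-\delta(\ep)$ and ruling out any cancellation between the two terms. To repair your argument you should restrict the compactness step to that regime (collapse of $\b_l,\ldots,\b_j$ onto $\b_j$), as the paper implicitly does, rather than attempt an expansion in the simultaneous-collapse case.
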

\begin{proof}
We compute the limit of $M_\infty^{(l)}$ as $\b_{l}\to\b_j$ and $\b_{l-1}<\b_j-\ep$. To this aim, consider the definition of $M_\infty^{(l)}$ in \eqref{eq:M_infty_def} and notice that both $\a_{l+1}\to\b_j$ and $\b_{l+1}\to\b_j$ as $\b_{l}\to\b_j$. For every fix $\b_{l-1}$, denote by $\bar{\a}_{l}$ the limit of $\a_{l}$ as $\b_{l}\to\b_j$. We obtain
\[
\lim_{\substack{\b_{l}\to\b_j\\\b_{l-1}<\b_j-\ep}} M_\infty^{(l)}(\b_1,\ldots,\b_{k-1})
=1-\frac{\zeta_{[\b_{l-1},\b_j]}(\b_j)}{\zeta_{[\b_{l-1},\b_j]}(\bar{\a}_{l})}
\]
By Lemma \ref{lemma:alpha_not_going_beta} there exists $\delta$ independent of $\b_{l-1}$ such that $\bar{\a}_{l}<\b_j-\delta$. Since $\zeta_{[\b_{l-1},\b_j]}$ is decreasing, we conclude that the previous quantity is larger than a strictly positive constant which depends only on $\ep$. By taking $\ep$ sufficiently small, the statement follows. Similarly we have
\[
\lim_{\substack{\b_{l}\to\b_j\\\b_{l+1}>\b_j+\ep}} M_\infty^{(l)}(\b_1,\ldots,\b_{k-1})
=\frac{\xi_{[\b_{j},\b_{l+1}]}(\b_j)}{\xi_{[\b_{j},\b_{l+1}]}(\bar{\a}_{l+1})} -1 <-C(\ep). \qedhere
\]
\end{proof}

\begin{theorem}\label{thm:k-layer-limit}
Let $k\in\N_0$. There exists a configuration $0=\beta_{0}<\beta_{1}<\ldots<\beta_{k-1}<\b_k=1$ such that the function
\begin{equation}
u_{\infty,k\text{layer}}(r):=u_{\infty,1\text{-layer}}(r;\b_{j-1},\b_j) \quad\text{ for } r\in [\b_{j-1},\b_j), \ j=0,\ldots,k, 
\end{equation}
is continuous. In addition, the $\b_j$ satisfy
\begin{equation}\label{eq:b_j_def}
\frac{\xi'(\b_j)}{\zeta'(\b_j)}=\frac{\xi(\a_{j+1})-\xi(\a_j)}{\zeta(\a_{j+1})-\zeta(\a_j)}, \quad j=1,\ldots, k-1.
\end{equation}
\end{theorem}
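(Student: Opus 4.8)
The plan is to reduce everything to a single topological statement about the map $M_\infty$. By Definition \ref{def:1_layer_limit}, the candidate function $u_{\infty,k\text{layer}}$ coincides on each open interval $(\b_{j-1},\b_j)$ with the $1$-layer limit solution there, hence is automatically continuous inside each such interval; so its continuity on $[0,1]$ is equivalent to the $k-1$ matching conditions at the interior nodes, i.e. to $M_\infty(\b_1,\dots,\b_{k-1})=0$ with $M_\infty$ as in \eqref{eq:M_infty_def}. Thus the theorem amounts to producing a zero of $M_\infty$ in $T$. Granting such a zero, relation \eqref{eq:b_j_def} is immediate from the last line of \eqref{eq:M_infty_def}: $M_\infty^{(j)}(\b)=0$ forces the two denominators to coincide (both being positive by Proposition \ref{prop:xi_zeta_annulus}), i.e. $\xi'(\b_j)\bigl(\zeta(\a_{j+1})-\zeta(\a_j)\bigr)=\zeta'(\b_j)\bigl(\xi(\a_{j+1})-\xi(\a_j)\bigr)$, and since $\a_j<\b_j<\a_{j+1}$ gives $\zeta(\a_{j+1})\neq\zeta(\a_j)$ while $\zeta'(\b_j)\neq 0$ (because $\b_j<1$), this rearranges exactly to \eqref{eq:b_j_def}.

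To produce the zero I would argue by Brouwer degree. By Lemma \ref{lemma:M_infty_continuous}, $M_\infty$ extends continuously to the bounded open set $\overline T\subset\R^{k-1}$. As comparison map I would take the affine ``discrete Laplacian'' $L=(L^{(1)},\dots,L^{(k-1)})$, $L^{(j)}(\b)=2\b_j-\b_{j-1}-\b_{j+1}$ (with $\b_0=0$, $\b_k=1$): its unique zero is $\b_j=j/k\in T$, and $DL$ is tridiagonal with diagonal $2$ and off-diagonal $-1$, of determinant $k>0$, so $\deg(L,T,0)=1$. It then suffices to check that the homotopy $H(t,\b)=(1-t)L(\b)+tM_\infty(\b)$ does not vanish for $t\in[0,1]$ and $\b\in\partial T$, for then $\deg(M_\infty,T,0)=1\neq0$. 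The device I would use here is a ``run/edge'' dichotomy: given $\b\in\partial T$, pick a maximal run of equal consecutive coordinates $\b_l=\b_{l+1}=\dots=\b_m$ with $m>l$ (one exists since $\b\in\partial T$), of common value $c$. If $c>0$ the run cannot contain $\b_0$, so $l\geq1$ and $\b_{l-1}<c$, whence $L^{(l)}(\b)=c-\b_{l-1}>0$; meanwhile $M_\infty^{(l)}(\b)=u_{\infty,1\text{-layer}}(\b_l;\b_l,\b_{l+1})-u_{\infty,1\text{-layer}}(\b_l;\b_{l-1},\b_l)$, whose first term equals $1$ (the interval $[\b_l,\b_{l+1}]$ has collapsed to the point $c$, and a $1$-layer limit solution at an endpoint of a vanishing interval tends to $1$) while its second term is the value of a genuine $1$-layer limit solution at an endpoint different from its unique maximum point $\a_l$, hence is $<1$; so $M_\infty^{(l)}(\b)>0$ too, and $H^{(l)}(t,\b)>0$. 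If instead $c=0$, then $\b_0=\dots=\b_m=0<\b_{m+1}$, and using $\xi_{[0,b]}=\xi/\xi'(b)$ from \eqref{eq:xi_zeta_annulus_def}, $\xi(0)=1/(N-2)$ and the strict monotonicity of $\xi$ (Proposition \ref{prop:xi_zeta}) one gets $M_\infty^{(m)}(\b)=\xi(0)/\xi(\a_{m+1})-1<0$, while $L^{(m)}(\b)=-\b_{m+1}<0$, so $H^{(m)}(t,\b)<0$. In either case $H(t,\b)\neq0$, as required.

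The genuinely analytic content behind the second paragraph — the behaviour of the reflection points $\a_j=\a_j(\b_{j-1},\b_j)$ of \eqref{eq:reflection_law2} and of $M_\infty^{(j)}$ when several $\b_j$ collide or approach $0$ or $1$, in particular the fact that a $1$-layer limit solution evaluated at an endpoint of a shrinking interval tends to $1$ — is exactly what Lemmas \ref{lemma:alpha_regular}, \ref{lemma:alpha_asimpt_beta}, \ref{lemma:alpha_not_going_beta}, \ref{lemma:boundary_T_1}, \ref{lemma:boundary_T_k} and \ref{lemma:collapse_points} provide; the uniform constants $C(\ep)$ there are what one would instead invoke to run a Poincar\'e--Miranda argument after transporting $T$ to a cube. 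I expect the main obstacle to be bookkeeping rather than any single estimate: one must make sure the case analysis genuinely exhausts $\partial T$ (every pattern of coinciding coordinates and of coordinates sitting at $0$ or $1$, i.e. every face and lower-dimensional stratum of the simplex) and that the signs it produces are compatible with the chosen homotopy — the run/edge dichotomy above being the mechanism I would rely on to keep this organized.
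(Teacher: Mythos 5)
Your argument is correct and follows the paper's strategy in spirit: you reduce the theorem to finding a zero of the matching map $M_\infty$ of \eqref{eq:M_infty_def} in the simplex $T$, you get that zero from a Brouwer degree computation whose boundary input is exactly the content of Lemmas \ref{lemma:alpha_asimpt_beta}, \ref{lemma:alpha_not_going_beta}, \ref{lemma:M_infty_continuous}, \ref{lemma:boundary_T_1}, \ref{lemma:boundary_T_k} and \ref{lemma:collapse_points}, and you then read off \eqref{eq:b_j_def} algebraically from $M_\infty^{(j)}=0$ (a step the paper leaves implicit; your justification that $\zeta'(\b_j)\neq 0$ and $\zeta(\a_j)\neq\zeta(\a_{j+1})$ is the right one). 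Where you genuinely differ is in the implementation of the homotopy. The paper homotopes $M_\infty$ to the translated identity $I-P$, which forces a careful choice of the anchor $P$ (each $P_j\in(\ep,1-\ep)$, consecutive $P_j$ spaced more than $2\ep$ apart) and a rather heavy stratification of $\partial T$ into the pieces $(\partial T)_{j,l}^{\pm}$, on each of which one must find a nonvanishing component of $H$ using the uniform constants $C(\ep)$ of the boundary lemmas. You instead take the discrete-Laplacian affine map $L^{(j)}(\b)=2\b_j-\b_{j-1}-\b_{j+1}$, whose unique zero $\b_j=j/k$ is interior and whose Jacobian determinant $k>0$ gives degree $1$ immediately, and you organize $\partial T$ by a single maximal-run dichotomy: for a run at a positive value $c$ the component indexed by the start of the run has both $L$ and $M_\infty$ strictly positive (the collapsed interval contributes the value $1$, the nondegenerate interval a value strictly below $1$), while for the run at $0$ the component indexed by the end of the run has both strictly negative (here the fact that the $1$-layer value at the endpoint of an interval shrinking to $0$ still tends to $1$ is the nontrivial point, and it is supplied by Lemma \ref{lemma:boundary_T_1} via the asymptotics $\a\sim\b$ of Lemma \ref{lemma:alpha_asimpt_beta}). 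This buys a cleaner, purely pointwise sign check on $\partial T$ with no anchor to tune; the paper's $\ep$-quantified version has the minor advantage of producing uniform nonvanishing constants on boundary strata, but these are not needed for the theorem (nor for the later passage to finite $p$, which only uses the degree on a neighborhood of the zero set). One cosmetic slip: $\overline T$ is compact, not open — the degree is computed on the bounded open set $T$, with $M_\infty$ extended continuously to $\overline T$ by Lemma \ref{lemma:M_infty_continuous} — but this does not affect the argument.
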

\begin{proof}
For $P=(P_1,\ldots,P_{k-1})\in T$ to be chosen later, let us introduce the operator
\[
(I-P)(\b_1,\ldots,\b_{k-1})=\left(\beta_1-P_1,\ldots,\beta_{k-1}-P_{k-1}\right).
\]
We want to show that the homotopy $H=(H^{(1)},\ldots,H^{(k-1)})$ defined by
\begin{equation}\label{eq:homotopy}
H(t,\b_1,\ldots,\b_{k-1})=tM_\infty(\b_1,\ldots,\b_{k-1})+(1-t)(I-P)(\b_1,\ldots,\b_{k-1})
\end{equation}
satisfies
\begin{equation}\label{1}
H(t,\b_1,\ldots,\b_{k-1})\ne0\quad\text{for every } t\in[0,1]\hbox{ and }(\b_1,\ldots,\b_{k-1})\in\partial T.
\end{equation}
Here $M_\infty$ is extended to $\partial T$ thanks to Lemma \ref{lemma:M_infty_continuous}.

In the following take $\ep<\bar{\ep}/2$, with $\bar{\ep}$ such that the statements of Lemmas \ref{lemma:boundary_T_1}, \ref{lemma:boundary_T_k} and \ref{lemma:collapse_points} hold true.

Let us first consider $H$ on $(\partial T)_1$, as defined in \eqref{eq:partial_T}. We write
\[
(\partial T)_1=\cup_{j=1}^{k-1}(\partial T)_{1,j}, \quad
(\partial T)_{1,j} :=\{ (\b_1,\ldots,\b_{k-1})\in (\partial T)_1: \, \b_j\leq\ep\leq\b_{j+1}\}
\]
By Lemma \ref{lemma:boundary_T_1} there exists $C>0$ such that
\[
H^{(j)}(t,\b_1,\ldots,\b_{k-1})<-tC+(1-t)(\ep-P_j) \quad\text{on } (\partial T)_{1,j},
\]
for every $j=1,\ldots,k-1$. This quantity is negative for every $t\in[0,1]$ provided that $P_j>\ep$ for every $j$.

Let us consider $H$ on $(\partial T)_k$. We write
\[
(\partial T)_k=\cup_{j=1}^{k-1}(\partial T)_{k,j}, \quad
(\partial T)_{k,j} :=\{ (\b_1,\ldots,\b_{k-1})\in (\partial T)_k: \, \b_{j-1}\leq1-\ep\leq\b_{j} \}.
\]
Then, by Lemma \ref{lemma:boundary_T_k},
\[
H^{(j)}(t,\b_1,\ldots,\b_{k-1})>tC+(1-t)(1-\ep-P_j) \quad\text{on } (\partial T)_{k,j},
\]
and this quantity is positive for every $t\in[0,1]$ provided that $P_j<1-\ep$ for every $j$.

Finally, let us consider $H$ on $(\partial T)_j$, for a fix $j=1,\ldots,k-1$. We define
\[
(\partial T)_{j,-1}^-=\{(\b_1,\ldots,\b_{k-1}) \in (\partial T)_j :\, \b_{j-1}=\b_j \leq\ep \},
\]
\[
(\partial T)_{j,l}^-=\{(\b_1,\ldots,\b_{k-1}) \in (\partial T)_j :\, \b_l\leq\b_{j-1}-\ep\leq\b_{l+1} \},
\]
for $l=0,\ldots,j-2$, and
\[
(\partial T)_{j,m}^+=\{(\b_1,\ldots,\b_{k-1}) \in (\partial T)_j :\, \b_{m-1}\leq \b_j+\ep\leq \b_m \},
\]
\[
(\partial T)_{j,k+1}^+=\{(\b_1,\ldots,\b_{k-1}) \in (\partial T)_j :\, \b_{j-1}=\b_j\geq1-\ep \},
\]
for $m=j+1,\ldots,k$, so that
\[
(\partial T)_{j}=\cup_{m=j+1}^k\left( (\partial T)_{j,-1}^-\cap (\partial T)_{j,m}^+ \right)
\cup_{l=0}^{j-2}\cup_{m=j+1}^{k+1}\left( (\partial T)_{j,l}^-\cap(\partial T)_{j,m}^+ \right).
\]
Let us show that on each piece of this decomposition at least one component  of $H$ does not vanish.

On $(\partial T)_{j,-1}^-\cap (\partial T)_{j,m}^+$, $m=j+1,\ldots,k$, we have $\b_{m-1}\leq \b_j+\ep\leq 2\ep$ and $\b_m\geq \b_j+\ep\geq\ep$.  Lemma \ref{lemma:boundary_T_1} implies
\[
M_\infty^{(m-1)}(\b_1,\ldots,\b_{k-1})<-C 
\quad \text{on } (\partial T)_{j,-1}^-\cap (\partial T)_{j,m}^+ ,
\]
so that
\[
H^{(m-1)}(t,\b_1,\ldots,\b_{k-1})<-Ct+(1-t) (2\ep-P_{m-1}) <0
\quad \text{on } (\partial T)_{j,-1}^-\cap (\partial T)_{j,m}^+,
\]
for every $m=j+1,\ldots,k$.

Next consider $(\partial T)_{j,l}^-\cap(\partial T)_{j,m}^+$, for $l=0,\ldots,j-2$ and $m=j+1,\ldots,k$. Lemma \ref{lemma:collapse_points} implies that
\[
M_\infty^{(l+1)}>C \text{ and } M_\infty^{(m-1)}<-C 
\quad\text{on } (\partial T)_{j,l}^-\cap(\partial T)_{j,m}^+,
\]
hence
\[
H^{(l+1)}>Ct+(1-t) (\b_{l+1}-P_{l+1}),\quad
H^{(m-1)}<-Ct+(1-t) (\b_{m-1}-P_{m-1})
\]
on $(\partial T)_{j,l}^-\cap(\partial T)_{j,m}^+$. Suppose by contradiction that both $H^{(l+1)}$ and $H^{(m-1)}$ vanish on $(\partial T)_{j,l}^-\cap(\partial T)_{j,m}^+$, then
\[
\b_{j-1}-\ep\leq \b_{l+1}<P_{l+1}<P_{m-1}<\b_{m-1}\leq\b_j+\ep=\b_{j-1}+\ep,
\]
which is not possible provided that $|P_{l+1}-P_{m-1}|>2\ep$.

Finally on $(\partial T)_{j,l}^-\cap(\partial T)_{j,k+1}^+$, $l=0,\ldots,j-2$, we have $\b_l\leq\b_{j-1}-\ep\leq1-\ep$ and $\b_{l+1}\geq\b_{j-1}-\ep\geq1-2\ep$. Lemma \ref{lemma:boundary_T_k} implies
\[
H^{(l+1)}> Ct+(1-t) (1-2\ep-P_{l+1}) >0
\quad \text{on } (\partial T)_{j,l}^-\cap(\partial T)_{j,k+1}^+,
\]
for every $l=0,\ldots,j-2$.

By \eqref{eq:homotopy} we get that
\begin{equation}\label{6}
deg\left(M_\infty(\b_1,\ldots,\b_{k-1}), T,0\right)=deg\left(I-P,T,0\right)=1.
\end{equation}
Then the equation
\begin{equation}\label{7}
M_\infty(\b_1,\ldots,\b_{k-1})=0
\end{equation}
admits at least a solution in $T$.
\end{proof}

\begin{proposition}
We have
\begin{equation}
u_{\infty,k-\text{layer}}=\sum_{j=1}^k A_j G(r,\a_j),
\end{equation}
where $(\a_1,\ldots,\a_k)$ is a critical point of the function $\varphi$ defined in \eqref{eq:varphi_def} and $(A_1,\ldots,A_k)$ is a solution of the system \eqref{eq:system_A_j_def}.
\end{proposition}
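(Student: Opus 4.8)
The plan is to establish first the global representation $u_{\infty,k\text{layer}}=\sum_j A_jG(\cdot,\a_j)$ on all of $[0,1]$, and then to read the reflection law at the nodes $\a_j$ as the Euler--Lagrange condition for $\varphi$. I would proceed in three steps.

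\emph{Step 1: global representation and the system.} By Theorem \ref{thm:k-layer-limit}, $u_{\infty,k\text{layer}}$ is continuous on $[0,1]$, it inherits the Neumann conditions $u_{\infty,k\text{layer}}'(0)=u_{\infty,k\text{layer}}'(1)=0$ from the extremal pieces $u_{\infty,1\text{-layer}}(\cdot;0,\b_1)$ and $u_{\infty,1\text{-layer}}(\cdot;\b_{k-1},1)$, and on $(\b_{j-1},\b_j)$ it coincides with $G_{[\b_{j-1},\b_j]}(\cdot,\a_j)/G_{[\b_{j-1},\b_j]}(\a_j,\a_j)$, hence solves $\mathcal Lu=0$ on $(\b_{j-1},\b_j)\setminus\{\a_j\}$ and has a corner at $\a_j$. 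At each interior node $\b_j$ both one-sided derivatives of $u_{\infty,k\text{layer}}$ vanish, since each adjacent piece carries a Neumann condition at $\b_j$; thus $u_{\infty,k\text{layer}}$ is $C^1$ across $\b_j$, and since $u_{\infty,k\text{layer}}''=-\frac{N-1}{r}u_{\infty,k\text{layer}}'+u_{\infty,k\text{layer}}$ on both sides, it is in fact smooth across $\b_j$. Therefore $\mathcal Lu_{\infty,k\text{layer}}=\sum_{j=1}^kA_j\delta_{\a_j}$ in $\mathcal D'(0,1)$, with $A_j=u_{\infty,k\text{layer}}'(\a_j^-)-u_{\infty,k\text{layer}}'(\a_j^+)=1/G_{[\b_{j-1},\b_j]}(\a_j,\a_j)>0$ by \eqref{eq:xi_zeta_annulus}. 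The function $\sum_jA_jG(\cdot,\a_j)$ solves the same equation with the same boundary conditions, and the homogeneous problem $\mathcal Lw=0$, $w'(0)=w'(1)=0$, has only the trivial solution (a solution with $w'(0)=0$ is a multiple of $\xi$, and $\xi'(1)>0$ by \eqref{eq:xi_zeta} and Proposition \ref{prop:xi_zeta}); hence $u_{\infty,k\text{layer}}=\sum_{j=1}^kA_jG(\cdot,\a_j)$. Evaluating at $r=\a_i$ and using $u_{\infty,k\text{layer}}(\a_i)=1$ (immediate from Definition \ref{def:1_layer_limit}) yields \eqref{eq:system_A_j_def}.

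\emph{Step 2: the minimizer of $\varphi$ and a matrix formula.} For $\vec s=(s_1,\dots,s_k)$ with $0<s_1<\dots<s_k<1$, each evaluation $v\mapsto v(s_i)$ is continuous on $H^1_{rad}(B_1)$ with Riesz representative $\frac{1}{|\partial B_1|s_i^{N-1}}G(\cdot,s_i)$ (integrate by parts using \eqref{eq:Green_def}). Hence the unique minimizer $w_{\vec s}$ in \eqref{eq:varphi_def} has the form $w_{\vec s}=\sum_i\mu_iG(\cdot,s_i)$, with $\mu=\mu(\vec s)$ fixed by $w_{\vec s}(s_j)=1$. Writing $G(r,s)=s^{N-1}\widetilde G(r,s)$ with the symmetric kernel $\widetilde G(r,s)=\xi(\min(r,s))\zeta(\max(r,s))$ and $\nu_i:=\mu_is_i^{N-1}$, the constraint becomes $\widetilde M(\vec s)\nu=\mathbf 1$, where $\widetilde M(\vec s)=(\widetilde G(s_i,s_j))_{i,j}$ and $\mathbf 1=(1,\dots,1)^{\mathsf T}$; moreover $\|w_{\vec s}\|_{H^1}^2=|\partial B_1|\sum_i\nu_i$, whence
\begin{equation}\label{eq:varphi_via_M}
\varphi(\vec s)=|\partial B_1|\,\mathbf 1^{\mathsf T}\widetilde M(\vec s)^{-1}\mathbf 1 .
\end{equation}
On $\{0<s_1<\dots<s_k<1\}$ the matrix $\widetilde M(\vec s)$ is positive definite (congruent to the Gram matrix of the linearly independent $G(\cdot,s_i)$) and smooth in $\vec s$ (for $i\ne j$ the kernel is evaluated off its corner, while $\widetilde G(s_l,s_l)=\xi(s_l)\zeta(s_l)$), so $\varphi$ is smooth there.

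\emph{Step 3: criticality at $\vec\a$, and the main obstacle.} Differentiating \eqref{eq:varphi_via_M}, using $\widetilde M^{-1}\mathbf 1=\nu$ and the symmetry of $\widetilde M$,
\[
\partial_{s_l}\varphi(\vec s)=-|\partial B_1|\,\nu^{\mathsf T}(\partial_{s_l}\widetilde M)\nu=-|\partial B_1|\,\nu_l\big(w_{\vec s}'(s_l^-)+w_{\vec s}'(s_l^+)\big),
\]
the second equality being a direct computation from the facts that $\partial_{s_l}\widetilde M$ has nonzero entries only in row and column $l$, that $\partial_{s_l}\widetilde G(s_l,s_j)=\partial_1\widetilde G(s_l,s_j)$ for $j\ne l$ while $\partial_{s_l}\widetilde G(s_l,s_l)=\xi'(s_l)\zeta(s_l)+\xi(s_l)\zeta'(s_l)=\partial_1\widetilde G(s_l^-,s_l)+\partial_1\widetilde G(s_l^+,s_l)$, and that $w_{\vec s}=\sum_i\nu_i\widetilde G(\cdot,s_i)$. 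By Step 1, at $\vec s=\vec\a$ the function $u_{\infty,k\text{layer}}$ belongs to the admissible class of \eqref{eq:varphi_def} and has the form $\sum_iA_iG(\cdot,\a_i)$, hence coincides with the unique minimizer $w_{\vec\a}$; and $u_{\infty,k\text{layer}}$ satisfies the reflection law $u_{\infty,k\text{layer}}'(\a_l^-)=-u_{\infty,k\text{layer}}'(\a_l^+)$ at each node (the observation following Definition \ref{def:1_layer_limit}). Therefore $\partial_{s_l}\varphi(\vec\a)=0$ for every $l$, i.e. $\vec\a$ is a critical point of $\varphi$. The crux is obtaining this clean formula for $\partial_{s_l}\varphi$: the minimizer $w_{\vec s}$ is \emph{not} differentiable as an $H^1$-valued function of $\vec s$ (displacing a corner of a Green function produces a dipole, which is not in $L^2$), so $\varphi$ cannot be differentiated naively; passing through the representation \eqref{eq:varphi_via_M} both yields the smoothness of $\varphi$ and identifies its critical-point equation with the reflection law. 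The regularity of $u_{\infty,k\text{layer}}$ at the gluing nodes $\b_j$, needed to make the distributional identity in Step 1 precise, is a secondary technical point, handled by the vanishing of the one-sided derivatives there.
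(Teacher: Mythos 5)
Your argument is correct, and it reaches the conclusion by a route that differs from the paper's in its core computation. The paper also represents $\varphi$ through its Green-function minimizer, but it parametrizes that minimizer as a juxtaposition of monotone pieces glued at intermediate Neumann points $\b_j(\vec s)$ defined implicitly by \eqref{eq:b_j_def2}, eliminates the $\b_j$ to get an explicit expression $\varphi=|\partial B_1|\sum_j\Phi_j(s_{j-1},s_j,s_{j+1})$ in terms of $\xi,\zeta$, and then differentiates by hand (the $N_i$, $D_i$ manipulations), obtaining an equivalence between $\nabla\varphi=0$ and the reflection system \eqref{eq:reflection_law_continuous1}--\eqref{eq:reflection_law_continuousk}; the representation of $u_{\infty,k\text{layer}}$ as $\sum_j A_jG(\cdot,\a_j)$ and the system \eqref{eq:system_A_j_def} are only asserted briefly. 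You instead make the global representation precise (your Step 1, including the $C^1$ matching at the $\b_j$, the jump computation $A_j=1/G_{[\b_{j-1},\b_j]}(\a_j,\a_j)$ via \eqref{eq:xi_zeta_annulus}, and the uniqueness argument through $\xi$, $\xi'(1)=1/\zeta(1)>0$), and you compute $\nabla\varphi$ through the Riesz/Gram formula $\varphi(\vec s)=|\partial B_1|\,\mathbf 1^{\mathsf T}\widetilde M(\vec s)^{-1}\mathbf 1$, which yields the clean identity $\partial_{s_l}\varphi=-|\partial B_1|\,\nu_l\bigl(w_{\vec s}'(s_l^-)+w_{\vec s}'(s_l^+)\bigr)$; criticality at $\vec\a$ then follows at once from the identification $u_{\infty,k\text{layer}}=w_{\vec\a}$ and the reflection law \eqref{eq:reflection_law2}. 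I checked your derivative formula against the paper's one-layer computation \eqref{eq:phi_derivative} and it is consistent. What each approach buys: yours gives the smoothness of $\varphi$ and its Euler--Lagrange condition conceptually, avoids the implicit functions $\b_j(\vec s)$ and the explicit $\xi,\zeta$ algebra, and supplies the missing details of Step 1; the paper's computation is heavier but delivers the "if and only if" characterization of critical points of $\varphi$ as solutions of the reflection system (your formula would give the same equivalence once one notes $\nu_l\neq0$, but you only prove, and only need, the one implication). No gaps of substance; the only points to state explicitly if you wrote this up would be the boundedness of $v=u_{\infty,k\text{layer}}-\sum_jA_jG(\cdot,\a_j)$ near $r=0$ when you conclude it is a multiple of $\xi$, and the strict positivity of $\xi'$ on $(0,1]$.
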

\begin{proof}
By construction, $u_{\infty,k-\text{layer}}$ is the juxtaposition of $k$ 1-layer solutions $u_{\infty,1\text{-layer}}(r;\b_{j-1},\b_j)$ as defined in \eqref{eq:1layer_def}. The $\b_j$ are such that the juxtaposition is continuous, that is to say \eqref{eq:b_j_def} holds. Recall that each 1-layer solution attains it maximum value 1 at $r=\a_j$, with $\a_j$ satisfying \eqref{eq:reflection_law2}, therefore $(A_1,\ldots,A_k)$ solves the system \eqref{eq:system_A_j_def}. We only have to prove that $(\a_1,\ldots,\a_k)$ is a critical point of $\varphi$.

Let us write relation \eqref{eq:reflection_law2} more explicitely for $\b_j$ satisfying \eqref{eq:b_j_def}:
\begin{equation}\label{eq:reflection_law_continuous1}
\frac{\xi'(\a_1)}{\xi(\a_1)} + \frac{\zeta'(\a_1)[\xi(\a_2)-\xi(\a_1)]-\xi'(\a_1)[\zeta(\a_2)-\zeta(\a_1)]}{\xi(\a_2)\zeta(\a_1)-\zeta(\a_2)\xi(\a_1)}=0, 
\end{equation}
\begin{multline}\label{eq:reflection_law_continuousj}
\frac{\zeta'(\a_j)[\xi(\a_j)-\xi(\a_{j-1})]-\xi'(\a_j)[\zeta(\a_j)-\zeta(\a_{j-1})]}{\xi(\a_j)\zeta(\a_{j-1})-\zeta(\a_j)\xi(\a_{j-1})} \\
+\frac{\zeta'(\a_j)[\xi(\a_{j+1})-\xi(\a_{j})]-\xi'(\a_j)[\zeta(\a_{j+1})-\zeta(\a_{j})]}{\xi(\a_{j+1})\zeta(\a_{j})-\zeta(\a_{j+1})\xi(\a_{j})}=0 \quad j=2,\ldots,k-1, \\
\end{multline}
\begin{equation}\label{eq:reflection_law_continuousk}
\frac{\zeta'(\a_k)[\xi(\a_k)-\xi(\a_{k-1})]-\xi'(\a_k)[\zeta(\a_k)-\zeta(\a_{k-1})]}{\xi(\a_k)\zeta(\a_{k-1})-\zeta(\a_k)\xi(\a_{k-1})} + \frac{\zeta'(\a_k)}{\zeta(\a_k)}=0.
\end{equation}
We have to prove that
\[
\varphi'(\a_1,\ldots,\a_k)=0 \quad\text{if and only if}\quad  (\a_1,\ldots,\a_k)\text{ satisfys } \eqref{eq:reflection_law_continuous1}-\eqref{eq:reflection_law_continuousk}
\]
 
We have
\begin{equation}
\begin{split}
\varphi(s_1,\ldots,s_k)
& =|\partial B_1| \cdot \sum_{j=1}^k s_j^{N-1} 
\left[ u_{\infty,+}'(s_j;\b_{j-1},s_j)-u_{\infty,-}'(s_j;s_j,\b_j) \right] \\
& =|\partial B_1| \cdot \sum_{j=1}^k s_j^{N-1} \left[ 
\frac{\xi_{[\b_{j-1},\b_j]}'(s_j)}{\xi_{[\b_{j-1},\b_j]}(s_j)} 
-\frac{\zeta_{[\b_{j-1},\b_j]}'(s_j)}{\zeta_{[\b_{j-1},\b_j]}(s_j)}
\right]=:|\partial B_1| \cdot \sum_{j=1}^k \Phi_j,
\end{split}
\end{equation}
with the $\b_j$ satisfying (see \eqref{eq:b_j_def})
\begin{equation}\label{eq:b_j_def2}
\frac{\xi'(\b_j)}{\zeta'(\b_j)}=\frac{\xi(s_{j+1})-\xi(s_j)}{\zeta(s_{j+1})-\zeta(s_j)}, \quad j=1,\ldots, k-1.
\end{equation}
We compute $\partial\varphi/\partial s_j$ for $j=2,\ldots,k-1$ (the cases $j=1$ and $j=k$ being similar). For such $j$, using relation \eqref{eq:b_j_def2}, rearranging the terms, and recalling \eqref{eq:xi_zeta}, we obtain
\begin{equation}
\Phi_j
=\frac{\zeta(s_j)[\xi(s_{j-1})-\xi(s_{j+1})]+\zeta(s_{j-1})[\xi(s_{j+1})-\xi(s_{j})]+\zeta(s_{j+1})[\xi(s_{j})-\xi(s_{j-1})]}{[\xi(s_j)\zeta(s_{j-1})-\xi(s_{j-1})\zeta(s_j)][\xi(s_{j+1})\zeta(s_j)-\xi(s_j)\zeta(s_{j+1}]}.
\end{equation}
When we compute $\partial\varphi/\partial s_j$, only the terms $\Phi_{j-1}$, $\Phi_j$ and $\Phi_{j+1}$ intervene. Some tedious computations provide
\begin{equation}
\frac{\partial \Phi_{j-1}}{\partial s_j} = \frac{\zeta'(s_j)[\xi(s_j)-\xi(s_{j-1})] - \xi'(s_j)[\zeta(s_j)-\zeta(s_{j-1})]}{[\xi(s_j)\zeta(s_{j-1})-\xi(s_{j-1})\zeta(s_j)]^2}
=: \frac{\zeta'(s_j)N_1 - \xi'(s_j)N_2}{D_1^2},
\end{equation}
\begin{equation}
\frac{\partial \Phi_{j+1}}{\partial s_j} = \frac{\zeta'(s_j)[\xi(s_{j+1})-\xi(s_{j})]-\xi'(s_j)[\zeta(s_{j+1})-\zeta(s_{j})]}{[\xi(s_{j+1})\zeta(s_{j})-\xi(s_{j})\zeta(s_{j+1})]^2}
=:\frac{\zeta'(s_j)N_3-\xi'(s_j)N_4}{D_2^2}
\end{equation}
and 
\begin{equation}
\frac{\partial \Phi_{j}}{\partial s_j} = \frac{\zeta'(s_j)N_5-\xi'(s_j)N_6}{D_1^2D_2^2},
\end{equation}
where
\begin{equation}
N_5:=\xi(s_{j-1})D_2[\zeta(s_{j+1})N_1-\xi(s_{j+1})N_2] -\xi(s_{j+1})D_1[\zeta(s_{j-1})N_3-\xi(s_{j-1})N_4],
\end{equation}
\begin{equation}
N_6:=\zeta(s_{j-1})D_2[\zeta(s_{j+1})N_1-\xi(s_{j+1})N_2] -\zeta(s_{j+1})D_1[\zeta(s_{j-1})N_3-\xi(s_{j-1})N_4].
\end{equation}
We sum the contributions to obtain, for $j=2,\ldots,k-1$,
\begin{multline}
\frac{\partial\varphi}{\partial s_j}=\frac{\partial \Phi_{j-1}}{\partial s_j}+\frac{\partial \Phi_{j}}{\partial s_j}+\frac{\partial \Phi_{j+1}}{\partial s_j} \\
= \frac{\Phi_j}{D_1 D_2} \left\{ \xi'(s_j)(N_2D_2+N_4D_1)-\zeta'(s_j)(N_1D_2+N_3D_1) \right\}.
\end{multline}
Therefore $\partial\varphi/\partial s_j=0$ if and only if \eqref{eq:reflection_law_continuousj} holds. Similarly, one can prove that $\partial\varphi/\partial s_1=0$ is equivalent to \eqref{eq:reflection_law_continuous1} and $\partial\varphi/\partial s_k=0$  is equivalent to \eqref{eq:reflection_law_continuousk}.
\end{proof}

We conclude this section with the following conjecture, which seems natural to us, since we have proved in Lemma \ref{lemma:uniqueness_1_layer} that the 1-layer solution of the limit problem is unique.

\begin{conjecture}
The configuration $(\b_1,\ldots,\b_{k-1})$ in Theorem \ref{thm:k-layer-limit} is unique.
\end{conjecture}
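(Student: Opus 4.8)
The plan is to run a degree-counting argument directly on the map $M_\infty$. Recall from the proof of Theorem \ref{thm:k-layer-limit} that $\deg(M_\infty,T,0)=1$ and that $M_\infty$ does not vanish on a whole neighbourhood of $\partial T$ in $\overline{T}$ (this is what Lemmas \ref{lemma:boundary_T_1}, \ref{lemma:boundary_T_k} and \ref{lemma:collapse_points} provide); hence every zero of $M_\infty$ lies in a fixed compact subset of the open set $T$, and the Brouwer degree equals the algebraic count of these zeros as soon as they are nondegenerate. By the Proposition preceding this conjecture, a zero $(\b_1,\ldots,\b_{k-1})$ of $M_\infty$ corresponds, via \eqref{eq:reflection_law2} and \eqref{eq:b_j_def}, to a critical point $(\a_1,\ldots,\a_k)$ of $\varphi$ in the simplex $\{0<s_1<\cdots<s_k<1\}$, and this correspondence is a bijection (its inverse is obtained from \eqref{eq:b_j_def} together with the strict monotonicity of $s\mapsto\xi'(s)/\zeta'(s)$ on $(0,1)$, which makes $\b_j$ well defined and, by the Cauchy mean value theorem, places it in $(\a_j,\a_{j+1})$). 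So the conjecture is equivalent to the uniqueness of the critical point of $\varphi$, and, by the Remark after Lemma \ref{lemma:uniqueness_1_layer}, to a statement about a weighted multi-point Robin function.

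The core step would be to prove that at every zero $\b=(\b_1,\ldots,\b_{k-1})$ of $M_\infty$ the Jacobian $DM_\infty(\b)$ is nonsingular and $\operatorname{sign}\det DM_\infty(\b)$ does not depend on $\b$; granting this,
\[
1=\deg(M_\infty,T,0)=\sum_{M_\infty(\b)=0}\operatorname{sign}\det DM_\infty(\b),
\]
so the common sign must be $+1$ and there is exactly one zero. By \eqref{eq:M_infty_def}, $M_\infty^{(j)}$ depends only on $\b_{j-1},\b_j,\b_{j+1}$, hence $DM_\infty$ is tridiagonal. I would first show that the diagonal entry $\partial M_\infty^{(j)}/\partial\b_j$ is strictly positive, by differentiating the two fractions in \eqref{eq:M_infty_def} and using the monotonicities \eqref{eq:xi_xi'_derivative} and \eqref{eq:zeta_zeta'_derivative} that already underlie the one-variable uniqueness in Lemma \ref{lemma:uniqueness_1_layer}. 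Then the off-diagonal entries $\partial M_\infty^{(j)}/\partial\b_{j-1}$ and $\partial M_\infty^{(j)}/\partial\b_{j+1}$, which involve the sensitivities $\partial\a_j/\partial\b_{j-1}$ and $\partial\a_{j+1}/\partial\b_{j+1}$ obtained by implicit differentiation of \eqref{eq:reflection_law2}, should be shown to carry the sign opposite to the diagonal and to leave the matrix strictly diagonally dominant. A nonsingular tridiagonal matrix with positive diagonal, off-diagonal entries of opposite sign, and strict diagonal dominance has positive determinant, which closes the count.

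The hard part is expected to be exactly this last estimate. The monotonicity currently available — Lemma \ref{lemma:uniqueness_1_layer}, i.e.\ the strict monotonicity of $s\mapsto\xi_{[a,b]}'(s)/\xi_{[a,b]}(s)$ and of $s\mapsto\zeta_{[a,b]}'(s)/\zeta_{[a,b]}(s)$ — is precisely what the scalar case $k=1$ needs, but it does not by itself yield the diagonal dominance required when $k\ge 2$: one needs quantitative, interval-uniform control of how the reflection point $\a_j$ depends on the endpoints $\b_{j-1},\b_j$ and of the boundary ratios $\xi_{[a,b]}(a)/\xi_{[a,b]}(\bar s)$ and $\zeta_{[a,b]}(b)/\zeta_{[a,b]}(\bar s)$, so as to dominate the off-diagonal by the diagonal terms uniformly over all admissible configurations $0=\b_0<\b_1<\cdots<\b_k=1$. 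Producing such estimates, and thereby excluding a sign change of some principal minor of $DM_\infty$ along the zero set, is the step we have not been able to settle, which is why the result is stated only as a conjecture.
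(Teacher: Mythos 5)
This statement is stated in the paper as an open \emph{conjecture}: the authors give no proof, so there is nothing to compare your attempt against except the question of whether it actually settles the uniqueness. It does not, and you say so yourself in the last paragraph. The bookkeeping you set up is sound and consistent with the paper's own machinery: the zeros of $M_\infty$ are confined to a compact subset of $T$ by Lemmas \ref{lemma:boundary_T_1}, \ref{lemma:boundary_T_k} and \ref{lemma:collapse_points}, $\deg(M_\infty,T,0)=1$ by \eqref{6}, the matrix $DM_\infty$ is indeed tridiagonal since $M_\infty^{(j)}$ in \eqref{eq:M_infty_def} depends only on $\b_{j-1},\b_j,\b_{j+1}$, and the identification of zeros of $M_\infty$ with critical points of $\varphi$ via \eqref{eq:reflection_law2} and \eqref{eq:b_j_def} is exactly the content of the Proposition following Theorem \ref{thm:k-layer-limit} (your inversion step is also fine: by \eqref{eq:xi_zeta} one computes $(\xi'/\zeta')'=-1/\bigl(r^{N-1}(\zeta')^2\bigr)<0$, and $\zeta'<0$ on $(0,1)$, so $\b_j$ is uniquely determined by \eqref{eq:b_j_def}).

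The genuine gap is the one you name but do not close: nondegeneracy of every zero of $M_\infty$ together with constancy of $\operatorname{sign}\det DM_\infty$ along the zero set. Without it, degree $1$ is perfectly compatible with several zeros (e.g. indices $+1,+1,-1$) or with degenerate zeros whose local index is not $\pm1$, so the algebraic count cannot be converted into uniqueness. Moreover, this step is not a routine consequence of the ingredients available in the paper: Lemma \ref{lemma:uniqueness_1_layer} and the monotonicities \eqref{eq:xi_xi'_derivative}, \eqref{eq:zeta_zeta'_derivative} control the dependence on the single interior variable $s$ with the endpoints $a,b$ frozen — precisely the $k=1$ case — whereas the off-diagonal entries of $DM_\infty$ require quantitative control of $\partial\bar s/\partial a$, $\partial\bar s/\partial b$ and of the boundary values of the normalized Green functions as the endpoints move, uniformly over all admissible configurations; no such estimates are proved in the paper, and your proposed diagonal-dominance bound is asserted as a hope rather than derived. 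So the proposal is a reasonable programme (and arguably the natural one, given that the paper already proves the scalar uniqueness in Lemma \ref{lemma:uniqueness_1_layer} and computes the degree), but it is not a proof; the conjecture remains open after your attempt, exactly as it does in the paper.
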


\section{Existence of the increasing and decreasing solutions}\label{S2}

\subsection{The increasing solution in the ball}

Let $0\leq\alpha<\beta\leq 1$. If $u\in H^1_{rad}(B_\b\setminus B_\a)$, we can assume it is continuous in $(\a,\b]$ ($\a$ included if positive) and the following set is well defined
\[
\cone_{+,[\a,\b]}=\{u\in H^1_{rad}(B_\b\setminus B_\a):\ u\geq0 \text{ and } u(r)\leq u(s) \text{ for every } \a < r\leq s \leq \b\}.
\]
Observe that if $\a=0$ and $u\in \cone_{+,[\a,\b]}$, then $u\in C(\overline{B_\b})$ and in particular it is a bounded function. In fact, since $u$ is non-decreasing, we can assume continuity also at the origin by letting $u(0)=\lim_{r\to0^+}u(r)$.
Moreover, $u$ is differentiable almost everywhere and $u'(r)\geq0$ where it is defined.

\begin{lemma}\label{lemma:properties_sol_cone}
Let $u\in\cone_{+,[\a,\b]}$ solve
\begin{equation}\label{eq:main_alpha_beta}
\left\{\begin{array}{ll}
-\Delta u+u=u^p & \text{ in } B_\b\setminus B_\a\\
\partial_\nu u=0 & \text{ on } \partial(B_\b\setminus B_\a).
\end{array}
\right.
\end{equation}
Then
\begin{itemize}
\item[(i)] either $u\equiv 1$, or $u(\a)<1$ and $u(\b)>1$;
\item[(ii)] $|u|\leq e^{1/2}$ in $\overline{B_\b\setminus B_\a}$;
\item[(iii)] $|u'|< 1$ in $\overline{B_\b\setminus B_\a}$.
\end{itemize}
\end{lemma}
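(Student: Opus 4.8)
The plan is to work with the one–dimensional radial ODE and its natural Hamiltonian. By the regularity observations preceding the statement (monotone $H^1_{rad}$ functions are bounded and continuous) together with standard elliptic bootstrap, any $u\in\cone_{+,[\a,\b]}$ solving \eqref{eq:main_alpha_beta} is a classical solution of
\[
-u''-\tfrac{N-1}{r}u'+u=u^p \ \text{ in }(\a,\b),\qquad u'(\a)=u'(\b)=0,
\]
($C^2$ inside, $C^1$ up to the boundary, with the convention $\a^{N-1}u'(\a)=0$ when $\a=0$). Multiplying by $r^{N-1}$ puts the equation in divergence form $-(r^{N-1}u')'=r^{N-1}(u^p-u)$, and I would introduce the Hamiltonian
\[
E(r):=-\tfrac12\,u'(r)^2+\tfrac12\,u(r)^2-\tfrac1{p+1}\,u(r)^{p+1}.
\]
A one–line computation using the equation gives $E'(r)=\tfrac{N-1}{r}\,u'(r)^2\ge 0$, so $E$ is non-decreasing on $[\a,\b]$ (the term $u'(r)^2/r$ extends continuously by $0$ at $r=0$ since $u'(r)=O(r)$ there). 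All three assertions follow by exploiting this monotonicity.

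\textbf{Part (i).} Integrating the divergence form identity on $(\a,\b)$ and using the Neumann conditions gives $\int_\a^\b r^{N-1}(u^p-u)\,dr=0$. Assuming $u\not\equiv 0$ (the only relevant case): since $u$ is non-decreasing, if $u(\b)\le 1$ then $u^p-u=u(u^{p-1}-1)\le 0$ throughout $[\a,\b]$, so the integral can vanish only if $u(r)\in\{0,1\}$ for every $r$; a continuous non-decreasing function with values in $\{0,1\}$ which is not identically $0$ must be $\equiv 1$. Symmetrically, if $u(\a)\ge 1$ then $u^p-u\ge 0$ everywhere and again $u\equiv 1$. Hence either $u\equiv 1$, or $u(\a)<1<u(\b)$.

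\textbf{Parts (ii) and (iii).} Evaluate $E$ at the endpoints, where $u'=0$: writing $M:=u(\b)=\max u$ and using $u(\a)\le 1$ from (i) (the case $u\equiv 1$ being trivial), for $t\in[0,1]$ one has $\tfrac12 t^2-\tfrac1{p+1}t^{p+1}=t^2\big(\tfrac12-\tfrac{t^{p-1}}{p+1}\big)\ge 0$, so $E(\a)\ge 0$. By monotonicity $E(\b)\ge E(\a)\ge 0$, i.e. $\tfrac12 M^2\ge\tfrac1{p+1}M^{p+1}$, hence $M\le\big(\tfrac{p+1}{2}\big)^{1/(p-1)}$; since $\tfrac{p-1}{2}-\ln\tfrac{p+1}{2}$ vanishes at $p=1$ and is increasing in $p$, this gives $M\le e^{1/2}$, which is (ii) as $|u|=u\le M$. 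For (iii), from $E(r)\ge E(\a)\ge 0$ and $u'\ge 0$ we get
\[
u'(r)^2\le u(r)^2-\tfrac2{p+1}u(r)^{p+1}=:g\big(u(r)\big),\qquad g'(t)=2t\big(1-t^{p-1}\big),
\]
so $g$ increases on $[0,1]$ and decreases on $[1,\infty)$; therefore $g(u(r))\le g(1)=\tfrac{p-1}{p+1}<1$ when $M\ge 1$, while $g(u(r))\le g(M)<M^2<1$ when $M<1$. In both cases $|u'|=u'<1$ on $[\a,\b]$.

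\textbf{Main difficulty.} There is no serious obstacle here: the whole argument is driven by the single fact that $E$ is a non-decreasing Hamiltonian for the radial flow, and the rest is elementary. The points needing a little care are the justification at $r=0$ in the ball case (handled by $u'=O(r)$ and the boundary-term convention), the exclusion of the trivial solution in (i), and the two scalar estimates $\big(\tfrac{p+1}{2}\big)^{1/(p-1)}\le e^{1/2}$ and $\tfrac{p-1}{p+1}<1$, both of which are immediate.
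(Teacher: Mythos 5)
Your proof is correct and follows essentially the same route as the paper: your Hamiltonian $E$ is just the negative of the paper's Lyapunov function $L(r)=\tfrac12|u'|^2-\tfrac12 u^2+\tfrac1{p+1}u^{p+1}$, whose monotonicity in $r$, evaluated at the endpoints where $u'=0$, yields (ii) and (iii), while (i) comes from the same integral identity. The only differences are that you spell out the elementary scalar inequality $\bigl(\tfrac{p+1}{2}\bigr)^{1/(p-1)}\le e^{1/2}$ and the exclusion of $u\equiv 0$, which the paper leaves implicit.
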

\begin{proof}
(i) Integrating the equation for $u$ in $B_\b\setminus B_\a$ we obtain $\int_{B_\b\setminus B_\a} u(1-u^{p-1})\,dx=0$.

(ii) By multiplying the equation for $u$ by $u'$ we obtain
\begin{equation}\label{eq:u_u'}
u''u'-uu'+u^pu'=-\frac{N-1}{r}(u')^2.
\end{equation}
Hence the Lyapunov function
\[
L(r)=\frac{|u'(r)|^2}{2}-\frac{u(r)^2}{2}+\frac{u(r)^{p+1}}{p+1}
\]
satisfies $L'(r)=-\frac{N-1}{r}(u')^2\leq 0$. As a consequence, we have $L(r)\leq L(0)=-\frac{u(0)^2}{2}+\frac{u(0)^{p+1}}{p+1}\leq 0$ by point (i), for every $\a\leq r\leq \b$. This implies 
\begin{equation}\label{d0}
u(r)\leq u(\b) \leq \left(\frac{p+1}{2}\right)^{\frac{1}{p-1}}
\end{equation}
and hence the claim.

(iii) Since the function $\frac{x^2}{2}-\frac{x^{p+1}}{p+1}$ achieves its maximum at $x=1$, the inequality $L(r)\leq 0$ implies $|u'(r)|^2\leq \frac{p-1}{p+1}$.
\end{proof}

%%%%%%%%%
%DA SISTEMARE
%%%%%%%%%
%Let us recall the following result by Ni and Lin-Ni.
%\begin{theorem}[\cite{Ni1983,LinNi1988}]
%Let us consider the problem
%\begin{equation}\label{eq:lin_ni}
%\left\{ \begin{array}{ll}
%-\Delta u+\lambda u=u^p \quad &\text{ in } B_1 \\
%u>0 \quad &\text{ in } B_1,\\
%\frac{\partial u}{\partial\nu}=0&\text{ on }\partial B_1
%\end{array}\right.
%\end{equation}
%Then, there exist positive constants $\lambda_0=\lambda_0(N,p)$ and $\lambda_1=\lambda_1(N)$ such that
%\begin{itemize}
%\item[(i)] for any $\lambda>\lambda_1$ there exists at least a nonconstant radial solution to \eqref{eq:lin_ni}
%\item[(ii)] for any $\lambda<\lambda_0$ \eqref{eq:lin_ni} does not admit any nonconstant radial solution.
%\end{itemize}
%\end{theorem}

\begin{proposition}\label{prop:existence_increasing_sol}
Let $\lambda_2^{rad}(\a,\b)$ be the second radial eigenvalue of $-\Delta+Id$ in $B_\b\setminus B_\a$ with Neumann boundary conditions.
If $p>\lambda_2^{rad}(\a,\b)$ there exists $u_{p,+}(r)=u_{p,+}(r;\a,\b)\in\cone_{+,[\a,\b]}$ which solves \eqref{eq:main_alpha_beta} and such that a suitable rescaling achieves
\begin{equation}\label{eq:Q_definition}
c_{p,+}(\a,\b)=\inf \left\{Q_{p,[\a,\b]}(u): \, u\in\cone_{+,[\a,\b]}, \, \|u\|_\infty<\sqrt{e}+1 \right\},
\end{equation}
where $Q_{p,[\a,\b]}$ was defined in \eqref{eq:Q_p_def}.
By the maximum principle, $u_{p,+}$ is strictly increasing.
\end{proposition}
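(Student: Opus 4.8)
The plan is to realise $u_{p,+}$, up to a fixed rescaling, as a minimiser of the quotient $Q_{p,[\a,\b]}$ over the convex cone $\cone_{+,[\a,\b]}$, in the spirit of the monotone Nehari-type method of \cite{ST,BNW}. First I would observe that the $L^\infty$-constraint in \eqref{eq:Q_definition} does not change the infimum: $Q_{p,[\a,\b]}$ is $0$-homogeneous, so every element of the cone may be rescaled into $\{\|u\|_\infty<\sqrt e+1\}$, whence $c_{p,+}(\a,\b)=\inf\{\|u\|_{H^1}^2:\,u\in\cone_{+,[\a,\b]},\ \|u\|_{p+1}=1\}$. This infimum is positive, since for $u$ in the cone with $\|u\|_\infty<\sqrt e+1$ one has $1=\|u\|_{p+1}^{p+1}\le\|u\|_\infty^{p-1}\|u\|_2^2\le(\sqrt e+1)^{p-1}\|u\|_{H^1}^2$. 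The essential feature of working in the cone — and the reason the argument survives in the ball case $\a=0$ with supercritical $p$, where $H^1_{rad}(B_\b)$ is not even embedded in $L^{p+1}$ — is that a sequence in $\cone_{+,[\a,\b]}$ bounded in $H^1$ is bounded in $L^\infty$ (because $\|u\|_\infty=u(\b)\le C(\a,\b)\|u\|_{H^1}$ by a one-dimensional estimate using monotonicity) and, being monotone, has a pointwise convergent subsequence by Helly's selection theorem.

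Existence of a minimiser $u$ then follows from the direct method: for $u_n\in\cone_{+,[\a,\b]}$ with $\|u_n\|_{p+1}=1$ and $\|u_n\|_{H^1}^2\to c_{p,+}(\a,\b)$, extract a subsequence with $u_n\rightharpoonup u$ in $H^1_{rad}$ and $u_n\to u$ pointwise; dominated convergence on the bounded domain gives $u_n\to u$ in $L^{p+1}$, so $\|u\|_{p+1}=1$ and $u\in\cone_{+,[\a,\b]}$, while weak lower semicontinuity gives $\|u\|_{H^1}^2\le c_{p,+}(\a,\b)$. I then set $u_{p,+}:=c_{p,+}(\a,\b)^{1/(p-1)}u$; by homogeneity $Q_{p,[\a,\b]}(u_{p,+})=c_{p,+}(\a,\b)$, and this particular multiple is chosen so that the Lagrange multiplier is normalised, $\|u_{p,+}\|_{H^1}^2=\|u_{p,+}\|_{p+1}^{p+1}$.

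The core of the proof is that $u_{p,+}$ solves \eqref{eq:main_alpha_beta}. Since $\cone_{+,[\a,\b]}$ is a convex cone and $Q_{p,[\a,\b]}$ is differentiable off the origin, minimality yields $\int_{B_\b\setminus B_\a}\bigl(\nabla u_{p,+}\cdot\nabla\phi+u_{p,+}\phi-u_{p,+}^p\phi\bigr)\,dx\ge 0$ for all $\phi\in\cone_{+,[\a,\b]}$, with equality for $\phi=u_{p,+}$ and, more generally, for every $\phi$ with $u_{p,+}\pm t\phi\in\cone_{+,[\a,\b]}$ for small $t>0$. On any open subinterval of $(\a,\b)$ where $u_{p,+}'>0$ this last class contains all $C^\infty_c$ perturbations, so there $u_{p,+}$ is a distributional, hence (elliptic regularity) classical, solution of $-\Delta u+u=u^p$; testing in the variational relation with smoothed indicators of $B_\b\setminus B_s$ and, on such intervals, with their slightly tilted-down variants, and letting the smoothing vanish, yields $h(s):=|\partial B_1|\,s^{N-1}u_{p,+}'(s)+\int_{B_\b\setminus B_s}(u_{p,+}-u_{p,+}^p)\,dx\ge0$ for all $s$, with $h\equiv0$ on every interval where $u_{p,+}'>0$, and also $u_{p,+}'(\b)=0$ (letting $s\to\b$) and, testing with $\phi\equiv1$, $u_{p,+}'(\a)=0$. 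It remains to exclude that $u_{p,+}$ is constant on some subinterval $[r_1,r_2]$ at a level $c$: continuity of $h$ forces $u_{p,+}$ to match $C^1$ at the endpoints and $h(r_1)=h(r_2)=0$, while on $(r_1,r_2)$ one has $h'(s)=-|\partial B_1|s^{N-1}(c-c^p)$, so if $c\notin\{0,1\}$ then $h$ is strictly monotone with equal vanishing endpoint values — impossible; if $c=0$ or $c=1$, unique continuation for $-\Delta(u_{p,+}-c)=u_{p,+}^p-u_{p,+}$ forces $u_{p,+}\equiv c$, which is excluded by $\|u_{p,+}\|_{p+1}\ne0$ when $c=0$ and, when $c=1$, precisely by the hypothesis $p>\lambda_2^{rad}(\a,\b)$: any constant gives $Q_{p,[\a,\b]}(\cdot)=|B_\b\setminus B_\a|^{(p-1)/(p+1)}$, whereas the competitor $1+\ep\psi_2$, with $\psi_2$ the second radial Neumann eigenfunction of $-\Delta+\mathrm{Id}$ — which, after a choice of sign, is strictly increasing (on each of its two nodal intervals $r^{N-1}\psi_2'$ is strictly monotone and vanishes at the adjacent endpoint, so $\psi_2'>0$), hence $1+\ep\psi_2\in\cone_{+,[\a,\b]}$ for small $\ep>0$ — satisfies, via the eigenvalue equation, $Q_{p,[\a,\b]}(1+\ep\psi_2)<Q_{p,[\a,\b]}(1)$ for small $\ep>0$, so $c_{p,+}(\a,\b)<|B_\b\setminus B_\a|^{(p-1)/(p+1)}$ and the minimiser is nonconstant. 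Thus $u_{p,+}$ has no flat part, solves $-\Delta u+u=u^p$ on all of $(\a,\b)$, and satisfies the Neumann conditions.

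Finally, $u_{p,+}\ge0$ is a nonconstant solution, so the strong maximum principle, together with Hopf's lemma against the Neumann condition at $\a$ when $\a>0$, gives $u_{p,+}>0$ on $[\a,\b]$; Lemma \ref{lemma:properties_sol_cone}(ii) then gives $\|u_{p,+}\|_\infty\le e^{1/2}<\sqrt e+1$, so the constraint in \eqref{eq:Q_definition} is inactive and $u_{p,+}$ genuinely realises $c_{p,+}(\a,\b)$ as stated; and differentiating the equation shows that $w:=u_{p,+}'\ge0$ solves a linear second-order ODE, hence $w>0$ in $(\a,\b)$ unless $w\equiv0$, so by nonconstancy $u_{p,+}$ is strictly increasing. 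I expect the genuinely delicate step to be the exclusion of flat parts of the minimiser — equivalently, the passage from the one-sided variational inequality to the full equation with its natural boundary conditions — since that is the only point where the sign information in the variational inequality, unique continuation, and the spectral condition $p>\lambda_2^{rad}(\a,\b)$ must all be combined.
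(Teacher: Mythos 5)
Your route is genuinely different from the paper's. The paper does not redo the variational work in the cone: it quotes \cite[Thm.~1.3, Prop.~4.7]{BNW} for a strictly increasing mountain-pass solution of the truncated problem (with $f_p$ in place of $u^p$), and its own proof consists in identifying that mountain-pass level with the Nehari level $c''_{p,+}$ and then with $c_{p,+}(\a,\b)$ by rescaling, removing the truncation via Lemma \ref{lemma:properties_sol_cone}(ii). You instead propose a self-contained direct minimisation of $Q_{p,[\a,\b]}$ on $\cone_{+,[\a,\b]}$ followed by an obstacle-type analysis. Much of this is sound: the compactness coming from monotonicity (uniform $L^\infty$ bound, Helly/pointwise convergence, dominated convergence in $L^{p+1}$) is exactly what makes the supercritical ball case work, and your use of $p>\lambda_2^{rad}(\a,\b)$ through the competitor $1+\ep\psi_2$, with $\psi_2$ the (indeed monotone) second radial Neumann eigenfunction, is precisely the role of that hypothesis in ruling out the constant minimiser. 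But the step you yourself flag as delicate --- passing from the one-sided variational inequality to the equation with its Neumann conditions --- is where the sketch has genuine gaps.

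Concretely: (a) the continuity of $h$, and the tacit assumption that a maximal flat interval $[r_1,r_2]$ is flanked by intervals on which $u'>0$ pointwise, are not justified for an $H^1$ monotone minimiser ($\{u'>0\}$ need not contain any interval adjacent to $[r_1,r_2]$). What makes the scheme work is the full complementarity package obtained by testing with step functions, with constants, and with $u$ itself: setting $G(r)=\int_r^\b\bigl(u-u^p\bigr)t^{N-1}\,dt$, one gets $r^{N-1}u'(r)+G(r)\ge0$ a.e., $G(\a)\ge0$, and, from the equality at $\phi=u$, $u'(r)\bigl(r^{N-1}u'(r)+G(r)\bigr)=0$ a.e.\ together with $u(\a)G(\a)=0$; this yields the a.e.\ identity $r^{N-1}u'(r)=\bigl(-G(r)\bigr)^{+}$, whence $u\in C^1$ and the coincidence set is $\{G\ge0\}$ --- only then do your endpoint relations make sense. (b) Flat intervals touching $r=\a$ or $r=\b$ are not treated at all, and your ``two vanishing endpoint values'' argument does not cover them; they require $G(\b)=0$ and the constant-test inequality $G(\a)\ge0$ with $u(\a)G(\a)=0$. (c) Most seriously, the unique-continuation exclusion of a flat piece at level $c=1$ fails as stated: unique continuation needs the equation on a one-sided neighbourhood of the flat interval, but the equation holds only where $G<0$, and immediately to the right of a flat piece at level $1$ one has $u>1$, hence $G$ increasing and $G>0$, so the equation holds on no right neighbourhood. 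That configuration is impossible, but for a different reason ($u>1$ on $(r_2,\b)$ gives $G(r_2)<G(\b)=0$, contradicting $G\ge0$ on the flat set; the level-$0$ case is handled by ODE uniqueness, since there $G<0$ does hold on an adjacent interval). Minor: your positivity-of-the-infimum line quantifies over the wrong constraint set (the $\|u\|_{p+1}=1$ competitors need not satisfy $\|u\|_\infty<\sqrt e+1$); the bound $\|u\|_\infty\le C\|u\|_{H^1}$ you state next is what should be used. So the strategy is repairable and attractive as a self-contained alternative to citing \cite{BNW}, but as written the central step is not yet a proof.
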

\begin{proof}
Fix $q>1$, $q<\frac{N+2}{N-2}$ if $N\geq3$, and $s_0=\sqrt{e}+1$. Define the following $C^1$ function
\[
f_p(s)=\left\{ \begin{array}{ll}
0 \ & \text{ if } s\leq 0 \\
s^p \ & \text{ if } 0\leq s\leq s_0 \\
s_0^p+p s_0^{p-1}(s-s_0) +(s-s_0)^q \ & \text{ if } s\geq s_0,
\end{array}\right.
\]
and let $F_p(s)=\int_0^s f(t)\,dt$,
\begin{equation}\label{eq:E_tilde}
\tilde{E}_{p,[\a,\b]}(u)=\int_{B_\b\setminus B_\a}\left( \frac{|\nabla u|^2}{2}+\frac{u^2}{2} -F_p(u) \right)\,dx.
\end{equation}

In \cite[Thm. 1.3, Prop. 4.7]{BNW} it is proved that there exists a strictly increasing radial solution of \eqref{eq:main_alpha_beta}, which achieves the following mountain pass level in $\cone_{+,[\a,\b]}$
\[
c'_{p,+}(\a,\b)=\inf_{\gamma\in\Gamma_{p,+}(\a,\b)} \max_{t\in[0,1]} \tilde{E}_{p,[\a,\b]}(\gamma(t)),
\]
where
\[
\Gamma_{p,+}(\a,\b)=\{\gamma\in C([0,1],\cone_{+,[\a,\b]}):\ \gamma(0)=0, \ \tilde{E}_{p,[\a,\b]}(\gamma(1))<0\}.
\]
%There were two main difficulties in the mentioned paper. The first was to show that the function which achieves the mini-max level is indeed a solution of the equation, which is not immediate since $\cone$ has empty interior in the $H^1$-topology. The second difficulty was to show that the solution is non-constant, which immediately implies that it is increasing by the maximum principle.

Given this result, it will be enough to show that a suitable rescaling of this solution achieves $c_{p,+}(\a,\b)$. To this aim, let
\[
c''_{p,+}(\a,\b)=\inf\left\{\tilde{E}_{p,[\a,\b]}(u): \ u\in \cone_{+,[\a,\b]}, \, 
\|u\|_{H^1}^2=\int_{B_\b\setminus B_\a} f_p(u)u \,dx \right\}.
\]
It is standard to see that $c'_{p,+}(\a,\b)=c''_{p,+}(\a,\b)$, see for example \cite[Thm. 4.2]{WillemBook1996}, with the only difference that we have to work in the cone $\cone_{+,[\a,\b]}$. Therefore there exists $u_{p,+} \in\cone_{+,[\a,\b]}$, strictly increasing, which achieves  $c''_{p,[\a,\b]}$ and solves \eqref{eq:main_alpha_beta} with $f_p(u)$ in place of $u^p$. Since the conclusions of Lemma \ref{lemma:properties_sol_cone} still hold with $f_p(u)$ in place of $u^p$, we have
\[
|u_{p,+}| \leq e^{1/2} \text{ in } \overline{B_\b\setminus B_\a} \quad\text{and hence}\quad 
f_p(u_{p,+})=u_{p,+}^p \text{ in } \overline{B_\b\setminus B_\a}.
\]

Let us show that $w=c_{p,+}(\a,\b)^{-\frac{1}{p-1}} u_{p,+}$ achieves $c_{p,+}(\a,\b)$ and solves $-\Delta w + w=c_{p,+}(\a,\b) w^p$, which concludes the proof. On the one hand we have
\begin{equation}\label{eq:c''_p}
c_{p,+}(\a,\b)\leq Q_{p,[\a,\b]}(u_{p,+}) =\|u_{p,+}\|_{H^1}^{2\left(1-\frac{2}{p+1}\right)}
=\left( 2 \frac{p+1}{p-1} c''_{p,+}(\a,\b) \right)^\frac{p-1}{p+1}.
\end{equation}
On the other hand, $t_p w$ is an admissible test function for $c''_{p,+}(\a,\b)$, with
\[
t_p=\|w\|_{H^1}^\frac{2}{p-1} \|w\|_{p+1}^{-\frac{p+1}{p-1}}.
\]
Hence
\[
c''_{p,+}(\a,\b) \leq \tilde{E}_{p,[\a,\b]}(t_p w) =\frac{p-1}{2(p+1)} \|t_p w\|_{H^1}^2 
= \frac{p-1}{2(p+1)} c_{p,+}(\a,\b)^\frac{p+1}{p-1}.
\]
This implies that the inequalities in \eqref{eq:c''_p} are indeed equalities and in turn that $u_{p,+}$ can be chosen as a multiple of $w$.
\end{proof}

\begin{remark}\label{rem:existence_sol}
For a fix $p$, if $0<\bar{\a}<\bar{\b}$ are such that there exists the solution $u_{p,+}(\cdot;\bar{\a},\bar{\b})$, then by the continuity of $\lambda_2^{rad}(\a,\b)$, there exist $0<A_1<\bar{\a}<A_2$, $B_1<\bar{\b}<B_2$ such that the solution $u_{p,+}(\cdot;\a,\b)$ exists for every $(\a,\b)\in(A_1,A_2)\times(B_1,B_2)$. In case $\bar{\a}=0$, there exist $B_1<\bar{\b}<B_2$ such that the analogous holds in the ball.
\end{remark}

\begin{remark}\label{rem:tilde_Q_p}
We see from the previous proof that $u_{p,+}$ equivalently achieves
\[
\inf_{u\in\cone_{+,[\a,\b]}} \tilde{Q}_{p,[\a,\b]}(u),
\quad\text{ where } \tilde{Q}_{p,[\a,\b]}(u)=\frac{\|u\|^2_{H^1}}{\left(\int_{B_\b\setminus B_\a} F_p(u)\,dx\right)^{\frac{2}{p+1}}}.
\]
\end{remark}

%%%%%%%%%%

As an additional information, we next show that the increasing solution is a local minimizer of $\tilde{Q}_{p,[\a,\b]}$ in $H^1_{rad}(B_\b\setminus B_\a)$. This implies for instance that the Morse index of the corresponding critical point of 
\[
E_{p,[\a,\b]}(u)=\int_{B_\b\setminus B_\a}\left( \frac{|\nabla u|^2}{2}+\frac{u^2}{2} -
\frac{u^{p+1}}{p+1} \right)\,dx.
\]
 is $1$. Indeed we have that $u_{p,+}$ is an eigenfunction of the operator 
$$v\mapsto -\Delta v + v - pu_{p,+}^{p-1}v$$
associated with the negative eigenvalue $1-p$ while for smooth functions $v$ orthogonal to $u_{p,+}$ in $H^1_{rad}(B_\b\setminus B_\a)$, we have 
$$E''_{p,[\a,\b]}(u_{p,+})[v,v]= \int_{B_\b\setminus B_\a}\left( {|\nabla v|^2}+v^2 -
pu_{p,+}^{p-1}v^{2} \right)\,dx\ge 0
$$
as a consequence of the fact that $u_{p,+}$ is a local minimizer of the functional $\tilde{Q}_{p,[\a,\b]}$. The claim then follows by density. 

\begin{theorem}\label{thm:morse_index}
The increasing solution $u_{p,+}$ is a local minimizer of $\tilde{Q}_{p,[\a,\b]}$ in $H^1_{rad}(B_\b\setminus B_\a)$.
\end{theorem}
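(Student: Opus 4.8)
The plan is to characterize $u_{p,+}$ variationally as the global minimizer of $\tilde Q_{p,[\a,\b]}$ restricted to the cone $\cone_{+,[\a,\b]}$ (Remark \ref{rem:tilde_Q_p}) and then upgrade this to local minimality in the full space $H^1_{rad}(B_\b\setminus B_\a)$. The key structural fact to exploit is that, since $f_p(s)=0$ for $s\le 0$, the functional $\tilde Q_{p,[\a,\b]}$ and the energy $\tilde E_{p,[\a,\b]}$ are insensitive to the negative part of a function: for any $v$, replacing $v$ by $|v|$ does not increase $\|v\|_{H^1}^2$ and leaves $\int F_p(v)$ unchanged. Hence it suffices to prove local minimality among nonnegative radial functions, which brings us closer to the cone where we already have a minimization property.

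First I would set up the comparison: let $v=u_{p,+}+\phi$ with $\phi\in C^\infty_{rad}$ small in $H^1$; by the remark above we may assume $v\ge 0$. The obstacle is that $v$ need not be monotone, so it does not lie in $\cone_{+,[\a,\b]}$ and the minimization in Remark \ref{rem:tilde_Q_p} does not directly apply. To bypass this I would use the increasing rearrangement (monotone rearrangement) $v^*$ of $v$ on $[\a,\b]$ with respect to the measure $r^{N-1}\,dr$: such a rearrangement preserves $\int_\a^\b F_p(v)\,r^{N-1}dr$ and $\int_\a^\b v^2\,r^{N-1}dr$ (it is measure-preserving) while not increasing the Dirichlet term $\int_\a^\b |v'|^2\,r^{N-1}dr$ by a Pólya–Szegő type inequality for monotone rearrangements. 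Thus $\tilde Q_{p,[\a,\b]}(v^*)\le \tilde Q_{p,[\a,\b]}(v)$ and $v^*\in\cone_{+,[\a,\b]}$, so $\tilde Q_{p,[\a,\b]}(v)\ge \tilde Q_{p,[\a,\b]}(v^*)\ge \inf_{\cone_{+,[\a,\b]}}\tilde Q_{p,[\a,\b]}=\tilde Q_{p,[\a,\b]}(u_{p,+})$. This would in fact give a global (not merely local) minimality statement among nonnegative functions, and hence among all functions by the negative-part argument.

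The step I expect to be the main obstacle is justifying the Pólya–Szegő inequality for the weighted one-dimensional problem, i.e. that passing to the monotone rearrangement on $([\a,\b], r^{N-1}dr)$ does not increase $\int |v'|^2 r^{N-1}dr$. For the standard (symmetric decreasing) rearrangement this is classical, but here we need the one-sided monotone rearrangement with a weight, and one must be careful that the weight $r^{N-1}$ is increasing so that sorting values in increasing order is compatible with the decreasing-in-co-area structure; the cleanest route is the layer-cake / coarea formula: write $v$ via its superlevel sets, note that for a monotone function the superlevel set $\{v>t\}$ is an interval of the form $(\rho(t),\b]$, and among all sets of prescribed $r^{N-1}dr$-measure the "top" interval $(\rho(t),\b]$ minimizes the relevant boundary/derivative contribution because $r^{N-1}$ is increasing. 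Then integrate over $t$. An alternative, perhaps safer, route that avoids rearrangement altogether is the one sketched in the paragraph preceding the theorem: show directly that $u_{p,+}$ is an eigenfunction of the linearized operator with eigenvalue $1-p<0$ of multiplicity one in the radial class and that $u_{p,+}$ has Morse index exactly one, combined with the fact that it is a constrained minimizer on the Nehari-type manifold; then a standard argument (as in Willem's book, Theorem 4.2, adapted) shows a mountain-pass/Nehari minimizer with Morse index one for a functional of the form $\tilde E$ is automatically a local minimizer of the associated quotient $\tilde Q$. I would present the rearrangement proof as the main line and remark that the Morse-index picture gives an independent confirmation.

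In writing this up I would (i) state and prove the reduction to nonnegative functions via the negative-part observation; (ii) state the weighted monotone Pólya–Szegő inequality as a lemma with a coarea-formula proof; (iii) combine with Remark \ref{rem:tilde_Q_p} to conclude; and (iv) add the remark that this forces the Morse index of $u_{p,+}$ as a critical point of $E_{p,[\a,\b]}$ to be one, as indicated in the discussion preceding the statement. The only genuinely delicate point is (ii), and in particular checking that strict monotonicity of the weight is what makes the inequality go in the right direction; everything else is routine once that is in place.
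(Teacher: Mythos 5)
Your reduction to nonnegative functions is harmless, but the pivotal step of your plan, the weighted monotone P\'olya--Szeg\H{o} inequality --- that the increasing rearrangement $v^*$ of $v$ on $([\a,\b],r^{N-1}dr)$ satisfies $\int_\a^\b |(v^*)'|^2 r^{N-1}\,dr \le \int_\a^\b |v'|^2 r^{N-1}\,dr$ --- is false, and with it the whole argument collapses. The increasing rearrangement pushes mass toward $r=\b$, where the weight is \emph{largest}, so the gradient term can grow enormously: take $v$ a bump of height $1$ supported on an interval of length $\ell$ near $r=\a$; its rearrangement rises from $0$ to $1$ on an interval adjacent to $\b$ of length about $\ell(\a/\b)^{N-1}$, so $\int |(v^*)'|^2 r^{N-1}dr \approx \b^{N-1}/(\ell(\a/\b)^{N-1})$, exceeding $\int |v'|^2 r^{N-1}dr \approx \a^{N-1}/\ell$ by a factor of order $(\b/\a)^{2(N-1)}$. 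In your own coarea formulation the required pointwise inequality is $\rho(t)^{N-1}\le \sum_{r\in v^{-1}(t)} r^{N-1}$, and it fails precisely when a superlevel set sits near the inner boundary, because then $\rho(t)$ is close to $\b$: the increasing weight works against you, not for you. There is also a structural reason the scheme cannot be repaired: your argument, if valid, would show that $u_{p,+}$ is a \emph{global} minimizer of $\tilde{Q}_{p,[\a,\b]}$ among nonnegative radial functions, whereas in the annulus the decreasing solution $u_{p,-}$ of Proposition \ref{prop:existence_decreasing_sol} is admissible and its quotient converges to $\varphi_{[\a,\b]}(\a)$, the global minimum of $\varphi_{[\a,\b]}$, while $c_{p,+}\to\varphi_{[\a,\b]}(\b)$, which is only a local minimum; indeed applying your rearrangement to $u_{p,-}$ would produce a function of $\cone_{+,[\a,\b]}$ beating $u_{p,+}$, contradicting Remark \ref{rem:tilde_Q_p}. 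This is exactly why the theorem asserts only \emph{local} minimality.

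Your fallback route (Morse index one implies local minimality of the quotient) is not available either: in the paper the Morse index statement is deduced \emph{from} this theorem (see the paragraph preceding it), so invoking it here is circular, and you offer no independent proof that the second variation is nonnegative off the span of $u_{p,+}$. For comparison, the paper's proof is rearrangement-free and genuinely local: Lemma \ref{lemma:C^2minimizer} shows that any radial function with Neumann conditions that is $C^2$-close to the strictly increasing $u_{p,+}$ must itself belong to $\cone_{+,[\a,\b]}$, so cone minimality gives $C^2$-local minimality; this is then upgraded to $H^1$-local minimality by the Brezis--Nirenberg ``$C^1$ versus $H^1$ minimizers'' device, i.e.\ a contradicting sequence of constrained minimizers on shrinking $H^1$-balls has nonpositive Lagrange multipliers and converges to $u_{p,+}$ in $C^2$ by elliptic bootstrap, contradicting the $C^2$ statement. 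Any correct proof along your lines would have to exploit locality in a similar way rather than a global rearrangement inequality.
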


We first show the minimality with respect to smooth variations. 

\begin{lemma}\label{lemma:C^2minimizer}
 There exists $\ep>0$ such that for every function satisfying
\begin{equation}\label{eq:phi_C2_close_u}
\varphi\in C_{rad}^2(B_\b\setminus B_\a), \ \varphi'(\a)=\varphi'(\b)=0, \ 
\|\varphi-u_{p,+}(\cdot;\a,\b)\|_{C^2}<\ep,
\end{equation}
it holds  $\tilde{Q}_{p,[\a,\b]}(u_{p,+})\leq \tilde{Q}_{p,[\a,\b]}(\varphi)$ ($\tilde{Q}_{p,[\a,\b]}$ is defined in Remark \ref{rem:tilde_Q_p}).
\end{lemma}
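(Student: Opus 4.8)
The plan is to show that once $\ep$ is chosen small enough (depending on $p,\a,\b$), every competitor $\varphi$ satisfying \eqref{eq:phi_C2_close_u} automatically belongs to the cone $\cone_{+,[\a,\b]}$. Granting this, the inequality $\tilde{Q}_{p,[\a,\b]}(u_{p,+})\leq\tilde{Q}_{p,[\a,\b]}(\varphi)$ is immediate from Remark \ref{rem:tilde_Q_p}, which identifies $\tilde{Q}_{p,[\a,\b]}(u_{p,+})$ with $\inf_{u\in\cone_{+,[\a,\b]}}\tilde{Q}_{p,[\a,\b]}(u)$: since $\varphi$ lies in the cone, its value is at least this infimum. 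So the whole content of the proof is the inclusion of a small $C^{2}$-ball around $u_{p,+}$ into $\cone_{+,[\a,\b]}$.

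First I would record the pointwise information on $v:=u_{p,+}(\cdot;\a,\b)$ that is needed to control nearby functions. Since $p>\lambda_2^{rad}(\a,\b)$, Proposition \ref{prop:existence_increasing_sol} gives that $v$ is strictly increasing, so in particular $v\not\equiv 1$, and then Lemma \ref{lemma:properties_sol_cone}(i) yields $0<v(\a)<1<v(\b)$. Evaluating \eqref{main1} at the endpoints, where $v'(\a)=v'(\b)=0$, gives $v''(\a)=v(\a)\bigl(1-v(\a)^{p-1}\bigr)>0$ and $v''(\b)=v(\b)\bigl(1-v(\b)^{p-1}\bigr)<0$ (when $\a=0$ the equation gives instead $v''(0)=v(0)\bigl(1-v(0)^{p-1}\bigr)/N>0$). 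Moreover $v'>0$ throughout the open interval: if $v'(r_0)=0$ at some $r_0\in(\a,\b)$, then $r_0$ is an interior minimum of the nonnegative function $v'$, hence $v''(r_0)=0$ as well, and \eqref{main1} then forces $v(r_0)=1$; but in that case $v$ and the constant function $1$ solve the same second-order ODE with locally Lipschitz right-hand side and identical Cauchy data at $r_0$, so $v\equiv 1$, contradicting strict monotonicity.

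The second step transfers these facts to $\varphi$. Fix $\eta>0$ small enough that $v''\geq\frac12 v''(\a)>0$ on $[\a,\a+\eta]$ and $v''\leq\frac12 v''(\b)<0$ on $[\b-\eta,\b]$, and set $c_\eta:=\min_{[\a+\eta,\b-\eta]}v'>0$ and $c_0:=v(\a)>0$; then choose $\ep$ smaller than a fixed fraction of $\min\{v''(\a),\,-v''(\b),\,c_\eta,\,c_0,\,1\}$. For $\varphi$ as in \eqref{eq:phi_C2_close_u}, $C^{2}$-closeness gives $\varphi''>0$ on $[\a,\a+\eta]$, so with $\varphi'(\a)=0$ we get $\varphi'>0$ on $(\a,\a+\eta]$; likewise $\varphi''<0$ on $[\b-\eta,\b]$ with $\varphi'(\b)=0$ gives $\varphi'>0$ on $[\b-\eta,\b)$; and on $[\a+\eta,\b-\eta]$ one has $\varphi'\geq c_\eta-\ep>0$. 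Hence $\varphi'\geq 0$ on $[\a,\b]$, while $\varphi\geq v-\ep\geq c_0-\ep>0$, so $\varphi\in\cone_{+,[\a,\b]}$; moreover $\|\varphi\|_\infty\leq\|v\|_\infty+\ep<\sqrt{e}+1$ by Lemma \ref{lemma:properties_sol_cone}(ii), so that $\tilde{Q}_{p,[\a,\b]}$ is evaluated at $\varphi$ on the genuine power nonlinearity. The conclusion follows from Remark \ref{rem:tilde_Q_p} as above.

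The only genuinely delicate point is the endpoint analysis: because $v'$ vanishes at $\a$ and $\b$, a $C^{2}$-small perturbation obeying the Neumann conditions is not a priori monotone, and one has to use the strict signs $v''(\a)>0$ and $v''(\b)<0$ — which is exactly where the strict inequalities $v(\a)<1<v(\b)$ enter — to recover monotonicity in one-sided neighborhoods of the boundary. Away from the endpoints the inclusion in the cone is immediate from $v'>0$ on $(\a,\b)$, so no further estimates are required.
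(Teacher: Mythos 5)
Your proof is correct and follows essentially the same route as the paper: reduce the inequality (via Remark \ref{rem:tilde_Q_p}) to showing that every competitor in a small $C^2$-ball lies in $\cone_{+,[\a,\b]}$, then recover monotonicity near the endpoints from the strict signs of $u_{p,+}''$ there and in the middle from a positive lower bound on $u_{p,+}'$. The only difference is that you explicitly justify, from the equation and Lemma \ref{lemma:properties_sol_cone}(i), the facts $u_{p,+}''(\a)>0$, $u_{p,+}''(\b)<0$ and $u_{p,+}'>0$ in $(\a,\b)$, which the paper uses implicitly.
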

\begin{proof}
It will be enough to find $\ep>0$ such that $\varphi$ satisfying \eqref{eq:phi_C2_close_u} implies 
$\varphi\in\cone_{+,[\a,\b]}$. As $u_{p,+}(r)\geq u_{p,+}(\a)>0$, then for $\ep<u_{p,+}(\a)/2$ we have $\varphi>0$ in $B_\b\setminus B_\a$. Let us show that $\varphi$ is increasing.

Since $u_{p,+}'(\a)=u_{p,+}'(\b)=0$ and $u_{p,+}'(r)>0$ for $r\in(\a,\b)$, there exists $\bar{r}\in (\a,\b)$ such that
\[
\min_{r\in [\a,\a+\bar{r}]} u_{p,+}''(r)>0 \quad \text{and} \quad \max_{r\in [\b-\bar{r},\b]} u_{p,+}''(r)<0.
\]
By choosing
\[
\ep<\frac{1}{2}\min\left\{ \min_{[\a,\a+\bar{r}]} u_{p,+}'', - \max_{[\b-\bar{r},\b]} u_{p,+}'' \right\},
\]
we have $\varphi''>0$ in $[\a,\a+\bar{r}]$ and $\varphi''<0$ in $[\b-\bar{r},\b]$, for every $\varphi$ satisfying \eqref{eq:phi_C2_close_u}. Then, using the fact that $\varphi'(\a)=\varphi'(\b)=0$, we deduce
\[
\varphi'(r)=\int_\a^r \varphi''(s)\,ds>0, \ r\in (\a,\a+\bar{r}], \quad
\varphi'(r)=-\int_r^\b \varphi''(s)\,ds>0, \ r\in [\b-\bar{r},\b).
\]
Finally, since $u_{p,+}'(r)>0$ in $[\a+\bar{r},\b-\bar{r}]$, by choosing
\[
\ep<\frac{1}{2} \min_{r\in [\a+\bar{r},\b-\bar{r}]} u_{p,+}'(r),
\]
we also have $\varphi'>0$ in $(\a+\bar{r}, \b-\bar{r})$ for every $\varphi$ satisfying \eqref{eq:phi_C2_close_u}. Therefore $\varphi\in\cone_{+,[\a,\b]}$.
\end{proof}

\begin{proof}[Proof of Theorem \ref{thm:morse_index}]
Let us show that there exists $\ep>0$ such that
\begin{equation}\label{eq:phi_H1_close_u}
\varphi \in H^1_{rad}(B_\b\setminus B_\a), \ \|\varphi-u_{p,+}\|_{H^1}<\ep
\end{equation}
implies $\tilde{Q}_{p,[\a,\b]}(u_{p,+})\leq \tilde{Q}_{p,[\a,\b]}(\varphi)$.
We proceed as in \cite{BrezisNirenberg1993}. Suppose by contradiction that there exists a sequence $\varphi_n$ satisfying \eqref{eq:phi_H1_close_u} with $\ep=1/n$ and $\tilde{Q}_{p,[\a,\b]}(\varphi_n)<\tilde{Q}_{p,[\a,\b]}(u_{p,+})$. 
Since $\inf\{\tilde{Q}_{p,[\a,\b]}(\varphi):\ \varphi\in H^1_{rad}(B_\b\setminus B_\a), \ \|\varphi-u_{p,+}\|_{H^1}\leq 1/n\}$ is attained, we can assume that it is achieved by $\varphi_n$, so that
\begin{equation}\label{eq:brezis_nirenberg1}
\tilde{Q}_{p,[\a,\b]}'(\varphi_n)[\psi]= \mu_n (\varphi_n-u_{p,+},\psi)_{H^1}
\end{equation}
for some Lagrange multiplier $\mu_n$ and for very test function $\psi\in H^1(B_\b\setminus B_\a)$. Therefore $\varphi_n$ satisfies
\[
(1-\mu_n)(-\Delta\varphi_n+\varphi_n) =f_p(\varphi_n)-\mu_n(-\Delta u_{p,+}+u_{p,+}), \quad 
\varphi_n'(\a)=\varphi_n'(\b)=0.
\]

Let us show that $\mu_n<0$.
If $\|\varphi_n-u_{p,+}\|_{H^1}< 1/n$ then $\mu_n=0$. If otherwise $\|\varphi_n-u_{p,+}\|_{H^1}= 1/n$, let $t>0$ and $\psi$ be such that $\varphi_n+t\psi \in H^1_{rad}(B_\b\setminus B_\a)$ and $\|\varphi_n+t\psi -u_{p,+}\|_{H^1}\leq 1/n$. Then
\[
\frac{1}{n^2}\geq \| \varphi_n+t\psi-u_{p,+}\|_{H^1}^2=
\frac{1}{n^2}+2t(\varphi_n-u_{p,+},\psi)_{H^1} +t^2\|\psi\|^2_{H^1},
\]
so that
\begin{equation}\label{eq:brezis_nirenberg2}
2t(\varphi_n-u_{p,+},\psi)_{H^1}\leq 0.
\end{equation}
On the other hand, by the definition of $\varphi_n$, we have $\tilde{Q}_{p,[\a,\b]}(\varphi_n+t\psi)-\tilde{Q}_{p,[\a,\b]}(\varphi_n)\geq0$, which in the limit $t\to0$, $t>0$, provides $\tilde{Q}_{p,[\a,\b]}'(\varphi_n)[\psi]\geq0$. By comparing the last inequality with \eqref{eq:brezis_nirenberg1} and \eqref{eq:brezis_nirenberg2}, we obtain $\mu_n \leq 0$.

By using the equation satisfied by $u_{p,+}$, we can rewrite \eqref{eq:brezis_nirenberg1} as
\[
(1-\mu_n)\left\{ -\Delta(\varphi_n-u_{p,+})+\varphi_n-u_{p,+} \right\}=f_p(\varphi_n)-f_p(u_{p,+}).
\]
As $\varphi_n\to u_{p,+}$ in $H^1$ as $n\to\infty$ and $\mu_n\leq 0$, the bootrstap argument implies that $\varphi_n\to u_{p,+}$ in $C^2(B_\b\setminus B_\a)$. This contradicts Lemma \ref{lemma:C^2minimizer}, thus providing that $u_{p,+}$ locally minimizes $\tilde{Q}_{p,[\a,\b]}$ in the $H^1_{rad}$-topology.
\end{proof}

%%%%%%%%%%

\subsection{The decreasing solution in the annulus}\label{S5}

As said before, finding a radial solution of 
\eqref{main-annulus} in an annulus
is easily done, whatever $p>1$, by minimizing the quotient $Q_{p,[\a,\b]}$ defined in \eqref{eq:Q_p_def} 
in $H^{1}_{rad}(B_\b\setminus B_\a)$. One expects that this produces a radially decreasing solution. One can show this fact for large $p$. In order to obtain a decreasing solution for a broader range of $p$ (we leave as a conjecture the fact that the minimizer of $Q_{p,[\a,\b]}$ is non increasing whatever $p>1$), we introduce the cone 
\[
\cone_{-,[\a,\b]}=\{u\in H^1_{rad}(B_\b\setminus B_\a):\ u\geq0 \text{ and } u(r)\geq u(s) \text{ for every } \a < r\leq s \leq \b\}.
\]
Observe that here we assume $\a>0$. Then $u\in C(\overline{B_\b\setminus B_\a})$ and in particular it is a bounded function. Moreover, $u$ is differentiable almost everywhere and $u'(r)\leq0$ where it is defined.

\begin{proposition}\label{prop:existence_decreasing_sol}
If $\alpha>0$ and $p>\lambda_2^{rad}(\a,\b)$ there exists $u_{p,-}(r)=u_{p,-}(r;\a,\b)\in\cone_{-,[\a,\b]}$ which solves \eqref{eq:main_alpha_beta} and such that a suitable rescaling achieves
\begin{equation}\label{eq:Q-_definition}
c_{p,-}(\a,\b)=\inf\{Q_{p,[\a,\b]}(u): \, u\in\cone_{-,[\a,\b]}\},
\end{equation}
\end{proposition}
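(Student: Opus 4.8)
The plan is to mirror the proof of Proposition \ref{prop:existence_increasing_sol} as closely as possible, working in the cone $\cone_{-,[\a,\b]}$ in place of $\cone_{+,[\a,\b]}$. First I would note that, since $\a>0$, the functions in $\cone_{-,[\a,\b]}$ are automatically continuous and bounded on $\overline{B_\b\setminus B_\a}$, so no truncation of the nonlinearity is strictly needed for compactness; however, to reuse verbatim the machinery from \cite{BNW} (the mountain-pass construction and the Palais–Smale analysis in a cone), I would again introduce the subcritical truncation $f_p$ and the truncated functional $\tilde E_{p,[\a,\b]}$ of \eqref{eq:E_tilde}. The condition $p>\l_2^{rad}(\a,\b)$ is precisely what is needed so that the constant function $u\equiv 1$ (which lies in $\cone_{-,[\a,\b]}$ and is a solution) is not a local minimizer of $\tilde E_{p,[\a,\b]}$, i.e.\ it sits at a mountain-pass-type level, and so that the geometry of $\tilde E_{p,[\a,\b]}$ restricted to the cone is non-trivial.

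The key steps, in order, are: (1) verify that $\cone_{-,[\a,\b]}$ is a closed convex cone in $H^1_{rad}(B_\b\setminus B_\a)$ and that the analogues of Lemma \ref{lemma:properties_sol_cone} hold — in particular that any $u\in\cone_{-,[\a,\b]}$ solving \eqref{eq:main_alpha_beta} satisfies either $u\equiv 1$, or $u(\b)<1<u(\a)$, together with the a priori bounds $|u|\le e^{1/2}$ and $|u'|<1$; the Lyapunov-function argument of Lemma \ref{lemma:properties_sol_cone} goes through unchanged since it only uses the ODE and the boundary conditions, not the monotonicity direction. (2) Invoke the construction of \cite{BNW} (or redo it) to obtain a strictly decreasing radial solution $u_{p,-}$ of the truncated problem achieving the mountain-pass level $c'_{p,-}(\a,\b)=\inf_{\g\in\Gamma_{p,-}}\max_{t}\tilde E_{p,[\a,\b]}(\g(t))$ over paths in $\cone_{-,[\a,\b]}$; here one uses that $\cone_{-,[\a,\b]}$ is invariant under the relevant deformation flow (this is where Neumann boundary conditions, as opposed to Dirichlet, matter, exactly as in the increasing case). (3) Show $c'_{p,-}=c''_{p,-}$ where $c''_{p,-}$ is the Nehari-type level $\inf\{\tilde E_{p,[\a,\b]}(u):u\in\cone_{-,[\a,\b]},\ \|u\|^2_{H^1}=\int f_p(u)u\}$, by the standard fibering argument of \cite[Thm.~4.2]{WillemBook1996} adapted to the cone. (4) Use the a priori $L^\infty$ bound from step (1) to conclude $f_p(u_{p,-})=u_{p,-}^p$, so $u_{p,-}$ solves the genuine equation \eqref{eq:main_alpha_beta}. (5) Finally, perform the rescaling $w=c_{p,-}(\a,\b)^{-1/(p-1)}u_{p,-}$ and run the two-sided inequality argument of \eqref{eq:c''_p}: on one side $c_{p,-}\le Q_{p,[\a,\b]}(u_{p,-})=(2\tfrac{p+1}{p-1}c''_{p,-})^{(p-1)/(p+1)}$, and on the other side $t_p w$ with $t_p=\|w\|_{H^1}^{2/(p-1)}\|w\|_{p+1}^{-(p+1)/(p-1)}$ is admissible for $c''_{p,-}$, giving the reverse inequality; hence the inequalities are equalities and a suitable multiple of $w$ achieves $c_{p,-}(\a,\b)$ and solves \eqref{eq:main_alpha_beta}.

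I do not expect genuine obstacles here: the statement is the exact mirror image of Proposition \ref{prop:existence_increasing_sol}, and the only structural asymmetry is the hypothesis $\a>0$, which is essential because for $\a=0$ the cone $\cone_{-,[0,\b]}$ contains functions blowing up at the origin (indeed $G_{[0,\b]}(\cdot,0)$ is singular there), so no decreasing solution in the ball exists. The one point that deserves a word of care is the invariance of $\cone_{-,[\a,\b]}$ under the deformation used in the mountain-pass/Nehari scheme: one must check that truncating, rescaling, and gradient-flowing a decreasing function keeps it decreasing, which is where the Neumann (rather than Dirichlet) boundary conditions are used, exactly as in \cite{BNW} for the increasing case. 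Since all of this is verbatim parallel to the increasing case already treated in Proposition \ref{prop:existence_increasing_sol}, the cleanest write-up is to state that the proof follows the same lines and only point out the two differences: the a priori bound $u(\b)<1<u(\a)$ in place of $u(\a)<1<u(\b)$, and the necessity of $\a>0$.
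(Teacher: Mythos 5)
Your route is not the paper's: since $\a>0$ the paper simply declares the proof ``classical'' (direct minimization of $Q_{p,[\a,\b]}$ on the cone, which is unproblematic because $H^1_{rad}(B_\b\setminus B_\a)$ embeds compactly into $C(\overline{B_\b\setminus B_\a})$ and into every $L^q$, so no truncation and no uniform $L^\infty$ bound are needed; the condition $p>\lambda_2^{rad}(\a,\b)$ rules out the constant, and the maximum principle gives strict monotonicity). More importantly, your proposal as written has a genuine gap exactly where you remove the truncation. You assert that parts (ii)--(iii) of Lemma \ref{lemma:properties_sol_cone} carry over to $\cone_{-,[\a,\b]}$ because the Lyapunov argument ``does not use the monotonicity direction''. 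It does: $L(r)=\frac{|u'|^2}{2}-\frac{u^2}{2}+\frac{u^{p+1}}{p+1}$ is non-increasing in $r$, and the proof of (ii) hinges on the fact that for an \emph{increasing} solution the value at the inner endpoint satisfies $u(\a)\le 1$, hence $L(\a)\le 0$, hence $L\le 0$ on all of $[\a,\b]$, and evaluating at the maximum point $r=\b$ (where $u'=0$) gives $u(\b)^{p-1}\le\frac{p+1}{2}$; the bound $|u'|<1$ also rests on $L\le 0$. For a \emph{decreasing} solution the maximum sits at the left endpoint $\a$, where $L$ is largest, and the only available inequality, $L(\a)\ge L(\b)$ with $L(\b)=-\frac{u(\b)^2}{2}+\frac{u(\b)^{p+1}}{p+1}\le 0$, points the wrong way: it yields no upper bound on $u(\a)=\|u\|_\infty$ nor on $|u'|$. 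Consequently the estimate $\|u\|_\infty\le e^{1/2}$ for solutions in $\cone_{-,[\a,\b]}$ is unproved, your step (4) ($f_p(u_{p,-})=u_{p,-}^p$) does not follow, and the level identification in step (5) collapses with it. (A secondary caveat: the cone machinery of \cite{BNW} is built for the increasing cone in the ball; invoking it ``verbatim'' for the decreasing cone is not automatic, though this is a lesser issue.)

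The repair is the observation you yourself made in passing: with $\a>0$ drop the truncation entirely. Minimize $Q_{p,[\a,\b]}$ directly over $\cone_{-,[\a,\b]}$: the cone is convex and weakly closed, the embedding into $L^{p+1}$ is compact for every $p$, so a minimizer exists by the direct method; the genuinely nontrivial point is then not an a priori bound but showing that the constrained minimizer is a free critical point, i.e.\ solves \eqref{eq:main_alpha_beta} despite the obstacle-type constraint of the cone (the Serra--Tilli/\cite{BNW}-type variational-inequality argument), after which $p>\lambda_2^{rad}(\a,\b)$ excludes the constant minimizer and the maximum principle gives strict decrease. This is the classical argument the paper is referring to, and it makes the missing $L^\infty$ bound unnecessary.
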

The proof is classical. Observe that by the maximum principle, $u_{p,-}$ is strictly decreasing.

\section{Behaviour of the monotone solutions as $p\to+\infty$}\label{sec:behaviour_p_infty}\label{S3}

In this section we will prove the following convergence result.

\begin{proposition}\label{prop:convergence_green}
Denote by $u_{\infty,+}(r)=u_{\infty,+}(r;\a,\b)$ the unique solution of
\begin{equation}
\left\{\begin{array}{ll}\label{a1}
-\Delta u + u=0 \quad &\text{in } B_\b\setminus B_\a \\
\partial_\nu u=0 &\text{on } \partial B_\a \\
u=1 \quad &\text{on } \partial B_\b.
\end{array}\right.
\end{equation}
As $p\to\infty$ we have that $u_{p,+}\to u_{\infty,+}$ in $H^1(B_\b\setminus B_\a)\cap C^{0,\gamma}(\overline{B_\b\setminus B_\a})$ for every $\gamma\in(0,1)$.
\end{proposition}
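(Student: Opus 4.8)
The plan is to combine the uniform a priori bounds of Lemma~\ref{lemma:properties_sol_cone} with the variational characterisation of $u_{p,+}$, thereby identifying the limit without passing to the limit in the equation (the ball case $\a=0$ being included throughout). First I would record that, by Lemma~\ref{lemma:properties_sol_cone}, $\|u_{p,+}\|_\infty\le e^{1/2}$ and $|u_{p,+}'|\le 1$ on $\overline{B_\b\setminus B_\a}$ uniformly in $p$, so that $\{u_{p,+}\}$ is bounded in $H^1(B_\b\setminus B_\a)$ and equi-Lipschitz. Given any sequence $p_n\to\infty$, Arzel\`a--Ascoli and the reflexivity of $H^1$ produce a subsequence and some $v\in\cone_{+,[\a,\b]}$ with $u_{p_n,+}\rightharpoonup v$ in $H^1$ and $u_{p_n,+}\to v$ uniformly (the two limits coinciding); interpolating the uniform convergence against the equi-Lipschitz bound then gives $u_{p_n,+}\to v$ in $C^{0,\gamma}(\overline{B_\b\setminus B_\a})$ for every $\gamma\in(0,1)$. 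Everything reduces to showing $v=u_{\infty,+}$ for every such subsequential limit.

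The crux --- and the point that also explains why the convergence fails to be $C^1$ up to $r=\b$ --- is that $u_{p,+}(\b)\to1$, i.e.\ the Dirichlet datum of \eqref{a1} is produced in the limit. Integrating the equation over $B_\b\setminus B_\a$ and using the Neumann condition gives $\int_{B_\b\setminus B_\a} u_{p,+}^p=\int_{B_\b\setminus B_\a} u_{p,+}\le e^{1/2}|B_\b\setminus B_\a|$. Since $u_{p,+}$ is increasing, for each $\delta>0$ the set $\{u_{p,+}>1+\delta\}$ is an annular shell $\{r_{p,\delta}<|x|<\b\}$ with $(1+\delta)^p|\{u_{p,+}>1+\delta\}|\le\int_{B_\b\setminus B_\a} u_{p,+}^p$, so $|\{u_{p,+}>1+\delta\}|\to0$. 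On the other hand $|u_{p,+}'|\le1$ forces $\b-r_{p,\delta}\ge u_{p,+}(\b)-1-\delta$ whenever $u_{p,+}(\b)>1+\delta$; hence if $\limsup_p u_{p,+}(\b)=L>1$, then taking $\delta=(L-1)/2$ keeps $r_{p,\delta}$ bounded away from $\b$ along a subsequence, so $|\{u_{p,+}>1+\delta\}|=|B_\b\setminus B_{r_{p,\delta}}|$ stays bounded below, a contradiction. Since $u_{p,+}(\b)>1$ by Lemma~\ref{lemma:properties_sol_cone}(i) and strict monotonicity, we get $u_{p,+}(\b)\to1$, hence $v(\b)=1$ and $\|v\|_\infty=1$.

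Next I would exploit the fact that $Q_{p,[\a,\b]}$ is scale invariant and that a rescaling of $u_{p,+}$ achieves $c_{p,+}(\a,\b)$, so $Q_{p,[\a,\b]}(u_{p,+})=c_{p,+}(\a,\b)$. Testing the infimum defining $c_{p,+}$ with $u_{\infty,+}$ (which lies in $\cone_{+,[\a,\b]}$, is positive and increasing with $\|u_{\infty,+}\|_\infty=1$) and using $\|u_{\infty,+}\|_{p+1}\to 1$ gives $\limsup_p c_{p,+}\le\|u_{\infty,+}\|_{H^1}^2$. On the other hand $\|u_{p_n,+}\|_{p_n+1}^2\le\|u_{p_n,+}\|_\infty^2|B_\b\setminus B_\a|^{2/(p_n+1)}\to v(\b)^2=1$ by the previous step, while weak lower semicontinuity gives $\liminf_n\|u_{p_n,+}\|_{H^1}^2\ge\|v\|_{H^1}^2$, so $\liminf_n c_{p_n,+}\ge\|v\|_{H^1}^2$. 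Comparing, $\|v\|_{H^1}^2\le\|u_{\infty,+}\|_{H^1}^2$. But $v\in\cone_{+,[\a,\b]}$ with $v(\b)=1$ forces $\|v\|_{H^1}^2=\|v\|_{H^1}^2/v(\b)^2\ge\varphi_{[\a,\b]}(\b)=\|u_{\infty,+}\|_{H^1}^2$, where I use that $u_{\infty,+}=\xi_{[\a,\b]}/\xi_{[\a,\b]}(\b)=G_{[\a,\b]}(\cdot,\b)/G_{[\a,\b]}(\b,\b)$ is, up to normalisation, the unique minimiser of $\|u\|_{H^1}^2/u(\b)^2$ (Proposition~\ref{prop:xi_zeta_annulus} and \cite{BS,GN}). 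Equality then forces $v=u_{\infty,+}$.

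Finally, the squeeze $\|u_{p_n,+}\|_{H^1}^2=c_{p_n,+}\|u_{p_n,+}\|_{p_n+1}^2\to\|u_{\infty,+}\|_{H^1}^2=\|v\|_{H^1}^2$, combined with $u_{p_n,+}\rightharpoonup v$ in the Hilbert space $H^1$, upgrades weak to strong $H^1$ convergence; since the limit is the same for every subsequence, the whole family satisfies $u_{p,+}\to u_{\infty,+}$ in $H^1(B_\b\setminus B_\a)\cap C^{0,\gamma}(\overline{B_\b\setminus B_\a})$. I expect the main obstacle to be the boundary estimate $u_{p,+}(\b)\to1$ of the second step --- the absence of concentration above level $1$ near $r=\b$ --- since this is precisely what turns the Neumann condition at $\b$ into a Dirichlet one in the limit; the remaining steps are a fairly standard mix of compactness and $\Gamma$-convergence-type comparisons, the only care being to keep the chain of inequalities on $c_{p,+}$ tight enough to obtain strong $H^1$ convergence. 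A more PDE-oriented alternative would pass to the limit in the weak formulation against interior test functions, but that seems to require first showing $v<1$ on $[\a,\b)$, which is less transparent than the energy comparison above.
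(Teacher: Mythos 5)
Your proposal is correct, and its overall skeleton coincides with the paper's: uniform bounds give a subsequential limit $v$ in the cone, the key point is that the sup value tends to $1$, then one matches the level $c_{p,+}$ against the limiting variational level and uses uniqueness of the minimizer of $\inf\{\|u\|_{H^1}^2:\,u(\b)=1\}$ (equivalently $\varphi_{[\a,\b]}(\b)$, whose normalized minimizer is $u_{\infty,+}$) to identify $v$, and finally norm convergence upgrades weak to strong $H^1$ convergence. Two sub-steps are handled differently, and both of your versions work. For the boundary estimate $u_{p,+}(\b)\to 1$, the paper (Lemma \ref{lemma:weak_conv}) integrates the radial equation on $(\bar r,\b)$ to force $u_{p,+}'(\bar r)\to\infty$, contradicting Lemma \ref{lemma:properties_sol_cone}(iii); you instead combine the identity $\int u_{p,+}^p=\int u_{p,+}$ with a Chebyshev level-set bound and the Lipschitz bound, which is an equally valid and arguably more elementary mechanism. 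For the upper bound on $c_{p,+}$, the paper proves Lemma \ref{lemma:w_p} (the rescaling $w_p=\sigma_p u$) so as to compare $Q_{\infty,[\a,\b]}(u)$ with $\limsup_p c_{p,+}$ for arbitrary $u$ in the cone, whereas you test directly with the single function $u_{\infty,+}$ and use $\|u_{\infty,+}\|_{p+1}\to\|u_{\infty,+}\|_\infty=1$; since only this one comparison is needed, your shortcut is legitimate and bypasses that lemma (similarly, you obtain the $H^1$ bound from the pointwise bounds of Lemma \ref{lemma:properties_sol_cone} rather than from $c_{p,+}$). The only step left slightly implicit is that the final squeeze needs $\|u_{p_n,+}\|_{p_n+1}^2\to 1$, but this follows at once from your chain: $\limsup_n\|u_{p_n,+}\|_{H^1}^2\le\limsup_n c_{p_n,+}\,(1+o(1))\le\|u_{\infty,+}\|_{H^1}^2$ together with weak lower semicontinuity, so no gap remains.
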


The proof of this result is inspired by \cite{GN}. We divide it in several steps.

\begin{lemma}\label{lemma:weak_conv}
There exists $\bar{u} \in \cone_{+,[\a,\b]}$ satisfying $\|\bar{u}\|_{\infty}=\bar{u}(\b)=1$ such that, up to a subsequence, it holds
\[
u_{p,+} \rightharpoonup \bar{u} \text{ in } H^1(B_\b\setminus B_\a), \qquad 
u_{p,+}\to \bar{u} \text{ in } C^{0,\gamma}(\overline{B_\b\setminus B_\a}), \text{ for every } \gamma\in(0,1).
\]
\end{lemma}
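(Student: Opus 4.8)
The plan is to extract compactness from the uniform bounds in Lemma \ref{lemma:properties_sol_cone} and then identify the weak limit. First I would recall that $u_{p,+}\in\cone_{+,[\a,\b]}$ solves \eqref{eq:main_alpha_beta}, so by Lemma \ref{lemma:properties_sol_cone}(ii)--(iii) we have $\|u_{p,+}\|_\infty\leq e^{1/2}$ and $\|u_{p,+}'\|_\infty<1$ uniformly in $p$; since the functions are radial on $\overline{B_\b\setminus B_\a}$, this gives a uniform bound in $W^{1,\infty}$, hence in $H^1(B_\b\setminus B_\a)$. By weak compactness in $H^1$ and the compact embedding $W^{1,\infty}\hookrightarrow C^{0,\gamma}(\overline{B_\b\setminus B_\a})$ (Arzel\`a--Ascoli, using equiboundedness and equi-Lipschitz continuity) for every $\gamma\in(0,1)$, there is a subsequence and a limit $\bar u$ with $u_{p,+}\rightharpoonup\bar u$ in $H^1$ and $u_{p,+}\to\bar u$ in $C^{0,\gamma}(\overline{B_\b\setminus B_\a})$.

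Next I would check that $\bar u$ inherits the structural properties. The cone $\cone_{+,[\a,\b]}$ is closed under $C^0$ (indeed uniform) convergence: nonnegativity and the monotonicity inequality $\bar u(r)\le\bar u(s)$ for $\a<r\le s\le\b$ pass to the uniform limit, so $\bar u\in\cone_{+,[\a,\b]}$, and in particular $\|\bar u\|_\infty=\bar u(\b)$. The point that requires genuine argument is $\bar u(\b)=1$ (equivalently $\|\bar u\|_\infty=1$). For the lower bound $\bar u(\b)\ge 1$: if $u_{p,+}\not\equiv 1$ then Lemma \ref{lemma:properties_sol_cone}(i) gives $u_{p,+}(\b)>1$, so $\bar u(\b)\ge 1$ by uniform convergence; the case $u_{p,+}\equiv1$ is trivial. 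For the upper bound $\bar u(\b)\le 1$, I expect this to be the main obstacle. Suppose instead $\bar u(\b)=1+2\eta$ for some $\eta>0$. Then, by uniform convergence, $u_{p,+}\ge 1+\eta$ on a fixed annular neighbourhood $\{b'\le r\le\b\}$ for all large $p$, so $u_{p,+}^p\to+\infty$ uniformly there. Testing the equation \eqref{eq:main_alpha_beta} against a fixed nonnegative radial cutoff supported near $\partial B_\b$ gives $\int (\nabla u_{p,+}\cdot\nabla\psi + u_{p,+}\psi)\,dx=\int u_{p,+}^p\psi\,dx\to+\infty$, while the left side stays bounded because $u_{p,+}$ is bounded in $H^1$ — a contradiction. (Equivalently, integrate the ODE $-(r^{N-1}u_{p,+}')'=r^{N-1}(u_{p,+}^p-u_{p,+})$ over a subinterval where $u_{p,+}>1+\eta$ and use $|u_{p,+}'|<1$ to bound the left-hand side, forcing the right-hand side to stay bounded, which fails.) Hence $\bar u(\b)=1$, which together with $\bar u\in\cone_{+,[\a,\b]}$ gives $\|\bar u\|_\infty=\bar u(\b)=1$.

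So the skeleton is: (1) uniform $W^{1,\infty}$ bound from Lemma \ref{lemma:properties_sol_cone}; (2) weak-$H^1$ and $C^{0,\gamma}$ compactness along a subsequence; (3) closedness of the cone under uniform limits; (4) the two-sided bound $\bar u(\b)=1$, with the $\le$ direction being the substantive step via the blow-up of $u_{p,+}^p$ on a region where the limit exceeds $1$. I would present step (4) as the heart of the lemma and keep the rest brief, since (1)--(3) are routine consequences of the a priori estimates already established. Note that at this stage one only claims $\|\bar u\|_\infty=1$; the identification of $\bar u$ with $u_{\infty,+}$, i.e. that $\bar u$ solves the limit linear problem \eqref{a1}, is naturally deferred to a subsequent step of the proof of Proposition \ref{prop:convergence_green}, where one would pass to the limit in the equation away from $\partial B_\b$ using that $u_{p,+}^p\to 0$ locally uniformly on $\{r<\b\}$ (because $\bar u<1$ there, by strict monotonicity to be justified) and use the Neumann condition at $\partial B_\a$.
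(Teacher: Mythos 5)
Your proposal is correct and follows essentially the same route as the paper: uniform bounds from Lemma \ref{lemma:properties_sol_cone} give the $H^1$-weak and $C^{0,\gamma}$ compactness, the cone properties pass to the limit, and $\|\bar u\|_\infty\le 1$ is obtained by contradiction from the blow-up of $u_{p,+}^p$ near $\partial B_\b$ — your parenthetical ODE-integration variant is exactly the paper's argument, which uses the bound $|u_{p,+}'|<1$ rather than your cutoff/$H^1$ test-function version. The only cosmetic difference is that the paper derives the $H^1$ bound from the variational level $c_{p,+}$ (which it reuses later) instead of from the pointwise $W^{1,\infty}$ bounds, but both are valid here.
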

\begin{proof}
Given any $\eta\in\cone_{+,[\a,\b]}$ with $\|\eta\|_\infty<\sqrt{e}+1$ we have, by the H\"older inequality (see \eqref{eq:Q_definition} for the definition of $c_{p,+}(\alpha,\beta)$),
\[
c_{p,+}(\a,\b)\leq \frac{\|\eta\|^2_{H^1}}{\|\eta\|^2_{p+1}}
\leq |B_\b\setminus B_\a|^\frac{p-1}{p+1} \frac{\|\eta\|^2_{H^1}}{\|\eta\|^2_{2}},
\]
which is bounded by a constant non depending on $p$. On the other hand, the equation for $u_{p,+}$ provides 
\begin{equation}\label{eq:H1_bound}
c_{p,+}(\a,\b)=\|u_{p,+}\|_{H^1}^{2\frac{p-1}{p+1}}= \|u_{p,+}\|_{p+1}^{p-1}.
\end{equation}
We deduce that the $H^1$-norm of the $u_{p,+}$ is bounded uniformly in $p$ and hence the weak convergence. 
The H\"older convergence comes from Lemma \ref{lemma:properties_sol_cone} (iii).

Being $u_{p,+}$ positive and strictly increasing for every $p$, $\bar{u}$ is non-negative and non-decreasing by the pointwise convergence. Let us show that $\|\bar{u}\|_{\infty}=1$. On the one hand, $\|\bar{u}\|_{\infty}\geq 1$ since $u_{p,+}(\b)>1$ for every $p$ and the convergence is $C^{0,\gamma}(\overline{B_\b\setminus B_\a})$. Suppose by contradiction that $\|\bar{u}\|_{\infty}=\bar{u}(\b)>1$. 
Then there exists $\bar{r}<\b$ and $\delta>0$ such that $u_{p,+}(r)>1+\delta$ for every $r\in(\bar{r},\b)$. By integrating \eqref{eq:main_alpha_beta} in $(\bar{r},\b)$ we obtain
\[
u'_{p,+}(\bar{r})>\frac{1}{\bar{r}^{N-1}} \int_{\bar{r}}^\b (u_{p,+}^{p-1}-1) r^{N-1}\,dr
\to\infty,
\]
thus contradicting Lemma \ref{lemma:properties_sol_cone} (iii).
\end{proof}

\begin{lemma}\label{lemma:limsup_u_p}
It holds
\[
\limsup_{p\to\infty}\|u_{p,+}\|_{p+1} \leq 1.
\]
\end{lemma}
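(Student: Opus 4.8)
\textbf{Plan for the proof of Lemma \ref{lemma:limsup_u_p}.}
The plan is to exploit the variational characterization of $u_{p,+}$ via the level $c_{p,+}(\a,\b)$ together with the identity \eqref{eq:H1_bound}, which reads $c_{p,+}(\a,\b)=\|u_{p,+}\|_{p+1}^{p-1}$, so that it suffices to show $\limsup_{p\to\infty} c_{p,+}(\a,\b)^{1/(p-1)}\le 1$. To get an upper bound on $c_{p,+}(\a,\b)$ we must produce, for each large $p$, a good competitor $\eta_p\in\cone_{+,[\a,\b]}$ with $\|\eta_p\|_\infty<\sqrt e+1$. The natural choice is (a rescaled truncation of) the limit profile $u_{\infty,+}$ from Proposition \ref{prop:convergence_green}, or more simply the function $v(r)=G_{[\a,\b]}(r,\b)/G_{[\a,\b]}(\b,\b)$, which is increasing, equals $1$ at $r=\b$, and is bounded by $1$; in fact the minimizing nature of the Green function for $\varphi_{[\a,\b]}$ gives $\|v\|_{H^1}^2=\varphi_{[\a,\b]}(\b)$. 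Actually the cleanest competitor is simply $u_{\infty,+}$ itself: it lies in $\cone_{+,[\a,\b]}$, satisfies $\|u_{\infty,+}\|_\infty=u_{\infty,+}(\b)=1<\sqrt e+1$, and has finite $H^1$ norm.

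With such an $\eta$ fixed (independent of $p$), the definition of $c_{p,+}(\a,\b)$ as an infimum of $Q_{p,[\a,\b]}$ gives
\[
c_{p,+}(\a,\b)\le Q_{p,[\a,\b]}(\eta)=\frac{\|\eta\|_{H^1}^2}{\|\eta\|_{p+1}^2}.
\]
Since $\|\eta\|_\infty=1$ and $|B_\b\setminus B_\a|$ is finite, one has $\|\eta\|_{p+1}\to\|\eta\|_\infty=1$ as $p\to\infty$ by the standard convergence of $L^q$ norms to the $L^\infty$ norm on a finite measure space (here one uses that the essential supremum is attained, so the $L^{p+1}$ norms do not degenerate). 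Hence $\|\eta\|_{p+1}^2\to 1$, and therefore $c_{p,+}(\a,\b)\le \|\eta\|_{H^1}^2(1+o(1))$ is bounded by a constant $C=C(\a,\b)$ uniformly in $p$ (large). Raising to the power $1/(p-1)$ and using \eqref{eq:H1_bound},
\[
\|u_{p,+}\|_{p+1}=c_{p,+}(\a,\b)^{1/(p-1)}\le C^{1/(p-1)}\longrightarrow 1\quad\text{as }p\to\infty,
\]
which is the claim.

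The argument is essentially routine once the right competitor is identified; the only point that requires a little care is the convergence $\|\eta\|_{p+1}\to\|\eta\|_\infty$, which needs a function whose $L^\infty$ norm is genuinely achieved on a set of positive measure — or at least approached, which is automatic for a continuous $\eta$ on a closed set where the max is attained at an interior or boundary point. Choosing $\eta=u_{\infty,+}$, which is continuous on $\overline{B_\b\setminus B_\a}$ with maximum $1$ attained on $\partial B_\b$, makes this immediate. I do not expect any serious obstacle; the main (minor) subtlety is just to make sure the competitor is admissible, i.e. lies in the cone $\cone_{+,[\a,\b]}$ and respects the constraint $\|\eta\|_\infty<\sqrt e+1$, both of which hold trivially for $u_{\infty,+}$.
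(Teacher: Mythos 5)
Your proof is correct, but it follows a genuinely different route from the paper. You combine the identity \eqref{eq:H1_bound}, $c_{p,+}(\a,\b)=\|u_{p,+}\|_{p+1}^{p-1}$, with a $p$-independent upper bound $c_{p,+}(\a,\b)\le Q_{p,[\a,\b]}(\eta)\le C$ obtained from one fixed admissible competitor (e.g.\ $u_{\infty,+}$, which indeed lies in $\cone_{+,[\a,\b]}$ with $\|u_{\infty,+}\|_\infty=1$), and then take the $(p-1)$-th root, so that $\|u_{p,+}\|_{p+1}\le C^{1/(p-1)}\to1$. The paper instead interpolates via H\"older, $\|u_{p,+}\|_{p+1}\le\|u_{p,+}\|_{q+1}|B_\b\setminus B_\a|^{\frac{1}{p+1}-\frac{1}{q+1}}$, sends $q\to\infty$ to reduce to $\|u_{p,+}\|_\infty|B_\b\setminus B_\a|^{1/(p+1)}$, and then invokes Lemma \ref{lemma:weak_conv}, i.e.\ the uniform ($C^{0,\gamma}$) convergence $u_{p,+}\to\bar u$ with $\|\bar u\|_\infty=1$. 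Your argument is more elementary: it bypasses the compactness and the identification $\|\bar u\|_\infty=1$ entirely, using only the variational level (whose uniform boundedness is anyway the first step in the proof of Lemma \ref{lemma:weak_conv}), at the price of not saying anything about the solution itself beyond the level identity; the paper's route ties the estimate to the limiting profile, which is what the subsequent proof of Proposition \ref{prop:convergence_green} exploits. Two minor remarks: your parenthetical about the essential supremum being ``attained'' is unnecessary — $\|\eta\|_{L^q}\to\|\eta\|_{L^\infty}$ on a finite measure space holds for any $\eta\in L^\infty$ — and in fact you do not even need $\|\eta\|_{p+1}\to1$, only that $\|\eta\|_{p+1}$ stays bounded away from zero, which is immediate for a fixed nonzero continuous $\eta$.
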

\begin{proof}
For every $q>p$ we have, by the H\"older inequality,
\[
\|u_{p,+}\|_{p+1} \leq \|u_{p,+}\|_{q+1} |B_\b\setminus B_\a|^{\frac{1}{p+1}-\frac{1}{q+1}}.
\]
Therefore
\[
\begin{split}
\limsup_{p\to\infty}\|u_{p,+}\|_{p+1}  \leq 
\limsup_{p\to\infty} \lim_{q\to\infty} \left(\|u_{p,+}\|_{q+1} |B_\b\setminus B_\a|^{\frac{1}{p+1}-\frac{1}{q+1}}\right)\\
=  \limsup_{p\to\infty}\left(\|u_{p,+}\|_{\infty} |B_\b\setminus B_\a|^\frac{1}{p+1}\right) 
=\|\bar{u}\|_{\infty}=1,
\end{split}
\]
by Lemma \ref{lemma:weak_conv}.
\end{proof}

\begin{lemma}\label{lemma:w_p}
For every $u\in\cone_{+,[\a,\b]}$, $u\not\equiv0$, there exists a sequence $\{w_p\}\subset \cone_{+,[\a,\b]}$ such that
\[
\lim_{p\to\infty}\|w_p-u\|_{H^1}=0, \quad 
\|u\|_{\infty}\leq \liminf_{p\to\infty} \|w_p\|_{p+1}.
\]
\end{lemma}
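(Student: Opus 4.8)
The plan is to take the constant sequence $w_p:=u$ for every $p$. With this choice $\|w_p-u\|_{H^1}=0$ for all $p$, so the first requirement holds trivially, and the whole content of the lemma reduces to the elementary estimate $\liminf_{p\to\infty}\|u\|_{p+1}\ge\|u\|_\infty$. Recall that, as observed right after the definition of $\cone_{+,[\a,\b]}$, a function $u\in\cone_{+,[\a,\b]}$ is non-negative, non-decreasing in $r$ and continuous up to $r=\b$ (and up to the origin when $\a=0$); in particular it is bounded, $\|u\|_\infty=u(\b)<\infty$, and the hypothesis $u\not\equiv0$ forces $\|u\|_\infty>0$.

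To prove the estimate I would fix $\ep\in(0,\|u\|_\infty)$. By continuity of $u$ at $r=\b$ there is $h=h(\ep)>0$ with $u(r)>\|u\|_\infty-\ep$ on the non-degenerate region $B_\b\setminus B_{\b-h}$, whose Lebesgue measure $m_\ep:=|B_\b\setminus B_{\b-h}|$ is strictly positive. Since $u$ is bounded it lies in $L^{p+1}(B_\b\setminus B_\a)$ for every $p$, and restricting the integral to $B_\b\setminus B_{\b-h}$ gives
\[
\|u\|_{p+1}\ \ge\ \Big(\int_{B_\b\setminus B_{\b-h}}|u|^{p+1}\,dx\Big)^{\frac1{p+1}}\ \ge\ (\|u\|_\infty-\ep)\,m_\ep^{\frac1{p+1}}.
\]
Since $m_\ep^{1/(p+1)}\to1$ as $p\to\infty$, this yields $\liminf_{p\to\infty}\|u\|_{p+1}\ge\|u\|_\infty-\ep$, and letting $\ep\to0$ proves the claim. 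With $w_p=u$ this is precisely the second inequality in the statement.

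Because $w_p=u$ is automatically an element of $\cone_{+,[\a,\b]}$ and converges to $u$, there is essentially no real obstacle here: the only step requiring a little care is the justification that elements of the cone are genuinely bounded and that the value $\|u\|_\infty$ is approached on a set of positive measure, which is what makes the reverse H\"older-type bound above legitimate; both facts follow from the monotonicity of $u$ together with the one-dimensional Sobolev embedding near the endpoint $r=\b$. (If for later purposes one prefers a non-constant sequence --- for instance to keep $\|w_p\|_\infty$ strictly below the bound $\sqrt e+1$ appearing in \eqref{eq:Q_definition} in the borderline case $\|u\|_\infty=\sqrt e+1$ --- one can instead take $w_p:=(1-\tfrac1p)u$, for which the three required properties are verified in exactly the same way.)
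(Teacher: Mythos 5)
Your proof is correct, and it takes a genuinely simpler route than the paper. The paper does not use the constant sequence: it sets $w_p=\sigma_p u$ with $\sigma_p>1$ chosen (by continuity of $\sigma\mapsto\|\sigma u\|_{p+1}$) so that $\|\sigma_p u\|_{p+1}=\|u\|_\infty\,|B_\b\setminus B_\a|^{1/(p+1)}$, which makes the second property hold by construction, and then argues by contradiction that $\sigma_p\to1$, which gives the $H^1$ convergence. You instead observe that $w_p\equiv u$ already does the job, the whole lemma collapsing to the standard fact $\liminf_{p\to\infty}\|u\|_{p+1}\ge\|u\|_\infty$, which you prove by the usual reverse-H\"older estimate on a set of positive measure where $u>\|u\|_\infty-\ep$; the boundedness and continuity of elements of $\cone_{+,[\a,\b]}$ needed for this are exactly the facts recorded in the paper right after the cone is introduced. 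Both arguments are valid, and your choice loses nothing in the only place the lemma is used (the chain of inequalities in the proof of Proposition \ref{prop:convergence_green}): there one needs $\limsup_p Q_{p,[\a,\b]}(w_p)\le Q_{\infty,[\a,\b]}(u)$ together with admissibility of $w_p$ for $c_{p,+}$, and since $Q_\infty$ is scale invariant one may normalize $\|u\|_\infty=1$, after which $w_p=u$ (or your alternative $(1-\tfrac1p)u$) respects the constraint $\|w_p\|_\infty<\sqrt e+1$ of \eqref{eq:Q_definition} at least as comfortably as the paper's dilated sequence, whose factor $\sigma_p>1$ slightly inflates the sup norm. What the paper's construction buys is only the exact normalization $\|w_p\|_{p+1}=\|u\|_\infty|B_\b\setminus B_\a|^{1/(p+1)}$, which is not needed for the stated conclusion; your elementary argument is a perfectly acceptable, indeed cleaner, substitute.
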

\begin{proof}
We take $w_p$ of the form $\sigma_p u$ with $\sigma_p>1$, so that $w_p\in\cone_{+,[\a,\b]}$. In order to choose $\sigma_p$, let $f(\sigma)=\|\sigma u\|_{p+1}$.
Since $f$ is continuous, $f(1)<\|u\|_{\infty} |B_\b\setminus B_\a|^\frac{1}{p+1}$ and $f(\sigma)\to\infty$ as $\sigma\to\infty$, there exists $\sigma_p\in(1,\infty)$ such that $\|\sigma_p u\|_{p+1}=\|u\|_{\infty} |B_\b\setminus B_\a|^\frac{1}{p+1}$. It only remains to prove that $\sigma_p\to1$ as $p\to\infty$. Suppose on the contrary that $\sigma_p>1+\delta$ for some $\delta>0$ and for every $p$ large. Then
\[
\|u\|_{\infty}=\lim_{p\to\infty} \left( \|u\|_{\infty} |B_\b\setminus B_\a|^\frac{1}{p+1}\right)
=\lim_{p\to\infty} \|\sigma_p u\|_{p+1} 
>(1+\delta) \lim_{p\to\infty} \|u\|_{p+1},
\]
which provides $1>1+\delta$, a contradiction.
\end{proof}

\begin{proof}[Proof of Proposition \ref{prop:convergence_green}]
Let
\begin{equation}\label{eq:c_infty_def}
\begin{split}
c_{\infty,+} & =\inf \left\{\|u\|^2_{H^1}: \, u\in\cone_{+,[\a,\b]}, \, \|u\|_{\infty}=1 \right\} \\
& =\inf \left\{Q_{\infty,[\a,\b]}(u): \, u\in\cone_{+,[\a,\b]}, \, u\not\equiv0 \right\},
\end{split}
\end{equation}
where $Q_{\infty,[\a,\b]}$ is defined in \eqref{eq:Q_infty_def}.
By Lemma \ref{lemma:weak_conv} we have
\begin{equation}\label{eq:c_infty_less_c_p}
c_{\infty,+}\leq \|\bar{u}\|^2_{H^1} \leq \liminf_{p\to\infty}\|u_{p,+}\|^2_{H^1}
=\liminf_{p\to\infty}(c_{p,+} \|u_{p,+}\|^2_{p+1}).
\end{equation}
Using Lemma \ref{lemma:limsup_u_p} we conclude that $c_{\infty,+}\leq \liminf_{p\to\infty}c_{p,+}$. On the other hand, given any $u\in\cone_{+,[\a,\b]}$, $u\not\equiv0$, Lemma \ref{lemma:w_p} provides
\[
Q_{\infty,[\a,\b]}(u) \geq \limsup_{p\to\infty} Q_{p,[\a,\b]}(w_p) 
\geq \limsup_{p\to\infty} c_{p,+}.
\]
Therefore we have obtained
\[
c_{\infty,+}=\lim_{p\to\infty} c_{p,+}.
\]
In turn, the inequalities in \eqref{eq:c_infty_less_c_p} are indeed equalities, which implies both that $u_{p,+}\to \bar{u}$ in $H^1(B_\b\setminus B_\a)$ and that $\bar{u}$ achieves $c_{\infty,+}$ (with $\|\bar{u}\|_{\infty}=1$).

It only remains to show that $u_{\infty,+}$ is the unique function, having $L^\infty$-norm equal to 1, which achieves $c_{\infty,+}$. On the one hand, $u_{\infty,+}$ uniquely achieves
\[
\inf \left\{\|u\|^2_{H^1}: \, u=1 \text{ on } \partial B_\b \right\} \leq c_{\infty,+}.
\]
On the other hand, $u_{\infty,+}$ is radial and satisfies
\[
u_{\infty,+}'(r)=\frac{1}{r^{N-1}}\int_\a^r t^{N-1} u_{\infty,+}(t) \, dt \geq 0, \quad \forall \ r\in (\alpha,\beta),
\]
so that $u_{\infty,+}$ is an admissible test function for $c_{\infty,+}$.
\end{proof}
\begin{remark}
Note that we cannot have the $C^1$-convergence of the solution up to $r=\beta$. Indeed $u'_{p,+}(\beta)=0$ and $u'_{\infty,+}(\beta)>0$.
\end{remark}
An analogous result holds for the decreasing solution in the annulus.
\begin{proposition}\label{prop:convergence_green_decreasing}
Let $\a>0$.
Denote by $u_{\infty,-}(r)=u_{\infty,-}(r;\a,\b)$ the unique solution of
\begin{equation}
\left\{\begin{array}{ll}
-\Delta u + u=0 \quad &\text{in } B_\b\setminus B_\a \\
\partial_\nu u=0 &\text{on } \partial B_\b \\
u=1 \quad &\text{on } \partial B_\a.
\end{array}\right.
\end{equation}
As $p\to\infty$ we have that $u_{p,-}\to u_{\infty,-}$ in $H^1(B_\b\setminus B_\a)\cap C^{0,\gamma}(\overline{B_\b\setminus B_\a})$ for every $\gamma\in(0,1)$.
\end{proposition}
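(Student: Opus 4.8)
The plan is to replay, step for step, the proof of Proposition \ref{prop:convergence_green}, with the cone $\cone_{+,[\a,\b]}$ replaced by $\cone_{-,[\a,\b]}$ and the roles of the two boundary spheres interchanged: the Dirichlet datum $u=1$ now sits on the inner sphere $\partial B_\a$ and the homogeneous Neumann condition on the outer sphere $\partial B_\b$, so that the maximum of the profile migrates from $\partial B_\b$ to $\partial B_\a$. Accordingly set
\[
c_{\infty,-}=\inf\left\{\|u\|_{H^1}^2:\ u\in\cone_{-,[\a,\b]},\ \|u\|_\infty=1\right\}=\inf\left\{Q_{\infty,[\a,\b]}(u):\ u\in\cone_{-,[\a,\b]},\ u\not\equiv0\right\}.
\]
First I would record the uniform bounds on $u_{p,-}$: testing $c_{p,-}(\a,\b)$ with a fixed competitor and using, as in \eqref{eq:H1_bound}, the identity $c_{p,-}(\a,\b)=\|u_{p,-}\|_{H^1}^{2(p-1)/(p+1)}=\|u_{p,-}\|_{p+1}^{p-1}$ gives a $p$-independent bound on $\|u_{p,-}\|_{H^1}$; combined with the analogue of Lemma \ref{lemma:properties_sol_cone} for the decreasing solution (see the last paragraph) this yields uniform $L^\infty$ and Lipschitz bounds. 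Hence, along a subsequence, $u_{p,-}\rightharpoonup\bar u$ in $H^1(B_\b\setminus B_\a)$ and $u_{p,-}\to\bar u$ in $C^{0,\g}(\overline{B_\b\setminus B_\a})$ for every $\g\in(0,1)$, with $\bar u\in\cone_{-,[\a,\b]}$. As in Lemma \ref{lemma:weak_conv}, one then checks $\|\bar u\|_\infty=\bar u(\a)=1$: the bound $\bar u(\a)\ge1$ follows from $u_{p,-}(\a)>1$ (the analogue of Lemma \ref{lemma:properties_sol_cone}(i), obtained by integrating the equation over the whole annulus), while $\bar u(\a)>1$ is impossible, since on an interval $[\a,\a+r_0]$ where $u_{p,-}>1+\delta$ one integrates \eqref{eq:main_alpha_beta} to get $u_{p,-}'(\a+r_0)\to-\infty$, contradicting the Lipschitz bound.

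The remaining steps transcribe the proof of Proposition \ref{prop:convergence_green} verbatim. The analogues of Lemmas \ref{lemma:limsup_u_p} and \ref{lemma:w_p} hold in $\cone_{-,[\a,\b]}$: the H\"older inequality together with $\|\bar u\|_\infty=1$ gives $\limsup_{p\to\infty}\|u_{p,-}\|_{p+1}\le1$, and for any $u\in\cone_{-,[\a,\b]}$, $u\not\equiv0$, the rescalings $w_p=\sigma_p u\in\cone_{-,[\a,\b]}$ with a suitable $\sigma_p\to1$ satisfy $\|w_p-u\|_{H^1}\to0$ and $\|u\|_\infty\le\liminf_p\|w_p\|_{p+1}$. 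Combining these one obtains $c_{\infty,-}\le\|\bar u\|_{H^1}^2\le\liminf_p c_{p,-}(\a,\b)$ and $Q_{\infty,[\a,\b]}(u)\ge\limsup_p c_{p,-}(\a,\b)$ for every admissible $u$; therefore $c_{p,-}(\a,\b)\to c_{\infty,-}$, all intermediate inequalities become equalities, $u_{p,-}\to\bar u$ strongly in $H^1$, and $\bar u$ achieves $c_{\infty,-}$. It remains to identify $\bar u=u_{\infty,-}$: every competitor $u$ for $c_{\infty,-}$ is decreasing with maximum $1$ attained at $\a$, hence satisfies $u=1$ on $\partial B_\a$, so $\inf\{\|u\|_{H^1}^2:\ u=1\text{ on }\partial B_\a\}\le c_{\infty,-}$, and this infimum is uniquely achieved by $u_{\infty,-}$; conversely $u_{\infty,-}$ itself competes for $c_{\infty,-}$, being non-negative with $u_{\infty,-}'(r)=-\frac{1}{r^{N-1}}\int_r^\b t^{N-1}u_{\infty,-}(t)\,dt\le0$ and $\|u_{\infty,-}\|_\infty=u_{\infty,-}(\a)=1$. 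Thus the two infima coincide and $\bar u=u_{\infty,-}$; uniqueness of the limit upgrades the subsequential convergence to convergence of the whole family.

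The one genuinely new point, and the one I expect to cost the most work, is the analogue of Lemma \ref{lemma:properties_sol_cone}(ii)--(iii) for $u_{p,-}$, i.e.\ the uniform $L^\infty$ and gradient bounds. For $u_{p,+}$ these are immediate from the Lyapunov function $L(r)=\tfrac12|u'|^2-\tfrac12u^2+\tfrac1{p+1}u^{p+1}$, which is non-positive at the outer endpoint $\b$, where $u_{p,+}$ attains its maximum; for $u_{p,-}$ the maximum sits at the inner endpoint $\a$, and no sign for $L(\a)$ is available a priori. I would handle this by a dichotomy: either $L(\a)\le0$ --- equivalently $\|u_{p,-}\|_\infty\le((p+1)/2)^{1/(p-1)}$ --- in which case $L\le0$ throughout $[\a,\b]$ and the estimates of Lemma \ref{lemma:properties_sol_cone} follow; or $L(\a)>0$, in which case one controls $\|u_{p,-}\|_\infty$ by combining the uniform $H^1$ bound with the behaviour of the equation near $\partial B_\a$ --- the solution drops from its maximum $M$ over a radial layer of width $\sim M^{(1-p)/2}$, forcing $\|u_{p,-}\|_{p+1}\ge cM^{1/2}$ for large $p$, while $\|u_{p,-}\|_{p+1}^{p-1}=c_{p,-}(\a,\b)$ remains bounded. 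Once these bounds are in place, the rest of the proof is a verbatim transcription of that of Proposition \ref{prop:convergence_green}.
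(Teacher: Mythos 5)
Your overall route is the right one — the paper itself offers no separate proof of this proposition, asserting only that it is analogous to Proposition \ref{prop:convergence_green}, and your transcription of Lemmas \ref{lemma:weak_conv}--\ref{lemma:w_p} and of the identification step is faithful; you also correctly isolate the one non-verbatim point, namely that Lemma \ref{lemma:properties_sol_cone}(ii)--(iii) exploits $L(\alpha)\le 0$, which is automatic for the increasing solution (whose maximum sits where $u<1$ fails to occur) but not for $u_{p,-}$, whose maximum is at $\alpha$ with $u_{p,-}(\alpha)>1$. However, the second branch of your dichotomy leaves a genuine gap: there you only produce a bound on $\|u_{p,-}\|_\infty$, while what the rest of your argument actually consumes is the uniform \emph{gradient} bound — it is needed both for the equicontinuity behind the $C^{0,\gamma}$ compactness and for the contradiction showing $\bar u(\alpha)=1$. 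A bound $\|u_{p,-}\|_\infty\le C$ with possibly $C>1$ does not let you re-enter the Lyapunov argument, since then $L(\alpha)=-\tfrac12\|u_{p,-}\|_\infty^2+\tfrac{1}{p+1}\|u_{p,-}\|_\infty^{p+1}$ may blow up as $p\to\infty$, so $|u_{p,-}'|$ is not controlled; moreover the layer-width estimate ``drops from $M$ over a width $\sim M^{(1-p)/2}$'' is itself only heuristic and would need an ODE argument whose natural tool is again the Lyapunov function whose sign you do not know in that branch.

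The gap is easily repaired, and in a way that makes the dichotomy unnecessary. Exactly as in \eqref{eq:H1_bound}, testing $c_{p,-}(\alpha,\beta)$ with a fixed element of $\cone_{-,[\a,\b]}$ gives $c_{p,-}(\alpha,\beta)=\|u_{p,-}\|_{p+1}^{p-1}\le C$ uniformly in $p$, hence by H\"older $\int_\alpha^\beta u_{p,-}^{p}\,t^{N-1}dt\le \bigl(\int_\alpha^\beta u_{p,-}^{p+1}t^{N-1}dt\bigr)^{\frac{p}{p+1}}\,C^{\frac{1}{p+1}}\le C'$. Integrating the radial equation $-(t^{N-1}u_{p,-}')'+t^{N-1}u_{p,-}=t^{N-1}u_{p,-}^p$ from $r$ to $\beta$ and using $u_{p,-}'(\beta)=0$ yields
\begin{equation*}
r^{N-1}|u_{p,-}'(r)|\le \int_\alpha^\beta t^{N-1}\bigl(u_{p,-}+u_{p,-}^p\bigr)\,dt\le C'',
\end{equation*}
so $|u_{p,-}'|\le C''\alpha^{1-N}$ uniformly in $p$ (here $\alpha>0$ is essential, which is why no decreasing solution exists in the ball). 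The uniform $L^\infty$ bound then follows at once from $u_{p,-}(\beta)<1$ and $u_{p,-}(\alpha)\le u_{p,-}(\beta)+C''\alpha^{1-N}(\beta-\alpha)$. With these two estimates in hand, your verbatim transcription of the proof of Proposition \ref{prop:convergence_green} — including the argument that $\bar u(\alpha)=1$, the analogues of Lemmas \ref{lemma:limsup_u_p} and \ref{lemma:w_p}, and the identification $\bar u=u_{\infty,-}$ — goes through as you describe.
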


We conclude this section with a result that we will need later.

\begin{lemma}\label{lemma:u_p^p}
We have
\begin{equation}\label{0}
\lim_{p\to\infty} \frac{u_{p,+}(\b;\a,\b)^p}{p} =\frac12 \left( u_{\infty,+}'(\b;\a,\b) \right)^2 .
\end{equation}
\end{lemma}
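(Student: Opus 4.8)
The plan is to exploit the Lyapunov (energy) function $L(r)$ introduced in the proof of Lemma \ref{lemma:properties_sol_cone}, evaluated at the boundary point $r=\b$, together with the $C^{0,\gamma}$-convergence $u_{p,+}\to u_{\infty,+}$ from Proposition \ref{prop:convergence_green}. Recall $L(r)=\tfrac12|u_{p,+}'(r)|^2-\tfrac12 u_{p,+}(r)^2+\tfrac1{p+1}u_{p,+}(r)^{p+1}$ is non-increasing and $L'(r)=-\tfrac{N-1}{r}(u_{p,+}')^2$. Since $u_{p,+}'(\b)=0$ (Neumann condition), we get the exact identity
\begin{equation}\label{eq:L_at_beta}
\frac{u_{p,+}(\b)^{p+1}}{p+1}=\frac{u_{p,+}(\b)^2}{2}-L(\b)=\frac{u_{p,+}(\b)^2}{2}+\int_\a^\b \frac{N-1}{r}\left(u_{p,+}'(r)\right)^2\,dr-L(\a).
\end{equation}
Because $u_{p,+}\in\cone_{+,[\a,\b]}$, point (i) of Lemma \ref{lemma:properties_sol_cone} gives $L(\a)=-\tfrac12 u_{p,+}(\a)^2+\tfrac1{p+1}u_{p,+}(\a)^{p+1}\to 0$ as $p\to\infty$ (here one uses $u_{p,+}(\a)\to u_{\infty,+}(\a)<1$, so the last term vanishes), and $u_{p,+}(\b)\to u_{\infty,+}(\b)=1$, so the first term on the right tends to $\tfrac12$.

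The core of the argument is therefore to show
\begin{equation}\label{eq:dirichlet_energy_limit}
\lim_{p\to\infty}\int_\a^\b \frac{N-1}{r}\left(u_{p,+}'(r)\right)^2\,dr=\int_\a^\b \frac{N-1}{r}\left(u_{\infty,+}'(r)\right)^2\,dr.
\end{equation}
The natural route is to pass to the limit in the Pohozaev-type / energy identities for $u_{p,+}$. Multiplying the equation by $u_{p,+}$ and integrating gives $\|u_{p,+}\|_{H^1}^2=\|u_{p,+}\|_{p+1}^{p+1}$, and by Lemma \ref{lemma:limsup_u_p} and Proposition \ref{prop:convergence_green} we know $\|u_{p,+}\|_{H^1}^2\to\|u_{\infty,+}\|_{H^1}^2$; this already controls the full gradient term $\int |\nabla u_{p,+}|^2$ in the limit. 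To upgrade this to the weighted integral \eqref{eq:dirichlet_energy_limit} (equivalently, to show no energy escapes near $r=\b$), one combines $C^0$-convergence on $[\a,\b]$ with interior elliptic estimates: away from $\b$ the convergence is $C^1$ (indeed $C^2$ by bootstrap, as in the proof of Proposition \ref{prop:convergence_green}), so $\int_\a^{\b-\delta}\tfrac{N-1}{r}(u_{p,+}')^2\to\int_\a^{\b-\delta}\tfrac{N-1}{r}(u_{\infty,+}')^2$ for every $\delta>0$; it then suffices to show the tail $\int_{\b-\delta}^\b \tfrac{N-1}{r}(u_{p,+}')^2$ is uniformly small as $\delta\to0$, uniformly in $p$. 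For this last point one uses that $|u_{p,+}'|<1$ (Lemma \ref{lemma:properties_sol_cone}(iii)), so the tail is bounded by $C\delta$ uniformly in $p$, and likewise for the limit; letting first $p\to\infty$ and then $\delta\to0$ closes \eqref{eq:dirichlet_energy_limit}.

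Putting \eqref{eq:L_at_beta} and \eqref{eq:dirichlet_energy_limit} together,
\[
\lim_{p\to\infty}\frac{u_{p,+}(\b)^{p+1}}{p+1}=\frac12+\int_\a^\b\frac{N-1}{r}\left(u_{\infty,+}'(r)\right)^2\,dr,
\]
and it remains only to identify the right-hand side as $\tfrac12\left(u_{\infty,+}'(\b)\right)^2$. This is exactly the statement that $L_\infty(r):=\tfrac12(u_{\infty,+}'(r))^2-\tfrac12 u_{\infty,+}(r)^2$ satisfies $L_\infty(\b)-L_\infty(\a)=\int_\a^\b\tfrac{N-1}{r}(u_{\infty,+}')^2$ combined with $L_\infty(\a)=-\tfrac12 u_{\infty,+}(\a)^2+\tfrac12 u_{\infty,+}'(\a)^2=-\tfrac12 u_{\infty,+}(\a)^2$ (since $u_{\infty,+}'(\a)=0$) and $u_{\infty,+}(\b)=1$, which is just the energy identity for the limit equation $-u''-\tfrac{N-1}{r}u'+u=0$; indeed differentiating $L_\infty$ and using the equation gives $L_\infty'(r)=-\tfrac{N-1}{r}(u_{\infty,+}')^2 + (\text{terms from the }+u)$ — more precisely one checks $\tfrac{d}{dr}\big(\tfrac12(u')^2-\tfrac12 u^2\big)=u'(u''-u)=u'\big(-\tfrac{N-1}{r}u'\big)$ using the limit equation, so the identity is exact. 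Finally, since $\tfrac{u_{p,+}(\b)^{p+1}}{p+1}=\tfrac{u_{p,+}(\b)^p}{p}\cdot\tfrac{p\,u_{p,+}(\b)}{p+1}$ and $u_{p,+}(\b)\to 1$, the factor $\tfrac{p\,u_{p,+}(\b)}{p+1}\to 1$, giving $\lim_p \tfrac{u_{p,+}(\b)^p}{p}=\tfrac12(u_{\infty,+}'(\b))^2$ as claimed. The main obstacle is the uniform-in-$p$ control of the weighted Dirichlet energy near $r=\b$ needed for \eqref{eq:dirichlet_energy_limit}; everything else is a bookkeeping consequence of the Lyapunov identity and the already-established $C^{0,\gamma}$ convergence.
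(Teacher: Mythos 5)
Your strategy (the one--dimensional Lyapunov identity $L'(r)=-\tfrac{N-1}{r}(u_{p,+}')^2$ evaluated at the two Neumann endpoints, plus convergence of the weighted Dirichlet integral) is a legitimate alternative to the paper's argument, which instead writes the Pohozaev identity on $B_\beta\setminus B_\alpha$ for $u_{p,+}$ and for $u_{\infty,+}$ and passes to the limit using the strong $H^1$ convergence of Proposition \ref{prop:convergence_green}. As written, however, your bookkeeping does not close, for three concrete reasons. First, since $u_{p,+}'(\beta)=0$ one has $\tfrac{u_{p,+}(\beta)^{p+1}}{p+1}=\tfrac{u_{p,+}(\beta)^2}{2}+L(\beta)$, not $\tfrac{u_{p,+}(\beta)^2}{2}-L(\beta)$; combined with $L(\beta)=L(\alpha)-\int_\alpha^\beta\tfrac{N-1}{r}(u_{p,+}')^2\,dr$ the exact identity is
\[
\frac{u_{p,+}(\beta)^{p+1}}{p+1}=\frac{u_{p,+}(\beta)^2}{2}-\frac{u_{p,+}(\alpha)^2}{2}+\frac{u_{p,+}(\alpha)^{p+1}}{p+1}-\int_\alpha^\beta\frac{N-1}{r}\bigl(u_{p,+}'\bigr)^2\,dr,
\]
with a \emph{minus} sign in front of the integral. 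Second, $L(\alpha)$ does not tend to $0$: only $\tfrac{u_{p,+}(\alpha)^{p+1}}{p+1}$ vanishes, while $-\tfrac12u_{p,+}(\alpha)^2\to-\tfrac12u_{\infty,+}(\alpha)^2<0$, because $u_{\infty,+}(\alpha)>0$ (it is a positive multiple of $\xi_{[\alpha,\beta]}(\alpha)$, and $\xi(0)=1/(N-2)>0$ in the ball case); this dropped term is precisely what must cancel against the analogous term in the limit identity. Third, your identification step is internally inconsistent: you assert $L_\infty(\beta)-L_\infty(\alpha)=+\int_\alpha^\beta\tfrac{N-1}{r}(u_{\infty,+}')^2\,dr$, but the computation you give in the same sentence, $L_\infty'=u_{\infty,+}'\bigl(-\tfrac{N-1}{r}u_{\infty,+}'\bigr)\le 0$, forces the opposite sign. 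Consequently your displayed intermediate claim $\lim_p\tfrac{u_{p,+}(\beta)^{p+1}}{p+1}=\tfrac12+\int_\alpha^\beta\tfrac{N-1}{r}(u_{\infty,+}')^2\,dr$ is false, and even on your own terms it differs from $\tfrac12(u_{\infty,+}'(\beta))^2$ by $\tfrac12u_{\infty,+}(\alpha)^2$, so the lemma is not actually derived.

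The method does survive once signs and the $u_{\infty,+}(\alpha)$-terms are kept: from the corrected identity above, $u_{p,+}(\beta)\to1$, $u_{p,+}(\alpha)\to u_{\infty,+}(\alpha)<1$, and $\int_\alpha^\beta\tfrac{N-1}{r}(u_{p,+}')^2\to\int_\alpha^\beta\tfrac{N-1}{r}(u_{\infty,+}')^2$ (for $\alpha>0$ this is immediate from the strong $H^1$ convergence, since $1/r\le r^{N-1}/\alpha^N$; in the ball the delicate region for the weight is $r\to0$, not $r\to\beta$, and there one uses the uniform bound $|u_{p,+}'(r)|\le Cr$ obtained by integrating the equation), one gets
\[
\lim_{p\to\infty}\frac{u_{p,+}(\beta)^{p+1}}{p+1}=\frac12-\frac12u_{\infty,+}(\alpha)^2-\int_\alpha^\beta\frac{N-1}{r}\bigl(u_{\infty,+}'\bigr)^2\,dr=\frac12\bigl(u_{\infty,+}'(\beta)\bigr)^2,
\]
the last equality being the limit energy identity with the correct sign, $L_\infty(\beta)-L_\infty(\alpha)=-\int_\alpha^\beta\tfrac{N-1}{r}(u_{\infty,+}')^2\,dr$ together with $u_{\infty,+}'(\alpha)=0$, $u_{\infty,+}(\beta)=1$. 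So the idea is sound and genuinely different from the paper's Pohozaev-based proof, but as submitted the chain of identities proves a different (false) limit; redo the algebra before relying on it.
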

\begin{proof}
The Pohozaev identity provides
\[
\begin{split}
\left(\frac{N-2}{2}-\frac{N}{p+1}\right) \int_{B_\b\setminus B_\a} |\nabla u_{p,+}|^2 \,dx
+\left(\frac{N}{2}-\frac{N}{p+1}\right) \int_{B_\b\setminus B_\a}u_{p,+}^2 \,dx \\
= \int_{\partial(B_\b\setminus B_\a)} \left(\frac{u_{p,+}^2}{2}
-\frac{u_{p,+}^{p+1}}{p+1}\right)\,d\sigma,
\end{split}
\]
so that
\begin{equation}\label{eq:pohozaev_u_p}
\begin{split}
\frac{|\partial B_\b|}{p+1} u_{p,+}(\b)^{p+1} 
=\frac{|\partial B_\a|}{p+1} u_{p,+}(\a)^{p+1} 
-\left(\frac{N-2}{2}-\frac{N}{p+1}\right) \int_{B_\b\setminus B_\a} |\nabla u_{p,+}|^2 \,dx \\
-\left(\frac{N}{2}-\frac{N}{p+1}\right) \int_{B_\b\setminus B_\a}u_{p,+}^2 \,dx
+\int_{\partial(B_\b\setminus B_\a)} \frac{u_{p,+}^2}{2} \,d\sigma.
\end{split}
\end{equation}
%From Proposition \ref{prop:convergence_green} we deduce the following
%\[
%\frac{|\partial B_\a|}{p+1} u_p(1)^{p+1}
%\sim \int_{\partial B_\a} \frac{u_\infty^2}{2} \,d\sigma -\int_{B_\a}u_\infty^2\,dx
%-\frac{N-2}{2} \int_{B_\a} (|\nabla u_\infty|^2+u_\infty^2) \,dx .
%\]
On the other hand, writing the Pohozaev identity satisfied by $u_{\infty,+}$ we obtain,
\begin{equation}\label{eq:pohozaev_u_infty}
\frac{|\partial B_\b|}2 u_{\infty,+}'(\b)^2
=  -\frac{N-2}{2} \int_{B_\b\setminus B_\a} |\nabla u_{\infty,+}|^2 \,dx 
-\frac{N}{2} \int_{B_\b\setminus B_\a}u_{\infty,+}^2\,dx
+\int_{\partial(B_\b\setminus B_\a)} \frac{u_\infty^2}{2} \,d\sigma.
\end{equation}
The convergence $u_{p,+}\to u_{\infty,+}$ in $H^1(B_\b\setminus B_\a)$ proved in Proposition \ref{prop:convergence_green} and the fact that $u_{p,+}(\a)<1$ imply that the right hand side in \eqref{eq:pohozaev_u_p} converges to the right hand side in \eqref{eq:pohozaev_u_infty}. 
\end{proof}

%%%%%%%%%%

\section{Uniqueness and nondegeneracy of the monotone solutions}\label{S4}

\subsection{Uniqueness}
In this section we show that the minimal energy solution in the cone found in the previous section is unique.
\begin{theorem}\label{thm:uniqueness_minimal_energy_sol}
The value $c_{p,+}(\a,\b)$ is uniquely achieved by a multiple of $u_{p,+}(\cdot;\a,\b)$ for $p$ large enough.
\end{theorem}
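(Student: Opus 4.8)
The plan is to argue by contradiction, through a blow-up analysis in the spirit of \cite{G}. Assume that, along a sequence $p_n\to\infty$, the level $c_{p_n,+}(\a,\b)$ is attained by two minimizers that are not proportional. After the rescaling in Proposition \ref{prop:existence_increasing_sol} we may regard them as two strictly increasing solutions $u_n,v_n\in\cone_{+,[\a,\b]}$ of \eqref{eq:main_alpha_beta} with $p=p_n$, each realizing the variational characterization; in particular Proposition \ref{prop:convergence_green} applies to both, so $u_n,v_n\to u_{\infty,+}(\cdot;\a,\b)$ in $H^1(B_\b\setminus B_\a)\cap C^{0,\gamma}(\overline{B_\b\setminus B_\a})$. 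Normalising the difference, put
\[
w_n=\frac{u_n-v_n}{\|u_n-v_n\|_\infty},\qquad \|w_n\|_\infty=1,
\]
which solves the linear Neumann problem
\[
-\Delta w_n+w_n=V_n\,w_n \ \text{ in } B_\b\setminus B_\a, \qquad \partial_\nu w_n=0 \ \text{ on } \partial(B_\b\setminus B_\a),
\]
with $V_n(r)=p_n\int_0^1\big(t\,u_n(r)+(1-t)\,v_n(r)\big)^{p_n-1}\,dt\ge 0$.

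The next step is to locate the potential $V_n$. On compact subsets of $[\a,\b)$ we have $u_n,v_n\to u_{\infty,+}<1$ uniformly, hence $V_n\to 0$ there, and elliptic estimates give $w_n\to w_*$ in $C^2_{loc}([\a,\b))$, where $\mathcal{L}w_*=0$ in $(\a,\b)$ and $w_*'(\a)=0$; thus $w_*=c\,\xi_{[\a,\b]}$ for some $c\in\R$. Near $r=\b$, Lemma \ref{lemma:u_p^p} yields $u_n(\b)^{p_n}\sim\tfrac{p_n}{2}\big(u_{\infty,+}'(\b)\big)^2$, so $V_n$ concentrates in a layer of width $\varepsilon_n$ defined by $\varepsilon_n^{-2}=p_n\,u_n(\b)^{p_n-1}\to\infty$. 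Setting $r=\b-\varepsilon_n s$ and $\widehat u_n(s)=p_n\big(u_n(\b-\varepsilon_n s)/u_n(\b)-1\big)$, the profile $\widehat u_n$ converges, locally uniformly on $[0,\infty)$, to the solution $\omega_0(s)=-2\log\cosh(s/\sqrt2)$ of the Liouville problem $-\omega''=e^{\omega}$, $\omega(0)=\omega'(0)=0$; correspondingly $\widehat w_n(s)=w_n(\b-\varepsilon_n s)$ is bounded by $1$ and converges to a solution $Z$ of the linearised equation $-Z''=e^{\omega_0}Z$ on $[0,\infty)$ with $Z'(0)=0$ (using $w_n'(\b)=0$).

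The contradiction comes from the nondegeneracy of $\omega_0$: the space of bounded solutions of $-Z''-\operatorname{sech}^2(s/\sqrt2)\,Z=0$ on $[0,\infty)$ is spanned by the translation mode $\omega_0'(s)=-\sqrt2\tanh(s/\sqrt2)$, which satisfies $\omega_0''(0)=-1\ne 0$, so the only bounded solution with $Z'(0)=0$ is $Z\equiv 0$ (the dilation mode $2+s\,\omega_0'(s)$ does satisfy the Neumann condition but is unbounded, hence does not interfere). Matching the layer expansion with the interior one in the overlap $1\ll s\ll\varepsilon_n^{-1}$ then forces $c\,\xi_{[\a,\b]}(\b)=\lim_{s\to\infty}Z(s)=0$, hence $c=0$ and $w_*\equiv 0$; in particular $w_n\to 0$ on compact subsets of $[\a,\b)$ and, after rescaling, on the layer. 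This contradicts $\|w_n\|_\infty=1$ once one has ruled out that the supremum is carried off to an intermediate scale $\varepsilon_n\ll\b-r\ll 1$. Controlling $w_n$ uniformly across this intermediate region — where $V_n$ interpolates between size $\varepsilon_n^{-2}$ and $0$ — is the main obstacle and the source of the additional technicalities with respect to the (otherwise parallel) nondegeneracy argument; it is handled by combining uniform decay estimates for $\widehat w_n$ on the whole half-line with the maximum principle in the region $\{V_n\le 1\}$, so that the vanishing of $w_n$ propagates from the interior and from the layer to the intermediate zone. Once this is in place, $\|w_n\|_\infty\to 0$, a contradiction, so the two minimizers must be proportional and the theorem follows.
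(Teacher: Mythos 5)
Your first two steps (interior limit $w_*=c\,\xi_{[\a,\b]}$, blow-up of the layer, identification of the kernel of $-Z''=e^{\omega_0}Z$ and the conclusion $Z\equiv0$ from boundedness plus $Z'(0)=0$) run parallel to Steps 1--2 of the paper's proof and are essentially sound, modulo routine points you leave implicit (the two minimizers must be blown up at comparable scales, cf.\ \eqref{u5}, and one needs a gradient bound such as \eqref{eq:C1_bound_w_p}, which the paper gets from the uniform $L^{p+1}$ bound and \eqref{u6}, to have compactness of $\widehat w_n$). The genuine gap is exactly at the point you yourself flag as ``the main obstacle'': the passage from the local limits to a contradiction with $\|w_n\|_\infty=1$. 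Your proposed fix --- ``uniform decay estimates for $\widehat w_n$ on the whole half-line'' plus the maximum principle on $\{V_n\le1\}$ --- is not an argument but a restatement of what is missing. Local convergence $\widehat w_n\to 0$ on compact sets of $[0,\infty)$ cannot by itself yield decay uniformly in $n$ out to the edge of the layer: the limit linearized equation has the bounded, \emph{non-decaying} solution $\omega_0'(s)\to-\sqrt2$, so no uniform decay can be extracted from the limit problem alone; quantitative, $p$-uniform input is required. Moreover, the set $\{V_n\le1\}$ on which you would apply the maximum principle has its inner boundary where $V_n=1$, i.e.\ at $s_n\sim\sqrt2\,\log p_n\to\infty$ in the rescaled variable, so the boundary datum you need to be small is precisely a value of $\widehat w_n$ outside every fixed compact set --- the maximum-principle step therefore presupposes the very uniform estimate you have not proved. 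The same circularity affects your ``matching in the overlap $1\ll s\ll\e_n^{-1}$'' used to conclude $c=0$.

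The paper closes this gap by a different device, which avoids proving any decay of $v_p=\widehat w_p$ at all. It first establishes the $p$-uniform exponential bound $e^{z_p(s)}\le Ce^{Cs}$ on the whole rescaled interval (Lemma \ref{lemma:z_p_derivative_large}, which in turn rests on the energy asymptotics of Lemma \ref{52} and Corollary \ref{54}); this makes the dominated convergence theorem applicable on the full layer. Then it writes the Green representation \eqref{48}, $w_p(r)=\int_\a^\b G_{[\a,\b]}(r,t)\,pK_p(t)w_p(t)\,dt$, evaluates it both at a point $m_p$ where $w_p(m_p)=1$ and at $r=\b$, and uses the Lipschitz continuity of $G$ in its second variable together with the dominated convergence to show that both values equal $p\e_p\,G_{[\a,\b]}(\cdot,\b)\int K_p(\b+\e_p s)v_p(s)\,ds+o_p(1)$ with the \emph{same} integral factor (\eqref{66}--\eqref{67}). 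Since $w_p(\b)=v_p(0)\to0$ and $G_{[\a,\b]}(\b,\b)>0$, that integral factor tends to zero, which forces $1=w_p(m_p)=o_p(1)$ --- the desired contradiction, wherever $m_p$ sits (in particular also at intermediate scales). Without this comparison-of-representations argument, or some genuine substitute for the uniform control across the intermediate region, your proof does not close.
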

\begin{proof}
\underline{Step 1}. Following \cite[Theorem 1.5]{G}, we perform a blow-up analysis of $u_{p,+}$.
Let
\begin{equation}\label{d3}
z_p(r)=\frac p{\|u_{p,+}\|_{\infty}} \left(u_{p,+}(\b+\e_p r)-\|u_{p,+}\|_{\infty}\right),
\qquad r \in \left[-\frac{\b-\a}{\e_p},0\right],
\end{equation}
where $\|u_{p,+}\|_\infty=u_{p,+}(\b)$ and 
\begin{equation}\label{eq:e_p_def}
p\e_p^2=\frac1{\|u_{p,+}\|_\infty^{p-1}}.
\end{equation}
From Lemma \ref{lemma:u_p^p} we obtain that, for $p$ large enough,
\begin{equation}\label{d5}
p\e_p=\frac{\sqrt p}{\|u_{p,+}\|_\infty^\frac{p-1}2}\to \frac{\sqrt{2}}{u'_{\infty,+}(\b)}
\quad\text{as } p\to\infty,
\end{equation}
so that, in particular, $\e_p\to 0$ as $p\to \infty$.

We claim that for every $R>0$ there exists $C>0$ independent of $p$ such that
\begin{equation}\label{d6}
|z_p(r)|+|z_p'(r)|\le C, \qquad r\in (-R,0).
\end{equation}
Of course, $z_p\leq0$. In order to obtain a bound from below, write
\[
z_p(-R)=-\frac{p\e_p R}{\|u_{p,+}\|_\infty} \cdot \frac{u_{p,+}(\b-\e_p R)-u_{p,+}(\b)}{-\e_p R }
=-C u_{p,+}'(\xi_p),
\]
for some $\xi_p\in (\b-\e_p R,\b)$, by the mean value theorem and \eqref{d5}.  The last quantity is bounded from below by Lemma \ref{lemma:properties_sol_cone} $iii)$.
This lemma, together with \eqref{d5}, also provides
\[
|z_p'|=\frac {p\e_p}{\|u_{p,+}\|_{\infty}}|u'_{p,+}| \leq C,
\]
so that \eqref{d6} is proved.

From \eqref{d6} and the equation solved by $z_p$:
\begin{equation}\label{d4}
\begin{cases}
-z_p''-\frac{(N-1)\e_p}{\b+\e_p r}z_p'+p\e_p^2 \left(1+\frac{z_p}p\right)
=\left(1+\frac{z_p}p\right)^p \quad &\text{ for } r\in\left(-\frac{\b-\a}{\e_p},0\right) \\
z_p(0)=z_p'(0)=0,
\end{cases}
\end{equation}
we can see that also $z''_p$ is bounded in $(-R,0)$. Therefore there exists $z_\infty\in C^1(-\infty,0)$ such that $z_p\to z_\infty$ in $C^1_{loc}(-\infty,0)$ and we can pass to the limit in \eqref{d4}, obtaining that $z_\infty$ satisfies
\begin{equation}\label{d7}
-z''=e^z\quad\hbox{in }(-\infty,0).
\end{equation}
All the solutions to this equation are given by
\begin{equation}\label{d8}
z(r)=\log\frac{4A^2e^{\sqrt2(A r+B)}}{\left(1+e^{\sqrt2(A r+B)}\right)^2},
\qquad A,B\in\R.
\end{equation}
Using that $z_\infty(0)=z_\infty'(0)=0$, we deduce
\begin{equation}\label{d9}
z_p(r)\to z_\infty(r)=\log\frac{4e^{\sqrt2r}}{\left(1+e^{\sqrt2r}\right)^2} \quad\text{in } C^1_{loc}(-\infty,0).
\end{equation}
\underline{Step 2}. 
We argue by contradiction and suppose that there exists $\tilde{u}_{p,+}(\cdot;\a,\b) \in \cone_{+,[\a,\b]}$, $\tilde{u}_{p,+}\not\equiv u_{p,+}$, which solves the equation and such that a suitable multiple achieves $c_{p,+}(\a,\b)$.
All the results proved in Sections \ref{sec:behaviour_p_infty}  apply to $\tilde{u}_{p,+}$ since it has the same variational characterization as $u_{p,+}$ and all the arguments can be repeated.

Since $\tilde{u}_{p,+}\not\equiv u_{p,+}$, the following normalized function is well defined
\begin{equation}\label{u2}
w_p=\frac{u_{p,+}-\tilde{u}_{p,+}}{\|u_{p,+}-\tilde{u}_{p,+}\|_\infty}.
\end{equation}
Letting $K_p(r)=\int_0^1(t u_{p,+}(r)+(1-t)\tilde{u}_{p,+}(r))^{p-1}dt$, we have that $w_p$ solves
\begin{equation}\label{u3}
\begin{cases}
-(r^{N-1}w_p')'+r^{N-1}w_p=r^{N-1}pK_pw_p \quad &\text{ for } r\in(\a,\b) \\
w_p'(\a)=w_p'(\b)=0, \quad |w_p|\leq1.
\end{cases}
\end{equation}
We claim that there exists $C>0$ independent of $p$ such that
\begin{equation}\label{eq:C1_bound_w_p}
|w_p'(r)| \leq C p \quad \text{for every } r\in \left(\frac{\a+\b}{2},\b\right).
\end{equation}
Integrating \eqref{u3} in $((\a+\b)/2,r)$, for $(\a+\b)/2\leq r\leq \b$, we obtain
\begin{equation}
|w_p'(r)|r^{N-1}\leq \int_{\frac{\a+\b}{2}}^r t^{N-1}|w_p(t)|dt
+p\int_{\frac{\a+\b}{2}}^rt^{N-1} |K_p(t)| |w_p(t)| dt.
\end{equation}
Since $|w_p|\le1$, we have
\begin{equation}\label{d14}
|w_p'(r)|\le C+p \int_{\frac{\a+\b}{2}}^r t^{N-1} |K_p(t)| dt .
\end{equation}
On the other hand we have that
\begin{equation}\label{eq:K_p_integrability}
\int_{\frac{\a+\b}{2}}^\b |K_p(r)|r^{N-1} dr \leq C.
\end{equation}
This comes from the inequality
\begin{equation}\label{u6}
|x^p-y^p|\leq p |x-y| \left(\max\{x,y\}\right)^{p-1}, \quad \text{for every } x,y>0,
\end{equation}
applied as follows
\begin{equation}
|K_p(r)|=\frac{|u_{p,+}(r)^p-\tilde{u}_{p,+}(r)^p|}{p|u_{p,+}(r)-\tilde{u}_{p,+}(r)|} 
\leq \left(\max\{u_{p,+}(r),\tilde{u}_{p,+}(r)\}\right)^{p-1},
\end{equation}
and from the fact that
\begin{equation}
\int_{B_\b\setminus B_\a} u_{p,+}^{p+1}\,dx 
+\int_{B_\b\setminus B_\a} \tilde{u}_{p,+}^{p+1} \,dx \leq C,
\end{equation}
uniformly in $p$, by the $H^1$-bound in Lemma \ref{lemma:weak_conv} and relation \eqref{eq:H1_bound}. So the claim \eqref{eq:C1_bound_w_p} is proved.

Thanks to this $C^1$-bound, we can perform a blow-up analysis of $w_p$, similar to the one in Step 1. Let $\e_p$ be as in \eqref{eq:e_p_def} and let $v_p(r)=w_p(\b+\e_p r)$ for $r\in\left(-(\b-\a)/\e_p,0\right)$, so that
\begin{equation}\label{u8}
\begin{cases}
-v_p''-\frac{(N-1)\e_p}{\b+\e_p r}v_p'+ \e_p^2 v_p = 
p\e_p^2 K_p(\b+\e_p r) v_p \quad &\text{ for } r\in\left(-\frac{\b-\a}{\e_p},0\right) \\
v_p'(\a)=v_p'(\b)=0 ,\ |v_p|\le1.
\end{cases}
\end{equation}
By \eqref{eq:C1_bound_w_p} we deduce that
\begin{equation}\label{u10}
|v_p'(r)|\le C \quad \text{for } r\in \left(-\frac{\b-\a}{2\e_p},0\right).
\end{equation}
Let us show that
\begin{equation}\label{u9}
p\e_p^2 K_p(\b+\e_p r)\rightarrow e^{z_\infty}
\end{equation}
locally in the compact subsets of $(-\infty,0)$, with $z_\infty$ defined in \eqref{d9}.
To this aim, consider the function $z_p$ introduced in \eqref{d3} and analogously let
\begin{equation}
p \tilde{\e}_p^2=\frac{1}{\|\tilde{u}_{p,+}\|_\infty^{p-1}}, \qquad
\tilde{z}_p(r)=\frac{p}{\|\tilde{u}_{p,+}\|_\infty} \left( \tilde{u}_{p,+}(\b+\tilde{\e}_p r)-\|\tilde{u}_{p,+}\|_\infty \right),
\end{equation}
so that
\begin{equation}
\tilde{u}_{p,+}(\b+\e_p r)
= \|\tilde{u}_{p,+}\|_\infty \left(1+\frac{1}{p}\tilde{z}_p\left(\frac{\e_p}{\tilde{\e}_p} r \right)\right).
\end{equation}
Since the asymptotic in \eqref{d5} holds for both $\e_p$ and $\tilde{\e}_p$, we deduce
\begin{equation}\label{u5}
\frac{\e_{p}}{\tilde{\e}_{p}}\to 1, \quad
\frac{\|u_{p,+}\|_\infty}{\|\tilde{u}_{p,+}\|_\infty} \to 1
\quad\hbox{as }p\rightarrow+\infty,
\end{equation} 
so that, using also \eqref{d9},
\begin{equation}
\tilde{z}_p\left(\frac{\e_p}{\tilde{\e}_p} r \right) \to z_\infty(r) \quad\text{as } p\to\infty.
\end{equation}
Using this and \eqref{u5} again, we have
\begin{eqnarray*}
 K_p(\b+\e_p r)&& =\| u_{p,+}\|_\infty^{p-1} \int_0^1\left\{ 1+\frac{1}{p}\left[ tz_p(r)+(1-t)\tilde{z}_p\left(\frac{\e_p}{\tilde{\e}_p}r\right) \right] +o_p(1) \right\}^{p-1}dt\nonumber \\
&& \sim \| u_{p,+}\|_\infty^{p-1} e^{z_\infty(r)} \quad\text{as } p\to\infty,
\end{eqnarray*}
proving \eqref{u9}.

By combining \eqref{u8}, \eqref{u10} and \eqref{u9}, we deduce that also $v_p''$ is bounded, so there exists $v_\infty\in C^1(-\infty,0)$  such that
\begin{equation}\label{d15}
v_p\to v_\infty \quad\hbox{in }C^1_{loc}(-\infty,0).
\end{equation}
Moreover  $v_\infty$ solves
\begin{equation}\label{d16}
\begin{cases}
-v''=e^{z_\infty} v \quad\hbox{in }(-\infty,0)\\
v'(0)=0,\ |v|\le1.
\end{cases}
\end{equation}
It is known, see \cite[Lemma 4.2]{G}, that
\begin{equation}\label{d17}
v(r)=A\frac{1-e^{\sqrt2r}}{1+e^{\sqrt2r}}+B\left(\sqrt2r\frac{1-e^{\sqrt2r}}{1+e^{\sqrt2r}}+2\right), \quad A,B\in\R.
\end{equation}
Since $v_\infty$ is bounded, we immediately obtain that $B=0$. On the other hand, the condition $v'_\infty(0)=0$ implies that $A=0$. Therefore
\begin{equation}\label{d18}
v_p\to v_\infty \equiv 0 \quad \text{in } C^1_{loc}(-\infty,0).
\end{equation}

\underline{Step 3}. We will see that $v_\infty\equiv0$ contradicts the fact that $\|w_p\|_\infty=1$ for every $p$.
Let $m_p\in [\a,\b]$ be such that $w_p(m_p)=1$ and let (up to a subsequence) $m_p\to m_\infty \in[\a,\b]$. 
Denoting by $G_{[\a,\b]}(r,t)$ the Green function of the operator $-u''-\frac{N-1}r u'+u$ with Neumann boundary conditions $u'(\a)=u'(\b)=0$, we have
\begin{equation}\label{48}
w_p(r)=\int_\a^\b G_{[\a,\b]}(r,t) p K_p(t)w_p(t) dt.
\end{equation}
Then
\[
1=w_p(m_p)=\int_\a^\b G_{[\a,\b]}(m_p,t) p K_p(t)w_p(t) dt 
= \int_{\frac{\a+\b}{2}}^\b G_{[\a,\b]}(m_p,t) p K_p(t)w_p(t) dt+o_p(1),
\]
where we used the fact that $u_{p,+}(t)\leq C<1$ for $t\in [(\a+\b)/2,\b]$. By applying the change of variables $t=\b+\e_p s$, we obtain
\begin{eqnarray}\label{49}
&&1= p\e_p\int_{-\frac{\b-\a}{2\e_p}}^0 G_{[\a,\b]}(m_p,\b+\e_ps)K_p(\b+\e_p s)v_p(s)ds +o_p(1)
\nonumber\\
&& = p\e_p G_{[\a,\b]}(m_p,\b)\int_{-\frac{\b-\a}{2\e_p}}^0 K_p(\b+\e_ps)v_p(s)ds \nonumber\\
&& +p\e_p \int_{-\frac{\b-\a}{2\e_p}}^0 \left(G_{[\a,\b]}(m_p,\b+\e_p s)-G_{[\a,\b]}(m_p,\b)\right)K_p(s)v_p(s)ds+o_p(1)
\nonumber\\
&&=: I_{1,p}+I_{2,p}+o_p(1).
\end{eqnarray}
Let us observe that there exists $C>0$ such that, for any $r,\alpha,\beta\in [0,1]$, the following holds
\begin{equation}\label{50}
|G(r,\a)-G(r,\beta)|\le C|\a-\beta|.
\end{equation}
This can be seen for example by making use of relations \eqref{eq:G}-\eqref{eq:xi_zeta_limits3}, we omit the details.
Then, using \eqref{u6} we can estimate $I_{2,p}$ as follows
\begin{eqnarray}\label{51}
&&\left|I_{2,p}\right| \le C p\e_p^2 \|u_{p,+}\|_\infty^p \int_{-\frac{\b-\a}{2\e_p}}^0 |s| 
\left(1+\frac{z_p(s)}p\right)^p |v_p(s)| ds
\nonumber\\
&&\leq C \|u_{p,+}\|_\infty \int_{-\frac{\b-\a}{2\e_p}}^0 |s| e^{z_p(s)} |v_p(s)| ds,
\end{eqnarray}
where we used \eqref{eq:e_p_def}.
We claim that
\begin{equation}\label{63}
\int_{-\frac{\b-\a}{2\e_p}}^0 |s| e^{z_p(s)} |v_p(s)| ds\to 0 \quad\text{as }p\to\infty.
\end{equation}
Since we know that $v_p\to 0$ a.e. as $p\rightarrow\infty$, it will be enough to show that the Lebesgue dominate convergence theorem applies. 
To this aim we use the following lemma, of which we postpone the proof to Section \ref{sec:proof_lemma_z_p}.

\begin{lemma}\label{lemma:z_p_derivative_large}
There exist $\bar{p}>1$, $\bar{s}\in (-(\b-\a)/(2\e_{\bar{p}}),0)$, $C>0$ such that
\begin{equation}\label{57}
z_p'(s)\ge C
\end{equation}
for every $p>\bar{p}$ and $s\in (-(\b-\a)/(2\e_{p}),\bar{s})$.
\end{lemma}

Given $\bar{p}$ and $\bar{s}$ as above, we compute
\begin{equation}\label{64}
z_p(s)=-\int_s^0 z_p'(\tau)d\tau =-\int_s^{\bar{s}} z_p'(\tau)d\tau-\int_{\bar{s}}^0 z_p'(\tau)d\tau \leq -C(\bar{s}-s) \leq C s,
\end{equation}
for every $p>\bar{p}$ and $s\in (-(\b-\a)/(2\e_{p}),\bar{s})$, which gives \eqref{63}. 

So we have proved that $|I_{2,p}|\rightarrow0$ as $p\rightarrow\infty$, so that \eqref{49} becomes
\begin{equation}\label{66}
1=p\e_p G_{[\a,\b]}(m_p,\b) \int_{-\frac{\b-\a}{2\e_p}}^0 K_p(\b+\e_ps)v_p(s)ds+o_p(1).
\end{equation}

On the other hand, \eqref{d18} implies (repeating the same procedure as before),
\begin{eqnarray}\label{67} 
&& o_p(1)=w_p(\b)=\int_\a^\b G_{[\a,\b]}(\b,t) p K_p(t)w_p(t)dt
=\int_{\frac{\a+\b}{2}}^\b G_{[\a,\b]}(\b,t) p K_p(t)w_p(t)dt+o_p(1)\nonumber\\
&&= p\e_p G_{[\a,\b]}(\b,\b) \int_{-\frac{\b-\a}{2\e_p}}^0 K_p(\b+\e_p s)v_p(s)ds
\nonumber\\
&& + p\e_p \int_{-\frac{\b-\a}{2\e_p}}^0 \left(G_{[\a,\b]}(\b,\b+\e_ps)-G_{[\a,\b]}(\b,\b)\right) 
K_p(\b+\e_p s)v_p(s)ds +o_p(1)
\nonumber\\
&&= p \e_p G_{[\a,\b]}(\b,\b) \int_{-\frac{\b-\a}{2\e_p}}^0 K_p(\b+\e_p s)v_p(s)ds+o_p(1),
\end{eqnarray}
which contradicts \eqref{66}. This concludes the proof of Theorem \ref{thm:uniqueness_minimal_energy_sol}.
\end{proof}

\begin{corollary}\label{coro:uniqueness_uniform}
Let us suppose that $(\a_n,\b_n)\rightarrow(\a,\b)$ with $\a<\b$. Then there exists $p_0=p_0(\a,\b)>1$ such that, for every $n$, the value $c_{p_0,+}(\a_n,\b_n)$ is uniquely achieved by a multiple of $u_{p,+}(\cdot;\a_n,\b_n)$.
\end{corollary}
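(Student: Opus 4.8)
The plan is to argue by contradiction, showing that the blow-up analysis in the proof of Theorem~\ref{thm:uniqueness_minimal_energy_sol} is uniform with respect to the endpoints provided the length of the interval stays bounded below. First I would note that, since $\lambda_2^{rad}$ depends continuously on the endpoints and $(\a_n,\b_n)\to(\a,\b)$, the number $\Lambda:=\sup_n\lambda_2^{rad}(\a_n,\b_n)$ is finite, so we may restrict attention to $p_0>\Lambda$, for which $u_{p_0,+}(\cdot;\a_n,\b_n)$ exists for every $n$. Suppose the conclusion fails for every such $p_0$. Then there are $p_j\to\infty$ and indices $n_j$ such that $c_{p_j,+}(\a_{n_j},\b_{n_j})$ is achieved by (multiples of) two distinct functions $u_{p_j,+}(\cdot;\a_{n_j},\b_{n_j})$ and $\tilde{u}_{p_j,+}(\cdot;\a_{n_j},\b_{n_j})$ in $\cone_{+,[\a_{n_j},\b_{n_j}]}$. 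Because $(\a_n,\b_n)\to(\a,\b)$ with $\a<\b$, we have $\b_{n_j}-\a_{n_j}\ge d:=(\b-\a)/2>0$ for $j$ large, while of course $0\le\a_{n_j}<\b_{n_j}\le1$.

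I would then replay, almost verbatim, the three steps of the proof of Theorem~\ref{thm:uniqueness_minimal_energy_sol}, with $p$ and $(\a,\b)$ replaced by $p_j$ and $(\a_{n_j},\b_{n_j})$. In Step~1, the rescaling $z_{p_j}$ of $u_{p_j,+}$ converges in $C^1_{loc}(-\infty,0)$ to the profile $z_\infty$ of \eqref{d9}; the relevant scaling $p_j\e_{p_j}$ stays bounded and bounded away from $0$ thanks to Lemma~\ref{lemma:u_p^p} and the continuous dependence $u_{\infty,+}'(\b_{n_j};\a_{n_j},\b_{n_j})\to u_{\infty,+}'(\b;\a,\b)>0$. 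In Step~2, the rescaled normalised difference $v_{p_j}$ of $w_{p_j}=(u_{p_j,+}-\tilde{u}_{p_j,+})/\|u_{p_j,+}-\tilde{u}_{p_j,+}\|_\infty$ converges in $C^1_{loc}(-\infty,0)$ to a bounded solution of \eqref{d16} with vanishing derivative at $0$, hence to $0$. In Step~3, the estimates \eqref{48}--\eqref{67}, applied with $w_{p_j}(m_{p_j})=1=\|w_{p_j}\|_\infty$ and $w_{p_j}(\b_{n_j})=o_j(1)$, give the contradiction $1=o_j(1)$. This rules out such a sequence $p_j$, so the desired threshold $p_0=p_0(\a,\b)$ exists.

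The one point demanding care — which I regard as the main, though essentially bookkeeping, obstacle — is that every quantitative ingredient of the proof of Theorem~\ref{thm:uniqueness_minimal_energy_sol} must be made uniform over intervals $[\a',\b']$ with $0\le\a'<\b'\le1$ and $\b'-\a'\ge d$. This holds for the $L^\infty$- and $C^1$-bounds of Lemma~\ref{lemma:properties_sol_cone} (which are domain-independent), for the $H^1$- and $L^{p+1}$-bounds on $u_{p,+}$ (obtained as in Lemma~\ref{lemma:weak_conv} and \eqref{eq:H1_bound} by testing $Q_{p,[\a',\b']}$ with a competitor whose norms depend only on $d$), for the Lipschitz estimate \eqref{50} on $G_{[\a',\b']}$ (uniform via the explicit formula \eqref{eq:G_annulus} and the continuous dependence of $\xi_{[\a',\b']},\zeta_{[\a',\b']}$ on $(\a',\b')$), for the asymptotics of Lemma~\ref{lemma:u_p^p} (uniform because the Pohozaev identities \eqref{eq:pohozaev_u_p}--\eqref{eq:pohozaev_u_infty} and the $H^1$-convergence of Proposition~\ref{prop:convergence_green} are uniform on $\{\b'-\a'\ge d\}$), and for the lower bound of Lemma~\ref{lemma:z_p_derivative_large}, whose proof (in Section~\ref{sec:proof_lemma_z_p}) relies only on the preceding estimates and thus yields $\bar p,\bar s,C$ depending only on $d$. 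With these uniformities in hand, the whole argument above goes through, with $o_p(1)$ replaced throughout by $o_j(1)$.
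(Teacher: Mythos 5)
Your argument is correct and is essentially the paper's own proof: the paper also disposes of the corollary by repeating the blow-up analysis of Theorem \ref{thm:uniqueness_minimal_energy_sol} along the varying intervals, the only point being that $z_p$ and $v_p$ are now defined on $\left(-\frac{\b_n-\a_n}{\e_p},0\right)$, which still exhausts $(-\infty,0)$ because the lengths $\b_n-\a_n$ stay bounded below, and that Lemma \ref{lemma:z_p_derivative_large} goes through unchanged. Your explicit contradiction/diagonal formulation and the checklist of uniform estimates simply spell out what the paper leaves implicit.
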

\begin{proof}
It is enough to repeat step by step the proof of Theorem \ref{thm:uniqueness_minimal_energy_sol}. We just remark that the functions $z_p$ and $v_p$ are now defined in $\left(-\frac{\b_n-\a_n}{\e_p},0\right)$ and by assumption this interval converges again to $(-\infty,0)$. This applies also to Lemma \ref{lemma:z_p_derivative_large}.
\end{proof}

\subsection{Proof of Lemma \ref{lemma:z_p_derivative_large}}\label{sec:proof_lemma_z_p}

\begin{lemma}\label{52}
Recall that $c_{p,+}(\a,\b)$ is defined in Proposition \ref{prop:existence_increasing_sol}.
We have that 
\begin{equation}
c_{p,+}(\a,\b) = |\partial B_\b| u'_{\infty,+}(\b;\a,\b)+o_p(1) \quad \text{as }p\to\infty.
\end{equation}
\end{lemma}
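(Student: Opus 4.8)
The plan is to obtain the estimate as an immediate consequence of two facts already established in Section~\ref{S3}, together with one elementary integration by parts.

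First I would invoke the proof of Proposition~\ref{prop:convergence_green}: there it is shown that $c_{p,+}(\a,\b)\to c_{\infty,+}(\a,\b)$ as $p\to\infty$ and that, among the functions of $\cone_{+,[\a,\b]}$ with unit $L^\infty$ norm, the value $c_{\infty,+}(\a,\b)$ is attained exactly by $u_{\infty,+}(\cdot;\a,\b)$. Using the first expression for $c_{\infty,+}$ in \eqref{eq:c_infty_def}, this yields
\[
c_{p,+}(\a,\b)=c_{\infty,+}(\a,\b)+o_p(1)=\|u_{\infty,+}(\cdot;\a,\b)\|_{H^1}^2+o_p(1)\qquad\text{as }p\to\infty .
\]

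Next I would compute $\|u_{\infty,+}\|_{H^1}^2$. Testing the equation $-\Delta u_{\infty,+}+u_{\infty,+}=0$ in $B_\b\setminus B_\a$ against $u_{\infty,+}$ and integrating by parts gives
\[
\|u_{\infty,+}\|_{H^1}^2=\int_{\partial(B_\b\setminus B_\a)}u_{\infty,+}\,\partial_\nu u_{\infty,+}\,d\sigma .
\]
On $\partial B_\a$ the integrand vanishes because $\partial_\nu u_{\infty,+}=0$ there; on $\partial B_\b$ one has $u_{\infty,+}\equiv 1$ and, $u_{\infty,+}$ being radial with outer unit normal $\hat r$ at $r=\b$, the boundary integral reduces to $|\partial B_\b|\,u'_{\infty,+}(\b;\a,\b)$. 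Hence $\|u_{\infty,+}(\cdot;\a,\b)\|_{H^1}^2=|\partial B_\b|\,u'_{\infty,+}(\b;\a,\b)$, and substituting this into the previous display proves the statement. (Alternatively, the same identity can be read off from the Pohozaev relation \eqref{eq:pohozaev_u_infty} together with the $H^1$ bound used there, but the Green identity above is more direct.)

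I do not expect any serious obstacle: the analytically delicate points — the convergence $c_{p,+}\to c_{\infty,+}$ and the identification of the limiting minimiser — are already settled in Section~\ref{S3}, so only the elementary boundary computation remains. The single point deserving care is the bookkeeping of the outward normals on the two components of $\partial(B_\b\setminus B_\a)$, so that the inner term drops out and the outer one is recovered with the correct (positive) sign, consistently with $u_{\infty,+}$ being increasing.
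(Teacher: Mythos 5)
Your argument is correct, and it takes a genuinely different route from the paper. You deduce the asymptotics from two facts already available after Section \ref{S3}: the convergence $c_{p,+}(\a,\b)\to c_{\infty,+}$, which is proved explicitly inside the proof of Proposition \ref{prop:convergence_green}, and the identification $c_{\infty,+}=\|u_{\infty,+}\|_{H^1}^2$ (also contained there, since $u_{\infty,+}$ is an admissible competitor with $\|u_{\infty,+}\|_\infty=1$ and minimizes over the larger Dirichlet class), which you then evaluate via the Green identity
\[
\|u_{\infty,+}\|_{H^1}^2=\int_{\partial(B_\b\setminus B_\a)}u_{\infty,+}\,\partial_\nu u_{\infty,+}\,d\sigma=|\partial B_\b|\,u'_{\infty,+}(\b;\a,\b),
\]
the inner boundary term vanishing by the Neumann condition (and being absent when $\a=0$). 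The paper instead starts from $c_{p,+}=\bigl(|\partial B_1|\int_\a^\b u_{p,+}^{p+1}t^{N-1}\,dt\bigr)^{\frac{p-1}{p+1}}$ and estimates this $L^{p+1}$ mass in the blow-up variables of \eqref{d3}: Fatou's lemma combined with Lemma \ref{lemma:u_p^p} gives the lower bound \eqref{53}, and testing $Q_{p,[\a,\b]}$ with $u_{\infty,+}$ gives the matching upper bound \eqref{53b}. Your route is shorter and avoids the blow-up analysis entirely; what the paper's computation buys is the convergence of the rescaled mass $\int_{-(\b-\a)/\e_p}^0\bigl(1+\tfrac{z_p(\tau)}{p}\bigr)^{p+1}(\b+\e_p\tau)^{N-1}\,d\tau\to\b^{N-1}\int_{-\infty}^0 e^{z_\infty(\tau)}\,d\tau$, which is exactly the ingredient that Corollary \ref{54} extracts from \eqref{53} and \eqref{53b} in the next step, and which the lemma's statement alone (all that your proof delivers) would not supply. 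One minor inaccuracy: your parenthetical remark that the boundary identity can alternatively be read off from the Pohozaev relation \eqref{eq:pohozaev_u_infty} is not right as stated, since that is a different identity; this is inessential, as your main computation is the standard energy identity with the correct bookkeeping of normals.
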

\begin{proof}
By definition we have
\begin{equation}
c_{p,+}(\a,\b)=Q_{p,[\a,\b]}(u_{p,+})
= \left(|\partial B_1| \int_\a^\b u_{p,+}^{p+1}(t) t^{N-1}dt \right)^\frac{p-1}{p+1}.
\end{equation}
Then, recalling the blow up procedure \eqref{d3}, \eqref{d9}, we obtain
\begin{eqnarray}\label{53}
&& \frac{c_{p,+}(\a,\b)^\frac{p+1}{p-1}}{|\partial B_1|}
=\int_\a^\b u_{p,+}^{p+1}(t) t^{N-1} dt
=\e_p \int_{-\frac{\b-\a}{\e_p}}^0 u_{p,+}^{p+1}(\b+\e_p s)(\b+\e_ps )^{N-1} ds
\nonumber\\
&& = \e_p \|u_{p,+}\|_\infty^{p+1} \int_{-\frac{\b-\a}{\e_p}}^0 \left(1+\frac{z_p(s)}{p} \right)^{p+1}(\b+\e_p s)^{N-1} ds \nonumber\\
&&\ge \e_p \|u_{p,+}\|_\infty^{p+1} \b^{N-1} \int_{-\infty}^0 e^{z_\infty(s)}ds,
\end{eqnarray}
where in the last step we applied Fatou's lemma. Next by \eqref{eq:e_p_def}, \eqref{d9} and Lemma \ref{lemma:u_p^p}, we obtain
\[
\frac{c_{p,+}(\a,\b)^\frac{p+1}{p-1}}{|\partial B_1|}
\geq \b^{N-1} \left(\frac{\|u_{p,+}\|_\infty^p}{p}\right)^\frac12 (\sqrt2+o_p(1))
=\b^{N-1} u'_{\infty,+}(\b) +o_p(1).
\]
On the other hand, taking $u_{\infty,+}$ as test function for $c_{p,+}(\a,\b)$, we have
\begin{equation}\label{53b}
c_{p,+}(\a,\b) \le Q_{p,[\a,\b]}(u_{\infty,+})
=|\partial B_1| \b^{N-1} \frac{u'_{\infty,+}(\b)}{1+o_p(1)},
\end{equation}
where in the last line we used the equation satisfied by $u_{\infty,+}$.
\end{proof}

\begin{corollary}\label{54}
For every $\delta>0$ there exist $s(\delta)<0$ and $p(\delta)>1$ such that, for every $p>p(\delta)$ and $s\in(-(\b-\a)/\e_p,s(\delta))$, the following holds
\begin{equation}\label{55}
\int_{-\frac{\b-\a}{\e_p}}^s \left(1+\frac{z_p(\tau)}p\right)^{p+1} (\b+\e_p\tau)^{N-1}d\tau
<\delta.
\end{equation}
\end{corollary}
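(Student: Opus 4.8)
The plan is to use the sharp asymptotic value of $c_{p,+}(\a,\b)$ from Lemma \ref{52} to rule out concentration of mass of the integrand near the left endpoint $-(\b-\a)/\e_p$, by comparing with the mass of the limiting profile $e^{z_\infty}$ on finite intervals. Write $I_p:=\int_{-(\b-\a)/\e_p}^0\bigl(1+z_p(\tau)/p\bigr)^{p+1}(\b+\e_p\tau)^{N-1}\,d\tau$ for the full integral.

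\emph{Step 1: the total mass $I_p$.} The change of variables performed in \eqref{53} is, before Fatou, the exact identity $|\partial B_1|^{-1}c_{p,+}(\a,\b)^{(p+1)/(p-1)}=\e_p\|u_{p,+}\|_\infty^{p+1}I_p$. I would combine this with Lemma \ref{52} (so that, since $c_{p,+}(\a,\b)\to|\partial B_\b|u'_{\infty,+}(\b)>0$, also $c_{p,+}^{(p+1)/(p-1)}\to|\partial B_\b|u'_{\infty,+}(\b)$), together with the relations $\e_p\|u_{p,+}\|_\infty^{p+1}=\|u_{p,+}\|_\infty^2/(p\e_p)$ coming from \eqref{eq:e_p_def}, the limit $p\e_p\to\sqrt2/u'_{\infty,+}(\b)$ from \eqref{d5}, and $\|u_{p,+}\|_\infty\to1$ (Lemma \ref{lemma:weak_conv}), to conclude $I_p\to\sqrt2\,\b^{N-1}$ as $p\to\infty$.

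\emph{Step 2: comparison on bounded intervals, and the mass of $e^{z_\infty}$.} For a fixed $s_0<0$, the $C^1_{loc}$-convergence $z_p\to z_\infty$ in \eqref{d9} gives $(1+z_p/p)^{p+1}\to e^{z_\infty}$ uniformly on $[s_0,0]$ (expand $(p+1)\log(1+z_p/p)=z_p+O(1/p)$ using the uniform bound on $z_p$), while $(\b+\e_p\tau)^{N-1}\to\b^{N-1}$ uniformly there; hence $\int_{s_0}^0\bigl(1+z_p(\tau)/p\bigr)^{p+1}(\b+\e_p\tau)^{N-1}\,d\tau\to\b^{N-1}\int_{s_0}^0 e^{z_\infty(\tau)}\,d\tau$. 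I would also record $\int_{-\infty}^0 e^{z_\infty}=\sqrt2$: either directly from \eqref{d9}, or by multiplying $-z_\infty''=e^{z_\infty}$ by $z_\infty'$ to obtain the conserved quantity $\tfrac12(z_\infty')^2+e^{z_\infty}\equiv1$, which forces $z_\infty'(\tau)\to\sqrt2$ as $\tau\to-\infty$ and then $\int_{-\infty}^0 e^{z_\infty}=\int_{-\infty}^0(-z_\infty'')=-z_\infty'(0)+\sqrt2=\sqrt2$, since $z_\infty'(0)=0$.

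\emph{Step 3: conclusion.} Subtracting the two limits, for each fixed $s_0<0$ the tail $\int_{-(\b-\a)/\e_p}^{s_0}(\cdots)\,d\tau=I_p-\int_{s_0}^0(\cdots)\,d\tau$ converges to $\b^{N-1}\int_{-\infty}^{s_0}e^{z_\infty}$, which is finite and tends to $0$ as $s_0\to-\infty$. Thus, given $\delta>0$, I would first choose $s(\delta)<0$ with $\b^{N-1}\int_{-\infty}^{s(\delta)}e^{z_\infty}<\delta/2$, and then $p(\delta)$ large enough that $-(\b-\a)/\e_p<s(\delta)$ and the tail at $s_0=s(\delta)$ is $<\delta$ for all $p>p(\delta)$. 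Since the integrand is nonnegative, as $(1+z_p(\tau)/p)^{p+1}=\|u_{p,+}\|_\infty^{-(p+1)}u_{p,+}(\b+\e_p\tau)^{p+1}\ge0$ by \eqref{d3}, the function $s\mapsto\int_{-(\b-\a)/\e_p}^s(\cdots)\,d\tau$ is nondecreasing, and the bound at $s(\delta)$ yields \eqref{55} for every $s\in(-(\b-\a)/\e_p,s(\delta))$. The only point with real content is the matching of the limit $\sqrt2\,\b^{N-1}$ of $I_p$ with the full mass $\b^{N-1}\int_{-\infty}^0 e^{z_\infty}$ of the limit profile — this is precisely what forbids an escape of mass to $-\infty$ — and it is supplied by Lemma \ref{52}; the rest is a routine no-concentration argument, the only mild care being the uniform-on-compacts convergence of $(1+z_p/p)^{p+1}$.
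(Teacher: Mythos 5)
Your proof is correct and follows essentially the same route as the paper: convergence of the total rescaled mass (via the identity behind \eqref{53} and Lemma \ref{52}), uniform convergence of $(1+z_p/p)^{p+1}(\b+\e_p\tau)^{N-1}$ to $\b^{N-1}e^{z_\infty}$ on compact subintervals, and nonnegativity of the integrand to control the tail. The only cosmetic difference is that you compute both limits explicitly as $\sqrt2\,\b^{N-1}$, while the paper identifies the limit of the full integral directly with $\b^{N-1}\int_{-\infty}^0 e^{z_\infty}$ from the two-sided bounds in the proof of Lemma \ref{52}.
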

\begin{proof}
For any $\delta>0$ let us choose $s(\delta)$ such that
\begin{equation}
\b^{N-1}\int_{-\infty}^{s(\delta)}e^{z_\infty(\tau)}d\tau<\frac\delta3.
\end{equation}
We point out that a consequence of \eqref{53} and \eqref{53b} of Lemma \ref{52} is that
\begin{equation}
\int_{-\frac{\b-\a}{\e_p}}^0 \left(1+\frac{z_p(\tau)}p\right)^{p+1} (\b+\e_p\tau)^{N-1}d\tau\rightarrow \b^{N-1} \int_{-\infty}^0 e^{z_\infty(\tau)}d\tau,
\end{equation}
as $p\to\infty$, hence we can choose $p_1(\delta)$ such that, for every $p\geq p_1(\delta)$,
\begin{equation}
\left|\int_{-\frac{\b-\a}{\e_p}}^0 \left(1+\frac{z_p(\tau)}p\right)^{p+1} (\b+\e_p\tau)^{N-1}d\tau- \b^{N-1} \int_{-\infty}^0 e^{z_\infty(\tau)}d\tau\right|<\frac\delta3.
\end{equation}
Next, using the uniform convergence of $z_p$ to $z_\infty$ on the compact sets of $(-\infty,0]$ let us choose $p_2(\delta)$ such that, for $p\ge p_2(\delta)$,
\begin{equation}
\int_{s(\delta)}^0\left|\left(1+\frac{z_p(\tau)}p\right)^{p+1} (\b+\e_p\tau)^{N-1}-\b^{N-1}e^{z_\infty(\tau)}\right|d\tau<\frac\delta3.
\end{equation}
Finally, let $p_3(\delta)$ be such that $-(\b-\a)/\e_p< s(\delta)$ for every $p\geq p_3(\delta)$ and set $p(\delta):=\max\left\{p_1(\delta),p_2(\delta), p_3(\delta) \right\}$.

If $p>p(\delta)$ and $s\in(-(\b-\a)/\e_p,s(\delta))$, we have
\begin{equation}\label{u1}
\begin{split}
\int_{-\frac{\b-\a}{\e_p}}^s \left(1+\frac{z_p(\tau)}p\right)^{p+1} (\b+\e_p\tau)^{N-1}d\tau
\le \int_{-\frac{\b-\a}{\e_p}}^{s(\delta)} \left(1+\frac{z_p(\tau)}p\right)^{p+1} (\b+\e_p\tau)^{N-1}d\tau \\
= \b^{N-1}\int_{-\infty}^{s(\delta)}e^{z_\infty(\tau)}d\tau 
+ \int_{-\frac{\b-\a}{\e_p}}^0 \left(1+\frac{z_p(\tau)}p\right)^{p+1} (\b+\e_p\tau)^{N-1}d\tau
-\b^{N-1}\int_{-\infty}^0e^{z_\infty(\tau)}d\tau \\
- \left(\int_{s(\delta)}^0\left(1+\frac{z_p(\tau)}p\right)^{p+1} (\b+\e_p\tau)^{N-1}d\tau-\b^{N-1}\int_{s(\delta)}^0e^{z_\infty(\tau)}d\tau\right)<\delta,
\end{split}
\end{equation}
which proves the claim.
\end{proof}

\begin{proof}[Proof of Lemma \ref{lemma:z_p_derivative_large}]
Writing the equation satisfied by $u_{p,+}$ we obtain
\begin{equation}\label{58}
-u_{p,+}'(\b+\e_p s) (\b+\e_p s)^{N-1}+\int_\a^{\b+\e_p s}u_{p,+}(t) t^{N-1}
=\int_\a^{\b+\e_p s}u_{p,+}^p(t) t^{N-1} dt,
\end{equation}
which implies by \eqref{d3},
\begin{equation}
\begin{split}
\frac{\|u_{p,+}\|_\infty}{p\e_p} z_p'(s)(\b+\e_ps)^{N-1}
=\int_\a^{\b+\e_p s}u_{p,+}(t)t^{N-1}dt \\
- \e_p \|u_{p,+}\|_\infty^p \int_{-\frac{\b-\a}{\e_p}}^s\left(1+\frac{z_p(t)}{p}\right)^p (\b+\e_p t)^{N-1} dt.
\end{split}
\end{equation}
Using \eqref{eq:e_p_def}, \eqref{d5} and the fact that $s\in (-(\b-\a)/2\e_p,0)$, we obtain, for $p$ large enough,
\begin{equation}\label{60}
\begin{split}
z_p'(s) \ge \frac{1}{2} \frac{\sqrt2}{u'_{\infty,+}(\b)} \frac{1}{\b^{N-1}} 
\int_\a^{\frac{\a+\b}{2}} u_{p,+}(t)t^{N-1}dt \\
-2 \left(\frac{2}{\a+\b}\right)^{N-1} \int_{-\frac{\b-\a}{\e_p}}^s \left(1+\frac{z_p(t)}{p}\right)^p(\b+\e_p t)^{N-1}dt =:I_1-I_{2,p}(s).
\end{split}
\end{equation}
By Corollary \ref{54}, with $\e=I_1/2$, there exist $\bar{p}$ and $\bar{s}$ such that 
\begin{equation}
I_{2,p}(s)< \frac{I_1}{2} \quad \text{for every } p>\bar{p}, \ s\in \left(-\frac{\b-\a}{\e_p},\bar{s}\right).
\end{equation}
The H\"older inequality with exponents $(p+1)/p$ and $p+1$ provides
\begin{eqnarray}\label{61}
&&\int_{-\frac{\b-\a}{\e_p}}^s \left(1+\frac{z_p(t)}{p}\right)^p (\b+\e_p t)^{N-1} dt 
\nonumber\\
&& \le\left(\int_{-\frac{\b-\a}{\e_p}}^s \left(1+\frac{z_p(t)}{p}\right)^{p+1}(\b+\e_p t)^{N-1}dt\right)^\frac{p}{p+1} \cdot
\left(\int_{-\frac{\b-\a}{\e_p}}^0 (\b+\e_p t)^{N-1}dt\right)^\frac{1}{p+1} \nonumber\\
&& \le \left(\int_{-\frac{\b-\a}{\e_p}}^s \left(1+\frac{z_p(t)}{p}\right)^{p+1}(\b+\e_p t)^{N-1}dt\right)^\frac{p}{p+1} C \e_p^{-\frac{1}{p+1}}.
\end{eqnarray}
We notice that 
\begin{equation}\label{62}
\e_p^{-1}\sim \frac{p}{\sqrt{2}} u'_{\infty,+}(\b) \quad\text{so that}\quad \e_p^{-\frac{1}{p+1}} \to 1 \quad\text{as}\quad p\to\infty.
\end{equation}
By combining \eqref{60}-\eqref{62} we obtain that $z'_p(s)\geq I_1/2$ for every $p>\bar{p}$ and $s\in(-(\b-\a)/\e_p,\bar{s})$.
\end{proof}

\subsection{Nondegeneracy}
With very few changes with respect to the proof of the uniqueness, one can prove the following nondegeneracy result,
\begin{theorem}\label{thm:non_degeneracy}
Let $v_p$ solve
\begin{equation}\label{d1}
\begin{cases}
-(r^{N-1}v')'+r^{N-1}v=r^{N-1}pu_{p,+}^{p-1}v \quad &\text{ for } r\in(\a,\b) \\
v'(\a)=v'(\b)=0.
\end{cases}
\end{equation}
Then $v_p\equiv0$ for $p$ large.
\end{theorem}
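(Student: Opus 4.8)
The plan is to mimic the proof of Theorem~\ref{thm:uniqueness_minimal_energy_sol}, replacing the difference quotient function $w_p$ by the solution $v_p$ of the linearized equation \eqref{d1}, suitably normalized. Since \eqref{d1} is linear and homogeneous, we may assume $\|v_p\|_\infty=1$, with the maximum attained at some $m_p\in[\a,\b]$, $m_p\to m_\infty$. The role of the kernel $pK_p$ in \eqref{u3} is now played by $pu_{p,+}^{p-1}$, which is even simpler: by the blow-up of Step~1 we have directly $p\e_p^2 u_{p,+}^{p-1}(\b+\e_p r)=p\e_p^2\|u_{p,+}\|_\infty^{p-1}(1+z_p(r)/p)^{p-1}=(1+z_p(r)/p)^{p-1}\to e^{z_\infty(r)}$ locally uniformly on $(-\infty,0)$, by \eqref{eq:e_p_def} and \eqref{d9}, so the analogue of \eqref{u9} requires no comparison with a second solution $\tilde u_{p,+}$.

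First I would establish the $C^1$-bound $|v_p'(r)|\le Cp$ on $((\a+\b)/2,\b)$ exactly as in \eqref{eq:C1_bound_w_p}: integrating \eqref{d1}, using $|v_p|\le 1$, and using $\int_{(\a+\b)/2}^\b u_{p,+}^{p-1}r^{N-1}\,dr\le\int u_{p,+}^{p+1}\le C$ (the last bound from Lemma~\ref{lemma:weak_conv} and \eqref{eq:H1_bound}, since $u_{p,+}\le C<1$ away from $\b$ makes $u_{p,+}^{p-1}\le u_{p,+}^{p+1}$ there up to a constant — in fact simpler). Then, setting $\tilde v_p(r)=v_p(\b+\e_p r)$, equation \eqref{d1} rescales to $-\tilde v_p''-\frac{(N-1)\e_p}{\b+\e_p r}\tilde v_p'+\e_p^2\tilde v_p=(1+z_p(r)/p)^{p-1}\tilde v_p$, and the $C^1$-bound gives $|\tilde v_p'|\le C$ on $(-(\b-\a)/2\e_p,0)$, hence $\tilde v_p''$ bounded and $\tilde v_p\to v_\infty$ in $C^1_{loc}(-\infty,0)$ with $-v_\infty''=e^{z_\infty}v_\infty$, $v_\infty'(0)=0$, $|v_\infty|\le1$. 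By \cite[Lemma 4.2]{G} (formula \eqref{d17}) the bounded solutions with $v'(0)=0$ force $A=B=0$, so $v_\infty\equiv0$, hence $\tilde v_p\to0$ in $C^1_{loc}$.

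Finally I would derive the contradiction as in Step~3: writing $v_p(r)=\int_\a^\b G_{[\a,\b]}(r,t)\,p u_{p,+}^{p-1}(t)v_p(t)\,dt$, restricting to $((\a+\b)/2,\b)$ up to $o_p(1)$, rescaling $t=\b+\e_p s$, and splitting off $G_{[\a,\b]}(m_p,\b)$ as the leading Green-function factor, one gets $1=p\e_p G_{[\a,\b]}(m_p,\b)\int_{-\frac{\b-\a}{2\e_p}}^0 u_{p,+}^{p-1}(\b+\e_p s)\tilde v_p(s)\,ds+o_p(1)$, the error term $I_{2,p}$ being controlled by the Lipschitz estimate \eqref{50} together with Lemma~\ref{lemma:z_p_derivative_large} and the dominated-convergence argument \eqref{63}--\eqref{64} (which bound $\int|s|e^{z_p(s)}|\tilde v_p(s)|\,ds\to0$; note $p\e_p^2 u_{p,+}^{p-1}=(1+z_p/p)^{p-1}\sim e^{z_p}$). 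Doing the same with $r=\b$ and using $G_{[\a,\b]}(\b,\b)>0$ gives $o_p(1)=p\e_p G_{[\a,\b]}(\b,\b)\int_{-\frac{\b-\a}{2\e_p}}^0 u_{p,+}^{p-1}(\b+\e_p s)\tilde v_p(s)\,ds+o_p(1)$, so the common integral is $o_p(1)$, contradicting the first identity since $G_{[\a,\b]}(m_p,\b)\to G_{[\a,\b]}(m_\infty,\b)$ and $1\neq 0$. Hence $v_p\equiv0$ for $p$ large. The main obstacle is the same as in the uniqueness proof: ensuring that the maximum of $|v_p|$ genuinely migrates to the boundary layer near $\b$ so that the blow-up captures it, which is precisely what Lemma~\ref{lemma:z_p_derivative_large} and the estimate \eqref{63} guarantee; one must check that $m_\infty$ cannot be an interior point where the contribution would be negligible — but this follows because the only nonvanishing contribution to \eqref{48}-type identities comes from the layer, forcing $G_{[\a,\b]}(m_p,\b)$ to be bounded away from $0$, i.e. $m_\infty$ is irrelevant to the contradiction.
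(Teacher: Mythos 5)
Your proposal is correct and follows essentially the same route as the paper, whose proof of Theorem \ref{thm:non_degeneracy} simply repeats the blow-up and Green-function argument of Theorem \ref{thm:uniqueness_minimal_energy_sol} for the normalized $v_p$, noting exactly the two simplifications you identified: a single blow-up parameter $\e_p$ (so the analogue of \eqref{u5} is automatic) and the kernel $u_{p,+}^{p-1}$ in place of $K_p$ (so the analogue of \eqref{eq:K_p_integrability} is trivial).
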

\begin{proof}
As in the proof of Theorem \ref{thm:uniqueness_minimal_energy_sol}, Step 2, we suppose by contradiction that there exists a nontrivial solution of \eqref{d1}. The blow-up analysis of this solution can be performed exactly as in the proof to Theorem \ref{thm:uniqueness_minimal_energy_sol}, reaching the contradiction in the sae way. Here the calculations are indeed easier because there is only one blow-up parameter $\e_p$, hence \eqref{u5} holds automatically. Also, in the analogous of \eqref{u3} there is $u_p^p$ in place of $K_p$, so that \eqref{eq:K_p_integrability} is trivial.
\end{proof}

\subsection{$C^1$ dependence on the boundary points}
The following result is inspired from \cite[Lemma 3.4]{OrtegaVerzini2004}. 

\begin{lemma}\label{lemma:continuous_dependence}
Let $p$ be fixed and let $0<A_1<A_2<B_1<B_2$ be as in Remark \ref{rem:existence_sol}. Define
\[
I=\left\{ (r,\a,\b): \, A_1<\a<A_2, \, B_1<\b< B_2, \, \a<r<\b \right\}.
\]
Then the map $ I\ni (r,\a,\b) \mapsto u_{p,+}(r;\a,\b)$ is continuous. 

Similarly, in the case of the ball, let $0<B_1<B_2$ be as in Remark \ref{rem:existence_sol} and $I=\left\{ (r,\b): \, B_1<\b< B_2, \, 0\leq r<\b \right\}$. Then the map $ I\ni (r,\b) \mapsto u_{p,+}(r;0,\b)$ is continuous.
\end{lemma}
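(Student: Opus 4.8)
The plan is to pass to a fixed reference interval by an affine change of variables and then to combine the uniform a priori bounds of Section \ref{S3} with the uniqueness result of Theorem \ref{thm:uniqueness_minimal_energy_sol} to identify the limit. Throughout, $p$ is taken large enough that $u_{p,+}(\cdot\,;\a,\b)$ is the \emph{unique} (up to normalization) minimizer of $Q_{p,[\a,\b]}$ in $\cone_{+,[\a,\b]}$, so that the map is well defined. Fix $(r_n,\a_n,\b_n)\to(r_0,\a_0,\b_0)$ in $I$, set $u_n:=u_{p,+}(\cdot\,;\a_n,\b_n)$, let $\phi_n:[\a_0,\b_0]\to[\a_n,\b_n]$ be the increasing affine bijection $\phi_n(t)=\a_n+\lambda_n(t-\a_0)$ with $\lambda_n=(\b_n-\a_n)/(\b_0-\a_0)\to1$, and put $\tilde u_n:=u_n\circ\phi_n$. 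Since $\phi_n$ is increasing, $\tilde u_n\in\cone_{+,[\a_0,\b_0]}$ and $\|\tilde u_n\|_\infty=\|u_n\|_\infty\le e^{1/2}$, while by Lemma \ref{lemma:properties_sol_cone}(iii) $|\tilde u_n'|=\lambda_n|u_n'\circ\phi_n|\le C$. A direct computation shows that on $(\a_0,\b_0)$
\[
\tilde u_n''=-\frac{\lambda_n(N-1)}{\phi_n(t)}\,\tilde u_n'+\lambda_n^2\bigl(\tilde u_n-\tilde u_n^{\,p}\bigr),\qquad \tilde u_n'(\a_0)=\tilde u_n'(\b_0)=0,
\]
with coefficients uniformly bounded (recall $\phi_n(t)\ge\a_n>A_1>0$) and converging uniformly to those of \eqref{eq:main_alpha_beta} on $[\a_0,\b_0]$. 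Hence $\tilde u_n$ is bounded in $C^{1,1}([\a_0,\b_0])$, and along a subsequence $\tilde u_n\to u_*$ in $C^1([\a_0,\b_0])$, where $u_*\in\cone_{+,[\a_0,\b_0]}$ solves \eqref{eq:main_alpha_beta} in $B_{\b_0}\setminus B_{\a_0}$.

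Next I would show that $c_{p,+}$ is continuous in the endpoints. Rescaling $u_{p,+}(\cdot\,;\a_0,\b_0)$ (whose $L^\infty$ norm is $\le e^{1/2}<\sqrt e+1$) from $[\a_0,\b_0]$ onto $[\a_n,\b_n]$ produces admissible test functions for $c_{p,+}(\a_n,\b_n)$, and since the weights $r^{N-1}$ converge uniformly under the affine change of variables the corresponding quotients tend to $c_{p,+}(\a_0,\b_0)$; this gives $\limsup_n c_{p,+}(\a_n,\b_n)\le c_{p,+}(\a_0,\b_0)$. Conversely, each $\tilde u_n$ above is admissible for $c_{p,+}(\a_0,\b_0)$ and $Q_{p,[\a_0,\b_0]}(\tilde u_n)=(1+o(1))\,Q_{p,[\a_n,\b_n]}(u_n)=(1+o(1))\,c_{p,+}(\a_n,\b_n)$, so $c_{p,+}(\a_0,\b_0)\le\liminf_n c_{p,+}(\a_n,\b_n)$. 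Therefore $c_{p,+}(\a_n,\b_n)\to c_{p,+}(\a_0,\b_0)=Q_{p,[\a_0,\b_0]}(u_{p,+}(\cdot\,;\a_0,\b_0))>0$. By \eqref{eq:H1_bound}, $\|u_n\|_{H^1}$ (hence $\|\tilde u_n\|_{H^1}$) is then bounded away from zero, and since the $C^1$ convergence on the bounded interval $[\a_0,\b_0]$ is strong in $H^1$, we get $u_*\not\equiv0$. Passing to the limit in the scale-invariant quotient, $Q_{p,[\a_0,\b_0]}(u_*)=\lim_n Q_{p,[\a_0,\b_0]}(\tilde u_n)=c_{p,+}(\a_0,\b_0)$, so $u_*$ is a nonzero element of $\cone_{+,[\a_0,\b_0]}$ that minimizes $Q_{p,[\a_0,\b_0]}$ and in addition solves \eqref{eq:main_alpha_beta} with coefficient $1$ in front of $u^p$. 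By Theorem \ref{thm:uniqueness_minimal_energy_sol} (in the uniform form of Corollary \ref{coro:uniqueness_uniform}, which keeps the minimizer unique along the whole sequence), $u_*$ is a positive multiple of $u_{p,+}(\cdot\,;\a_0,\b_0)$, and since both solve \eqref{eq:main_alpha_beta} exactly, $u_*=u_{p,+}(\cdot\,;\a_0,\b_0)$. As this limit is independent of the subsequence, $\tilde u_n\to u_{p,+}(\cdot\,;\a_0,\b_0)$ in $C^1([\a_0,\b_0])$ along the full sequence; undoing the rescaling and using $\phi_n^{-1}(r_n)\to r_0$ yields $u_n(r_n)=\tilde u_n(\phi_n^{-1}(r_n))\to u_{p,+}(r_0;\a_0,\b_0)$, which is the asserted continuity. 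The case of the ball is identical with $\phi_n(t)=(\b_n/\b_0)t$, the singular coefficient at the origin being handled in the standard way (from $u_n'(r)=r^{1-N}\int_0^r s^{N-1}(u_n^{\,p}-u_n)\,ds$ one gets $|u_n'(r)|+|u_n''(r)|\le C$ near $0$ uniformly in $n$).

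The change-of-variables estimates for the quotients and the uniform $C^{1,1}$ bounds are routine. The genuine obstacle — and the reason the long Section \ref{S4} is needed here — is the identification of the limit $u_*$: a priori it is only known to be a nonzero monotone solution of the limit equation attaining $c_{p,+}(\a_0,\b_0)$, and without the uniqueness statement one could not exclude that the minimizer ``jumps'' as $(\a,\b)$ moves; in particular one must rule out the constant solution $u\equiv1$, which is also a critical point of $Q_{p,[\a_0,\b_0]}$. Alternatively, one could organize the proof around the Implicit Function Theorem using the nondegeneracy of Theorem \ref{thm:non_degeneracy}, which would even upgrade the conclusion to a $C^1$ dependence of $u_{p,+}(r;\a,\b)$ on $(\a,\b)$; but the compactness-plus-uniqueness argument above already gives the continuity claimed.
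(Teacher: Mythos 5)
Your proof is correct, and it rests on the same crucial ingredient as the paper's own argument, namely the uniqueness of the minimizer of $c_{p,+}$ (Theorem \ref{thm:uniqueness_minimal_energy_sol}, Corollary \ref{coro:uniqueness_uniform}), valid for $p$ large --- a restriction both you and the paper use despite the statement's ``let $p$ be fixed''. The mechanics, however, differ. The paper extends $u_{p,+}(\cdot;\a_n,\b_n)$ by constants to the fixed interval $[A_1,B_2]$, uses only weak $H^1$ compactness, identifies the weak limit as a solution on $B_{\b_*}\setminus B_{\a_*}$ by testing with compactly supported functions, and concludes by contradiction: if the limit were not $u_{p,+}(\cdot;\a_*,\b_*)$, uniqueness forces $Q_{p,[\a_*,\b_*]}(\tilde u)>c_{p,+}(\a_*,\b_*)$, and weak lower semicontinuity together with the continuity of $Q_{p,[\a,\b]}$ in $(\a,\b)$ would make the limiting solution asymptotically achieve $c_{p,+}(\a_n,\b_n)$, contradicting uniqueness on the perturbed intervals --- this is where the uniform statement of Corollary \ref{coro:uniqueness_uniform} is genuinely needed. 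You instead rescale the moving annulus onto the limiting one, exploit the ODE to get uniform $C^{1,1}$ bounds and strong $C^1$ compactness, prove directly that $(\a,\b)\mapsto c_{p,+}(\a,\b)$ is continuous by comparing rescaled quotients, and then apply uniqueness only at the limiting interval, fixing the multiplicative constant through the equation. Your route buys stronger ($C^1$) convergence, an explicit proof of the continuity of $c_{p,+}$ (which the paper only invokes), and uniqueness used only at the limit point rather than along the whole sequence; the paper's route buys brevity (no rescaled equation, no pointwise ODE estimates). Two cosmetic points: in the ball case the identity should read $u_n'(r)=r^{1-N}\int_0^r s^{N-1}\bigl(u_n-u_n^{\,p}\bigr)\,ds$ (sign), which does not affect the bound you need; and for a single $p$ to work on all of $I$ one should note, as the paper does tacitly, that the uniqueness threshold can be taken locally uniform in $(\a,\b)$, which is the content of Corollary \ref{coro:uniqueness_uniform}.
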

\begin{proof}
We prove the result in the case of the annulus, the case of the ball being analogous.
Let $(r,\a_n,\b_n)$ be a sequence in $I$ such that $\a_n\to\a_*$, $\b_n\to\b_*$. Let $\hat{u}_{p,+}(r;\a_n,\b_n)$ be the trivial extension of $u_{p,+}(r;\a_n,\b_n)$ in the interval $[A,B]:=[A_1,B_2]$ (extend as a constant outside $(\a_n,\b_n)$). Define $\hat{u}_{p,+}(r;\a_*,\b_*)$ analogously. Since $\{ \hat{u}_{p,+}(\cdot;\a_n,\b_n)  \}_n$ is bounded in $H^1(B_{B}\setminus B_{A})$, there exists $\tilde{u}\in H^1(B_{B}\setminus B_{A})$ such that (up to a subsequence)
\begin{equation*}
\hat{u}_{p,+}(\cdot;\a_n,\b_n) \rightharpoonup \tilde{u} \quad
\text{weakly in } H^1(B_{B}\setminus B_{A}).
\end{equation*}

In order to conclude the proof it will be enough to show that $\tilde{u}\equiv u_{p,+}(\cdot;\a_*,\b_*)$.
Let $\varphi\in C_c^\infty(B_{\b_*}\setminus B_{\a_*})$. Then $\varphi\in C_c^\infty(B_{\b_n}\setminus B_{\a_n})$ for $n$ sufficiently large and the $H^1$-weak convergence implies
\begin{equation*}
\int_{B_{\b_*}\setminus B_{\a_*}} \left( \nabla\tilde{u}\cdot\nabla\varphi +\tilde{u}\varphi\right) \,dx 
= \int_{B_{\b_*}\setminus B_{\a_*}} \tilde{u}^p\varphi \,dx.
\end{equation*}
Therefore both $\tilde{u}$ and $u_{p,+}(\cdot;\a_*,\b_*)$ solve equation \eqref{eq:main_alpha_beta} in $B_{\b_*}\setminus B_{\a_*}$. Moreover, by the pointwise convergence, $\tilde{u}$ is non-negative and non-decreasing (and hence positive and increasing by the maximum principle) and, by Lemma \ref{lemma:properties_sol_cone} (ii), it satisfies $\|\tilde{u}\|_\infty<\sqrt{e}+1$. Therefore $\tilde{u}$ can be used as a test function for $c_{p,+}(\a_*,\b_*)$.

Suppose by contradiction that $\tilde{u}\not\equiv u_{p,+}(\cdot;\a_*,\b_*)$. Then the uniqueness of the minimal energy solution in the cone with variable intervals proved in Corollary \ref{coro:uniqueness_uniform} implies
\begin{equation}\label{eq:continuous_dependence1}
Q_{p,[\a_*,\b_*]}(\tilde{u}) > Q_{p,[\a_*,\b_*]}(u_{p,+}(\cdot;\a_*,\b_*)).
\end{equation}
On the other hand, the $H^1$-weak convergence implies
\begin{equation}\label{eq:continuous_dependence2}
Q_{p,[\a_*,\b_*]}(\tilde{u}) 
\leq \liminf_{n\to\infty} Q_{p,[\a_*,\b_*]}(\hat{u}_{p,+}(\cdot;\a_n,\b_n)).
\end{equation}
We use \eqref{eq:continuous_dependence1}, \eqref{eq:continuous_dependence2} and the continuity if $Q_{p,[\a,\b]}$ with respect to $\a,\b$, to obtain
\begin{equation*}
\begin{split}
\lim_{n\to\infty} Q_{p,[\a_n,\b_n]}(\hat{u}_{p,+}(\cdot;\a_*,\b_*))
=Q_{p,[\a_*,\b_*]}(u_{p,+}(\cdot;\a_*,\b_*)) < Q_{p,[\a_*,\b_*]}(\tilde{u}) \\
\leq \liminf_{n\to\infty} Q_{p,[\a_*,\b_*]}(\hat{u}_{p,+}(\cdot;\a_n,\b_n))
=\lim_{n\to\infty} Q_{p,[\a_n,\b_n]}(u_{p,+}(\cdot;\a_n,\b_n)).
\end{split}
\end{equation*}
This implies that $\hat{u}_{p,+}(\cdot;\a_*,\b_*)$ achieves $c_{p,+}(\a_n,\b_n)$ for $n$ large, which contradicts Theorem \ref{thm:uniqueness_minimal_energy_sol}.\\
Finally, by the uniqueness of the minimal solution in $[\a,\b]$ it is standard to show that the sequences  $\a_n$ and $\b_n$ do converge.
\end{proof}

\begin{lemma}\label{lemma:C1_dependence}
In the same assumptions of the previous lemma, the maps $ I\ni (r,\a,\b) \mapsto u_{p,+}(r;\a,\b)$ and $ I\ni (r,\b) \mapsto u_{p,+}(r;0,\b)$ are of class $C^1$.
\end{lemma}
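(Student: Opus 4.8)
The plan is to derive the regularity from the Implicit Function Theorem, after reducing to a problem on a fixed interval. Fix $p$ large, so that Theorems \ref{thm:uniqueness_minimal_energy_sol} and \ref{thm:non_degeneracy} apply, and for $(\a,\b)$ in the open set $\mathcal{O}:=(A_1,A_2)\times(B_1,B_2)$ (respectively $\mathcal{O}:=(B_1,B_2)$, with the convention $\a=0$, in the ball case) rescale by $r=\a+(\b-\a)t$, $t\in[0,1]$, and set $v(t)=u_{p,+}(\a+(\b-\a)t;\a,\b)$. Then $v$ solves the fixed-interval Neumann problem
\begin{equation*}
-v''-\frac{(N-1)(\b-\a)}{\a+(\b-\a)t}\,v'+(\b-\a)^2 v=(\b-\a)^2 v^p\ \text{ in }(0,1),\qquad v'(0)=v'(1)=0,
\end{equation*}
whose coefficients depend smoothly on $(\a,\b)\in\mathcal{O}$. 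One then fixes a Banach space $X$ of radial functions incorporating the Neumann conditions — for the annulus, functions in $C^2([0,1])$ (or $H^2(0,1)$) with vanishing derivative at the endpoints; for the ball, radial $H^2$ functions on $\overline{B_1}$, so as to absorb the coordinate singularity of $(N-1)/r$ at the origin — and a target space $Y$ (respectively $C^0([0,1])$ and $L^2_{rad}(B_1)$), and defines $\Phi:X\times\mathcal{O}\to Y$ to be the difference of the two sides of the displayed equation. Since $u_{p,+}$ is positive and bounded, the map $v\mapsto v^p$ is smooth near the solution, so $\Phi$ is of class $C^1$ (in fact $C^\infty$).

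The crucial step is that $D_v\Phi$, evaluated at the solution, is an isomorphism of $X$ onto $Y$. Undoing the rescaling, $D_v\Phi$ is the Neumann realization of the linearized operator $Lw=-w''-\frac{N-1}{r}w'+w-pu_{p,+}^{p-1}w$; it is a compact perturbation — multiplication by the bounded function $pu_{p,+}^{p-1}$, compact from $X$ to $Y$ by the compact embedding $X\hookrightarrow Y$ — of the Neumann operator $w\mapsto -w''-\frac{N-1}{r}w'+w$, which is an isomorphism $X\to Y$ (multiplying by $r^{N-1}w$ and integrating shows the homogeneous problem has only the trivial solution, and a Neumann second-order operator with trivial kernel is Fredholm of index zero). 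Hence $D_v\Phi$ is Fredholm of index $0$, and Theorem \ref{thm:non_degeneracy} guarantees its kernel is trivial for $p$ large, so it is bijective. The Implicit Function Theorem then produces, near every $(\a_0,\b_0)\in\mathcal{O}$, a unique $C^1$ (indeed $C^\infty$) branch $(\a,\b)\mapsto v(\cdot;\a,\b)\in X$ of solutions, which by the continuity established in Lemma \ref{lemma:continuous_dependence} coincides with the globally defined solution. Thus $(\a,\b)\mapsto u_{p,+}(\cdot;\a,\b)$ is $C^1$ with values in $X$; differentiating the equation in the parameters shows that $\partial_\a u_{p,+}$ and $\partial_\b u_{p,+}$ solve linear second-order ODEs in $r$ with continuous coefficients, hence are $C^2$ in $r$, and a routine bootstrap upgrades the convergence of the difference quotients to $C^2_{loc}$ in $r$. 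Together with the $C^2$ dependence of each $u_{p,+}(\cdot;\a,\b)$ on $r$, this gives that $(r,\a,\b)\mapsto u_{p,+}(r;\a,\b)$ is jointly $C^1$ on $I$; the ball case is identical.

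The main obstacle is precisely the invertibility of the linearized operator: this is exactly where the nondegeneracy of the monotone solution (Theorem \ref{thm:non_degeneracy}) is indispensable, combined with the Fredholm-index-zero property, which turns triviality of the kernel into bijectivity. A secondary technical point is the functional setting in the ball case, where the coefficient $(N-1)/r$ is singular at the origin; this is conveniently bypassed by the variational ($H^2_{rad}$) formulation, for which the bilinear form $\int_{B_1}(\nabla w\cdot\nabla\phi+w\phi)\,dx$ does not see the origin. Finally, a more elementary alternative avoids the functional-analytic setup altogether: one shoots from the inner endpoint, writing $u_{p,+}(r;\a,\b)=u(r;c,\a,\b)$ with $u(\cdot;c,\a,\b)$ the solution of the initial value problem $u(\a)=c$, $u'(\a)=0$ (for the ball, $c=u(0)$, $u'(0)=0$, a regular-singular IVP enjoying the usual smooth dependence on data); the Neumann condition $u'(\b;c,\a,\b)=0$ then defines $c$ implicitly, and since $\partial_c u(\cdot;c,\a,\b)$ solves the linearized equation with nonzero datum at the inner endpoint, Theorem \ref{thm:non_degeneracy} prevents it from satisfying $w'(\b)=0$, so the scalar Implicit Function Theorem gives $c=c(\a,\b)\in C^1$, and smooth dependence of the IVP on its parameters concludes.
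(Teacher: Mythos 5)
Your proposal is correct and follows essentially the same route as the paper: rescale to a fixed domain, set up the equation as $\Phi(u;\a,\b)=0$ between suitable Banach spaces, use the nondegeneracy of Theorem \ref{thm:non_degeneracy} together with the Fredholm index-zero property to invert the linearization, apply the Implicit Function Theorem, and identify the resulting local branch with $u_{p,+}$ via Lemma \ref{lemma:continuous_dependence}. The only differences are cosmetic (affine rescaling to $[0,1]$ rather than to $[A,B]$ with an amplitude normalization) plus your optional shooting alternative, which is a valid elementary variant but not needed.
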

\begin{proof}
Again we prove the result only in the case of the annulus and we set $[A,B]:=[A_1,B_2]$. Let
\[
\hat{u}_{p,+}(s;\a,\b)=h^{\frac{2}{p-1}} u_{p,+}(hs+k;\a,\b),
\]
with
\[
h=\frac{\a-\b}{A-B} \quad\text{and}\quad k=\frac{A\b-B\a}{A-B}.
\]
Then $\hat{u}_{p,+} \in H^1_{rad}(B_B\setminus B_A)$ and solves
\[
-\hat{u}_{p,+}''-h\frac{N-1}{hs+k}\hat{u}_{p,+}'+ h^2 \hat{u}_{p,+}= (\hat{u}_{p,+})^p.
\]
We define a functional $\Phi:H^2_{rad}(B_B\setminus B_A)\cap\{u>0\} \times \left\{ (\a,\b): \, A_1<\a<A_2, \, B_1<\b< B_2 \right\} \to L^2(B_B\setminus B_A)$ as follows
\[
\Phi(u;\a,\b)=-u''-h\frac{N-1}{hs+k} u'+h^2u-u^p.
\]
The Implicit Function Theorem applies to $\Phi(u;\a,\b)=0$ near the point $(\hat{u}_{p,+};\a,\b)$. Indeed we have
\[
\partial_u \Phi(\hat{u}_{p,+}(\cdot;\a,\b);\a,\b))[\hat{\psi}] =
-\hat{\psi}''-h\frac{N-1}{hs+k}\hat{\psi}'+h^2\hat{\psi}-p(\hat{u}_{p,+})^{p-1}\hat{\psi}.
\]
Letting $\hat{\psi}(s)=\psi(hs+k)$ and rescaling back to the original variable $hs+k=r$, the previous expression becomes
\[
h^2(-\psi''-\frac{N-1}{r}\psi'+\psi-p u_{p,+}^{p-1}\psi).
\]
The nondegeneracy of $u_{p,+}$ proved in Theorem \ref{thm:non_degeneracy} implies that $\partial_u \Phi(\hat{u}_{p,+}(\cdot;\a,\b);\a,\b))$ is injective. Being a Fredholm operator of index 0, it is also surjective. 

By the Implicit Function Theorem there exists locally a $C^1$ map $(\a,\b)\mapsto u(\cdot;\a,\b)$ such that $\Phi(u(\cdot;\a,\b);\a,\b)=0$. Then Lemma \ref{lemma:continuous_dependence} implies that $(\a,\b)\mapsto u_{p,+}(\cdot;\a,\b)$ is of class $C^1$.
\end{proof}

\begin{lemma}\label{lemma:uniform_convergence}
Fix $0\leq\a<1$. For every $\e>0$ we have that
\begin{equation*}
u_{p,+}(\cdot;\a,\b)\to u_{\infty,+}(\cdot;\a,\b) \quad 
\text{in } H^1(B_\b\setminus B_\a) \cap C^{0,\gamma}(\overline{B_\b\setminus B_\a})
\text{ for every } \gamma\in(0,1),
\end{equation*}
as $p\to\infty$, uniformly in $\beta$ for $\a+\e\leq\b\leq 1$. Analogously, fix $0<\b\leq 1$, then for every $\e>0$ the convergence is uniform for $0\leq \a \leq \b-\e$.
\end{lemma}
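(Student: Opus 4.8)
The plan is to argue by contradiction, reducing the uniform statement to the pointwise (in $\b$) convergence already established in Proposition~\ref{prop:convergence_green} together with the compactness built in Section~\ref{sec:behaviour_p_infty}. I treat the first assertion ($\a$ fixed, $\b\in[\a+\e,1]$), the second being symmetric. If the convergence were not uniform there would exist $\e_0>0$, $p_n\to\infty$ and $\b_n\in[\a+\e,1]$ such that, extending $u_{p_n,+}(\cdot;\a,\b_n)$ by the constant $u_{p_n,+}(\b_n)$ and $u_{\infty,+}(\cdot;\a,\b_n)$ by the constant $1$ to the fixed interval $[\a,1]$, these two extensions stay at $H^1\cap C^{0,\gamma}$ distance $\ge\e_0$. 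Passing to a subsequence, $\b_n\to\b_*\in[\a+\e,1]$, and the whole game is to show that \emph{both} families converge to $u_{\infty,+}(\cdot;\a,\b_*)$. For the limit family this is soft: $u_{\infty,+}(\cdot;\a,\b)$ solves the linear problem \eqref{a1}, so its continuous dependence on $\b$ in $H^1\cap C^0$ (with sup norm identically $1$) is standard, and I would simply invoke it.

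The real work is to show $u_{p_n,+}(\cdot;\a,\b_n)\to u_{\infty,+}(\cdot;\a,\b_*)$, and here I would replay the proof of Proposition~\ref{prop:convergence_green} keeping the interval moving. Because $\b_n-\a\ge\e$ and $\b_n\le1$, the a priori bounds are uniform in $n$: the $H^1$-bound of Lemma~\ref{lemma:weak_conv} and the gradient bound $|u_{p_n,+}'|<1$ of Lemma~\ref{lemma:properties_sol_cone}(iii). Hence, up to a further subsequence, the extensions converge weakly in $H^1([\a,1])$ and, by Ascoli--Arzel\`a together with interpolation, strongly in $C^{0,\gamma}([\a,1])$ to some $\bar u$, which on $(\a,\b_*)$ lies in $\cone_{+,[\a,\b_*]}$ and, by the blow-up-of-the-derivative argument inside the proof of Lemma~\ref{lemma:weak_conv}, satisfies $\|\bar u\|_\infty=\bar u(\b_*)=1$ (in particular $u_{p_n,+}(\b_n)\to1$).

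To identify $\bar u$ I would show $c_{p_n,+}(\a,\b_n)\to c_{\infty,+}(\a,\b_*)$, with $c_{\infty,+}(\a,\b)=\|u_{\infty,+}(\cdot;\a,\b)\|_{H^1}^2$ the level in \eqref{eq:c_infty_def}. The lower bound $c_{\infty,+}(\a,\b_*)\le\liminf_n c_{p_n,+}(\a,\b_n)$ follows from $c_{\infty,+}(\a,\b_*)\le\|\bar u\|_{H^1}^2\le\liminf_n\|u_{p_n,+}\|_{H^1}^2=\liminf_n(c_{p_n,+}(\a,\b_n)\|u_{p_n,+}\|_{p_n+1}^2)$ once one checks, exactly as in Lemma~\ref{lemma:limsup_u_p} (whose only inputs are $|B_{\b_n}\setminus B_\a|$ bounded and $\|u_{p_n,+}\|_\infty\to\|\bar u\|_\infty=1$), that $\limsup_n\|u_{p_n,+}\|_{p_n+1}\le1$. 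For the upper bound I would test $c_{p_n,+}(\a,\b_n)$ with $u_{\infty,+}(\cdot;\a,\b_n)$ as in \eqref{53b} of Lemma~\ref{52}, getting $c_{p_n,+}(\a,\b_n)\le\|u_{\infty,+}(\cdot;\a,\b_n)\|_{H^1}^2/\|u_{\infty,+}(\cdot;\a,\b_n)\|_{p_n+1}^2$, whose numerator tends to $c_{\infty,+}(\a,\b_*)$ and whose denominator tends to $1$ because $u_{\infty,+}(\cdot;\a,\b_n)\to u_{\infty,+}(\cdot;\a,\b_*)$ uniformly with sup norm $1$. Thus all inequalities are equalities, convergence of the norms upgrades the weak $H^1$ convergence to strong, $\bar u$ achieves $c_{\infty,+}(\a,\b_*)$ with unit sup norm, and the uniqueness clause of Proposition~\ref{prop:convergence_green} forces $\bar u=u_{\infty,+}(\cdot;\a,\b_*)$; since the limit is subsequence-independent the full sequence converges. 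Combined with the continuous dependence of the linear profile this contradicts the choice of $\e_0$. The second assertion is identical, with $\b$ fixed, $\a\in[0,\b-\e]$ and the same solution $u_{p,+}$, the endpoint $\a=0$ causing no trouble since only the interval's length enters the uniform bounds.

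The main obstacle I anticipate is the bookkeeping in the middle step: checking that every estimate in the chain of Lemmas~\ref{lemma:weak_conv}--\ref{lemma:w_p} and in Lemma~\ref{52}, originally stated for a \emph{fixed} interval, survives when the right endpoint varies with $n$. The only genuinely domain-dependent ingredient is the recovery/test-function construction, and I sidestep the moving-domain version of Lemma~\ref{lemma:w_p} altogether by using $u_{\infty,+}(\cdot;\a,\b_n)$ itself as competitor, so that everything reduces to the continuous dependence of the linear profile and the elementary fact that $\|v_n\|_{p_n+1}\to\|v_*\|_\infty$ whenever $v_n\to v_*$ uniformly on a bounded domain.
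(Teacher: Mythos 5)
Your argument is correct in outline, but it follows a genuinely different route from the paper's. The paper proves the lemma directly and quantitatively: it first gets bounds on $c_{p,+}(\a,\b)$, $\|u_{p,+}\|_{H^1}$ and $\|u_{p,+}\|_{p+1}^{p+1}$ that are uniform in $\b\in[\a+\e,1]$ by using a single competitor $\eta\in\cone_{+,[\a,1]}$ whose Rayleigh quotient is estimated independently of $\b$; it then obtains the uniform H\"older convergence from Ascoli--Arzel\`a, using equicontinuity in $r$ (the bound $|u_{p,+}'|<1$ of Lemma \ref{lemma:properties_sol_cone}) \emph{and} equicontinuity in $\b$, the latter coming from the uniform bound on $\partial u_{p,+}/\partial\b$ of Lemma \ref{lemma:partial_up_beta_bound} (a forward reference whose proof rests on the blow-up analysis and, ultimately, on the nondegeneracy machinery); finally it gets the uniform $H^1$-convergence by testing the equation for $u_{p,+}-u_{\infty,+}$ against itself and using the uniform bounds. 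You instead argue by contradiction along sequences $p_n\to\infty$, $\b_n\to\b_*$, and rerun the variational scheme of Proposition \ref{prop:convergence_green} on the moving annuli: uniform a priori bounds, compactness to a limit $\bar u$ with $\|\bar u\|_\infty=\bar u(\b_*)=1$, convergence of the levels $c_{p_n,+}(\a,\b_n)\to c_{\infty,+}(\a,\b_*)$ (lower bound as in Lemma \ref{lemma:limsup_u_p}, upper bound by testing with $u_{\infty,+}(\cdot;\a,\b_n)$ as in \eqref{53b}, which neatly replaces the moving-domain version of Lemma \ref{lemma:w_p}), strong $H^1$-convergence from norm convergence, and identification of $\bar u$ via the uniqueness clause of Proposition \ref{prop:convergence_green} together with the elementary continuous dependence of the linear profile on $\b$. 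What your route buys is independence from Lemma \ref{lemma:partial_up_beta_bound}: you never need differentiability of $u_{p,+}$ in $\b$, only the bounds and uniqueness already available in Sections \ref{S2}--\ref{S3}, which makes the logical structure cleaner (the paper has to flag explicitly that Lemma \ref{lemma:partial_up_beta_bound} "holds independently" to avoid circularity). What you give up is the quantitative flavour and you take on the moving-domain bookkeeping (weak lower semicontinuity and $L^{p_n+1}$-norms on varying annuli, $C^{0,\gamma}$ rather than just $C^0$ dependence of $u_{\infty,+}(\cdot;\a,\b)$ on $\b$, obtained by interpolating the uniform Lipschitz bound); these are routine and you flag them, so I see no genuine gap, but they should be written out if this argument were to replace the paper's.
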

\begin{proof}
We only prove the first statement.
First we claim that for every $\e>0$ there exists $C=C(\e)$ such that
\[
c_{p,+}(\a,\b)+\|u_{p,+}(\cdot;\a,\b)\|^2_{H^1}
+\|u_{p,+}(\cdot;\a,\b)\|^{p+1}_{p+1} \leq C 
\]
for every $p>1$, $\b\in [\a+\e,1]$. To prove the claim we proceed similarly to Lemma \ref{lemma:weak_conv}. Given any $\eta\in \cone_{+,[\a,1]}$ satisfying $\|\eta\|_\infty<\sqrt{e}+1$, we have
\begin{equation}\label{eq:uniform_weak_bound}
c_{p,+}(\a,\b)
\leq \frac{\|\eta\|^2_{H^1(B_\b\setminus B_\a)}}{\|\eta\|^2_{L^{p+1}(B_\b\setminus B_\a)}}
\leq |B_\b\setminus B_\a|^{\frac{p-1}{p+1}} 
\frac{\|\eta\|^2_{H^1(B_\b\setminus B_\a)}}{\|\eta\|^2_{L^2(B_\b\setminus B_\a)}}
\leq |B_1\setminus B_\a|^{\frac{p-1}{p+1}} 
\frac{\|\eta\|^2_{H^1(B_1\setminus B_\a)}}{\|\eta\|^2_{L^2(B_{\a+\e}\setminus B_\a)}}
\end{equation}
for every $p>1$, $\b\in [\a+\e,1]$. This together with \eqref{eq:H1_bound} proves the claim.

By Lemma \ref{lemma:properties_sol_cone} (iii) and Lemma \ref{lemma:continuous_dependence} we have
\[
|u'_{p,+}(\cdot;\a,\b)|<1, \qquad u_{p,+}(\cdot;\a,\b) \text{ is equicontinuous in } \b,
\]
which provides the uniform H\"older convergence by the Ascoli-Arzel\'a Theorem. Note that the equicontinuity in $\b$ of $u_{p,+}(\cdot;\a,\b)$ follows by Lemma \ref{lemma:partial_up_beta_bound} (which holds independently). \par

To prove the uniform $H^1$-convergence, we test the equation satisfied by $u_{p,+}(\cdot;\a,\b)-u_{\infty,+}(\cdot;\a,\b)$ by itself in $B_\b\setminus B_\a$ and apply the H\"older inequality to obtain
\[
\| u_{p,+}-u_{\infty,+}\|^2_{H^1(B_\b\setminus B_\a)} 
\leq \| u_{p,+}-u_{\infty,+}\|_{L^\infty(B_\b\setminus B_\a)}  
\|u_{p,+}\|^p_{L^p(B_\b\setminus B_\a)} 
+|\partial B_\b| |u'_{\infty,+}(\b)| |u_{p,+}(\beta)-1|.
\]
The uniform estimate \eqref{eq:uniform_weak_bound} and the uniform H\"older convergence allow to conclude.
\end{proof}

\begin{remark}\label{rem:uniform}
By combining the previous result with the proof of Lemma \eqref{0}, we see that
\begin{equation*}
\lim_{p\to\infty} \frac{u_{p,+}(\b;\a,\b)^p}{p} =\frac12 \left( u_{\infty,+}'(\b;\a,\b) \right)^2,
\end{equation*}
uniformly for $\a+\e\leq\b\leq 1$, for every $\e>0$.
\end{remark}

%%%%%%%%%%

%%%%%%%%%%

\section{Existence of the 1-layer solution}\label{S7}
For $k\in\N_0$ let $0= \b_0<\b_1<\ldots<\b_{k-1}<\b_k= 1$. In this section we prove the existence of a 1-layer radial solution of the equation \eqref{main-annulus} in the interval $[\b_{j-1},\b_j]$, for some $j=1,\ldots,k$, by gluing the increasing solution in $[\b_{j-1},\a]$ and the decreasing solution in $[\a,\b_{j}]$, for a suitable $\a\in (\b_{j-1},\b_j)$.

\begin{theorem}\label{thm:one_peak_solution}
For $p$ sufficiently large there exists a radial solution $u_{p,1\text{-layer}}(r;\b_{j-1},\b_j)$ of \eqref{main-annulus} in $B_{\b_j}\setminus B_{\b_{j-1}}$, having exactly one maximum point at $r=\alpha_{j,p}$. Furthermore, 
\begin{equation}
\alpha_{j,p}\to \alpha_{j}, \quad u_{p,1\text{-layer}} \to u_{\infty,1\text{-layer}} \ \text{ pointwise},
\end{equation}
as $p\to\infty$, where
\begin{equation}\label{eq:alpha1infty}
\left.\left(\frac{G_{[\b_{j-1},\b_{j}]}(r,r)}{r^{N-1}}\right)'\right|_{r=\a_{j}} =0,
\quad
u_{\infty,1\text{-layer}}(r;\b_{j-1},\b_{j})=\frac{G_{[\b_{j-1},\b_{j}]}(r,\a_{j})}{G_{[\b_{j-1},\b_{j}]}(\a_{j},\a_{j})}
\end{equation}
(compare with Definition \ref{def:1_layer_limit}).
\end{theorem}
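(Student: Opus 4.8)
The plan is to build $u_{p,1\text{-layer}}(\cdot;\b_{j-1},\b_j)$ by gluing, at a well-chosen interface $\alpha=\alpha_{j,p}$, the increasing solution $u_{p,+}(\cdot;\b_{j-1},\alpha)$ on $[\b_{j-1},\alpha]$ with the decreasing solution $u_{p,-}(\cdot;\alpha,\b_j)$ on $[\alpha,\b_j]$. Both pieces exist once $p$ exceeds $\lambda_2^{rad}$ of the relevant subinterval (Propositions \ref{prop:existence_increasing_sol} and \ref{prop:existence_decreasing_sol}), and since that eigenvalue stays bounded when $\alpha$ ranges in a fixed compact subinterval $[\b_{j-1}+\delta_0,\b_j-\delta_0]$ — which we may take to contain $\alpha_j$ in its interior, as $\alpha_j\in(\b_{j-1},\b_j)$ by Lemma \ref{lemma:uniqueness_1_layer} — both are available for all such $\alpha$ when $p$ is large. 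Each piece has vanishing derivative at the outer endpoint and at the interface $\alpha$, so as soon as the two pieces take the same value at $\alpha$ the glued function is $C^1$ across $\alpha$; reading off $v''$ from the equation then upgrades it to $C^2$, so it is a classical radial solution of \eqref{main-annulus} on $B_{\b_j}\setminus B_{\b_{j-1}}$ with a single maximum at $r=\alpha$ (each piece being strictly monotone). Thus everything reduces to choosing $\alpha_{j,p}$ so that the interface values match.

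To this end I will study the continuous (Lemma \ref{lemma:continuous_dependence}) matching function $M_p(\alpha)=u_{p,+}(\alpha;\b_{j-1},\alpha)-u_{p,-}(\alpha;\alpha,\b_j)$, the difference of the two maximal values. The main obstacle is that $M_p\to0$ pointwise, since both maxima tend to $1$ (Propositions \ref{prop:convergence_green}, \ref{prop:convergence_green_decreasing}), so the sign of $M_p$ is invisible at leading order. The remedy will be to pass to $p$-th powers: as $t\mapsto t^p$ is increasing, the sign of $M_p(\alpha)$ equals that of $u_{p,+}(\alpha;\b_{j-1},\alpha)^p-u_{p,-}(\alpha;\alpha,\b_j)^p$, and by the uniform form of Lemma \ref{lemma:u_p^p} (Remark \ref{rem:uniform}) together with its decreasing counterpart,
\[
\frac{u_{p,+}(\alpha;\b_{j-1},\alpha)^p}{p}\to\tfrac12\bigl(u_{\infty,+}'(\alpha;\b_{j-1},\alpha)\bigr)^2,\qquad \frac{u_{p,-}(\alpha;\alpha,\b_j)^p}{p}\to\tfrac12\bigl(u_{\infty,-}'(\alpha;\alpha,\b_j)\bigr)^2,
\]
uniformly for $\alpha\in[\b_{j-1}+\delta_0,\b_j-\delta_0]$. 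Hence, for $p$ large, $M_p(\alpha)$ will have the sign of $g(\alpha):=\tfrac12(u_{\infty,+}'(\alpha;\b_{j-1},\alpha))^2-\tfrac12(u_{\infty,-}'(\alpha;\alpha,\b_j))^2$, uniformly on compact subsets of $(\b_{j-1},\b_j)$ away from the zeros of $g$.

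The final step will be to recognize $g$ as the function controlled by Lemma \ref{lemma:uniqueness_1_layer}. Since $u_{\infty,+}(\cdot;\b_{j-1},\alpha)$ is the unique (up to a scalar) increasing solution of $\mathcal{L}u=0$ on $[\b_{j-1},\alpha]$ with vanishing derivative at $\b_{j-1}$ and value $1$ at $\alpha$, it equals $\xi_{[\b_{j-1},\b_j]}/\xi_{[\b_{j-1},\b_j]}(\alpha)$ there, and likewise $u_{\infty,-}(\cdot;\alpha,\b_j)=\zeta_{[\b_{j-1},\b_j]}/\zeta_{[\b_{j-1},\b_j]}(\alpha)$ on $[\alpha,\b_j]$; therefore, by \eqref{eq:phi_derivative},
\[
g(\alpha)=\frac12\Bigl\{\Bigl(\frac{\xi_{[\b_{j-1},\b_j]}'(\alpha)}{\xi_{[\b_{j-1},\b_j]}(\alpha)}\Bigr)^{2}-\Bigl(\frac{\zeta_{[\b_{j-1},\b_j]}'(\alpha)}{\zeta_{[\b_{j-1},\b_j]}(\alpha)}\Bigr)^{2}\Bigr\}=-\frac{\varphi_{[\b_{j-1},\b_j]}'(\alpha)}{2\,|\partial B_1|\,\alpha^{N-1}}.
\]
By Lemma \ref{lemma:uniqueness_1_layer} this is strictly increasing, negative on $(\b_{j-1},\alpha_j)$ and positive on $(\alpha_j,\b_j)$, where $\alpha_j$ is the unique zero of \eqref{eq:reflection_law}, equivalently the unique interior critical point of the weighted Robin function, i.e. the point characterized by \eqref{eq:alpha1infty}. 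So, for every small $\delta>0$ and $p$ large, $M_p<0$ on $[\b_{j-1}+\delta,\alpha_j-\delta]$ and $M_p>0$ on $[\alpha_j+\delta,\b_j-\delta]$, and the intermediate value theorem will yield $\alpha_{j,p}\in(\alpha_j-\delta,\alpha_j+\delta)$ with $M_p(\alpha_{j,p})=0$; letting $\delta\to0$ gives $\alpha_{j,p}\to\alpha_j$. Finally, for fixed $r\ne\alpha_j$ and $p$ large the glued solution equals $u_{p,+}(r;\b_{j-1},\alpha_{j,p})$ or $u_{p,-}(r;\alpha_{j,p},\b_j)$; passing to the limit with the convergence uniform in the interface endpoint (Lemma \ref{lemma:uniform_convergence}) and $\alpha_{j,p}\to\alpha_j$ identifies the limit as $\xi_{[\b_{j-1},\b_j]}(r)/\xi_{[\b_{j-1},\b_j]}(\alpha_j)$, resp. $\zeta_{[\b_{j-1},\b_j]}(r)/\zeta_{[\b_{j-1},\b_j]}(\alpha_j)$, that is $G_{[\b_{j-1},\b_j]}(r,\alpha_j)/G_{[\b_{j-1},\b_j]}(\alpha_j,\alpha_j)$; the value at $r=\alpha_j$ follows from $|u_{p,1\text{-layer}}'|<1$ (Lemma \ref{lemma:properties_sol_cone}) and $\alpha_{j,p}\to\alpha_j$.
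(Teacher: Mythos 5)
Your proposal is correct and follows essentially the same route as the paper: glue $u_{p,+}(\cdot;\b_{j-1},\alpha)$ with $u_{p,-}(\cdot;\alpha,\b_j)$, study the matching condition through the quantity $\bigl(u_{p,+}(\alpha;\b_{j-1},\alpha)^p-u_{p,-}(\alpha;\alpha,\b_j)^p\bigr)/p$ (the paper's $L_p$, sign-equivalent to your $M_p$), pass to the limit via Lemma \ref{lemma:u_p^p} and Remark \ref{rem:uniform}, identify the limit with $-\varphi_{[\b_{j-1},\b_j]}'(\alpha)/(2|\partial B_1|\alpha^{N-1})$ through \eqref{C1}--\eqref{C2} and \eqref{eq:phi_derivative}, and conclude with Lemma \ref{lemma:uniqueness_1_layer} and the intermediate value theorem. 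The extra details you supply (existence of both monotone pieces for $\alpha$ in a compact subinterval, regularity across the interface, and the uniform-convergence localization giving $\alpha_{j,p}\to\alpha_j$ and the pointwise limit) are consistent with, and slightly more explicit than, the paper's argument.
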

\begin{proof}
We juxtapose the increasing solution $u_{p,+}(r;\beta_{j-1},\alpha)$ to the decreasing one $u_{p,-}(r;\alpha,\beta_{j})$. For a generic $\a \in (\beta_{j-1},\beta_j)$ this is a discontinuous function. Our aim is to find $\a_{j,p}$ such that it is continuous, that is to say
\begin{equation}\label{eq:nehari_neumann_match}
u_{p,+}(\a_{j,p};\b_{j-1},\a_{j,p})=u_{p,-}(\a_{j,p};\a_{j,p},\b_j).
\end{equation}
Since we are working with Neumann boundary conditions, the function
\begin{equation}\label{c3}
u_{p,1\text{-layer}}(r;\b_{j-1},\b_j)=\left\{\begin{array}{ll}
u_{p,+}(r;\b_{j-1},\a_{j,p}) & \text{ in } (\b_{j-1},\a_{j,p})\\
u_{p,-}(r;\a_{j,p},\b_j) & \text{ in } (\a_{j,p},\b_j)
\end{array}
\right.
\end{equation}
is the requested solution if $\a_{j,p}$ satisfies \eqref{eq:nehari_neumann_match}.

We define
\begin{equation}\label{c6}
L_p(\cdot;\b_{j-1},\b_j):\alpha\in(\b_{j-1},\b_j)
\mapsto \frac{u_{p,+}(\a;\b_{j-1},\a)^p-u_{p,-}(\a;\a,\b_j)^p}{p}.
\end{equation}
We aim to prove that $L_p$ has a zero for $p$ sufficiently large.
\begin{itemize}
\item[(i)] We proved in Lemma \ref{lemma:continuous_dependence} (and analogous result for $u_{p,-}$) that $L_p$ is continuous. This is a consequence of the uniqueness of the increasing and decreasing solutions.
\item[(ii)] Due to Lemma \ref{lemma:u_p^p} we have that
\begin{equation}\label{eq:L_infty}
L_p(\a;\b_{j-1},\b_j) \to L_\infty(\a;\b_{j-1},\b_j)=\frac{u'_{\infty,+}(\a;\b_{j-1},\a)^2-u'_{\infty,-}(\a;\a,\b_j)^2}{2},
\end{equation}
pointwise for $\a \in [\b_{j-1},\b_j]$. 
%Moreover, the convergence is uniform for $\a\in[\beta_0+\ep,\b_1]$ for every $\ep>0$ (see Remark \ref{rem:uniform}).
%
\item[(iii)]  By Propositions \ref{prop:convergence_green} and \ref{prop:convergence_green_decreasing} and by \eqref{eq:G_annulus} we have
\begin{equation}\label{C1}
u_{\infty,+}(r;\b_{j-1},\a)=\frac{G_{[\b_{j-1},\a]}(r,\a)}{G_{[\b_{j-1},\a]}(\a,\a)}
=\frac{\xi_{[\b_{j-1},\b_j]}(r)}{\xi_{[\b_{j-1},\b_j]}(\a)},
\end{equation}
\begin{equation}\label{C2}
u_{\infty,-}(r;\a,\b_j)=\frac{G_{[\a,\b_j]}(r,\a)}{G_{[\a,\b_j]}(\a,\a)}
=\frac{\zeta_{[\b_{j-1},\b_j]}(r)}{\zeta_{[\b_{j-1},\b_j]}(\a)}.
\end{equation}
Comparing with \eqref{eq:phi_derivative}, we obtain
\begin{equation*}
L_\infty(\a;\b_{j-1},\b_j)=\frac{1}{2} \left\{
\left(\frac{\xi'_{[\b_{j-1},\b_j]}(\a)}{\xi_{[\b_{j-1},\b_j]}(\a)}\right)^2
- \left(\frac{\zeta'_{[\b_{j-1},\b_j]}(\a)}{\zeta_{[\b_{j-1},\b_j]}(\a)}\right)^2 \right\}
=-\frac{\varphi'_{[\b_{j-1},\b_j]}(\a)}{2|\partial B_1|\a^{N-1}}.
\end{equation*}
Therefore Lemma \ref{lemma:uniqueness_1_layer} implies that
\begin{equation}\label{eq:L_infty_increasing}
\frac{\partial }{\partial\a} L_\infty(\a;\b_{j-1},\b_j) >0
\end{equation}
and that $L_\infty(\a;\b_{j-1},\b_j)$ admits a unique interior zero $\alpha_{j}$.
\end{itemize}
Combining (i)-(ii)-(ii) we deduce that $L_p$ has a zero for $p$ sufficiently large, which provides the existence of the 1-layer solution.
\end{proof}

\section{Existence of the $k$-layer solution}\label{S8}

In this section we write for shorter notation $u_{p,+}(r):=u_{p,+}(r;\a,b)$ and $u_{\infty,+}(r):=u_{\infty,+}(r;\a,b)$.

Let $p$ be fixed. Let us recall the definition of $I$ in Lemma \ref{lemma:continuous_dependence}.
In the case of the annulus we have
\[
I=\left\{ (r,\a,\b): \, A_1<\a<A_2, \, B_1<\b< B_2, \, \a<r<\b \right\},
\]
with $0<A_1<A_2<B_1<B_2$ as in Remark \ref{rem:existence_sol}.
In the case of the ball we have 
\[
I=\left\{ (r,\b): \, B_1<\b< B_2, \, 0\leq r<\b \right\},
\]
again with $0<B_1<B_2$ as in Remark \ref{rem:existence_sol}.

\begin{lemma}\label{lemma:partial_up_beta_bound}
Let $(r,\a,\b)\in I$. There exists $C>0$ independent of $\b$ and $p$ such that
\[
\left\|\frac{\partial u_{p,+}}{\partial\b}(\cdot;\a,\b)\right\|_\infty \leq C.
\]
\end{lemma}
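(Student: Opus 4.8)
The plan is to use the $C^1$ dependence from Lemma~\ref{lemma:C1_dependence} to write down the boundary value problem solved by $\psi:=\partial_\b u_{p,+}(\cdot;\a,\b)$, to represent $\psi$ through a single normalized solution of the homogeneous linearized equation, and to control that solution by the blow up analysis of Section~\ref{S4}.

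First I would differentiate \eqref{eq:main_alpha_beta} with respect to $\b$ (keeping $\a$ fixed; rigorously, first passing to a fixed interval as in Lemma~\ref{lemma:C1_dependence}), which gives that $\psi$ solves the \emph{homogeneous} linearized equation
\[
-\psi''-\tfrac{N-1}{r}\psi'+\psi-p\,u_{p,+}^{p-1}\psi=0 \qquad\text{in }(\a,\b).
\]
Differentiating the identities $\partial_r u_{p,+}(\a;\a,\b)\equiv 0$ and $\partial_r u_{p,+}(\b;\a,\b)\equiv 0$, and using the equation at $r=\b$ (where $u_{p,+}'(\b)=0$), one gets the boundary conditions $\psi'(\a)=0$ and
\[
\psi'(\b)=-u_{p,+}''(\b)=u_{p,+}(\b)^p-u_{p,+}(\b)=:M_p>0 .
\]
By Lemma~\ref{lemma:u_p^p}, Remark~\ref{rem:uniform} and \eqref{eq:e_p_def}, one has $M_p\sim \tfrac p2\big(u'_{\infty,+}(\b;\a,\b)\big)^2$ and $M_p\,\e_p\to u'_{\infty,+}(\b;\a,\b)/\sqrt2$ as $p\to\infty$, uniformly in $\b$ on the range under consideration.

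Since the solutions of the homogeneous linearized equation with vanishing derivative at $\a$ form a one dimensional space, I let $\phi_p$ be its generator normalized by $\phi_p(\a)=1$; the nondegeneracy Theorem~\ref{thm:non_degeneracy} forces $\phi_p'(\b)\neq 0$, for otherwise $\phi_p$ would be a nontrivial Neumann solution of the linearized equation. Hence $\psi=\dfrac{M_p}{\phi_p'(\b)}\,\phi_p$ and $\|\psi\|_\infty=\dfrac{M_p}{|\phi_p'(\b)|}\,\|\phi_p\|_\infty$, so it suffices to bound $\|\phi_p\|_\infty$ and $M_p/|\phi_p'(\b)|$ from above, uniformly in $p$ (large) and $\b$. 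To do so I would study $\phi_p$ as in Section~\ref{S4}: on every interval $[\a,\b-\delta]$ the potential $p\,u_{p,+}^{p-1}$ tends to $0$ uniformly (by Lemma~\ref{lemma:uniform_convergence}, as $u_{\infty,+}<1$ there), so the operator is uniformly coercive and $\phi_p\to \xi_{[\a,\b]}/\xi_{[\a,\b]}(\a)$ in $C^1([\a,\b-\delta])$, in particular $\phi_p$ stays uniformly bounded away from $\b$; near $\b$, the rescaled function $\Phi_p(\sigma):=\phi_p(\b+\e_p\sigma)$ converges in $C^1_{loc}$ to a solution of the Liouville equation $-V''=e^{z_\infty}V$, and matching with the outer limit at $\sigma\to-\infty$ (value $\ell:=\xi_{[\a,\b]}(\b)/\xi_{[\a,\b]}(\a)$, vanishing derivative) selects, among the solutions \eqref{d17}, the bounded one $\Phi_\infty(\sigma)=\ell\,\frac{1-e^{\sqrt2\sigma}}{1+e^{\sqrt2\sigma}}$. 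This gives $\|\phi_p\|_\infty\to\ell$ and $\e_p\phi_p'(\b)=\Phi_p'(0)\to\Phi_\infty'(0)=-\ell/\sqrt2$, hence, with the previous paragraph, $\|\psi\|_\infty\to u'_{\infty,+}(\b;\a,\b)=\xi_{[\a,\b]}'(\b)/\xi_{[\a,\b]}(\b)$ (using \eqref{C1}), a quantity which is continuous, hence uniformly bounded, on the compact range of parameters. So $\|\psi\|_\infty\le C$ for $p$ large; for $p$ in a bounded range the bound follows from continuity of $(r,\a,\b,p)\mapsto u_{p,+}$ and compactness.

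The hard part is to make the two limits in the previous paragraph \emph{uniform in $p$ and $\b$}, i.e.\ to control $\phi_p$ in the intermediate region between the layer scale $\e_p$ and a macroscopic scale, where the linearized operator is neither coercive nor yet in the pure blow up regime. I expect this to be handled exactly as in Corollary~\ref{54} and Lemma~\ref{lemma:z_p_derivative_large}: one shows that $p\,u_{p,+}^{p-1}$ is already negligible on $[\a,\b-C\e_p\log p]$ for $C$ large (coercivity there), while on the remaining layer $\{\sigma\in(-C\log p,0)\}$ the rescaled coefficient $\e_p^2-(1+z_p(\sigma)/p)^{p-1}$ is uniformly small outside a fixed inner window, so that a Gronwall estimate closes the argument. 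Equivalently, one may argue by contradiction: if $\|\psi\|_\infty$ were unbounded along some $p_n\to\infty$, renormalizing $\phi_{p_n}$ by its sup norm gives the limit $0$ in the interior (since $\phi_{p_n}(\a)/\|\phi_{p_n}\|_\infty\to0$) and the limit $0$ in the layer (by \eqref{d17} and $\Phi_\infty'(0)=0$), which contradicts the normalization once the intermediate region is controlled as above.
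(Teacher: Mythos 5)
Your skeleton is sound and its conclusion is consistent with the paper (your limit $\|\partial_\b u_{p,+}\|_\infty\to u_{\infty,+}'(\b)$ agrees with Remark \ref{rem:C_beta}): the representation $\partial_\b u_{p,+}=\frac{M_p}{\phi_p'(\b)}\phi_p$ with $M_p=u_{p,+}(\b)^p-u_{p,+}(\b)$, the use of Theorem \ref{thm:non_degeneracy} to guarantee $\phi_p'(\b)\neq0$, and the outer convergence of $\phi_p$ on $[\a,\b-\delta]$ (an initial value problem from $r=\a$ with uniformly vanishing potential) are all correct. The genuine gap is exactly the step you flag as ``the hard part'', and neither of your two proposed closures works as written. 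For the Gronwall argument: on the window between the outer region and $r=\b$ (of length $\delta/\ep_p$, or $C\log p$ after your first reduction) the \emph{free} equation $\Phi''=0$ already has solutions growing linearly in the window length, so uniform smallness of the coefficient outside a fixed inner interval is not sufficient; to close you must combine the weighted integrability $\int(1+|\sigma|)e^{z_p(\sigma)}\,d\sigma\le C$ (which does follow from Lemma \ref{lemma:z_p_derivative_large} and \eqref{64}) with the fact that the derivative entering the window is only $O(\ep_p)$, and run a two-component (value/derivative) estimate; none of this is in the sketch, and it is where the analytic content of the lemma lies. The fallback contradiction argument is flawed: unboundedness of $\|\partial_\b u_{p,+}\|_\infty$ does not force $\|\phi_{p_n}\|_\infty\to\infty$, since it can equally come from degeneration of the factor $M_p/\phi_p'(\b)$ (i.e.\ $\ep_p\phi_p'(\b)\to0$); and the condition ``$\Phi_\infty'(0)=0$'' you invoke has no justification for $\phi_p$, which satisfies no Neumann condition at $\b$ (that condition is what forces the trivial limit in Theorem \ref{thm:non_degeneracy}, not here).

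For comparison, the paper avoids the intermediate-region matching altogether: it normalizes $\psi=\partial_\b u_{p,+}$ by its own sup norm, so the blown-up sequence $v_p$ is bounded by $1$ for free, identifies the layer profile $A z_\infty'$ with $A\neq0$ by the Green-representation argument of Step 3 of Theorem \ref{thm:uniqueness_minimal_energy_sol}, and then obtains the bound from the bilinear identity \eqref{eq:partial_up_beta_derivative_up}, $-\b^{N-1}\psi'(\b)u_{p,+}(\b)=(p-1)\int_\a^\b u_{p,+}^p\,\psi\, r^{N-1}dr$: the left-hand side is explicitly of order $p$, while the right-hand side equals $p\,\|\psi\|_\infty$ times a quantity with a nonzero limit, so $\|\psi\|_\infty$ is bounded. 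If you want to keep your route you must supply the weighted two-scale estimate for $\phi_p$ described above (and address uniformity in $\b$, which the paper itself handles by a separate compactness argument, cf.\ Lemma \ref{lemma:uniform_C1_conv}); as it stands, the proposal has a real gap at its central step.
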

\begin{proof}
Notice first that $\frac{\partial u_{p,+}}{\partial\b}(r):=\frac{\partial u_{p,+}}{\partial\b}(r;\a,\b)$ exists by Lemma \ref{lemma:C1_dependence} and solves
\begin{equation}\label{eq:partial_up_beta_equation}
\begin{cases}
-\left(\frac{\partial u_{p,+}}{\partial\b}\right)''-\frac{N-1}{r}\left(\frac{\partial u_{p,+}}{\partial\b}\right)'+\frac{\partial u_{p,+}}{\partial\b}=p u_{p,+}^{p-1}\frac{\partial u_{p,+}}{\partial\b}
 \quad\hbox{in }(\a,\b)\\
\left(\frac{\partial u_{p,+}}{\partial\b}\right)'(\b)=-u_{p,+}''(\b) \\
\left(\frac{\partial u_{p,+}}{\partial\b}\right)'(\a)=0.
\end{cases}
\end{equation}
\underline{Step 1}. Let $\ep_p$ be as in \eqref{eq:e_p_def} and $z_\infty$ be as in \eqref{d9}. We claim that there exists $A\neq0$ such that
\[
v_p(r):=\frac{\frac{\partial u_{p,+}}{\partial\b}(\b+\ep_pr)}{\left\|\frac{\partial u_{p,+}}{\partial\b}\right\|_\infty} \to A z_\infty'(r) \quad \text{in } C^1_{loc}(-\infty,0).
\]
To prove the claim we perform the same blow-up analysis as in the proof of Theorem \ref{thm:uniqueness_minimal_energy_sol}. We have (compare with \eqref{u8})
\begin{equation}
\begin{cases}
-v_p''-\frac{(N-1)\e_p}{\b+\e_p r}v_p'+ \e_p^2 v_p = 
\left(1+\frac{z_p}{p}\right)^{p-1} v_p 
\quad &\text{ for } r\in\left(-\frac{\b-\a}{\e_p},0\right) \\
|v_p|\le1,
\end{cases}
\end{equation}
where $z_p$ is as in \eqref{d3}. Proceeding as in \eqref{eq:C1_bound_w_p}-\eqref{d14} and \eqref{u10}, we can show that there exists $C>0$ independent of $p$ such that
\[
|v_p'(r)|\le C \quad \text{for } r\in \left(-\frac{\b-\a}{2\e_p},0\right).
\]
Indeed we have
\[
|v_p'(r)|\le C+\int_{-\frac{\b-\a}{2\ep_p}}^0 (\b+\ep_pt)^{N-1}\left(1+\frac{z_p}{p}\right)^{p-1} \,dt \leq C+\b^{N-1} \int_{-\infty}^0 e^{z_\infty}\,dt \leq C.
\]
Therefore there exists $v_\infty$ such that $v_p\to v_\infty$ in $C^1_{loc}(-\infty,0)$ and $v_\infty$ solves
\begin{equation}
\begin{cases}
-v'' =e^{z_\infty}  v
\quad &\text{ for } r\in\left(-\infty,0\right) \\
|v|\le1.
\end{cases}
\end{equation}
We deduce form \eqref{d17} that $v_\infty=A z'_\infty$. Finally, proceeding as in Step 3 of the proof of Theorem \ref{thm:uniqueness_minimal_energy_sol} one can show that $A\neq 0$.

\underline{Step 2}. Proceeding similarly to \eqref{eq:xi_xi'}-\eqref{eq:xi_xi'_tris}, we multiply \eqref{eq:main_alpha_beta} by $\frac{\partial u_{p,+}}{\partial\b} r^{N-1}$ and \eqref{eq:partial_up_beta_equation} by $u_{p,+}r^{N-1}$ and we integrate in $(\a,\b)$:
\begin{equation}\label{eq:partial_up_beta_derivative_up}
-\b^{N-1} \left(\frac{\partial u_{p,+}}{\partial\b} \right)'(\b) \, u_{p,+}(\b) 
= (p-1) \int_\a^\b u_{p,+}^p \frac{\partial u_{p,+}}{\partial\b} r^{N-1} \,dr.
\end{equation}
On the one hand, by Lemma \ref{lemma:u_p^p}, we have
\begin{equation}\label{eq:partial_up_beta_derivative}
\left(\frac{\partial u_{p,+}}{\partial\b}\right)'(\b)=-u_{p,+}''(\b)=u_{p;+}(\b)^p - u_{p,+}(\b)
=p\left(\frac{u_{\infty,+}'(\b)^2}{2}+o(1) \right).
\end{equation}
On the other hand, by performing the change of variables $r=\b+\ep_ps$, we obtain
\begin{multline}\label{eq:partial_up_beta_integral}
\int_\a^\b u_{p,+}^p \frac{\partial u_{p,+}}{\partial\b} r^{N-1} \,dr
= \ep_p \left\|\frac{\partial u_{p,+}}{\partial\b}\right\|_{\infty} \|u_{p,+}\|_{\infty}^p 
\int_{-\frac{\b-\a}{\ep_p}}^0 \frac{u_{p,+}(\b+\ep_ps)^p}{\|u_{p,+}\|_\infty^p} v_p(s) (\b+\ep_ps)^{N-1} \,ds,
\end{multline}
with $v_p$ as in Step 1. Recalling that (see \eqref{d3}, \eqref{d5} and \eqref{d9})
\[
\begin{split}
\e_p\|u_{p,+}\|_\infty^p=\frac{\e_p\|u_{p,+}\|_\infty^{\frac{p+1}{2}}}{\sqrt{p}} \to \frac{u_{\infty,+}'(\b)}{\sqrt{2}}, \\
\frac{u_{p,+}(\b+\ep_ps)^p}{\|u_{p,+}\|_\infty^p} \to e^{z_\infty},
\end{split}
\]
and using \eqref{64} to pass to the limit in \eqref{eq:partial_up_beta_integral}, we get
\begin{multline}\label{eq:partial_up_beta_limit}
\int_\a^\b u_{p,+}^p \frac{\partial u_{p,+}}{\partial\b} r^{N-1} \,dr
= \left\|\frac{\partial u_{p,+}}{\partial\b}\right\|_{\infty} \left( \frac{u_{\infty,+}'(\b)}{\sqrt{2}}+o(1) \right) \b^{N-1} A 
\int_{-\infty}^0 (e^{z_\infty} z_\infty'+o(1)) \,ds.
\end{multline}
Since $A\neq0$ and $u_{\infty,+}'(\b)\neq0$, by combining \eqref{eq:partial_up_beta_derivative_up}, \eqref{eq:partial_up_beta_derivative} and \eqref{eq:partial_up_beta_limit} we deduce that $\left\|\frac{\partial u_{p,+}}{\partial\b}\right\|_{\infty}$ is bounded.
\end{proof}

\begin{corollary}\label{coro:partial_up_beta_convergence}
Let $(r,\a,\b)\in I$. There exists a function $C(\b)$ such that
\[
\frac{\partial u_{p,+}}{\partial\b} \to C(\b) u_{\infty,+}
\]
pointwise as $p\to\infty$.
\end{corollary}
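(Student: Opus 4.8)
The plan is to write $w_p:=\partial u_{p,+}/\partial\b$, which exists by Lemma~\ref{lemma:C1_dependence} and solves the linearized problem \eqref{eq:partial_up_beta_equation}, and which satisfies $\|w_p\|_\infty\le C$ uniformly in $p$ by Lemma~\ref{lemma:partial_up_beta_bound}. Fix $\delta>0$. Since $u_{p,+}\to u_{\infty,+}$ uniformly on $[\a,\b]$ (Proposition~\ref{prop:convergence_green}) and $u_{\infty,+}<1$ on $[\a,\b-\delta]$, for $p$ large we have $u_{p,+}\le c_\delta<1$ there, hence $p\,u_{p,+}^{p-1}\to 0$ uniformly on $[\a,\b-\delta]$. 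Feeding this and the $L^\infty$--bound on $w_p$ into \eqref{eq:partial_up_beta_equation} shows that $w_p$ is bounded in $C^2([\a,\b-\delta])$. Therefore, along a subsequence, $w_p\to w$ in $C^1_{loc}([\a,\b))$, where $\mathcal Lw=0$ in $(\a,\b)$ and $w'(\a)=0$. By \eqref{eq:xi_zeta_annulus_def} and Proposition~\ref{prop:xi_zeta_annulus} the solutions of $\mathcal Lw=0$ with $w'(\a)=0$ form the line spanned by $\xi_{[\a,\b]}$, and $u_{\infty,+}(\cdot;\a,\b)=\xi_{[\a,\b]}/\xi_{[\a,\b]}(\b)$, so $w=C\,u_{\infty,+}$ for some constant $C$ which a priori depends on the subsequence. (In the ball case $\a=0$ one argues identically near the origin, using $\xi'(0)=0$.)

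It remains to show that $C$ is forced to take a single value. Passing to a further subsequence I may assume in addition $\|w_p\|_\infty\to W\ge 0$ and, by Step~1 of the proof of Lemma~\ref{lemma:partial_up_beta_bound}, $v_p(s):=w_p(\b+\ep_ps)/\|w_p\|_\infty\to A\,z_\infty'(s)$ in $C^1_{loc}(-\infty,0)$ with $A\ne 0$, where $\ep_p$ and $z_\infty$ are as in \eqref{eq:e_p_def}, \eqref{d9}. I evaluate derivatives at $s=0$: on the one hand $v_p'(0)=\ep_p\,w_p'(\b)/\|w_p\|_\infty$, and using \eqref{eq:partial_up_beta_derivative} and \eqref{d5} one gets $\ep_p\,w_p'(\b)\to u_{\infty,+}'(\b)/\sqrt2$; on the other hand $v_p'(0)\to A\,z_\infty''(0)=-A$. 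Hence
\[
W A=-\frac{u_{\infty,+}'(\b)}{\sqrt 2},\qquad\text{so in particular }W>0 .
\]
Then I would match the inner and outer profiles. Using the representation formula for the solution of $\mathcal Lw_p=p\,u_{p,+}^{p-1}w_p$ with $w_p'(\a)=0$, together with the bound $u_{p,+}(\b-\delta_p)\le 1-c_0\delta_p/4$ (valid for a sequence $\delta_p\to 0$ tending to zero more slowly than $\max\{1/p,\|u_{p,+}-u_{\infty,+}\|_\infty\}$, where $c_0=u_{\infty,+}'(\b)>0$), one shows that $p\,u_{p,+}^{p-1}\to 0$ uniformly on $[\a,\b-\delta_p]$, and consequently $w_p=C\,u_{\infty,+}+o(1)$ uniformly on $[\a,\b-\delta_p]$. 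Choosing $\delta_p$ so that in addition $\delta_p/\ep_p\to\infty$ and setting $r_p:=\b-\delta_p$, on the one hand $w_p(r_p)=C\,u_{\infty,+}(\b-\delta_p)+o(1)\to C$; on the other hand $r_p=\b+\ep_ps_p$ with $s_p:=-\delta_p/\ep_p\to-\infty$, and since $v_p\to A z_\infty'$ uniformly enough on $(-\infty,0)$ — the forcing term in the equation \eqref{u8} for $v_p$ being dominated in the tail by an integrable function via $z_p(s)\le Cs$, cf.\ \eqref{64} — while $z_\infty'(s)\to\sqrt2$ as $s\to-\infty$, one gets $w_p(r_p)=\|w_p\|_\infty v_p(s_p)\to W A\sqrt2$. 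Comparing the two, $C=W A\sqrt 2=-u_{\infty,+}'(\b;\a,\b)$, which is independent of the subsequence. Since $\{w_p\}$ is precompact in $C^1_{loc}([\a,\b))$, the whole sequence converges:
\[
\frac{\partial u_{p,+}}{\partial\b}(\cdot;\a,\b)\ \longrightarrow\ -\,u_{\infty,+}'(\b;\a,\b)\,u_{\infty,+}(\cdot;\a,\b)\qquad\text{in }C^1_{loc}([\a,\b)),
\]
in particular pointwise in $(\a,\b)$, so $C(\b)=-u_{\infty,+}'(\b;\a,\b)$. (By \eqref{eq:xi_zeta_annulus_def} this limit is exactly $\partial u_{\infty,+}/\partial\b$, i.e.\ the $\b$--derivatives converge in the interior although $u_{p,+}\to u_{\infty,+}$ only in $C^1_{loc}([\a,\b))$.)

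The main obstacle is this last matching step. The convergence $w_p\to C\,u_{\infty,+}$ is only locally uniform on $[\a,\b)$ and degenerates at $r=\b$ — indeed $w_p(\b)\to 0\ne C$ because of the boundary layer of width $\sim\ep_p$ there — so the constant $C$ can only be recovered by comparing the outer limit with the blown-up inner profile in an overlap region $\ep_p\ll\b-r\ll 1$, which forces one to control both convergences on $p$--dependent scales (hence the delicate choice of $\delta_p$ and the tail estimate for $v_p$). Everything else — the subsequential $C^2_{loc}$ compactness, the identification of the limit with $C\,u_{\infty,+}$, and the relation $WA=-u_{\infty,+}'(\b)/\sqrt2$ — follows directly from the results already established.
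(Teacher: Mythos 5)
Your opening paragraph supplies exactly the argument the paper leaves implicit: the corollary is stated without proof right after Lemma \ref{lemma:partial_up_beta_bound}, and the intended reasoning is the one you give (uniform $L^\infty$ bound, $p\,u_{p,+}^{p-1}\to0$ locally uniformly away from $\b$, local elliptic bounds, limit solving $\mathcal L w=0$ with $w'(\a)=0$, hence a multiple of $u_{\infty,+}$). Where you genuinely diverge is in pinning down the constant. The paper does not attempt this inside the corollary: only the existence of some $C(\b)$ is used, and the value $C(\b)=-u_{\infty,+}'(\b)$ is extracted afterwards, in Lemma \ref{lemma:partial_up_beta_convergence} and Remark \ref{rem:C_beta}, from the global integral identities \eqref{eq:partial_up_beta_w}--\eqref{eq:partial_up_beta_z} (pairings with $u_{p,+}'$ and $ru_{p,+}'+\tfrac2{p-1}u_{p,+}$), which avoids any analysis on $p$-dependent scales and also settles subsequence-independence a posteriori. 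Your matched-asymptotics identification is a different, self-contained route, and your point that plain compactness only gives subsequential convergence (so some identification of $C$ is really needed for the statement as written) is well taken. Your relation $WA=-u_{\infty,+}'(\b)/\sqrt2$, via \eqref{eq:partial_up_beta_derivative}, \eqref{d5} and $z_\infty''(0)=-1$, is correct and yields the same value as Remark \ref{rem:C_beta}.

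The matching itself, however, is asserted where it is delicate. First, $\delta_p\gg\max\{1/p,\|u_{p,+}-u_{\infty,+}\|_\infty\}$ is not enough: from $u_{p,+}(\b-\delta_p)\le1-c\delta_p/4$ you only get $p\,u_{p,+}^{p-1}\le p e^{-c(p-1)\delta_p/4}$ on $[\a,\b-\delta_p]$, so you need $p\delta_p/\log p\to\infty$ (any $\log p/p\ll\delta_p\ll p^{-1/2}$ works and is compatible with your other requirements). Second, smallness of the coefficient alone does not give $w_p=C\,u_{\infty,+}+o(1)$ uniformly up to $\b-\delta_p$, since the outer region still feels the $O(p)$ boundary-layer contributions (the datum $w_p'(\b)\sim p\,u_{\infty,+}'(\b)^2/2$ and the layer part of any representation formula), which cancel only to leading order; the clean fix is to integrate \eqref{eq:partial_up_beta_equation} from $\a$ (using $w_p'(\a)=0$) to get a uniform Lipschitz bound $|w_p'|\le C$ on $[\a,\b-\delta_p]$, and combine it with the convergence on fixed compacts of $[\a,\b)$ to conclude $w_p(\b-\delta_p)\to C$. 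Third, on the inner side, dominating the forcing by an integrable tail does not by itself give $v_p(s_p)\to A\sqrt2$ along $s_p=-\delta_p/\e_p\to-\infty$, because the $C^1_{loc}$ convergence is not uniform on expanding intervals; the same integrated-equation trick in inner variables, together with $z_p(s)\le C(s-\bar s)$ from Lemma \ref{lemma:z_p_derivative_large} (cf.\ \eqref{64}), yields the uniform tail bound $|v_p'(s)|\le C(\e_p+e^{cs})$, whence $|v_p(s_p)-v_p(-M)|\le C\delta_p+Ce^{-cM}$ and the matching goes through. With these repairs your argument is complete; as it stands, these three steps are gaps, though fixable ones, and the paper's own route via \eqref{eq:partial_up_beta_w}--\eqref{eq:partial_up_beta_z} bypasses them entirely.
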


\begin{lemma}\label{lemma:u_infty_integrals}
We have
\begin{equation}\label{eq:u_infty_equality1}
(N-1)\int_\a^\b u_{\infty,+}' u_{\infty,+} r^{N-3} \,dr=\b^{N-1} \left( u_{\infty,+}''(\b)-u_{\infty,+}'(\b)^2 \right) - \a^{N-1} u_{\infty,+}(\a)^2,
\end{equation}
\begin{equation}\label{eq:u_infty_equality2}
2\int_\a^\b u_{\infty,+}^2 r^{N-1} \,dr = \b^{N-1} \left( u_{\infty,+}'(\b)+\b u_{\infty,+}''(\b) \right) -\b^N u_{\infty,+}'(\b)^2 -\a^{N-1} u_{\infty,+}(\a)^2.
\end{equation}
\end{lemma}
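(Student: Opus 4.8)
The plan is to read off both identities directly from the ordinary differential equation satisfied by $u:=u_{\infty,+}(\cdot;\a,\b)$. As a radial function $u$ solves $-u''-\frac{N-1}{r}u'+u=0$ on $(\a,\b)$, equivalently $(r^{N-1}u')'=r^{N-1}u$, together with $u'(\a)=0$ and $u(\b)=1$; in particular, evaluating the equation at $r=\a$ gives the relation $u''(\a)=u(\a)$, which will account for the boundary term at the inner endpoint.

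For \eqref{eq:u_infty_equality1} I would repeat, with $u$ in place of $\xi_{[\a,\b]}$, the computation carried out in \eqref{eq:xi_xi'}--\eqref{eq:xi_xi'_tris} inside the proof of Lemma \ref{lemma:uniqueness_1_layer}: multiplying the equation by $r^{N-1}u'$, multiplying the equation differentiated once in $r$ by $r^{N-1}u$, and combining the two, one obtains
\[
\frac{d}{dr}\left(r^{N-1}\bigl(u''(r)u(r)-u'(r)^2\bigr)\right)=(N-1)\,r^{N-3}u'(r)u(r).
\]
Integrating on $(\a,\b)$ and inserting $u'(\a)=0$, $u''(\a)=u(\a)$ and $u(\b)=1$ gives exactly \eqref{eq:u_infty_equality1}; the only difference with the cited computation is that the inner boundary term $\a^{N-1}\bigl(u''(\a)u(\a)-u'(\a)^2\bigr)=\a^{N-1}u(\a)^2$ must now be retained, since $\a$ need not be $0$ (it vanishes when $\a=0$, recovering the situation of Section \ref{sec:limit_problem}).

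For \eqref{eq:u_infty_equality2} I would combine two elementary integral relations for $u$. The first is the energy identity: multiply $(r^{N-1}u')'=r^{N-1}u$ by $u$ and integrate by parts on $(\a,\b)$; using $u'(\a)=0$ and $u(\b)=1$ this reads
\[
\int_\a^\b (u')^2 r^{N-1}\,dr=\b^{N-1}u'(\b)-\int_\a^\b u^2 r^{N-1}\,dr.
\]
The second is the Pohozaev identity for the equation: multiply it by $r^{N}u'$ and integrate by parts, handling the two nonconstant terms as $r^Nu''u'=\frac12 r^N\bigl((u')^2\bigr)'$ and $r^Nuu'=\frac12 r^N(u^2)'$, which together with $u'(\a)=0$, $u(\b)=1$ yields
\[
\frac{N-2}{2}\int_\a^\b (u')^2 r^{N-1}\,dr+\frac N2\int_\a^\b u^2 r^{N-1}\,dr=\frac12 \b^{N}\bigl(1-u'(\b)^2\bigr)-\frac12 \a^{N}u(\a)^2.
\]
Eliminating $\int_\a^\b (u')^2 r^{N-1}\,dr$ between these two relations, and then rewriting the terms at $r=\b$ through the equation evaluated there, namely $\b u''(\b)+(N-1)u'(\b)=\b$ (so that $\b^{N}=\b^{N}u''(\b)+(N-1)\b^{N-1}u'(\b)$), one arrives at an identity of the form \eqref{eq:u_infty_equality2}.

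All of the manipulations above are routine integrations by parts. The only point that requires attention — and thus the "hard part", such as it is — is the careful bookkeeping of the boundary contributions, in particular those at the inner endpoint $r=\a$, which were absent in the ball computations of Section \ref{sec:limit_problem}, together with the final algebraic rearrangement of the $\b$--boundary data via the equation so that the right-hand sides collapse to the stated form. I do not expect any substantive obstacle beyond this.
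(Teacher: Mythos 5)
Your derivation of \eqref{eq:u_infty_equality1} is essentially the paper's own: the paper too multiplies \eqref{a1} by $r^{N-1}u_{\infty,+}'$ and the differentiated equation by $r^{N-1}u_{\infty,+}$, arrives at $(r^{N-1}u_{\infty,+}''u_{\infty,+})'-(r^{N-1}(u_{\infty,+}')^2)'=(N-1)u_{\infty,+}'u_{\infty,+}r^{N-3}$, and integrates using $u_{\infty,+}'(\a)=0$, $u_{\infty,+}''(\a)=u_{\infty,+}(\a)$, $u_{\infty,+}(\b)=1$; your remark that the inner boundary term must now be kept (it was silently dropped in \eqref{eq:xi_xi'_tris}, harmlessly) is correct. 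For \eqref{eq:u_infty_equality2} your route differs in bookkeeping but not in substance: the paper introduces the auxiliary function $k=ru_{\infty,+}'$, which solves $-k''-\frac{N-1}{r}k'+k=-2u_{\infty,+}$, and integrates the Wronskian-type identity $(r^{N-1}(u_{\infty,+}k'-ku_{\infty,+}'))'=2r^{N-1}u_{\infty,+}^2$; since $r^{N-1}k=r^Nu_{\infty,+}'$ this uses exactly your Pohozaev multiplier, and your combination of the energy identity with the Pohozaev identity, followed by the substitution $\b^N=\b^Nu_{\infty,+}''(\b)+(N-1)\b^{N-1}u_{\infty,+}'(\b)$ from the ODE at $r=\b$, is an equivalent rearrangement of the same two integrations by parts. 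The paper's choice of $k$ produces the $\b$-terms directly in the stated form; your version needs that final substitution but avoids introducing $k$.

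One point you should state explicitly instead of hiding it in ``an identity of the form \eqref{eq:u_infty_equality2}'': carried out correctly, your elimination yields the inner boundary contribution $-\a^{N}u_{\infty,+}(\a)^2$, not $-\a^{N-1}u_{\infty,+}(\a)^2$ as printed. This is not a defect of your argument: the paper's own proof gives the same exponent, because the boundary term at $r=\a$ is $\a^{N-1}u_{\infty,+}(\a)k'(\a)$ with $k'(\a)=\a u_{\infty,+}(\a)$, hence $\a^{N}u_{\infty,+}(\a)^2$; a direct check with $N=3$ and $u_{\infty,+}=(Ae^r+Be^{-r})/r$ on $[1/2,1]$ confirms $\a^N$ and rules out $\a^{N-1}$. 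So the power of $\a$ in the displayed \eqref{eq:u_infty_equality2} is a typo (immaterial when $\a=0$ or $\a=1$, but not for a general annulus), and your write-up should say that your computation produces the corrected identity rather than claim agreement with the display as written.
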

\begin{proof}
Proceeding similarly to \eqref{eq:xi_xi'}-\eqref{eq:xi_xi'_tris}, we multiply \eqref{a1} by $r^{N-1} u_{\infty,+}'$ and we multiply the equation satisfied by $u_{\infty,+}'$ by $r^{N-1} u_{\infty,+}$ to obtain
\[
(r^{N-1}u_{\infty,+}''u_{\infty,+})'-(r^{N-1}(u_{\infty,+}')^2)'=(N-1) u_{\infty,+}'u_{\infty,+}r^{N-3}.
\]
Intergating in $(\a,\b)$ and recalling that $u_{\infty,+}'(\a)=0$, $u_{\infty,+}''(\a)=u_{\infty,+}(\a)$, $u_{\infty,+}(\b)=1$, we obtain \eqref{eq:u_infty_equality1}.

Let $k=ru_{\infty,+}'$ so that
\[
-k''-\frac{N-1}{r} k' +k =-2u_{\infty,+}.
\]
We multiply the last equation by $r^{N-1}u_{\infty,+}$ and \eqref{a1} by $r^{N-1}k$ to obtain
\[
(k' u_{\infty,+}r^{N-1})' -((u_{\infty,+})^2 r^N)' = 2 u_{\infty,+}^2 r^{N-1}.
\]
Intagrating in $(\a,\b)$ and noticing that $k'(\b)=u_{\infty,+}'(\b)+\b u_{\infty,+}''(\b)$ and that $k'(\a)=\a u_{\infty,+}(\a)$, we obtain \eqref{eq:u_infty_equality2}.
\end{proof}

\begin{lemma}\label{lemma:partial_up_beta_convergence}
For $(r,\a,\b)\in I$ we have
\[
\left. p \frac{\partial u_{p,+}}{\partial\b}(r;\a,\b)\right|_{r=\b} = 2 \frac{u_{\infty,+}''(\b)-(u_{\infty,+}'(\b))^2}{u_{\infty,+}'(\b)} +o(1).
\]
\end{lemma}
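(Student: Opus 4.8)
The plan is to reduce the statement to a Wronskian‑type identity relating $w:=\partial u_{p,+}/\partial\b$ to an interior integral, obtained by pairing the linearized equation for $w$ with $u_{p,+}'$, and then to pass to the limit $p\to\infty$ using Lemma~\ref{lemma:u_p^p}, Corollary~\ref{coro:partial_up_beta_convergence} and Lemma~\ref{lemma:u_infty_integrals}.

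\emph{Step 1 (the identity).} Differentiating \eqref{eq:main_alpha_beta} in $r$ shows that $y:=u_{p,+}'$ solves
\[
-\big(r^{N-1}y'\big)'+r^{N-1}y+(N-1)r^{N-3}y=r^{N-1}pu_{p,+}^{p-1}y ,
\]
i.e.\ the same linearized equation solved by $w$ (see \eqref{eq:partial_up_beta_equation}, written in self‑adjoint form) plus the extra zero‑order term $(N-1)r^{N-3}y$. Multiplying the equation for $w$ by $y$, that for $y$ by $w$, subtracting and integrating on $(\a,\b)$ — exactly the computation of \eqref{eq:xi_xi'}--\eqref{eq:xi_xi'_tris} — the terms carrying $pu_{p,+}^{p-1}$ cancel and one gets $\big[r^{N-1}(y'w-w'y)\big]_\a^\b=(N-1)\int_\a^\b r^{N-3}yw\,dr$. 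Since $u_{p,+}'(\a)=u_{p,+}'(\b)=0$ and $w'(\a)=0$, this reads
\[
\b^{N-1}u_{p,+}''(\b)\,w(\b)=(N-1)\int_\a^\b r^{N-3}u_{p,+}'\,w\,dr+\a^{N-1}u_{p,+}''(\a)\,w(\a)
\]
(for the ball $\a=0$ the last term is absent and the boundary term at the origin vanishes).

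\emph{Step 2 (passage to the limit).} From \eqref{eq:main_alpha_beta} at $r=\b$ and $r=\a$ one has $u_{p,+}''(\b)=u_{p,+}(\b)-u_{p,+}(\b)^p$ and $u_{p,+}''(\a)=u_{p,+}(\a)-u_{p,+}(\a)^p$; by Lemma~\ref{lemma:u_p^p} the first is $-p\big(\tfrac12(u_{\infty,+}'(\b))^2+o(1)\big)$, while $u_{p,+}(\a)\to u_{\infty,+}(\a)<1$ forces $u_{p,+}(\a)^p\to0$ and hence $u_{p,+}''(\a)\to u_{\infty,+}(\a)$. Granting (Step 3) that $w=\partial u_{p,+}/\partial\b\to\partial_\b u_{\infty,+}=-u_{\infty,+}'(\b)\,u_{\infty,+}$ in $C^1$ on compact subsets of $[\a,\b)$, the uniform $L^\infty$ bounds of Lemma~\ref{lemma:properties_sol_cone}(iii) and Lemma~\ref{lemma:partial_up_beta_bound} let us pass to the limit in the integral by dominated convergence, so the right–hand side of the identity tends to $-u_{\infty,+}'(\b)\big[(N-1)\int_\a^\b r^{N-3}u_{\infty,+}'u_{\infty,+}\,dr+\a^{N-1}u_{\infty,+}(\a)^2\big]$, which by \eqref{eq:u_infty_equality1} equals $-u_{\infty,+}'(\b)\,\b^{N-1}\big(u_{\infty,+}''(\b)-(u_{\infty,+}'(\b))^2\big)$. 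Dividing the identity by $\b^{N-1}u_{p,+}''(\b)=-p\,\b^{N-1}\big(\tfrac12(u_{\infty,+}'(\b))^2+o(1)\big)$ (nonzero since $u_{\infty,+}'(\b)>0$) then yields
\[
p\,w(\b)\ \longrightarrow\ \frac{-u_{\infty,+}'(\b)\big(u_{\infty,+}''(\b)-(u_{\infty,+}'(\b))^2\big)}{-\tfrac12(u_{\infty,+}'(\b))^2}=2\,\frac{u_{\infty,+}''(\b)-(u_{\infty,+}'(\b))^2}{u_{\infty,+}'(\b)} ,
\]
which is the claim (in particular $p\,w(\b)$ converges, because the divided right–hand side does).

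\emph{Step 3 (identification of the limit of $w$) and the main obstacle.} Differentiating \eqref{a1} in $\b$, and using that $\a$ is fixed, $\partial_\b u_{\infty,+}$ solves $-\Delta v+v=0$ in $B_\b\setminus B_\a$ with $\partial_\nu v=0$ on $\partial B_\a$; such solutions form the one–dimensional space spanned by $u_{\infty,+}$, so $\partial_\b u_{\infty,+}(r;\a,\b)=c(\b)u_{\infty,+}(r;\a,\b)$, and differentiating $u_{\infty,+}(\b;\a,\b)\equiv1$ gives $c(\b)=\partial_\b u_{\infty,+}(\b;\a,\b)=-u_{\infty,+}'(\b;\a,\b)$. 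It remains to show $\partial_\b u_{p,+}\to\partial_\b u_{\infty,+}$ locally uniformly on $[\a,\b)$. On $[\a,\b-\delta]$ one has $u_{p,+}\le1-c<1$ for $p$ large, hence $pu_{p,+}^{p-1}\to0$ uniformly; combining this with Lemma~\ref{lemma:uniform_convergence} and elliptic bootstrap, $u_{p,+}\to u_{\infty,+}$ in $C^2([\a,\b-\delta])$, uniformly for $\b$ in compacts. Since $w$ solves $\mathcal L w=pu_{p,+}^{p-1}w$, $w'(\a)=0$, and is $L^\infty$–bounded by Lemma~\ref{lemma:partial_up_beta_bound}, Schauder estimates (up to the Neumann boundary at $\a$) give a uniform $C^{1,\g}([\a,\b-\delta])$ bound; any subsequential $C^1$–limit $v_*$ satisfies $\mathcal L v_*=0$, $v_*'(\a)=0$, so $v_*=c_*(\b)u_{\infty,+}$, and integrating $\partial_\b u_{p,+}$ in $\b$ and letting $p\to\infty$ forces $c_*(\b)=-u_{\infty,+}'(\b)$ independently of the subsequence (this also identifies the constant in Corollary~\ref{coro:partial_up_beta_convergence}). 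The delicate point is precisely this step: the convergence $u_{p,+}\to u_{\infty,+}$ is only $C^{0,\g}$/$H^1$ globally and breaks down in $C^1$ at $r=\b$, where a boundary layer forms, so one must work strictly in the interior where $pu_{p,+}^{p-1}\to0$; moreover the limiting value of the normalizing constant is not visible from the blow‑up of Lemma~\ref{lemma:partial_up_beta_bound} alone and has to be pinned down by comparison with the limit problem.
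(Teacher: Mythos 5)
Your Steps 1--2 reproduce the paper's first Pohozaev-type identity \eqref{eq:partial_up_beta_w} (pairing the linearized equation \eqref{eq:partial_up_beta_equation} with $u_{p,+}'$) and the limit computation via Lemma \ref{lemma:u_p^p} and \eqref{eq:u_infty_equality1}; that part is correct. The difference, and the problem, is Step 3: you feed into that single identity the value $C(\b)=-u_{\infty,+}'(\b)$ of the constant in Corollary \ref{coro:partial_up_beta_convergence}, and your justification of this value --- ``integrating $\partial_\b u_{p,+}$ in $\b$ and letting $p\to\infty$ forces $c_*(\b)=-u_{\infty,+}'(\b)$ independently of the subsequence'' --- does not hold up. What the uniform bound of Lemma \ref{lemma:partial_up_beta_bound} together with the uniform convergence $u_{p,+}(r;\a,\cdot)\to u_{\infty,+}(r;\a,\cdot)$ of Lemma \ref{lemma:uniform_convergence} gives is only weak-$*$ convergence of $\b\mapsto\partial_\b u_{p,+}(r;\a,\b)$ to $\b\mapsto -u_{\infty,+}'(\b)u_{\infty,+}(r;\a,\b)$; this identifies the limit only in an averaged sense in $\b$ and says nothing about the pointwise limit at the specific $\b$ in the statement. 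The rigidity of the $r$-profile (every fixed-$\b$ subsequential limit is a multiple of $u_{\infty,+}(\cdot;\a,\b)$) does not exclude oscillation of the multiple in $\b$: for uniformly Lipschitz functions $g_p(\b)=G(\b)+p^{-1}\sin(p\b)$ one has $g_p\to G$ uniformly while $g_p'(\b)$ has every value of $[G'(\b)-1,G'(\b)+1]$ as a subsequential limit at each fixed $\b$, and no subsequence of $\cos(p\b)$ converges even a.e. So your integration-in-$\b$ argument cannot pin down $c_*(\b)$ at the given $\b$ (nor a.e.) without some equicontinuity in $\b$ of $\partial_\b u_{p,+}$, which is not available at this stage.

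Without that identification, your Wronskian identity is one linear relation in two unknowns, $\lim_p p\,\partial_\b u_{p,+}(\b)$ and $C(\b)$, and the argument stalls. This is exactly why the paper derives a \emph{second} identity, pairing \eqref{eq:partial_up_beta_equation} with $z=ru_{p,+}'+\tfrac{2}{p-1}u_{p,+}$ and using \eqref{eq:u_infty_equality2}, to obtain \eqref{eq:partial_up_beta_z}; passing to the limit in both relations gives the $2\times2$ system \eqref{eq:C_beta1}--\eqref{eq:C_beta2}, which determines the desired limit \emph{and} $C(\b)$ simultaneously (the identity $C(\b)=-u_{\infty,+}'(\b)$ is then a byproduct, Remark \ref{rem:C_beta}, not an input). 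To repair your proof you should either import that second identity, or supply a genuine proof of pointwise-in-$\b$ convergence of $\partial_\b u_{p,+}$ with the identified constant --- the latter being essentially the delicate content you yourself flag at the end of Step 3 but do not actually establish.
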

\begin{proof}
We define $w:=u_{p,+}'$, so that
\begin{equation}\label{eq:w}
\begin{cases}
-w''-\frac{N-1}{r}w'+w=p u_{p,+}^{p-1}w -\frac{N-1}{r^2}w
\quad &\text{ for } r\in (\a,\b) \\
w(\a)=w(\b)=0 \\
w'(\a)=u_{p,+}''(\a), \ w'(\b)=u_{p,+}''(\b),
\end{cases}
\end{equation}
and $z:=ru_{p,+}'+\frac{2}{p-1}u_{p,+}$, so that
\begin{equation}\label{eq:z}
\begin{cases}
-z''-\frac{N-1}{r}z'+z=p u_{p,+}^{p-1}z - 2 u_{p,+}
\quad &\text{ for } r\in (\a,\b) \\
z'(\b)=\b u_{p,+}''(\b), \  z'(\a)=\a u_{p,+}''(\a) \\
z(\b)=\frac{2}{p-1}u_{p,+}(\b), \ z(\a)=\frac{2}{p-1}u_{p,+}(\a).
\end{cases}
\end{equation}
We multiply \eqref{eq:w} by $r^{N-1}\frac{\partial u_{p,+}}{\partial\b}$ and \eqref{eq:partial_up_beta_equation} by $r^{N-1}w$ and integrate in $(\a,\b)$:
\begin{equation}\label{eq:partial_up_beta_w}
\b^{N-1} u_{p,+}''(\b) \frac{\partial u_{p,+}}{\partial\b}(\b)
-\a^{N-1} u_{p,+}''(\a) \frac{\partial u_{p,+}}{\partial\b}(\a)
=(N-1) \int_\a^\b u_{p,+}' \frac{\partial u_{p,+}}{\partial\b} r^{N-3} \,dr.
\end{equation}
Similarly, we multiply \eqref{eq:z} by $r^{N-1}\frac{\partial u_{p,+}}{\partial\b}$ and \eqref{eq:partial_up_beta_equation} by $r^{N-1}z$ and we integrate in $(\a,\b)$:
\begin{multline}\label{eq:partial_up_beta_z}
\b^{N} u_{p,+}''(\b) \frac{\partial u_{p,+}}{\partial\b}(\b)
-\a^{N} u_{p,+}''(\a) \frac{\partial u_{p,+}}{\partial\b}(\a)
+\frac{2}{p-1} \b^{N-1} u_{p,+}''(\b) u_{p,+}(\b) \\
=2\int_\a^\b u_{p,+} \frac{\partial u_{p,+}}{\partial\b} r^{N-1}\,dr.
\end{multline}
Using Lemma \ref{lemma:uniform_convergence}, equation \eqref{eq:partial_up_beta_derivative}, Corollary \ref{coro:partial_up_beta_convergence} and Lemma \ref{lemma:u_infty_integrals}, we can pass to the limit in \eqref{eq:partial_up_beta_w} and \eqref{eq:partial_up_beta_z} to obtain
\begin{multline}\label{eq:C_beta1}
-\b^{N-1} p \left(\frac{u_{\infty,+}'(\b)^2}{2}+o(1) \right) \frac{\partial u_{p,+}}{\partial\b}(\b)
-C(\b)\a^{N-1}u_{\infty,+}(\a)^2  \\
=C(\b)\left\{ \b^{N-1} \left( u_{\infty,+}''(\b)-u_{\infty,+}'(\b)^2 \right) - \a^{N-1} u_{\infty,+}(\a)^2 \right\}+o(1)
\end{multline}
and
\begin{multline}\label{eq:C_beta2}
-\b^{N} p \left(\frac{u_{\infty,+}'(\b)^2}{2}+o(1) \right) \frac{\partial u_{p,+}}{\partial\b}(\b)
-C(\b)\a^{N}u_{\infty,+}(\a)^2 -\frac{2p}{p-1} \b^{N-1} \left(\frac{u_{\infty,+}'(\b)^2}{2}+o(1) \right) \\
=C(\b) \left\{\b^{N-1} \left( u_{\infty,+}'(\b)+\b u_{\infty,+}''(\b) \right) -\b^N u_{\infty,+}'(\b)^2 -\a^{N-1} u_{\infty,+}(\a)^2 \right\} +o(1).
\end{multline}
By combining the two previous expressions we obtain the statement.
\end{proof}

\begin{remark}\label{rem:C_beta}
We infer from \eqref{eq:C_beta1} and \eqref{eq:C_beta2} that $C(\beta)=- u_{\infty,+}'(\beta)$, so that Corollary \ref{coro:partial_up_beta_convergence} provides
\[
\frac{\partial u_{p,+}}{\partial\b} \to - u_{\infty,+}'(\beta) u_{\infty,+}
\]
pointwise as $p\to\infty$.
\end{remark}

\begin{lemma}\label{lemma:uniform_C1_conv}
The convergence in Lemma \ref{lemma:partial_up_beta_convergence} is uniform in $\b$ for $(r,\a,\b)\in I$.
\end{lemma}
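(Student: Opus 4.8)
The plan is to argue by contradiction and reduce to the uniform statements already at our disposal. Suppose the convergence in Lemma \ref{lemma:partial_up_beta_convergence} were not uniform. Then there would exist $\delta>0$, a sequence $p_n\to\infty$ and points $\b_n$ with $(r,\a,\b_n)\in I$ such that
\[
\left| p_n\, \frac{\partial u_{p_n,+}}{\partial\b}(\b_n;\a,\b_n) - 2\,\frac{u_{\infty,+}''(\b_n;\a,\b_n)-(u_{\infty,+}'(\b_n;\a,\b_n))^2}{u_{\infty,+}'(\b_n;\a,\b_n)} \right|\ge\delta
\]
for every $n$. Passing to a subsequence, $\b_n\to\b_*\in[B_1,B_2]$. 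Since $u_{\infty,+}(\cdot;\a,\b)$ solves the linear problem \eqref{a1}, it and its first two $r$-derivatives at $r=\b$ depend continuously on $\b$, and $u_{\infty,+}'(\b_*;\a,\b_*)>0$; hence the subtracted term above converges, and it suffices to prove that $p_n\,\partial_\b u_{p_n,+}(\b_n;\a,\b_n)$ converges to the value at $\b_*$ of the right-hand side in Lemma \ref{lemma:partial_up_beta_convergence}, which then contradicts the displayed inequality.

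To obtain this, we re-run the proof of Lemma \ref{lemma:partial_up_beta_convergence} along the sequence $(p_n,\b_n)$. All the ingredients used there persist along this sequence: the convergence $u_{p,+}\to u_{\infty,+}$ in $H^1\cap C^{0,\gamma}$ is uniform in $\b$ by Lemma \ref{lemma:uniform_convergence}; the asymptotic $u_{p,+}(\b)^p/p\to\tfrac12 u_{\infty,+}'(\b)^2$ is uniform by Remark \ref{rem:uniform}; the bound $\|\partial_\b u_{p,+}\|_\infty\le C$ is uniform in $\b$ and $p$ by Lemma \ref{lemma:partial_up_beta_bound}; and the convergences of $\partial_\b u_{p,+}$ supplied by Corollary \ref{coro:partial_up_beta_convergence} and Remark \ref{rem:C_beta} carry over along $(p_n,\b_n)$, since they rest only on those bounds and on the continuous dependence of \eqref{a1} on $\b$. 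As the identities \eqref{eq:partial_up_beta_w} and \eqref{eq:partial_up_beta_z} are exact, it remains only to pass to the limit in their right-hand sides along $(p_n,\b_n)$, which reproduces \eqref{eq:C_beta1}--\eqref{eq:C_beta2} with $\b$ replaced by $\b_*$.

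The one point requiring genuine care is the blow-up analysis of Step 1 in the proof of Lemma \ref{lemma:partial_up_beta_bound} together with the passage to the limit in the integral \eqref{eq:partial_up_beta_integral}, now carried out along $\b_n$. The rescaling windows $\bigl(-(\b_n-\a)/\ep_{p_n},0\bigr)$ still expand to $(-\infty,0)$ because $\b_n-\a$ is bounded below, exactly as in Corollary \ref{coro:uniqueness_uniform}; thus $z_{p_n}\to z_\infty$ and $v_{p_n}\to A z_\infty'$ in $C^1_{loc}(-\infty,0)$ with $A=A(\b_*)\neq0$, the non-vanishing of $A$ coming, as in Step 1 of Lemma \ref{lemma:partial_up_beta_bound}, from Step 3 of Theorem \ref{thm:uniqueness_minimal_energy_sol}. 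To dominate the tails of \eqref{eq:partial_up_beta_integral} one invokes Corollary \ref{54} and Lemma \ref{lemma:z_p_derivative_large}; their proofs use only \eqref{eq:e_p_def}, \eqref{d9} and Lemma \ref{52}, all of which hold with constants independent of $\b$ over a compact range of $\b$ (again as in Corollary \ref{coro:uniqueness_uniform}), so the dominating functions may be chosen independently of $n$ and the dominated convergence theorem applies along $\b_n$. I expect this $\b$-uniform version of Corollary \ref{54} to be the only real obstacle; everything else is a bookkeeping repetition of the proof of Lemma \ref{lemma:partial_up_beta_convergence}.

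Collecting these facts, the limit in \eqref{eq:partial_up_beta_w} and \eqref{eq:partial_up_beta_z} with $\b=\b_n$ yields
\[
p_n\,\frac{\partial u_{p_n,+}}{\partial\b}(\b_n;\a,\b_n)\longrightarrow 2\,\frac{u_{\infty,+}''(\b_*;\a,\b_*)-(u_{\infty,+}'(\b_*;\a,\b_*))^2}{u_{\infty,+}'(\b_*;\a,\b_*)},
\]
which contradicts the choice of $(\b_n)$ and $\delta$. This proves the uniformity; the case of the ball ($\a=0$) is treated identically.
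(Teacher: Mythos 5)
Your proof is correct and follows essentially the same route as the paper: argue by contradiction, extract sequences $p_n\to\infty$, $\b_n\to\b_*$, note that the preliminary results (Lemma \ref{lemma:uniform_convergence}, Remark \ref{rem:uniform}, Lemma \ref{lemma:partial_up_beta_bound}, Corollary \ref{coro:partial_up_beta_convergence}, Remark \ref{rem:C_beta}) hold along such sequences, and pass to the limit in the identities \eqref{eq:partial_up_beta_w}--\eqref{eq:partial_up_beta_z} by dominated convergence to contradict the assumed non-uniformity. Your extra care about the blow-up window and the $\b$-uniform version of Corollary \ref{54} is a slightly more explicit account of what the paper compresses into ``repeating the proof of Lemma \ref{lemma:partial_up_beta_bound} with $p=p_n$ and $\b=\b_n$''.
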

\begin{proof}
We argue by contradiction and suppose that
\begin{equation}
\lim_{p\to+\infty} \sup_{\b\in[B_1,B_2]} \left[ \left. p \frac{\partial u_{p,+}}{\partial\b}(r;\a,\b)\right|_{r=\b} - 2 \frac{u_{\infty,+}''(\b)-(u_{\infty,+}'(\b))^2}{u_{\infty,+}'(\b)} \right] \geq C>0.
\end{equation}
So we can select sequences $p_n\to+\infty$ and $\b_n\rightarrow\b_0\in[B_1,B_2]$ such that
\begin{equation}
 \left. p_n \frac{\partial u_{p_n,+}}{\partial\b}(r;\a,\b_n)\right|_{r=\b_n} - 2 \frac{u_{\infty,+}''(\b_n)-(u_{\infty,+}'(\b_n))^2}{u_{\infty,+}'(\b_n)} \to C>0,
\end{equation}
and the smoothness of $u_{\infty,+}$ implies
\begin{equation}\label{eq:uniform_contradiction}
 \left. p_n \frac{\partial u_{p_n,+}}{\partial\b}(r;\a,\b_n)\right|_{r=\b_n} - 2 \frac{u_{\infty,+}''(\b_0)-(u_{\infty,+}'(\b_0))^2}{u_{\infty,+}'(\b_0)} \to C>0.
\end{equation}
Let us consider relation \eqref{eq:partial_up_beta_w} evaluated along the sequences $p_n$, $\b_n$. By Remark \ref{rem:uniform} we have
\[
-\frac{u_{p_n,+}''(\b_n)}{p_n} 
\to  \frac{u_{\infty,+}'(\b_0)^2}{2} .
\]
uniformly. Moreover by repeating the proof of Lemma \ref{lemma:partial_up_beta_bound} with $p=p_n$ and $\b=\b_n$ one can prove that
\[
\left\|\frac{\partial u_{p_n,+}}{\partial\b}(\cdot;\a,\b_n)\right\|_\infty \leq C,
\quad \text{with }C \text{ independent of } n.
\]
Lemma \ref{lemma:uniform_convergence} and Remark \ref{rem:C_beta} provide
\begin{equation}
u_{p_n,+}'(r;\a,\b_n) \to u_{\infty,+}'(r;\a,\b_0), \quad
\frac{\partial u_{p_n,+}}{\partial\b}(r;\a,\b_n) \to - u_{\infty,+}'(\beta_0) u_{\infty,+}(r)
\end{equation}
pointwise. We can apply the dominated convergence theorem to pass to the limit in  \eqref{eq:partial_up_beta_w} to obtain
\begin{equation}
p_n \frac{\partial u_{p_n,+}}{\partial\b}(r;\a,\b_n)- 2 \frac{u_{\infty,+}''(\b_0)-(u_{\infty,+}'(\b_0))^2}{u_{\infty,+}'(\b_0)} \to 0,
\end{equation}
which contradicts \eqref{eq:uniform_contradiction}.
\end{proof}

We have an anologous result for the decreasing solution.

\begin{lemma}\label{lemma:partial_up-_beta_convergence}
We have
\[
\left. p \frac{\partial u_{p,-}}{\partial\a}(r;\a,\b)\right|_{r=\a} \to
 2 \frac{u_{\infty,-}''(\a)-(u_{\infty,-}'(\a))^2}{u_{\infty,-}'(\a)}
\]
uniformly in $\b$ for $(r,\a,\b)\in I$.
\end{lemma}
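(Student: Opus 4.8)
The plan is to repeat, for the \emph{decreasing} solution $u_{p,-}(\cdot;\a,\b)$, the chain of arguments that produced Lemma~\ref{lemma:partial_up_beta_bound}, Lemma~\ref{lemma:partial_up_beta_convergence} and Lemma~\ref{lemma:uniform_C1_conv}, with the roles of the two endpoints interchanged: since $u_{p,-}$ attains its maximum at the inner boundary $r=\a$, the blow--up is performed near $\a$ rather than near $\b$, and one differentiates with respect to $\a$ instead of $\b$. All the preliminary ingredients have decreasing--solution counterparts, obtained exactly as in the increasing case: the $C^1$ dependence of $u_{p,-}$ on $(\a,\b)$ (the analogue of Lemma~\ref{lemma:C1_dependence}, based on the nondegeneracy of $u_{p,-}$, which is proved just as Theorem~\ref{thm:non_degeneracy}); the asymptotics $u_{p,-}(\a;\a,\b)^p/p\to\tfrac12\bigl(u_{\infty,-}'(\a;\a,\b)\bigr)^2$ (the analogue of Lemma~\ref{lemma:u_p^p}, via Proposition~\ref{prop:convergence_green_decreasing} and the Pohozaev identity); the uniform convergence $u_{p,-}\to u_{\infty,-}$ and the limit of $u_{p,-}''(\a)/p$ (the analogues of Lemma~\ref{lemma:uniform_convergence} and Remark~\ref{rem:uniform}); and the integral identities for $u_{\infty,-}$ obtained exactly as in Lemma~\ref{lemma:u_infty_integrals}, but integrating from $\a$ and using the boundary data $u_{\infty,-}'(\b)=0$, $u_{\infty,-}(\a)=1$, $u_{\infty,-}''(\a)=u_{\infty,-}(\a)-\frac{N-1}{\a}u_{\infty,-}'(\a)$.

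First I would record that $\psi_p:=\partial u_{p,-}/\partial\a$ solves the linearised problem $-(r^{N-1}\psi_p')'+r^{N-1}\psi_p=p\,r^{N-1}u_{p,-}^{p-1}\psi_p$ on $(\a,\b)$, with $\psi_p'(\b)=0$ and, by differentiating the Neumann condition $u_{p,-}'(\a;\a,\b)=0$ with respect to $\a$, $\psi_p'(\a)=-u_{p,-}''(\a)$. Next I would establish the uniform bound $\|\psi_p\|_\infty\le C$, independent of $p$ and $\b$, exactly as in Lemma~\ref{lemma:partial_up_beta_bound}: with $\e_p$ defined through $p\e_p^2=u_{p,-}(\a)^{1-p}$ (so $p\e_p\to\sqrt 2/|u_{\infty,-}'(\a)|$), rescale $v_p(r)=\psi_p(\a+\e_p r)/\|\psi_p\|_\infty$ on $(0,(\b-\a)/\e_p)$; a blow--up analysis identical to Step~1 there gives a $C^1_{loc}$ bound on $v_p'$ and the convergence of $v_p$ to a bounded solution of the limit linearised equation, which by \eqref{d17} (read off the reflection $r\mapsto -r$ that carries the half--line $(0,\infty)$ of the present blow--up onto $(-\infty,0)$) is a nonzero multiple of the derivative of the corresponding rescaled $z_\infty$, the non--vanishing of the constant being proved as in Step~3 of Theorem~\ref{thm:uniqueness_minimal_energy_sol}. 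Pairing the $u_{p,-}$ equation against $r^{N-1}\psi_p$ and the $\psi_p$ equation against $r^{N-1}u_{p,-}$, integrating on $(\a,\b)$ and inserting the blow--up information together with the $u_{\infty,-}$ identities then bounds $\|\psi_p\|_\infty$, and simultaneously yields the pointwise limit $\psi_p\to C(\a)\,u_{\infty,-}$ for a constant $C(\a)$ (the analogue of Corollary~\ref{coro:partial_up_beta_convergence} and Remark~\ref{rem:C_beta}).

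With these bounds in hand, the precise asymptotic is obtained by passing to the limit in the two integral identities built, as in the proof of Lemma~\ref{lemma:partial_up_beta_convergence}, by pairing the equations for $w:=u_{p,-}'$ and for $z:=r u_{p,-}'+\tfrac{2}{p-1}u_{p,-}$ against $r^{N-1}\psi_p$, and pairing the decreasing analogue of \eqref{eq:partial_up_beta_equation} against $r^{N-1}w$ and $r^{N-1}z$; now the boundary contributions live at $\a$ (where $w=0$, $w'(\a)=u_{p,-}''(\a)$, $z'(\a)=\a u_{p,-}''(\a)$, $z(\a)=\tfrac{2}{p-1}$) and at $\b$ (where $w=0$, $w'(\b)=0$). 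Using $-u_{p,-}''(\a)/p\to\tfrac12(u_{\infty,-}'(\a))^2$, the pointwise limit of $\psi_p$ and the $u_{\infty,-}$ identities, one eliminates $C(\a)$ between the two resulting relations and gets $p\,\psi_p(\a)\to 2\bigl(u_{\infty,-}''(\a)-u_{\infty,-}'(\a)^2\bigr)/u_{\infty,-}'(\a)$. The uniformity (in $\b$, and in $\a$ over the compact range allowed by $I$) then follows by the contradiction scheme of Lemma~\ref{lemma:uniform_C1_conv}: along hypothetical sequences $p_n\to\infty$, $\b_n\to\b_0$ (resp.\ $\a_n\to\a_0$) one re--runs the estimates with $(p_n,\b_n)$ in place of $(p,\b)$, using the uniform bound on $\|\psi_{p_n}\|_\infty$, the uniform convergences of $u_{p_n,-}$ and of $u_{p_n,-}''/p_n$, and the dominated convergence theorem.

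I expect the only real difficulty to be bookkeeping rather than conceptual: one must recompute every boundary term in the integration--by--parts identities with $\a$ now playing the role of the ``peak'' endpoint, keep track of the signs forced by the monotonicity of $u_{p,-}$ (so that, for instance, $u_{p,-}''(\a)<0$ and $u_{\infty,-}'(\a)<0$, and the right--hand side of the claimed limit has the correct sign), and re--derive the $u_{\infty,-}$--analogue of Lemma~\ref{lemma:u_infty_integrals}. The blow--up itself is literally the reflection $r\mapsto -r$ of the one in Section~\ref{S4}, so no genuinely new phenomenon arises.
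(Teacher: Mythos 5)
Your proposal is correct and coincides with what the paper intends: the lemma is stated there without a separate proof, as the mirror image of Lemmas \ref{lemma:partial_up_beta_bound}--\ref{lemma:uniform_C1_conv}, and your plan (blow-up at the inner endpoint $\a$ where $u_{p,-}$ peaks, the two pairing identities for $w=u_{p,-}'$ and $z=ru_{p,-}'+\tfrac{2}{p-1}u_{p,-}$, elimination of the constant $C(\a)$, and the contradiction scheme for uniformity) is exactly that mirrored argument. One bookkeeping slip to fix: for $w=u_{p,-}'$ only $w(\b)=0$ holds, while $w'(\b)=u_{p,-}''(\b)=u_{p,-}(\b)-u_{p,-}(\b)^p\neq 0$, so the boundary term $\b^{N-1}u_{p,-}''(\b)\,\frac{\partial u_{p,-}}{\partial\a}(\b)$ must be retained — it converges to $C(\a)\,\b^{N-1}u_{\infty,-}(\b)^2$ and cancels against the matching term in the $u_{\infty,-}$ integral identity, just as the $\a$-endpoint terms do in \eqref{eq:C_beta1}.
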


\begin{theorem}\label{thm:C1convergence}
Let $L_p(\a;\b_{j-1},\b_j)$ and $L_\infty(\a;\b_{j-1},\b_j)$ be defined in \eqref{c6} and \eqref{eq:L_infty} respectively. There exists $\ep>0$ such that
\begin{equation}
L_p(\cdot;\b_{j-1},\b_j)\to L_\infty(\cdot;\b_{j-1},\b_j) \quad\text{in } C^1(\a_j-\ep,\a_j+\ep).
\end{equation}
\end{theorem}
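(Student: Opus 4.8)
The statement asserts $C^1$-convergence of $L_p(\cdot;\b_{j-1},\b_j)$ to $L_\infty(\cdot;\b_{j-1},\b_j)$ on a small interval around $\a_j$. We already know from Theorem \ref{thm:one_peak_solution}(ii) that $L_p \to L_\infty$ pointwise, and $L_\infty$ is smooth by \eqref{eq:L_infty} together with Lemma \ref{lemma:uniqueness_1_layer}; so the whole content is the convergence of the derivatives $L_p'$. The plan is to differentiate the definition \eqref{c6} with respect to $\a$, compute the limit of each resulting term using the machinery built in Section \ref{S8}, and check that the convergence is uniform on $(\a_j-\ep,\a_j+\ep)$.

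First I would write, using \eqref{c6} and the chain rule (recall that $u_{p,+}$ depends on $\a$ both through the second boundary point and through the evaluation point, similarly $u_{p,-}$ through the first boundary point and the evaluation point):
\[
p\, L_p'(\a;\b_{j-1},\b_j) = u_{p,+}(\a;\b_{j-1},\a)^{p-1}\left[ u_{p,+}'(\a;\b_{j-1},\a) + \left.\frac{\partial u_{p,+}}{\partial\b}(r;\b_{j-1},\b)\right|_{r=\b=\a}\right] - u_{p,-}(\a;\a,\b_j)^{p-1}\left[ u_{p,-}'(\a;\a,\b_j) + \left.\frac{\partial u_{p,-}}{\partial\a}(r;\a,\b_j)\right|_{r=\a}\right].
\]
Now $u_{p,+}'(\a;\b_{j-1},\a)=0$ and $u_{p,-}'(\a;\a,\b_j)=0$ by the Neumann conditions at the matching point, so the bracketed quantities reduce to the $\partial_\b$ (resp.\ $\partial_\a$) derivatives. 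The point is then to multiply and divide by $p$: write $u_{p,+}(\a;\b_{j-1},\a)^{p-1} = \frac{u_{p,+}(\a)^p}{u_{p,+}(\a)}$ and use Lemma \ref{lemma:u_p^p} (in the form of Remark \ref{rem:uniform}, for uniformity) to get $u_{p,+}(\a;\b_{j-1},\a)^p/p \to \tfrac12 (u_{\infty,+}'(\a;\b_{j-1},\a))^2$, while $u_{p,+}(\a;\b_{j-1},\a)\to u_{\infty,+}(\a;\b_{j-1},\a)=1$ by Proposition \ref{prop:convergence_green}. Combined with Lemma \ref{lemma:partial_up_beta_convergence} (and its uniform version, Lemma \ref{lemma:uniform_C1_conv}), which gives $p\,\partial_\b u_{p,+}|_{r=\b} \to 2(u_{\infty,+}''(\b)-(u_{\infty,+}'(\b))^2)/u_{\infty,+}'(\b)$, the positive contribution becomes
\[
u_{p,+}(\a)^{p-1}\cdot p\,\partial_\b u_{p,+}(\a) \;\Big/\; p \;=\; \frac{u_{p,+}(\a)^p}{p\,u_{p,+}(\a)}\cdot p\,\partial_\b u_{p,+}(\a) \;\longrightarrow\; \frac{u_{\infty,+}'(\a)^2}{2}\cdot\frac{2(u_{\infty,+}''(\a)-u_{\infty,+}'(\a)^2)}{u_{\infty,+}'(\a)} = u_{\infty,+}'(\a)\big(u_{\infty,+}''(\a)-u_{\infty,+}'(\a)^2\big).
\]
The symmetric computation with Lemma \ref{lemma:partial_up-_beta_convergence} handles the decreasing side, and one checks that the resulting limit equals $L_\infty'(\a;\b_{j-1},\b_j)$ — which can be verified either by direct differentiation of \eqref{eq:L_infty} using that $u_{\infty,\pm}$ solve $-\Delta u + u = 0$ (so $u_{\infty,\pm}''(\a) = u_{\infty,\pm}(\a) - \frac{N-1}{\a}u_{\infty,\pm}'(\a)$ at the matching point, and the extra $\a$-dependence of $u_{\infty,+}(\cdot;\b_{j-1},\a)$ through its boundary point contributes exactly the $\partial_\b$ term), or more cleanly by noting that both sides already match pointwise and that the above limit is continuous in $\a$, so it must be the derivative of the pointwise limit.

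The uniformity over $(\a_j-\ep,\a_j+\ep)$ is the part requiring care, but all the ingredients are already uniform: Remark \ref{rem:uniform} gives $u_{p,+}(\b;\a,\b)^p/p \to \tfrac12(u_{\infty,+}'(\b))^2$ uniformly for $\b$ in compact subsets of $(\a, 1]$, Lemma \ref{lemma:uniform_convergence} gives uniform $C^{0,\gamma}$ convergence of $u_{p,\pm}$, and Lemmas \ref{lemma:uniform_C1_conv} and \ref{lemma:partial_up-_beta_convergence} give the uniform convergence of the rescaled $\b$- (resp.\ $\a$-) derivatives. Since $\a_j$ is an interior zero of $L_\infty$ with $L_\infty'(\a_j)>0$ by \eqref{eq:L_infty_increasing}, for $\ep$ small the interval $[\a_j-\ep,\a_j+\ep]$ stays at positive distance from $\b_{j-1}$ and $\b_j$, so all these uniform statements apply simultaneously there. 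Assembling them proves $L_p' \to L_\infty'$ uniformly on that interval, which together with the pointwise (hence uniform, by equicontinuity) convergence $L_p\to L_\infty$ gives $C^1$-convergence. \textbf{The main obstacle} I anticipate is bookkeeping rather than conceptual: correctly expanding $\frac{d}{d\a}$ of the composite expressions $u_{p,+}(\a;\b_{j-1},\a)^p$ and $u_{p,-}(\a;\a,\b_j)^p$ — keeping track of which occurrences of $\a$ are evaluation points (killed by the Neumann condition) and which are boundary points (producing the $\partial_\b$, $\partial_\a$ terms analyzed in Lemmas \ref{lemma:partial_up_beta_convergence}–\ref{lemma:partial_up-_beta_convergence}) — and then verifying that the limit coincides with $L_\infty'$; I would do the latter check via the ``limit is continuous, so equals derivative of the pointwise limit'' shortcut to avoid a direct differentiation of \eqref{eq:L_infty}.
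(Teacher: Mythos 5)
Your argument is correct and is essentially the paper's own proof: differentiate \eqref{c6} in $\a$, use the Neumann conditions to kill the $u_{p,\pm}'$ terms, pass to the limit via Lemma \ref{lemma:u_p^p} (Remark \ref{rem:uniform}) together with Lemmas \ref{lemma:partial_up_beta_convergence}, \ref{lemma:uniform_C1_conv} and \ref{lemma:partial_up-_beta_convergence}, and identify the limit with $L_\infty'$ (the paper does this by explicitly differentiating \eqref{C1}, while your ``uniform limit of the derivatives equals the derivative of the pointwise limit'' shortcut is an equally valid way to close that step). The only blemish is the spurious factor $p$ on the left-hand side of your first display (it is $L_p'$, not $p\,L_p'$, that equals the bracketed difference), which you implicitly correct in the subsequent manipulation, so the conclusion is unaffected.
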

\begin{proof}
On the one hand we have
\[
\frac{\partial}{\partial\a} \left( u_{p,+}(\a;\b_{j-1},\a)\right)
=u_{p,+}'(\a;\b_{j-1},\a)+ \left. \frac{\partial u_{p,+}}{\partial\a}(r;\b_{j-1},\a)\right|_{r=\a}
=\left. \frac{\partial u_{p,+}}{\partial\a}(r;\b_{j-1},\a)\right|_{r=\a}
\]
so that, by Lemmas \ref{lemma:u_p^p} and \ref{lemma:partial_up_beta_convergence},
\begin{multline}
\frac{\partial}{\partial\a} \left( \frac{u_{p,+}(\a;\b_{j-1},\a)^p}{p} \right)
= \frac{u_{p,+}(\a;\b_{j-1},\a)^{p-1}}{p} p \left. \frac{\partial u_{p,+}}{\partial\a}(r;\b_{j-1},\a)\right|_{r=\a} \\
= \frac12 \left( u_{\infty,+}'(\a;\b_{j-1},\a) \right)^2 2 \frac{u_{\infty,+}''(\a;\b_{j-1},\a)-(u_{\infty,+}'(\a;\b_{j-1},\a))^2}{u_{\infty,+}'(\a;\b_{j-1},\a)} +o(1).
\end{multline}
On the other hand, by computing explicitely the derivatives in \eqref{C1}, we obtain
\begin{equation}
\left.\frac{\partial u_{\infty,+}'}{\partial \a} (r;\b_{j-1},\a) \right|_{r=\a}=-(u_{\infty,+}'(\a;\b_{j-1},\a))^2,
\end{equation}
and hence
\begin{equation}
\frac{\partial}{\partial\a} \left( \frac{(u_{\infty,+}'(\a;\b_{j-1},\a))^2}{2} \right)
=u_{\infty,+}'(\a;\b_{j-1},\a)[u_{\infty,+}''(\a;\b_{j-1},\a)-(u_{\infty,+}'(\a;\b_{j-1},\a))^2].
\end{equation}
The convergence is uniform by Lemma \ref{lemma:uniform_C1_conv} and by Remark Remark \ref{rem:uniform}. Since an analogous result hold for the decreasing solution, the statement is proved.
\end{proof}

\begin{corollary}\label{coro:alpha_jp_C1}
The map $\a_{j,p}(\b_{j-1},\b_j)$ defined in Theorem \ref{thm:one_peak_solution} is of class $C^1$.
\end{corollary}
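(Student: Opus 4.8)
The plan is to realize $\a_{j,p}$ as an implicit function of the endpoints through the Implicit Function Theorem, applied to the equation $L_p(\a;\b_{j-1},\b_j)=0$, where $L_p$ is the function defined in \eqref{c6}.

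First I would observe that $L_p$ is of class $C^1$ jointly in $(\a,\b_{j-1},\b_j)$. Writing
\[
L_p(\a;\b_{j-1},\b_j)=\frac{u_{p,+}(\a;\b_{j-1},\a)^p-u_{p,-}(\a;\a,\b_j)^p}{p},
\]
the $C^1$ dependence on the endpoints $\b_{j-1}$ and $\b_j$ is precisely Lemma \ref{lemma:C1_dependence} together with its analogue for the decreasing solution; the $C^1$ dependence on $\a$, which enters simultaneously as the evaluation point $r=\a$ and as an endpoint of the underlying annulus, then follows by composing the $C^1$ maps of Lemma \ref{lemma:C1_dependence} with the smooth maps $\a\mapsto(\a,\b_{j-1},\a)$ and $\a\mapsto(\a,\a,\b_j)$ and with $t\mapsto t^p$, the last of which is smooth on a neighbourhood of the range of the positive and bounded functions $u_{p,\pm}$ (Lemma \ref{lemma:properties_sol_cone}). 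This is the same regularity already used in the statement and proof of Theorem \ref{thm:C1convergence}.

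Next I would verify the nondegeneracy hypothesis $\partial_\a L_p(\a_{j,p};\b_{j-1},\b_j)\neq0$ for $p$ large. By Theorem \ref{thm:C1convergence} we have $L_p(\cdot;\b_{j-1},\b_j)\to L_\infty(\cdot;\b_{j-1},\b_j)$ in $C^1(\a_j-\ep,\a_j+\ep)$, hence $\partial_\a L_p\to\partial_\a L_\infty$ uniformly on that interval; since $\partial_\a L_\infty(\a_j;\b_{j-1},\b_j)>0$ by \eqref{eq:L_infty_increasing}, there is $\bar p$ (depending on $\b_{j-1},\b_j$) such that $\partial_\a L_p>0$ on a fixed neighbourhood of $\a_j$ for every $p>\bar p$. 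In particular $L_p(\cdot;\b_{j-1},\b_j)$ is then strictly increasing near $\a_j$ and has there a unique zero, which must coincide with $\a_{j,p}$ because $\a_{j,p}\to\a_j$ by Theorem \ref{thm:one_peak_solution}; thus $\partial_\a L_p(\a_{j,p};\b_{j-1},\b_j)>0$.

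Finally, fixing a base pair $(\bar\b_{j-1},\bar\b_j)$ and applying the Implicit Function Theorem to $L_p=0$ at the point $(\a_{j,p}(\bar\b_{j-1},\bar\b_j),\bar\b_{j-1},\bar\b_j)$ yields a $C^1$ branch $(\b_{j-1},\b_j)\mapsto\a(\b_{j-1},\b_j)$ of zeros of $L_p$ through that point; by the local uniqueness of the zero near $\a_j$ it agrees with $\a_{j,p}$ on a neighbourhood of $(\bar\b_{j-1},\bar\b_j)$, and since the base pair is arbitrary, $\a_{j,p}$ is of class $C^1$. I do not expect a real obstacle here: all the substance is contained in Lemma \ref{lemma:C1_dependence} (which itself rests on the nondegeneracy Theorem \ref{thm:non_degeneracy}) and in the $C^1$-convergence Theorem \ref{thm:C1convergence}. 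The one point that needs a little care is to anchor the nondegeneracy of $L_p$ at the specific zero $\a_{j,p}$ rather than merely somewhere, and this is exactly what the uniform $C^1$-convergence, combined with the strict sign of $\partial_\a L_\infty$ at $\a_j$ and the convergence $\a_{j,p}\to\a_j$, provides.
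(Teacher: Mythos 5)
Your proposal is correct and follows essentially the same route as the paper: the paper also treats $\a_{j,p}$ as implicitly defined by $L_p(\a;\b_{j-1},\b_j)=0$ and invokes the Implicit Function Theorem, with the positivity of $\partial_\a L_p$ deduced from Theorem \ref{thm:C1convergence} and \eqref{eq:L_infty_increasing}. Your additional care about the joint $C^1$ regularity of $L_p$ in all variables and about anchoring the nondegeneracy at the specific zero $\a_{j,p}$ (via $\a_{j,p}\to\a_j$ and local uniqueness) simply makes explicit details the paper leaves implicit.
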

\begin{proof}
$\a_{j,p}$ is implicitely defined by the equation $L_p(\a_{j,p};\b_{j-1},\b_j)=0$, with $L_p$ as in \eqref{c6}. We infer from Theorem \ref{thm:C1convergence} and relation \eqref{eq:L_infty_increasing} that 
\[
\frac{\partial }{\partial\a} L_p(\a;\b_{j-1},\b_j) >0,
\]
so that the Implicit Function Theorem applies.
\end{proof}

\begin{corollary}\label{coro:1_layer_C1}
$u_{p,1\text{-layer}}(\b_j;\b_j,\b_{j+1})$ is $C^1$ in $(\b_j,\b_{j+1})$ 
\end{corollary}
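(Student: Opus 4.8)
The plan is to reduce the claim to the $C^{1}$-dependence results already established. By the definition \eqref{c3} of the $1$-layer solution on the interval $[\b_j,\b_{j+1}]$, whose peak $\a_{j+1,p}=\a_{j+1,p}(\b_j,\b_{j+1})$ lies strictly inside $(\b_j,\b_{j+1})$, the evaluation point $r=\b_j$ belongs to the increasing branch, so
\[
u_{p,1\text{-layer}}(\b_j;\b_j,\b_{j+1})=u_{p,+}\bigl(\b_j;\,\b_j,\,\a_{j+1,p}(\b_j,\b_{j+1})\bigr).
\]
Corollary \ref{coro:alpha_jp_C1} gives that $(\b_j,\b_{j+1})\mapsto\a_{j+1,p}(\b_j,\b_{j+1})$ is $C^{1}$, so it will be enough to show that the map $(a,b)\mapsto u_{p,+}(a;a,b)$ — the value of the increasing solution on $[a,b]$ at its left endpoint — is $C^{1}$ for $a<b$; the statement then follows by composing $C^{1}$ maps.

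To prove this I would use the straightening change of variables from the proof of Lemma \ref{lemma:C1_dependence}: with $h=\frac{a-b}{A-B}$, $k=\frac{Ab-Ba}{A-B}$ and $\hat u_{p,+}(s;a,b)=h^{2/(p-1)}u_{p,+}(hs+k;a,b)$ on the fixed annulus $B_B\setminus B_A$, one has $hA+k=a$, so the endpoint $s=A$ corresponds to $r=a$. The Implicit Function Theorem argument in that proof shows that $(a,b)\mapsto\hat u_{p,+}(\cdot;a,b)$ is $C^{1}$ with values in $H^{2}_{rad}(B_B\setminus B_A)$. Since in one dimension $H^{2}$ of a bounded interval embeds continuously in $C^{1}$ of its closure, point evaluation at $s=A$ is a bounded linear functional there; hence $(a,b)\mapsto\hat u_{p,+}(A;a,b)$ is $C^{1}$, and, $h$ being smooth and nonvanishing in $(a,b)$, so is $(a,b)\mapsto u_{p,+}(a;a,b)=h^{-2/(p-1)}\hat u_{p,+}(A;a,b)$.

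The only subtle point — and the place where some care is needed — is that Lemma \ref{lemma:C1_dependence} is stated on the open set $\a<r<\b$, while here $u_{p,+}$ must be evaluated at the boundary point $r=\b_j$. This is exactly what the rescaling resolves: the Implicit Function Theorem in the proof of Lemma \ref{lemma:C1_dependence} produces $C^{1}$ dependence in the $H^{2}_{rad}$-topology on the \emph{fixed} annulus $B_B\setminus B_A$, which includes its boundary, so the dependence is $C^{1}$ up to and including the endpoints. The same argument, with $u_{p,-}$ in place of $u_{p,+}$, shows symmetrically that $u_{p,1\text{-layer}}(\b_j;\b_{j-1},\b_j)$ is $C^{1}$ in $(\b_{j-1},\b_j)$.
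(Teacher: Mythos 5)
Your proof is correct, and it follows the same reduction the paper intends — write $u_{p,1\text{-layer}}(\b_j;\b_j,\b_{j+1})=u_{p,+}\bigl(\b_j;\b_j,\a_{j+1,p}(\b_j,\b_{j+1})\bigr)$ and compose the $C^1$ map $\a_{j+1,p}$ of Corollary \ref{coro:alpha_jp_C1} with a regular dependence of the increasing solution on its interval — but you justify the second factor by a different mechanism. The paper's one-sentence proof appeals to the continuity of $\a_{j,p}$ and to ``the uniqueness result for ODE'', i.e.\ to smooth dependence of solutions of the radial ODE on their data, whereas you extract the $C^1$ regularity of $(a,b)\mapsto u_{p,+}(a;a,b)$ from the fixed-domain Implicit Function Theorem parametrization used in the proof of Lemma \ref{lemma:C1_dependence}: the map $(a,b)\mapsto\hat u_{p,+}(\cdot;a,b)$ is $C^1$ with values in $H^2_{rad}(B_B\setminus B_A)$, radial $H^2$ on an annulus with $A>0$ controls the one-dimensional $H^2(A,B)$ norm and hence $C^1([A,B])$, so evaluation at the fixed endpoint $s=A$ (which corresponds to $r=a$ since $hA+k=a$) is a bounded linear functional, and $h^{-2/(p-1)}$ is a smooth nonvanishing factor. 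This has the merit of explicitly covering the evaluation at the boundary point $r=\b_j$, which the statement of Lemma \ref{lemma:C1_dependence} (formulated only for $\a<r<\b$) does not literally provide; you correctly flag and close this small gap, which the paper glosses over. Your closing remark about the decreasing branch is exactly the symmetric statement needed for the other term in $M_p^{(j)}$, and since all the intervals involved stay away from the origin, the annulus case suffices throughout.
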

\begin{proof}
It follows by the continuity of the map $\a_{j,p}(\b_{j-1},\b_j)$ and the uniqueness result for ODE.
\end{proof}

\begin{theorem}
For $p$ sufficiently large there exists a radial solution $u_{p,k\text{layer}}$ of \eqref{main} having exactly $k$ maximum points $\a_{1,p},\ldots,\a_{k,p}$. Furthermore, $u_{p,k\text{layer}} \to u_{\infty,k\text{layer}}$ pointwise, as defined in Theorem \ref{thm:one_peak_solution}.
\end{theorem}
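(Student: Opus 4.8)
The plan is to realise $u_{p,k\text{layer}}$ as a juxtaposition of $k$ one-layer solutions over a partition $0=\beta_0<\beta_1<\dots<\beta_{k-1}<\beta_k=1$ to be determined, and to locate the partition by a topological degree argument inheriting the nonzero degree of $M_\infty$ computed in Theorem \ref{thm:k-layer-limit}. For $(\beta_1,\dots,\beta_{k-1})\in T$ and $p$ large, Theorem \ref{thm:one_peak_solution} provides $u_{p,1\text{-layer}}(\cdot\,;\beta_{j-1},\beta_j)$ on each $[\beta_{j-1},\beta_j]$, with unique maximum at $\alpha_{j,p}=\alpha_{j,p}(\beta_{j-1},\beta_j)$. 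Because each piece satisfies the Neumann condition at both endpoints of its interval, at an interior node $\beta_j$ both adjacent pieces are the ODE solution issued from the datum $(\text{value},0)$; hence, once the two one-sided values at $\beta_j$ coincide, the juxtaposition is smooth across $\beta_j$ and is a classical radial solution of \eqref{main}. Thus I set
\[
M_p^{(j)}(\beta_1,\dots,\beta_{k-1})=u_{p,1\text{-layer}}(\beta_j;\beta_j,\beta_{j+1})-u_{p,1\text{-layer}}(\beta_j;\beta_{j-1},\beta_j),\qquad j=1,\dots,k-1,
\]
and a zero of $M_p=(M_p^{(1)},\dots,M_p^{(k-1)})$ gives the desired solution; by Corollaries \ref{coro:alpha_jp_C1} and \ref{coro:1_layer_C1}, $M_p$ is $C^1$ on $T$.

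Next I would pass to the limit uniformly on compact subsets of $T$. Writing $u_{p,1\text{-layer}}(\beta_j;\beta_{j-1},\beta_j)=u_{p,-}(\beta_j;\alpha_{j,p},\beta_j)$ and $u_{p,1\text{-layer}}(\beta_j;\beta_j,\beta_{j+1})=u_{p,+}(\beta_j;\beta_j,\alpha_{j+1,p})$, I combine the convergence $\alpha_{j,p}\to\alpha_j$ — obtained from $L_p(\alpha_{j,p};\beta_{j-1},\beta_j)=0$, the uniform $C^1$-convergence $L_p\to L_\infty$ of Theorem \ref{thm:C1convergence} and $\partial_\alpha L_\infty>0$ via the implicit function theorem — with the uniform convergences $u_{p,\pm}\to u_{\infty,\pm}$ of Lemma \ref{lemma:uniform_convergence}, to obtain that $M_p\to M_\infty$ uniformly on
\[
T_\delta:=\{\beta\in\overline T:\ \beta_i-\beta_{i-1}\ge\delta\ \text{for every } i=1,\dots,k\}.
\]
Fix $\delta$ small. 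Since $M_\infty$ is continuous on $\overline T$ and does not vanish on $\partial T$ with the quantitative lower bounds of Lemmas \ref{lemma:boundary_T_1}, \ref{lemma:boundary_T_k} and \ref{lemma:collapse_points} (as exploited in the proof of Theorem \ref{thm:k-layer-limit}), there is a neighbourhood of $\partial T$ in $\overline T$ on which $M_\infty\ne 0$; as any configuration with $\min_i(\beta_i-\beta_{i-1})\le\delta$ lies within distance $\delta$ of $\partial T$, for $\delta$ small all zeros of $M_\infty$ in $T$ lie in the interior of $T_\delta$ and $M_\infty\ne 0$ on $\partial T_\delta$, whence by excision $\deg(M_\infty,T_\delta,0)=\deg(M_\infty,T,0)=1$ (the last equality being \eqref{6}).

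For $p$ large, $M_p$ is then defined and $C^1$ on the compact $\overline{T_\delta}$, and the affine homotopy $tM_p+(1-t)M_\infty$ is admissible on $\partial T_\delta$ since $\|M_p-M_\infty\|_{L^\infty(\partial T_\delta)}\to 0$ and $\inf_{\partial T_\delta}|M_\infty|>0$; hence $\deg(M_p,T_\delta,0)=1$ and $M_p$ has a zero $\bar\beta^{(p)}=(\bar\beta^{(p)}_1,\dots,\bar\beta^{(p)}_{k-1})\in T_\delta$. Then
\[
u_{p,k\text{layer}}(r):=u_{p,1\text{-layer}}(r;\bar\beta^{(p)}_{j-1},\bar\beta^{(p)}_j),\qquad r\in[\bar\beta^{(p)}_{j-1},\bar\beta^{(p)}_j),\ j=1,\dots,k,
\]
is a positive radial solution of \eqref{main}; each piece being increasing on $(\bar\beta^{(p)}_{j-1},\alpha_{j,p})$ and decreasing on $(\alpha_{j,p},\bar\beta^{(p)}_j)$ while every interior node $\bar\beta^{(p)}_j$ is a local minimum, the solution has exactly the $k$ maximum points $\alpha_{1,p},\dots,\alpha_{k,p}$. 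Finally, extracting a subsequence with $\bar\beta^{(p)}\to\bar\beta\in T_\delta$, the uniform convergence forces $M_\infty(\bar\beta)=0$, so $\bar\beta$ is an admissible configuration as in Theorem \ref{thm:k-layer-limit}; then $\alpha_{j,p}\to\alpha_j(\bar\beta_{j-1},\bar\beta_j)$ and $u_{p,k\text{layer}}\to u_{\infty,k\text{layer}}$ pointwise by Theorem \ref{thm:one_peak_solution}, which together with the Proposition following Theorem \ref{thm:k-layer-limit} also yields (i)--(ii) of Theorem \ref{T}.

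The step I expect to be the main obstacle is the uniformity over configurations: one has to upgrade Theorem \ref{thm:C1convergence} and the implicit-function construction of $\alpha_{j,p}$ so that a single $p$ works simultaneously for all $\beta\in\overline{T_\delta}$ and $M_p\to M_\infty$ uniformly there. This uniformity is exactly what makes the degree comparison legitimate, and it relies on the uniform-in-$\beta$ estimates of Lemma \ref{lemma:uniform_convergence}, Remark \ref{rem:uniform} and Lemma \ref{lemma:uniform_C1_conv}.
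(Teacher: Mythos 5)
Your proposal is correct and follows essentially the same route as the paper: the same map $M_p$ built from the one-layer solutions, the degree $\deg(M_\infty,\cdot,0)=1$ inherited from Theorem \ref{thm:k-layer-limit} via excision, and the uniform convergence $M_p\to M_\infty$ (resting on Lemma \ref{lemma:uniform_convergence}, Remark \ref{rem:uniform}, Lemma \ref{lemma:uniform_C1_conv} and Theorem \ref{thm:C1convergence}) to transfer the degree and produce a zero of $M_p$. Your additional details — the set $T_\delta$, the verification that the glued function is a classical solution with exactly $k$ maxima, and the extraction of a convergent subsequence of configurations — only make explicit what the paper leaves implicit.
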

\begin{proof}
Let $T$ be as in \eqref{eq:T_def} and let $M_p=(M_p^{(1)},\ldots,M_p^{(k-1)}):T\to\R^{k-1}$, defined as
\begin{equation}\label{eq:M_p_def}
M_p^{(j)}(\b_1,\ldots,\b_{k-1})
= u_{p,1\text{-layer}}(\b_j;\b_j,\b_{j+1})-u_{p,1\text{-layer}}(\b_j;\b_{j-1},\b_{j}) 
\end{equation}
for $j=1,\ldots,k-1$.

Let us consider a domain $U\subset T$ such that $\left(M_\infty\left(\b_1,\ldots,\b_{k-1})\right)\right)^{-1}(0)\subset U$. Relation \eqref{6} and the excision property of the topological degree imply
\begin{equation}\label{8}
deg\left(M_\infty(\b_1,\ldots,\b_{k-1}), U,0\right)=1.
\end{equation}
Finally, since $M_p\rightarrow M_\infty$ uniformly in $U$, we get that
\begin{equation}\label{9}
deg\left(M_p(\b_1,\ldots,\b_{k-1}), U,0\right)=1,
\end{equation}
so that $M_p$ admits at least one zero in $U$.
\end{proof}

%\begin{thebibliography}{99}
%
%\end{thebibliography}

%\nocite{*}
%\bibliographystyle{plain}

\bibliographystyle{abbrv}
\bibliography{biblio.bib}

\end{document}